\DeclareMathOperator*{\argmax}{arg\,max}
\DeclareMathOperator*{\argmin}{arg\,min}
\newcommand{\bR}{\mathbb{R}}
\newcommand{\bN}{\mathbb{N}}
\newcommand{\cA}{\mathcal{A}}
\newcommand{\cC}{\mathcal{C}}
\newcommand{\cD}{\mathcal{D}}
\newcommand{\cG}{\mathcal{G}}
\newcommand{\cH}{\mathcal{H}}
\newcommand{\cK}{\mathcal{K}}
\newcommand{\cR}{\mathcal{R}}
\newcommand{\cM}{\mathcal{M}}
\newcommand{\cE}{\mathcal{E}}
\newcommand{\cP}{\mathcal{P}}
\newcommand{\cF}{\mathcal{F}}
\newcommand{\cQ}{\mathcal{Q}}
\newcommand{\cI}{\mathcal{I}}
\newcommand{\cV}{\mathcal{V}}
\newcommand{\sfE}{{\sf E}}
\newcommand{\sfQ}{{\sf Q}}
\newcommand{\sfP}{{\sf P}}
\newcommand{\rt}{\right}
\newcommand{\lt}{\left}
\newtheorem{theorem}{Theorem}[section]
\newtheorem{lemma}[theorem]{Lemma}
\newtheorem{corollary}{Corollary}[section]
\newtheorem{proposition}{Proposition}[section]
\theoremstyle{remark}
\newtheorem{remark}{Remark}[section]
\newtheorem{example}{Example}[section]  %%we made this change here from the original annals template for the examples
\begin{document}

\begin{frontmatter}
\title{Joint Sequential Detection and Isolation \\ for Dependent  Data Streams}
%An asymptotic optimality theory}
%\title{A sample article title with some additional note\thanksref{t1}}
\runtitle{Joint Sequential Detection and Isolation}
%\thankstext{T1}{A sample additional note to the title.}

\begin{aug}
%%%%%%%%%%%%%%%%%%%%%%%%%%%%%%%%%%%%%%%%%%%%%%
%%Only one address is permitted per author. %%
%%Only division, organization and e-mail is %%
%%included in the address.                  %%
%%Additional information can be included in %%
%%the Acknowledgments section if necessary. %%
%%%%%%%%%%%%%%%%%%%%%%%%%%%%%%%%%%%%%%%%%%%%%%
\author{\fnms{Anamitra} \snm{Chaudhuri}\ead[label=e1,mark]{ac34@illinois.edu}}
\and
\author{\fnms{Georgios} \snm{Fellouris}\ead[label=e2,mark]{fellouri@illinois.edu}}
%\and
%\author[B]{\fnms{Third} \snm{Author}\ead[label=e3,mark]{third@somewhere.com}}
%%%%%%%%%%%%%%%%%%%%%%%%%%%%%%%%%%%%%%%%%%%%%%
%% Addresses                                %%
%%%%%%%%%%%%%%%%%%%%%%%%%%%%%%%%%%%%%%%%%%%%%%
\address{Department of Statistics,
University of Illinois, Urbana-Champaign,
\printead{e1,e2}}
\end{aug}

\begin{abstract} %should not be longer than 200 words
The problem of joint sequential detection and isolation  is considered  in the context of multiple, not necessarily independent, data streams. A multiple testing framework is proposed, where each  hypothesis corresponds to a different subset of data streams,  the sample size is a  stopping time of the observations, and the probabilities of  four kinds of error are controlled below distinct, user-specified levels. Two of these  errors  reflect the  detection component  of the formulation, whereas the other two the  isolation component.  The optimal expected sample size is characterized to a first-order asymptotic approximation as the error probabilities go to 0. Different asymptotic regimes, expressing different prioritizations of the detection and isolation tasks, are considered.  A novel, versatile family of testing procedures  is proposed, in which  two distinct, in general, statistics are computed  for each hypothesis, one addressing the detection task and the other  the isolation task. Tests in this family, of various  computational complexities,
are shown to be  asymptotically  optimal under different setups. The  general theory is  applied to the detection and isolation of anomalous, not necessarily independent,  data streams, as well as to the detection and isolation of an unknown dependence structure.  
\end{abstract}

%where it is shown that the most practical rule with quadratic complexity is able to achieve asymptotic optimality under a wide range of probability distributions.
%: (i) the number of free parameters (thresholds) is equal to the number of error constraints and (ii) it ranges from the most practical rule to the asymptotically optimal rule based on their design that also incorporates the aspect of its computational complexity. 

\begin{keyword}[class=MSC2020]
\kwd[Primary ]{62L10}
\kwd{62L05}
\kwd[; secondary ]{62J15}
\end{keyword}

\begin{keyword}
\kwd{Sequential multiple testing}
\kwd{Detection and isolation}
\kwd{Anomaly detection}
\kwd{Asymptotic optimality}
\kwd{Dependence structure}
%\kwd{Multiple testing for correlation}
%\kwd{Familywise error rates}
\end{keyword}

\end{frontmatter}
%%%%%%%%%%%%%%%%%%%%%%%%%%%%%%%%%%%%%%%%%%%%%%
%% Please use \tableofcontents for articles %%
%% with 50 pages and more                   %%
%%%%%%%%%%%%%%%%%%%%%%%%%%%%%%%%%%%%%%%%%%%%%%
%\tableofcontents
%ranging from public health, engineering, cyber-security, signal detection etc., data are generated by mutliple sources (sensor networks, etc)involve multiple testing problem regarding higher dimensional dependence structure, for example, testing pairwise independence between data streams. 
 
\section{Introduction}
The field of sequential analysis  deals with statistical procedures whose sample size  is not fixed in advance, but it is a function of the collected observations. For recent textbooks regarding developments  on this area since the  groundbreaking work of Abraham Wald \cite{wald1945sequential,Wald_book}, we refer to  \cite{tartakovsky2014sequential,Bartroff_Book_Clinicaltrials}.  While earlier works, like Wald’s, were motivated by problems in inspection sampling and quality control, in modern industrial and engineering setups it is quite common that there are multiple data streams that are generated by a multitude of data sources (e.g., sensors, channels, etc.), are monitored sequentially, and need to be utilized for real-time decisions.  This is the case, for example, in multiple access wireless network \cite{rappaport1996wireless}, multisensor surveillance systems \cite{foresti2003multisensor}, multichannel signal detection \cite{tartakovsky2003sequential},  environmental monitoring \cite{env-mon}, blind source separation \cite{blind-source}, biometric authentication \cite{biom-auth}, sensor networks \cite{sens-net}, power systems \cite{power-grid}, \cite{Hey_Taj2018smart_grid}, neural coding \cite{neur-cod}, etc. In many of  these applications,   the goal is to   identify  data streams that are  ``anomalous", in the sense that their marginal distributions deviate substantially from a certain, ``normal'' behavior. In others,  the goal is  to  identify data streams that share a certain dependence structure.  In both cases, the problem  admits   a multiple testing formulation, in which control of the classical  familywise error rates   guarantees, with  high probability, the precise identification of  the anomalous data sources  \cite{holm1979simple, de2012step, de2012sequential, bartroff2014sequential,
Cohen2015,song2017asymptotically,tsopela} or of the unknown  dependence structure \cite{Hey_Taj2016det_markov, Hey_Taj2016lin_search, loc-str, chau_fell20}  (see also  \cite{li2012, cai2011, cai2013} %cai3
for fixed-sample-size formulations). 

% tends to be the one determining the sample size.   
%where in the first one the hypotheses refer to the marginal distributions  of the data streams,
%\cite{de2012sequential} \cite{de2012step,De201588,2014arXiv_Bartroff,bartroff2014sequential,malloy2014sequential},  \cite{song2017asymptotically},  and in the second one to their pairwise distributions.
%which was introduced by Wald in his revolutionary work of \textit{sequential hypothesis testing} \cite{wald1945sequential} and has been an extensive area of research since then (see, \cite{tartakovsky2014sequential}),

%A different formulation, motivated by a range of applications \cite{env-mon,blind-source,biom-auth,sens-net,power-grid, Hey_Taj2018smart_grid,neur-cod} is to test for  pairwise independence between data streams and identify the dependent pairs  with a certain degree accuracy. This problem has been considered in the  fixed sample size setting \cite{cai3}, \cite{li2012}, \cite{cai2011}, \cite{cai2013}, it is not well-explored how to design efficient sequential procedures that are computationally feasible as well as minimizes the sampling cost.  and in the second one the hypotheses refer to pairwise dependence.  % both in sequential and non-sequential (fixed sample size) setup

Familywise error  metrics are, in general,  determined by  the most difficult hypothesis problem under consideration. As a result, even the  average  sample size  for  their  control can be excessive in practice. One approach for addressing this  issue is to  employ alternative  error metrics,  such as 
%by considering some other popular error metrics, such as %(i) familywise error rates \cite{bartroff2014sequential}, \cite{de2012sequential}, \cite{de2012step},
%\cite{holm1979simple}, \cite{song2017asymptotically}, 
generalized familywise error rates \cite{bartroff2010multistage, 2014arXiv_Bartroff, De201588, song_fell2019}, %\cite{hommel1988controlled, LehmannRomano2005, romano2006stepup, romano2007control},  
the generalized misclassification rate \cite{Li2014, malloy2014sequential, song_fell2019}, or  the  false discovery rate \cite{He_Bart_2021}. %  among others. 
%\cite{benjamini1995controlling, benjamini2001, guo2014further, pena2011power}. 
%in \cite{bartroff2014sequential}, \cite{de2012sequential}, \cite{de2012step},
%\cite{holm1979simple}, \cite{song2017asymptotically},\cite{2014arXiv_Bartroff}, \cite{bartroff2010multistage}, \cite{De201588}, \cite{song_fell2019}, \cite{Li2014}, \cite{malloy2014sequential}, \cite{song_fell2019}, \cite{He_Bart_2021} etc. 
A different approach  is  to focus  on detecting  the presence of  anomalous   \cite{det_vvv1993, det_vvv1994, det_Tart2001, Inv_seq_det2001, det_Tart2002, tartakovsky2003sequential, fellouris2017multichannel}   or dependent \cite{Castro2012, Hey_Taj2016det_markov, Hey_Taj2016lin_search}  data streams, without identifying them with guaranteed precision. This \textit{detection} task leads to a binary  testing problem and suffices when  an action needs to be taken  as long as an anomalous data stream is present, no matter which one  it is, or  there is some kind of dependence, no matter where exactly it is located. 

The first goal of the present work is to introduce a  novel  multiple testing formulation that unifies  the pure detection and the pure  isolation problems. Specifically, the proposed formulation requires control of  four distinct error metrics, two of which  refer to the detection task, whereas the other  two refer  to the isolation task. Thus, it allows 
for  strict  control of \textit{false alarms} and \textit{missed detections}, without having  to  either escalate the prescribed control of \textit{false positives} and \textit{false negatives}, as in the traditional familywise error formulation, or   ignore it, as in the pure detection problem.

The second goal of  this work is  %main  methodological contribution  is the 
the proposal and asymptotic analysis of a general, versatile family of testing procedures that   satisfy the prescribed error control in the above formulation.
 %Specifically,  two distinct, in general, statistics are computed  for each hypothesis, where one of them the detection task and the other to the isolation task.The most challenging task however, and the main contribution from a theoretical point of view,   is  an asymptotic analysis with which 
Specifically, we show that  tests in the proposed family, of various  degrees of computational complexity, achieve the optimal expected sample size as the target error probabilities go to 0 under various asymptotic regimes  that  correspond to different prioritizations  of the detection and isolation tasks.

The  above asymptotic analysis is conducted under  a general framework that allows not only for  temporal dependence, but also  for dependence among the  data streams. Thus, the third goal of the paper is to  apply the above results to  two  problems  that arise as  special cases of the general setup considered  in this work:  (i) the detection and isolation of anomalous  data streams, and (ii) the detection and isolation of  an unknown dependence structure. 

For the first of these problems, in the special case that the data streams are assumed to be  \text{independent}, we obtain asymptotically optimal rules  that are easy to implement, for any number of data sources, under a general setup that allows for temporal dependence. Thus, we improve upon existing rules  for the pure detection and isolation problems  \cite{de2012step, de2012sequential, bartroff2014sequential, song2017asymptotically}. In the general case for the first problem, where the data streams can be dependent,  as well as for the  second problem,  the  asymptotically optimal rules we obtain in this work are not always easy to implement. For this reason, we put special emphasis on low-complexity  testing procedures, evaluate their asymptotic relative efficiencies, and we further show that they do achieve  asymptotic optimality under various setups. Finally, we apply  the above results to the special case that the data streams follow a multivariate Gaussian distribution. In this way, we also generalize certain classical results on the problem of sequentially testing for the  correlation  coefficient of a bivariate Gaussian distribution  \cite{choi, kowalski, sat-pra, wolde, wood}. 
 
  %problems ofthe detection and isolation of anomalous streams  testing  the means and   and dependentstructure of 

 %that focus on the isolation problem and we t and in this way, the theory developed in this paper can explain the origin of these rules, which were designed in some ad-hoc manners in the past literature. On the other hand, in the presence of dependence,  \textcolor{red}{When the problem  is to detect and isolate a dependence structure...} 

%These results are illustrated to the problem of testing pairwise correlation in dependent streams whose underlying distribution is multivariate Gaussian.   

 %Finally, we provide sufficient conditions for this optimal asymptotic expected sample size to be  achieved by the proposed testing rules. 

%As before, the asymptotically optimal rules we derive are not always computationally convenient, and we focus again on the most practical rule to apply. We show that under a wide range of probability distributions, it achieves asymptotic optimality.   

%  of dependent streams whose underlying distribution is%To the best of our knowledge, asymptotic optimality of these existing rules is understood only under the strict assumption that the streams are independent and in fact, they fail to be asymptotically optimal, in general, when

%Finally, the  general theory we develop is applied  to the problem of joint detection and isolation of a dependence structure. 

The remainder of the paper is organized as follows:  we formulate the  general  framework of this paper  in Section \ref{sec:prob_form}, we introduce the proposed family of testing procedures  in Section \ref{sec:prop_proced}, and  we establish our  general asymptotic optimality theory in Section \ref{sec:asymp_opt}. Next, we apply the previous results to the detection and isolation of anomalous data  streams in Section \ref{sec:appl_anomalous}, and   of  an unknown dependence structure in Section \ref{sec:appl_depen_struc}. We present the results of two simulation studies  in Section \ref{sec:sim_stud}. Finally, we conclude and discuss potential extensions of this work in Section \ref{sec:conclusion}. All proofs are presented in the Appendix, %(Supplementary material \cite{chau_fell_aos_supp})
where we also include  auxiliary results of independent interest.

Finally, we collect some notations that  are  used  throughout the paper.  %Furthermore, for any event $\Gamma$, we use the following notation:$$\sfE_\sfP\lt[Y; \Gamma\rt] = \int_{\Gamma}Yd\sfP.$$
%We use the convention that maximum and minimum of the empty set are $-\infty$ and $+\infty$ respectively. : $C_k^J$ denotes the binomial coefficient ${J \choose k}$, that is, the number of subsets of size $k$ from a set of size $J$
We denote the set of natural numbers by $\bN$, i.e., $\bN := \{1, 2, \dots\}$, and for any $n \in \bN$ we  set $[n] := \{1, \dots, n\}$. We denote by $\bR$ the set of real numbers and for any $a,b \in \bR$ we set 
 $a \vee b \equiv \max\{a, b\}$ and $a \wedge  b \equiv \min\{a, b\}$.   $|A|$ represents the  cardinality  of a finite set $A$. %$\mathscr{P}(A)$ denotes its power set and , 
For a square matrix $M$, we denote by $det(M)$ its determinant and by $tr(M)$  its trace.  For any $p \in \bN$, we denote by $N_p(\mu, \Sigma)$ the $p$-variate Gaussian distribution with mean vector $\mu$ and covariance matrix $\Sigma$, and omit the subscript when $p = 1$.
% i.e., in case of univariate Gaussian distribution.
For a probability measure $\sfP$,  we denote by $\sfE_\sfP$, $\sf{Var}_\sfP$ and ${\sf Corr}_{\sfP}$ the corresponding expectation, variance and correlation, respectively. Finally, for any sequences of positive numbers $(x_n)$ and $(y_n)$, we write   $x_n \sim y_n$ for  $\lim_n  (x_n/y_n) = 1$ and  $x_n \lesssim  y_n$ for  $\limsup_n (x_n/y_n) \leq 1.$
 
%\newpage

\section{Problem formulation}\label{sec:prob_form}
\subsection{Data}
We consider $K$ data  sources that generate data simultaneously in discrete time and denote by $X^k_n$ the   random element whose value is generated by data source $k \in [K]$  at  time $n \in \bN$.  For any  subset of data sources, $s \subseteq [K]$, we set 
\begin{align*}
X^s_n &:=\{X^{k}_n: k \in s\} \qquad 
\text{and}  \qquad   \cF_n^{s} := \sigma\lt(X^{s}_m:  m \in [n]\rt), \quad n \in \bN,
\end{align*}
i.e.,  $X_n^s$ is the set of  observations by  the data  sources in $s$ at  time $n$ and $\cF_n^{s}$  the $\sigma$-field generated by the observations from the  data sources in $s$ up to time $n$, and 
\begin{align*}
X^s &:=\{X^{s}_n, n \in \bN\} \qquad 
\text{and}  \qquad \cF^{s} := \sigma\lt( \cF^{s}_n, n \in \bN\rt),
\end{align*}
i.e., $X^s$ is the complete sequence of observations  by the data sources in  $s$ and $\cF^s$ the  $\sigma$-field it generates. 
In what follows, we omit the superscipt in the above notations  whenever  $s=[K]$, and replace it by $k$ whenever   $s=\{k\}$ for some $k \in [K]$.
%In the rest of the paper we similarly omit any superscript of subscript whenever it is $[K]$ and replace it by $k$ whenever it is $\{k\}$ for some $k \in [K]$. %, e.g., when $s = [K]$ and $s = \{k\}$, we also denote $\cI(\sfP, \sfQ; s)$ by $\cI(\sfP, \sfQ)$ and $\cI(\sfP^k, \sfQ^k)$ respectively. In the same spirit, we obtain similar notations $\Lambda_k$, $s_k$, $s'_k$, $A_k, B_k, \cH^k, \cG^k$ etc.

\subsection{Global distributions}
 We consider  a finite family, $\cP$,  of  plausible distributions of    $X$,  to the elements of which we refer as \textit{global} distributions.   We say that two subsets of data sources $s_1\subseteq [K]$ and $s_2\subseteq [K]$  are independent under some $\sfP \in \cP$   if $X^{s_1}$ and $X^{s_2}$ are independent under $\sfP$. For any global distribution, $\sfP \in \cP$, and any  subset  of sources,  $s\subseteq [K]$, we denote   by $\sfP^s$ the restriction of $\sfP$ on  $\cF^s$, i.e., the marginal distribution of $X^s$ under $\sfP$.  For any family of global distributions, $\cQ \subseteq \cP$, and any subset of data sources,  $s \subseteq [K]$,  we set 
$$\cQ^s := \{\sfP^s : \sfP \in \cQ\},$$ 
e.g., $\cP^s$ is the family of distributions of $X^s$ induced by $\cP$. 
As before, we omit the superscript whenever $s = [K]$, and replace it by $k$ whenever   $s=\{k\}$ for some $k \in [K]$.

\subsection{A family of local hypotheses}
We consider  a   finite family of subsets of $[K]$,  $\cK$, to the    elements of which we  refer as   \textit{units}. For each  $e \in \cK$, we consider two hypotheses for the marginal distribution of $X^e$,
\begin{equation*} % \label{testing_problem}
\cH^e \;  \text{(null)} \qquad \text{  versus }  \qquad  \cG^e \;  \text{(alternative)},
\end{equation*}
 where $\cH^e$,  $\cG^e$ are non-empty, disjoint subfamilies of $\cP^e$.  For any global distribution, $\sfP \in \cP$, we 
we say that $e\in \cK$  is  a \textit{signal} under $\sfP$  if $\sfP^e \in \cG^e$, and  
we   denote by  $\cA(\sfP)$  the set of signals  under $\sfP$, i.e., 
$$\cA(\sfP) := \{e \in \cK : \;  \sfP^e \in \cG^e\}.$$

%\begin{example}
\begin{remark}
When the units are the pairs of data sources,   i.e., 
\begin{equation} \label{pairs}
\cK = \big\{\{i, j\} : 1 \leq i < j \leq K\big\},
\end{equation}
each   testing problem  concerns  the  marginal distribution  of a pair of data sources.  On the other hand,  when the units are the individual data sources,  i.e., 
\begin{equation} \label{singletons}
\cK = \{\{k\} : k \in [K]\},
\end{equation}
each  testing problem  concerns the  marginal distribution of an individual data source. In the latter case,   we  replace the superscript in $\cH^e$ and $ \cG^e$  by $k$ when $e = \{k\}$ for some $k \in [K]$.
\end{remark}
%\end{example}\begin{example}

%\end{example}
%When $\cK$ is given by \eqref{singletons}, we replace the superscript in the above notations by $k$ whenever $e = \{k\}$ for some $k \in [K]$.

\subsection{A Gaussian example}\label{gauss_prob_setup}
Before continuing with the proposed problem formulation, we illustrate the already introduced notation with a  concrete example. For this, we let    
$$\cM_k \subset \bR, \qquad \cV_k \subset \bR_+, \qquad \cR_{kj} \subset (-1, 1)$$ be arbitrary finite sets,  with $0 \in \cM_k$  and   $0 \in \cR_{kj}$ for every $k, j \in [K] \; \text{and}  \; k < j$, and  consider the  family of distributions, $\cP$,  for which  $\sfP \in \cP$ if and only if   $X$  is a  sequence of iid  Gaussian random vectors under $\sfP$  with 
$$ \sfE_{\sfP}[X_n^k] \in \cM_k, \qquad {\sf Var}_{\sfP}[X_n^k] \in \cV_k , \qquad {\sf Corr}_{\sfP}(X_n^k,  X_n^j) \in \cR_{kj} $$
for every $n \in \bN$ and $k, j \in [K]$  with $ k < j$. 
  
\subsubsection{Detection and isolation of non-zero Gaussian means}\label{test_mean}
When  testing whether the mean of each source is  equal to  zero or not, then $\cK$ is given by \eqref{singletons} and for every $e=\{k\}$, where $k \in [K]$, the hypotheses take the following form: 
 %for $k$ take the form:Now, for every $k \in [K]$, suppose $0 \in \cM_k$ and we further fix some non-zero $\mu_k \in \cM_k$ to consider the following two hypotheses for $X^k$,:
% which is a sequence of independent and identically distributed (iid) Gaussian random variables 
\begin{align*}
\cH^e  \equiv \cH^k &= \left\{X^k \overset{\text{iid}}{\sim} N(0, \sigma^2) : \sigma^2 \in \cV_k\right\} \\
\cG^e \equiv \cG^k &= \left\{ X^k \overset{\text{iid}}{\sim} N(\theta, \sigma^2) : \theta \in \cM_k \setminus \{0\}, \sigma^2 \in \cV_k \right\}, 
%\quad k \in [K],
\end{align*}
independently of the  dependence structure of the data sources. 

\subsubsection{Detection and isolation of a Gaussian dependence structure}\label{test_corr}
When testing whether each pair of sources is independent or not, then $\cK$ is given by \eqref{pairs} and  for every  $e = \{i, j\} \in \cK$  the hypotheses take the following form:
%suppose $0 \in \cR_{ij}$ and we %further fix some $\cR'_e \subset \cR_e$, such that $0 \notin \cR'_e$, to 
%consider the following two hypotheses for $X^e$, which is a sequence of iid bivariate Gaussian random vectors
\begin{align*}
\cH^e &= \left\{X^e \overset{\text{iid}}{\sim} N_2\left((\theta_1, \theta_2), \Sigma\right) : \theta_1 \in \cM_i, \theta_2 \in \cM_j, \Sigma_{11} \in \cV_i, \Sigma_{22} \in \cV_j, \Sigma_{12} = 0\right\},\\
\cG^e &= \left\{X^e \overset{\text{iid}}{\sim} N_2\left((\theta_1, \theta_2), \Sigma\right) : \theta_1 \in \cM_i, \theta_2 \in \cM_j, \Sigma_{11} \in \cV_i, \Sigma_{22} \in \cV_j, \Sigma_{12}  \in \cR_{ij} \setminus\{ 0\} \right\}.
\end{align*}

\subsection{Prior information about the signal configuration}
We assume that the set of signals is known \textit{a priori} to belong to a  family of subsets of $\cK$,  $\Psi$, and denote by $\cP_\Psi$ the  family of global distributions that are compatible with $\Psi$, i.e.,
\begin{align*}
\cP_\Psi :=  \{\sfP \in \cP:  \quad \sfP^e \in \cH^e \cup \cG^e \; \;  \text{for every}\ e \in \cK \quad \text{and} \quad  \cA(\sfP) \in \Psi\}.
\end{align*}
We refer to  $\cH_0 \equiv \cP_\emptyset$ as the family of \textit{global null distributions}.   Clearly, $\cH_0 \subseteq  \cP_\Psi \Leftrightarrow
  \emptyset \in \Psi.$

The family  $\Psi$ is to be specified by the practitioner on the basis of the available information regarding the signal set.  For example,  if it is assumed that  there are at least $l$ and at most $u$ signals,  then $\Psi$ takes the following form:
  $$\Psi_{l, u}:= \{A \subseteq \cK : l \leq |A| \leq u\}.$$
When, in particular, $l=0$ and $u=K$,  
$\Psi_{l, u}$ coincides with the powerset of $\cK$ and  corresponds to the case of no prior information about the signals whatsoever.  

 Alternatively, the available prior information may  be of  topological nature. For example, if the signals are assumed  to form a \textit{cluster},  in the sense of being  stable   with respect to symmetric differences, then  $\Psi$ takes the following form:
$$\Psi_{clus} := \{A \subseteq \cK : \;\; \text{if} \;\; e, e' \in A\;\; \text{and}\;\; e \triangle  e' \in \cK,\;\; \text{then}\;\; e \triangle  e' \in A\}.$$
Alternatively, if  the signals  are assumed to be disjoint, then  $\Psi$ becomes
$$\Psi_{dis} := \{A \subseteq \cK : \;\; \text{if} \;\; e, e' \in A\;\; \text{then}\;\; e \cap e' = \emptyset\}.$$

\begin{remark} When $\cK$ is given by \eqref{pairs} or  \eqref{singletons}, then $\Psi_{dis} \subseteq \Psi_{clus}$. However,   this inclusion does not   hold  in general.  Indeed,  if $K = 2$,  \,  $\cK = \{\{1\}, \{2\}, \{1,2\}\}$ and 
$ A:= \{\{1\}, \{2\}\}$, then $A\in   \Psi_{dis} $, but $A \notin \Psi_{clus}$, \, since   $\{1\} \triangle \{2\} = \{1, 2\} \in \cK$,  but \,   $\{1, 2\} \notin A$.
\end{remark}

\subsection{Tests}
We assume that  observations are collected sequentially from all sources. At each time  instance, we decide whether  to stop sampling  or not, and in the former case we select one of the two hypotheses for each unit.  Thus, 
a \text{testing procedure} is a pair $(T,D)$, where 
  $T$ is  the, possibly random, time at which  sampling is terminated in \text{all} data sources and  $D$ is  the set of units  identified  upon stopping as  the set of signals. 
  
  More formally, we say that $(T,D)$ is a   \textit{test} if 
\begin{itemize}
\item $T$ is an $\{\cF_n\}$-stopping time, i.e.,  $\{T=n\} \in \cF_n$ for every $n \in \bN$,  
\item  $D$ is $\cF_T$-measurable and  $\Psi$-valued,  i.e.,  $\{T=n, D=A\} \in \cF_n$ for every $n \in \bN$,  $A \in \Psi$,
\end{itemize}  
 and we denote by $\cC_\Psi$ the family of all tests. We next restrict this family, focusing on tests that  control the  probabilities of certain errors of interest.

\subsection{Errors}
We say that a test   $(T,D) \in \cC_\Psi$ commits  under some $\sfP \in \cP_\Psi$  
 \begin{itemize} 
\item  a  \textit{false alarm}  when there is no signal but at least one unit is identified as such, i.e., when $\cA(\sfP) = \emptyset$ and the event $D \neq \emptyset$ occurs,
\item  a  \textit{missed detection}  when  there is at least one signal   but no unit is identified as such,  i.e., when $\cA(\sfP) \neq \emptyset$   and  the event $D = \emptyset$ occurs,

\item a  \textit{false positive}  when  there is at least one true signal and  at least one mistakenly identified signal, i.e., when  $\cA(\sfP) \neq \emptyset$ and the event $D \setminus \cA(\sfP) \neq \emptyset$ occurs,

\item  a  \textit{false negative} when at least one unit is identified as a  signal and there is also at least one missed signal, i.e.,  when  the events  $D\neq \emptyset$ and  $\cA(\sfP) \setminus D \neq \emptyset$ both occur. 

\end{itemize}

\subsection{Testing formulations} 
 False alarms and missed detections are  of special importance when it is  critical to take a certain action  if there is at least one signal,  \textit{no matter which}. The relevance and importance of these two  errors, relative to false positives/negatives, can give rise to different  testing formulations.

\subsubsection{Pure isolation}
When $\emptyset \notin\Psi$,  false alarms and missed detection are not possible. In this case, 
we  consider  tests that control the  probabilities of  false positives and  false negatives,  as in  classical familywise error control. That is,  the  class of tests of interest is
\begin{align*}
\cC_{\Psi}(\gamma, \delta) := \{  (T, D) \in \cC_\Psi  :  \,  & \; \sfP \lt(D \setminus \cA(\sfP) \neq \emptyset\rt) \leq \gamma \\ 
& \& \; \; \sfP\lt(\cA(\sfP) \setminus D \neq \emptyset, \, D \neq \emptyset  \rt) \leq \delta  \quad    \forall  \;\sfP \in \cP_\Psi \},
\end{align*}
where $\gamma$ and $\delta$ are user-specified tolerance levels in $(0,1)$.

\begin{remark}
If it is assumed that there are exactly $m$    signals, i.e., $\Psi=\Psi_{m,m}$,  where $m \in (0, |\cK|)$, then 
$$\cC_{\Psi}(\gamma, \delta)= \{(T, D) \in \cC_\Psi : \; \;  \sfP\lt(D \neq \cA(\sfP)\rt) \leq \gamma \wedge \delta \quad   \forall \;  \sfP \in \cP_\Psi \},
$$ 
since, in this case, whenever a false positive occurs so does a false negative,  and vice versa. 
\end{remark}

\subsubsection{Pure detection}

When $\emptyset \in\Psi$, or equivalently $\cH_0 \subseteq \cP_\Psi$,  false alarms and missed detections are possible. If, also,  these are  the only  errors we want to explicitly control,   the class of tests of interest  becomes  
\begin{align*}
\cD_{\Psi}(\alpha, \beta) := \{(T, D) \in \cC_\Psi  :  \; \; \sfP\lt(D \neq \emptyset\rt) & \leq \alpha \quad    \forall \; \;  \sfP \in \cH_0 \\ 
 \&  \quad
 \sfP\lt(D = \emptyset\rt) &\leq \beta  \quad   \forall \; \; \sfP \in \cP_\Psi \setminus \cH_0\},
\end{align*}
where $\alpha$ and $\beta$ are user-specified tolerance levels in $(0,1)$.

\subsubsection{Familywise error control}
\label{subsec:familywise}
Even if  false alarms and missed detections are possible and of special interest, it may still be desirable to also identify the signals,  if any, with some degree of accuracy.  This is achieved, for example, by the classical familywise error control, according to which the class of tests of interest is 
\begin{align*}
\cE_{\Psi} (\gamma, \delta) &:= \{  (T, D) \in \cC_\Psi  :  \;  \sfP \lt(D \setminus \cA(\sfP) \neq \emptyset\rt) \leq \gamma  
 \quad \& \quad \sfP\lt(\cA(\sfP) \setminus D \neq \emptyset \rt) \leq \delta  \; \; \;   \forall  \;\sfP \in \cP_\Psi \},
\end{align*}
where, as before, $ \gamma$ and $\delta$ are user-specified tolerance levels in $(0,1)$.  When $\emptyset \notin \Psi$, $\cE_{\Psi} (\gamma, \delta) $ coincides with  $\cC_{\Psi} (\gamma, \delta)$.  When $\emptyset \in \Psi$, it controls for a false alarm/missed detection by  essentially treating  it  as any other false positive/negative. Indeed, the probability of a false alarm is bounded above by $\gamma$, that  of a missed detection by $\delta$ and, as a result,
\begin{equation}\label{fwer_and_det}
\cE_{\Psi} (\gamma, \delta) \subseteq  \cD_{\Psi}(\alpha, \beta) \quad \Leftrightarrow \quad  \alpha\geq \gamma, \; \beta\geq \delta.
\end{equation}
 Thus,  in order  to lower the false alarm/missed detection rate in this context, the practitioner needs to  lower  the false positive/negative rate,  even if the latter is not of equally major  concern, and vice versa. 
 % Similarly,  to lower the  false positive/negative rate,  the practitioner needs to  lower the  false alarm/missed detection rate, even if the latter errors are not of equal importance.
%This results however in a substantial increase of the required resulting sample size,   

\subsubsection{Joint detection and isolation}
Motivated by the above limitation of familywise error control, we propose a more general and, to the best of our knowledge, novel  multiple testing   formulation, which  generalizes  the pure detection and  pure isolation problems.  Specifically, the proposed class of tests is
\begin{align*}
\cC_{\Psi}(\alpha, \beta, \gamma, \delta) := \cD_{\Psi}(\alpha, \beta) \cap \cC_{\Psi \setminus \emptyset} (\gamma, \delta), 
\end{align*}
where $\alpha, \beta, \gamma, \delta$ are user-specified tolerance levels in $(0,1)$. When $\emptyset \notin \Psi$, this class  reduces to  $\cC_{\Psi}(\gamma, \delta)$, similarly to
$\cE_{\Psi}(\gamma, \delta)$. Unlike the latter, however,  it  also reduces to  $\cD_{\Psi}(\alpha, \beta)$ when  $\emptyset \in \Psi$ and $\gamma = \delta = 1$. More interestingly, with a non-trivial choice of $\gamma$ and $\delta$ it leads to  the same false alarm and missed detection rates as in the pure detection problem, while providing an explicit and non-trivial control  of  the false positive and false negative rates. In general, the proposed formulation  allows for an asymmetric treatment of  the four kinds of error that were introduced earlier, through the the  selection  of the four distinct,  user-specified tolerance levels, $\alpha, \beta, \gamma, \delta$.

\subsection{Goals}
A main goal of this work is  to propose tests,  for each of the above formulations, that achieve the  optimal expected sample size
under every possible distribution  to a first-order asymptotic approximation as the corresponding error probabilities go to $0$. This is achieved under quite weak distributional assumptions that allow  both for temporal dependence, as well as  for dependence among the sources.  Since asymptotically optimal tests are not always computationally simple when the sources are dependent, another  main goal  is to propose practical tests,   for each of the above formulations, and to evaluate their performance loss, if any, relative to asymptotically optimal schemes.  

We next introduce the proposed   family of tests (Section \ref{sec:prop_proced}) and state  our asymptotic  results (Section \ref{sec:asymp_opt}). We  subsequently apply this general methodology and theory to two special cases: the  detection and isolation of, possibly dependent,  anomalous sources (Section \ref{sec:appl_anomalous}),  and the detection and isolation of a dependence structure among the sources (Section \ref{sec:appl_depen_struc}).

\section{A family of tests} \label{sec:prop_proced}

In this section we introduce families of tests
 that can be designed to belong to the families  we  introduced earlier. In order to do so, for each unit $e \in \cK$ we denote    by  $\cH_{\Psi,e}$  the family of non-null global distributions  under which the null is  correct  in unit $e$,  and by $\cG_{\Psi,e}$ the family of global distributions   under which $e$ is a signal,  i.e., 
\begin{align*}
\cH_{\Psi,e}&:= \{\sfP \in \cP_\Psi \setminus \cH_0 : \; e \notin \cA(\sfP) \}  \qquad \text{and} \qquad 
\cG_{\Psi,e}:= \{\sfP \in \cP_\Psi: \; e \in \cA(\sfP)\}. 
 \end{align*}
We assume that for every  
$$e \in \cK,   \quad  \sfP \in \cG_{\Psi,e} \cup \cH_0, \quad \sfQ \in \cH_{\Psi,e} \cup \cH_0,  \quad e \subseteq s \subseteq  [K],   \quad n \in \bN,$$
   $\sfP^s$ and $\sfQ^s$, i.e, the marginal distributions of $X^s$ under $\sfP$ and $\sfQ$ respectively, are  mutually absolutely continuous when restricted to $\cF_n^{s}$, and  we denote the corresponding likelihood ratio (Radon-Nikodym derivative) and log-likelihood ratio  as follows:
$$\Lambda_n(\sfP^s, \sfQ^s ) := \frac{d\sfP^s}{d\sfQ^s}\lt(\cF_n^{s}\rt) \qquad \text{and}  \qquad 
Z_n(\sfP^s, \sfQ^s ) :=\log \Lambda_n(\sfP^s, \sfQ^s).
$$

\subsection{Detection and isolation statistics}
For each unit $e \in \cK$ we select  two subsets of $[K]$,  $ s_e , s'_e \subseteq [K]$, that both  include $e$,   i.e.,  $e \subseteq s_e \cap s'_e,$ 
to which we refer as the \textit{subsystems of unit $e$}. Then, at each time $n \in \bN$,  we compute the values of  the following GLR-type  statistics,
\begin{align*}
\Lambda_{e, \rm det}(n) & := 
\frac{ \max  \{ \Lambda_n(\sfP, \sfP_0) : \sfP \in \cG_{\Psi,e}^{s_e} \} }
 {\max \{ \Lambda_n(\sfP, \sfP_0): \sfP \in \cH_0^{s_e}  \} }, \quad \quad 
 \Lambda_{e, \rm iso}(n)  := 
\frac{ \max  \{ \Lambda_n(\sfP, \sfP'_0) : \sfP \in \cG_{\Psi,e}^{s'_e}\} }
 {\max \{ \Lambda_n(\sfP, \sfP'_0): \sfP \in \cH_{\Psi,e}^{s'_e} \} },
\end{align*}
where   $\sfP_0$ and $\sfP'_0$ are arbitrary   distributions from   $\cH_0^{s_e}$ and $\cH_0^{s'_e}$, respectively. We note that
\begin{itemize}
\item  $\Lambda_{e, \rm det}(n)$  is based on data from  $s_e$ up to  time $n$ and the  larger its value, the stronger the evidence that  there exists \textit{at least one signal}. Thus,  we refer   to  $s_e$   as the \textit{detection  subsystem}  for  unit $e$ and to $\Lambda_{e, \rm det}$ as the   \textit{detection} statistic  for  unit $e$.   
 
\item    $\Lambda_{e, \rm iso}(n)$  is based on data from  $s'_e$ up to  time $n$, and the larger its value, the stronger the evidence that  unit $e$ \textit{is a signal}. Thus, we  refer to  $s'_e$  as the \textit{isolation  subsystem}  for  unit $e$ and to $\Lambda_{e, \rm iso}$ as the   \textit{isolation} statistic  for  unit $e$.   

   \end{itemize}

For each $n \in \bN$ we also denote by $D_{\rm iso}(n)$ the  set of units whose  isolation statistics at time $n$  have values larger than $1$, i.e.,
\begin{equation} \label{D_iso}
D_{\rm iso}(n) := \lt\{e \in \cK : \Lambda_{e, \rm iso}(n) > 1\rt\}.
\end{equation}
Therefore,  $D_{\rm iso}(n)$  can be interpreted as the estimated signal set based on the from the isolation subsystems in the first $n$ time instances. 
 %\qquad \text{and} \qquad p(n)=|D_{\rm iso}(n)|.$$

When the detection and isolation subsystems of unit $e \in \cK$ are selected as small as possible, i.e.,  $s_e = s'_e =e$, then its  isolation and detection statistics coincide and  reduce  to 
\begin{align} \label{LLR}
  \Lambda_e(n) := \frac{\max \{\Lambda_n(\sfP, \sfP_0):\sfP \in \cG^e\}}
{\max \{ \Lambda_n(\sfP, \sfP_0): \sfP \in \cH^e \} }.
\end{align}

We refer to  $\Lambda_e(n)$ as the \textit{local} statistic of unit $e$ at time $n$, as it is computed based solely on data from unit $e$ up to time $n$. 
Furthermore, we denote the ordered versions of the local statistics  by  $\Lambda_{(1)}(n) \geq \dots \geq \Lambda_{(|\cK|)}(n)$, and   the corresponding indices by  $i_1(n), \dots, i_{|\cK|}(n)$. We also denote by $p(n)$ the number of units whose local statistics at time $n$ have values larger than $1$, i.e., 
%$\Lambda_{(p(n))}(n) > 1 \geq \Lambda_{(p(n) + 1)}(n)$.
$p(n):=|\{e \in \cK:  \Lambda_e(n)>1\}|$. 

%\textcolor{red}{define $p(n)$ here}.

\begin{remark}
When  $e = \{k\}$ for some $k \in [K]$,
% e.g., when $\cK$ is given by \eqref{singletons}, 
we replace $e$ in $\Lambda_{e, \rm det}(n)$,   $\Lambda_{e, \rm iso}(n)$,  $\Lambda_{e}(n)$ by $k$, and write $\Lambda_{k, \rm det}(n)$,   $\Lambda_{k, \rm iso}(n)$,  $\Lambda_{k}(n)$, instead.
\end{remark}

%\begin{remark}The  selection of the subsystems  will clearly have an impact on both the computational requirements of the resulting tests, as well as on  their statistical properties.  For the time being, unless otherwise specified, we consider these subsystems as  arbitrary. \end{remark}

%\begin{equation*} %\label{stream_indi} 
%\Lambda_{(k)}(n) = \Lambda_{i_k(n)}(n), \quad  k \in [K].
%\end{equation*}
%\textcolor{red}{Finally, we denote by $p(n)$ the number of likelihood ratio statistics which are greater than 1 at time $n$.}
% i.e.,$$\Lambda_{(1)}(n) \geq \dots \geq \Lambda_{(p(n))}(n) > 1 \geq \Lambda_{(p(n) + 1)}(n) \geq \dots \geq \Lambda_{(K)}(n).$$

%The above statistics are determined based on the data generated in the local subsystems $S_e$ and $S'_e$ respectively.  When  $S_e= S'_e=[K]$ the computation of these statistics requires data from the whole system. 

%\subsection{The family of tests}\label{sec:prop_rule} Before we introduce the proposed procedure in the general case of joint detection and isolation, We start by considering  the special cases of the pure isolation problem and the pure detection problem. 

\subsection{The proposed tests}
We next  utilize  the above statistics to devise  tests that belong to each of the  families we introduced in the previous section.  

\subsubsection{The test for pure isolation}  
\label{subsubsec: pure_isolation_test}
When $\emptyset \notin \Psi$, we  stop sampling when the value of every \textit{isolation}  statistic is either too large or too small, and we identify as signals those units for which  these values are large.  Therefore, the proposed test in this case, $\chi^{*} := (T^{*}, D^{*})$,   is:
\begin{align*}
T^{*} &:= \inf \left\{ n \in \bN :   D_{\rm iso}(n) \in \Psi,  \; \Lambda_{e, \rm iso}(n) \notin \lt(1/A_e, B_e\rt) \; \forall \;   e \in \cK   \right\} \quad \text{and} \quad
D^{*} := D_{\rm iso}(T^{*}), 
\end{align*}
where $\{A_e, B_e\}_{e \in \cK}$  are  thresholds to be determined so that  $\chi^{*} \in \cC_\Psi(\gamma, \delta)$.

\subsubsection{The test for pure detection} \label{subsubsec: pure_detection_test}
When  $\emptyset \in \Psi$ and the focus is on the pure detection problem, we stop sampling  when either  all detection statistics are small, in which case the null hypothesis is selected in all units,   or  \textit{at least one} detection statistic is large enough, in which case   some non-empty subset of units is identified as the set of signals.   
Specifically,  the proposed test  in this case,
$\chi^*_{det} := (T^*_{det},  D^*_{det})$,  is:
\begin{align*}
T^*_{det} &:= \min\{T_0, T_{det}\} \qquad  \text{and} \qquad 
D_{det}^* = \emptyset \quad \Leftrightarrow \quad  T_0 < T_{det},\\
\text{where} \quad T_0 &:= \inf\lt\{n  \in \bN : \Lambda_{e, \rm det}(n) \leq 1/C_e \quad  \text{for every}\ e \in \cK\rt\},\\
T_{det} &:= \inf\left\{n \in \bN: \Lambda_{e, \rm det}(n) \geq D_e\quad \text{for some}\ e \in \cK \right\},
\end{align*}
and  $\{C_e, D_e\}_{e \in \cK}$ are thresholds to be determined so that   $ \chi^*_{det}  \in \cD_\Psi(\alpha, \beta)$.

%When $T_1 < T_{2, D}$, that is, we declare upon stopping that null hypothesis is true in all the units, that is, $D^* = \emptyset$. When $T_{2, D} < T_1$, we declare that there exist at least one unit in which the alternative hypothesis is true, that is, $D^* \neq \emptyset$. We refer to $\chi^*_{D}$ as the \textit{Pure Detection Rule}.

\subsubsection{The test for joint detection and isolation}\label{subsubsec: joint_test}

%\subsubsection{Joint detection and isolation}
When $\emptyset \in \Psi$ and the goal is to control all four kinds of error, the proposed test  in this case, $\chi^* := \lt(T^*, D^*\rt)$,  is based on a combination of the two previous ones:
%we combine the two previous %Given the above statistics, the proposed testing procedure continuous sampling until  either every  local detection statistic is small enough, in which the   or  \textit{at least one} local detection statistic is large enough and \textit{at the same time} \textit{all}  local isolation statistics  are either small enough or large enough, below a threshold $1/A_e$ or above a different threshold,  while at the same time with $D_{\rm iso}(n)$ belonging to $\Psi$ in adherence to the prior information, but not being $\emptyset$. Thus, 
%the  proposed stopping rule takes the following  form 
\begin{align*}
T^* &:= \min\{T_0, T_{joint}\}, \qquad 
D^* := \begin{cases}
\emptyset\ &\text{if} \quad T_0< T_{joint} \\
D_{\rm iso}\lt(T^*\rt)\ &\text{if} \quad  T_{joint} < T_0\end{cases}, 
\end{align*}
where $T_{0}$ is defined as before,  
\begin{align*}
T_{joint} := \inf\{ n \in \bN : & \; \Lambda_{e, \rm det}(n) \geq D_e \quad \text{for some}\ e \in \cK,\\
&  \Lambda_{e, \rm iso}(n) \notin \lt(1/A_e, B_e\rt) \quad \text{for every}  \, e \in \cK, \quad D_{\rm iso}(n) \in \Psi \setminus \emptyset\}, 
\end{align*}
and $\{A_e, B_e, C_e, D_e\}_{e \in \cK}$ are thresholds to be determined so that $\chi^* \in \cC_\Psi(\alpha, \beta, \gamma, \delta)$.

\subsubsection{The test for familywise error control}\label{jdi_to_fwer}
When $\emptyset \in \Psi$ and we are  interested in  the familywise error control of Subsection \ref{subsec:familywise}, 
we utilize  
 $$\chi^* := \lt(T^*, D^*\rt) \quad \text{with} \quad  A_e=C_e, B_e=D_e \quad  \text{for every} \; e \in \cK,$$
and  denote the resulting test   by  
$\chi_{fwer}^* := (T_{fwer}^*, D_{fwer}^*).$

\begin{remark}
When
%\begin{remark} \label{rem:int_rule}In case of $\chi^*_{fwer}$ if we further consider 
$s_e = s'_e = e$ for every $e \in \cK$,   then 
%$(T_{fwer}^*, D_{fwer}^*).$  reduces to
\begin{align}\label{intersection_rule}
\begin{split}
T^*_{fwer} &= \inf\{ n \in \bN : \, D_{\rm iso}(n) \in \Psi, \;  \Lambda_{e}(n) \notin \lt(1/A_e, B_e\rt)  \; \forall \; e \in \cK \}, \\
 D^*_{fwer} &= D_{\rm iso}(T^*_{fwer})
 \end{split}
\end{align}
%Furthermore, for every $e \in \cK$, it we let $A_e = A$ and $B_e = B$, that is, the local statistics in all units are required to cross the same thresholds for the sampling to be terminated, then 
and if also  $\Psi$ is the power set of $\cK$, 
$\chi^*_{fwer}$ coincides with the  \textit{``Intersection Rule''}   in \cite{de2012sequential, de2012step}.
\end{remark}
\begin{comment}

%and the thresholds in $\chi^*$ are chosen such that $A_e = C_e$ and $B_e = D_e$, 
then
 $T_0$ and $T_{joint}$ take the following form
\begin{align*}
T_0 &= \inf\lt\{n  \in \bN : \Lambda_{e}(n) \leq 1/A_e \quad  \text{for every}\ e \in \cK\rt\},\\
T_{joint} &= \inf\{ n \in \bN : \; \Lambda_{e}(n) \geq B_e\quad \text{for some}\ e \in \cK,\\
&\qquad \qquad \qquad \; \Lambda_{e}(n) \notin \lt(1/A_e, B_e\rt) \; \; \text{for every}  \, e \in \cK \quad  \text{and} \quad D_{\rm iso}(n) \in \Psi \setminus \emptyset\}.
\end{align*}
Since the stopping events of $T_{joint}$ satisfy the following that,
$$\{\Lambda_{e}(n) \notin \lt(1/A_e, B_e\rt) \; \; \text{for every}  \, e \in \cK \;\;  \text{and} \;\; D_{\rm iso}(n) \in \Psi \setminus \emptyset\} \subseteq \{\Lambda_{e}(n) \geq B_e\;\; \text{for some}\ e \in \cK\},$$
\end{comment}
%\end{remark}

\subsection{Error control}\label{sec:err_con}

\begin{comment}
If condition \eqref{infty_det_iso} holds and $\emptyset \notin \Psi$, for any choice of thresholds, the rule $\chi^*_{iso}$ terminates almost surely under every distribution $\sfP \in \cP(\Psi)$. Also due to the same condition, $\chi^*_{det}$ and $\chi^*$ terminate almost surely under every distribution $\sfP \in \cP(\Psi)$ when $\emptyset \in \Psi$.
\end{comment}

 We next  select the thresholds in each of the above tests in order to satisfy the desired error constraints.  We emphasize  that no additional  distributional assumptions are needed for this purpose. To state these results, we set
 \begin{equation}\label{a,b,c,d}
 a_e := |\cH_{\Psi,e}^{s'_e}|, \quad b_e :=  |\cG_{\Psi,e}^{s'_e}|, \quad c_e := |\cH_0^{s_e}|, \quad d_e := |\cG_{\Psi,e}^{s_e}| \quad e \in \cK. 
 \end{equation}

% We start with the case of pure isolation, i.e., when $\emptyset \notin \Psi$, and  continue with the case that $\emptyset \in \Psi$.  

\begin{theorem}\label{th:errcon}
 Suppose  that  $\emptyset \notin \Psi$ and let
 $\gamma, \delta \in (0, 1)$.  
 \begin{itemize}
\item[(i)]  $\chi^* \in \cC_{\Psi}(\gamma, \delta)$ when 
\begin{align*}  
A_e \geq a_e \, |\cK|/\delta \qquad \text{and} \qquad 
B_e \geq b_e \, |\cK|/\gamma  \quad  \forall \,  e \in \cK.
\end{align*}
\item[(ii)] If, also,     $\Psi = \Psi_{m,m}$ \,   for some $m \in (0, |\cK|)$, then \,  $\chi^* \in \cC_{\Psi}(\gamma, \delta)$ when 
\begin{align*}  
A_e \geq a_e \, |\cK|/(\gamma \wedge \delta) \quad \text{and} \qquad 
B_e \geq b_e \, |\cK|/(\gamma  \wedge \delta)  \quad  \forall \,  e \in \cK.
\end{align*}
\end{itemize}
\end{theorem}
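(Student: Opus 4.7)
The plan is to decompose the false-positive and false-negative events into per-unit events, bound each via a ``sandwich'' inequality that reduces $\Lambda_{e, \rm iso}$ to a likelihood ratio with respect to the \emph{true} marginal, and conclude with a union bound and Ville's maximal inequality.

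For part (i), I would fix $\sfP \in \cP_\Psi$ and handle the false-positive event first. A union bound gives
\[
\sfP(D^* \setminus \cA(\sfP) \neq \emptyset) \leq \sum_{e \in \cK \setminus \cA(\sfP)} \sfP(e \in D^*, T^* < \infty).
\]
When $e \notin \cA(\sfP)$, the true marginal satisfies $\sfP^{s'_e} \in \cH_{\Psi, e}^{s'_e}$, so the denominator of $\Lambda_{e, \rm iso}(n)$ dominates the single term $\Lambda_n(\sfP^{s'_e}, \sfP'_0)$, and Bayes' rule yields
\[
\Lambda_{e, \rm iso}(n) \leq \max_{\sfP'' \in \cG_{\Psi, e}^{s'_e}} \Lambda_n(\sfP'', \sfP^{s'_e}).
\]
By definition of $T^*$, each $\Lambda_{e, \rm iso}(T^*)$ has exited $(1/A_e, B_e)$, so $e \in D^*$ forces $\Lambda_{e, \rm iso}(T^*) \geq B_e$, and hence $\Lambda_{T^*}(\sfP'', \sfP^{s'_e}) \geq B_e$ for some $\sfP'' \in \cG_{\Psi, e}^{s'_e}$. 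A further union bound over the at most $b_e$ candidate distributions, combined with Ville's inequality applied to the non-negative $\sfP^{s'_e}$-martingale $n \mapsto \Lambda_n(\sfP'', \sfP^{s'_e})$, gives $\sfP(e \in D^*, T^* < \infty) \leq b_e/B_e \leq \gamma/|\cK|$, and summing yields the required bound $\gamma$.

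The false-negative bound follows by a symmetric argument with numerator and denominator exchanged: when $e \in \cA(\sfP)$ one has $\sfP^{s'_e} \in \cG_{\Psi, e}^{s'_e}$, and on $\{e \notin D^*, T^* < \infty\}$ the statistic satisfies $1/\Lambda_{e, \rm iso}(T^*) \geq A_e$, leading to $\sfP(e \notin D^*, T^* < \infty) \leq a_e/A_e \leq \delta/|\cK|$. The requirement $D^* \neq \emptyset$ in the definition of a false negative is automatic because $D^* \in \Psi$ and $\emptyset \notin \Psi$. For part (ii), the constraint $\Psi = \Psi_{m,m}$ forces $|D^*| = |\cA(\sfP)| = m$, which collapses $\{D^* \setminus \cA(\sfP) \neq \emptyset\}$, $\{\cA(\sfP) \setminus D^* \neq \emptyset\}$ and $\{D^* \neq \cA(\sfP)\}$ into a single event; re-running either bound of (i) with the strengthened, symmetric thresholds yields $\sfP(D^* \neq \cA(\sfP)) \leq \gamma \wedge \delta$, which places $\chi^*$ in $\cC_\Psi(\gamma, \delta)$.

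The main obstacle is the sandwich step: one must notice that $\cH_{\Psi, e}^{s'_e}$ and $\cG_{\Psi, e}^{s'_e}$ are defined precisely so that the \emph{true} marginal $\sfP^{s'_e}$ lies in the family appearing in the denominator (when $e$ is not a signal) or in the numerator (when $e$ is a signal), thereby enabling the reduction to a likelihood ratio against $\sfP^{s'_e}$. A small measurability wrinkle, namely that $T^*$ is a stopping time of $\{\cF_n\}$ rather than of $\{\cF_n^{s'_e}\}$, is side-stepped by invoking Ville's \emph{supremum} inequality, since $\sup_n \Lambda_n(\sfP'', \sfP^{s'_e})$ is $\cF^{s'_e}$-measurable and its law under $\sfP$ coincides with its law under $\sfP^{s'_e}$.
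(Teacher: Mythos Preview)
Your proposal is correct and follows essentially the same route as the paper: a union bound over units, the sandwich inequality $\Lambda_{e,\rm iso}(n)\le \max_{\sfP''\in\cG_{\Psi,e}^{s'_e}}\Lambda_n(\sfP'',\sfP^{s'_e})$ (resp.\ its reciprocal) obtained because the true marginal $\sfP^{s'_e}$ belongs to the maximizing family in the denominator (resp.\ numerator), a further union bound over the $b_e$ (resp.\ $a_e$) candidates, and Ville's maximal inequality applied to the supremum over $n$ to avoid the stopping-time measurability issue. The paper packages the sandwich-plus-Ville step into auxiliary lemmas (its Lemmas~\ref{B_e} and~\ref{A_e}) and then invokes them, but the argument is identical; your treatment of part~(ii) via the collapse of the three error events when $|D^*|=|\cA(\sfP)|=m$ also matches the paper's.
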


\begin{proof}
The proof can be found in Appendix \ref{pf:th:errcon}.
\end{proof}

\begin{theorem}\label{th:errcon2}
 Suppose that $\emptyset \in \Psi$ and let $\alpha, \beta, \gamma, \delta \in (0, 1)$.
 \begin{enumerate}
\item[(i)]   $ \chi^*_{det} \in \cD_{\Psi}(\alpha, \beta)$ when 
\begin{align*} %\label{thresholds_D}
C_e \geq  c_e/\beta \qquad \text{and} \qquad
D_e \geq  d_e \, |\cK|/\alpha \quad \forall \,  e \in \cK.
\end{align*}
\item[(ii)]   $\chi_{fwer}^*\in \cE_{\Psi}(\gamma, \delta)$ when
\begin{align*} 
A_e &\geq  (a_e \vee c_e) \, (|\cK| + 1)/ \delta  \qquad \text{and } \qquad 
B_e \geq  (b_e \vee d_e) \, |\cK|/\gamma 
\quad \forall \,   e \in \cK.
\end{align*}
 If, also,   $s_e = s'_e = e$ \, for every $e \in \cK$,  then \, 
$\chi_{fwer}^*\in \cE_{\Psi}(\gamma, \delta)$ when 
\begin{align*}  
A_e \geq a_e \, |\cK|/\delta \qquad \text{and} \qquad 
B_e \geq b_e \, |\cK|/\gamma  \quad  \forall \,   e \in \cK.
\end{align*}
\item[(iii)]
   $\chi^* \in \cC_\Psi(\alpha, \beta, \gamma, \delta)$ when  $\{C_e, D_e\}_{e \in \cK}$ are as in (i)  and $\{A_e, B_e\}_{e \in \cK}$ as in Theorem \ref{th:errcon}.

\end{enumerate}
\end{theorem}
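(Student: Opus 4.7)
My plan is to base the proof on two single-unit estimates, obtained from a change-of-measure argument combined with Ville's maximal inequality for non-negative martingales. Specifically, for every $e \in \cK$, if $\sfP \in \cP_\Psi$ satisfies $\sfP^{s_e} \in \cH_0^{s_e}$, then at any time $n$, the inequality $\Lambda_{e, \rm det}(n) \geq D_e$ forces $\max_{\sfP' \in \cG_{\Psi,e}^{s_e}} \Lambda_n(\sfP', \sfP^{s_e}) \geq D_e$, because the denominator of $\Lambda_{e,\rm det}(n)$ is at least $\Lambda_n(\sfP^{s_e}, \sfP_0)$. A union bound over the finite family $\cG_{\Psi,e}^{s_e}$ and Ville's inequality (each $\Lambda_n(\sfP', \sfP^{s_e})$ being a non-negative $\sfP$-martingale of mean 1) then give $\sfP\{\exists n:\Lambda_{e, \rm det}(n) \geq D_e\} \leq d_e/D_e$. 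Symmetric arguments yield the analogues: $\sfP\{\exists n : \Lambda_{e, \rm det}(n) \leq 1/C_e\} \leq c_e/C_e$ when $\sfP^{s_e} \in \cG_{\Psi,e}^{s_e}$, and the two iso-side bounds $b_e/B_e$ and $a_e/A_e$, each under the appropriate marginal condition on $\sfP^{s'_e}$.

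For part (i), the false-alarm event $\{D^*_{det}\neq\emptyset\}=\{T_{det}\leq T_0\}$ is contained in $\bigcup_{e\in\cK}\{\exists n : \Lambda_{e,\rm det}(n) \geq D_e\}$; under $\sfP\in\cH_0$, a union bound and the det-side single-unit estimate give $\sum_e d_e/D_e \leq \alpha$ when $D_e \geq d_e|\cK|/\alpha$. For the missed-detection event $\{D^*_{det}=\emptyset\}=\{T_0<T_{det}\}$ under $\sfP\in\cP_\Psi\setminus\cH_0$, I pick any $e^* \in \cA(\sfP)$; at time $T_0$ one has $\Lambda_{e^*,\rm det}(T_0)\leq 1/C_{e^*}$, so the single-unit estimate bounds this by $c_{e^*}/C_{e^*} \leq \beta$ provided $C_{e^*} \geq c_{e^*}/\beta$.

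For part (ii), I analyze $\chi^*_{fwer}$ (where $A_e=C_e$, $B_e=D_e$). The false-positive event decomposes by cases on $\cA(\sfP)$. Under $\sfP\in\cH_0$, any nonempty $D^*$ is a false positive, and $\{D^*\neq\emptyset\}\subseteq\{T_{det}\leq T_0\}$ yields $\sum_e d_e/D_e$. Under $\sfP$ with $\cA(\sfP)\neq\emptyset$, each $e\in D^*\setminus\cA(\sfP)$ must satisfy $\Lambda_{e,\rm iso}(T^*)\geq B_e$ (combining $D^*=D_{\rm iso}(T^*)$ with the $T_{joint}$-stopping condition that every iso statistic lies outside $(1/A_e,B_e)$), so the contribution is at most $\sum_e b_e/B_e$; the combined requirement is $B_e\geq (b_e\vee d_e)|\cK|/\gamma$. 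For the false-negative event under $\sfP\notin\cH_0$, I split into the missed-detection case ($D^*=\emptyset$, i.e., $T_0<T_{joint}$, bounded by $c_{e^*}/A_{e^*}$) and the missed-signal case ($D^*\neq\emptyset$ but some $e^*\in\cA(\sfP)\setminus D^*$, forcing $\Lambda_{e^*,\rm iso}(T^*)\leq 1/A_{e^*}$, bounded by $\sum_{e\in\cA(\sfP)} a_e/A_e$). Adding these and using $A_e\geq(a_e\vee c_e)(|\cK|+1)/\delta$ keeps the total below $\delta$. In the special case $s_e=s'_e=e$, the det and iso statistics coincide, so at $T_0$ the missed-detection case also forces $\Lambda_{e^*,\rm iso}(T_0)\leq 1/A_{e^*}$, which absorbs the extra summand and leaves the simpler requirement $A_e\geq a_e|\cK|/\delta$.

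Part (iii) follows by combining the arguments above: the false-alarm and missed-detection bounds depend only on the det-side thresholds and reduce verbatim to part (i), while the false-positive and false-negative bounds (both restricted to $\{D^*\neq\emptyset\}$) depend only on the iso-side thresholds and reduce to Theorem \ref{th:errcon}(i) by the identical union-bound and Ville argument applied to the iso statistics. The main obstacle I foresee is the case decomposition in the false-negative analysis of part (ii): one must carefully separate the contributions from stopping at $T_0$ versus at $T_{joint}$, and realize that these are genuinely distinct in the general setup (hence the factor $|\cK|+1$) but collapse when the det and iso statistics coincide (hence the improved constant in the special case).
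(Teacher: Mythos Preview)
Your proposal is correct and follows essentially the same approach as the paper: the single-unit Ville/change-of-measure estimates are exactly the paper's Lemmas~\ref{D_e}--\ref{A_e}, the union bounds and event inclusions for (i) and (iii) match verbatim, and your two-case decomposition for the false-positive and false-negative probabilities of $\chi^*_{fwer}$ in (ii), including the $(|\cK|+1)$ versus $|\cK|$ distinction when $s_e=s'_e=e$, is precisely the paper's argument.
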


\begin{proof}
The proof can be found in Appendix \ref{pf:th:errcon2}.
\end{proof}

\subsection{Selection of thresholds}

Based on Theorems \ref{th:errcon} and \ref{th:errcon2}, in what follows we assume that 
\begin{itemize}
\item  when $\emptyset \notin \Psi$,  $\{A_e, B_e\}_{e \in \cK}$  are selected, as functions of  $\gamma, \delta \in (0,1)$, so that 
\begin{align} \label{A,B,asym}
\begin{split}
& \chi^* \in \cC_\Psi(\gamma, \delta) \quad \forall \;
 \gamma, \delta \in (0,1), \\
&\log A_e \sim |\log\delta|,  \quad  \log B_e \sim |\log\gamma|
\quad \forall   \;  e \in \cK \quad \text{as} \;  \; \gamma,\delta \to 0,
\end{split}
\end{align}
\item  when $\emptyset \in \Psi$,  $\{A_e, B_e, C_e, D_e\}_{e \in \cK}$ are selected, as functions of
 $\alpha, \beta, \gamma, \delta \in (0,1)$, so that 
\begin{align} \label{C,D,asym}
\begin{split}
&\chi^*_{det} \in \cD_{\Psi}(\alpha, \beta), \quad \chi^*_{fwer} \in \cE_{\Psi}(\gamma, \delta), \quad \chi^* \in \cC_\Psi(\alpha, \beta, \gamma, \delta) \quad \forall \,  \alpha, \beta, \gamma, \delta \in (0,1), \\
&\log A_e \sim |\log\delta|,  \quad \log B_e \sim |\log\gamma|, \quad \log C_e \sim |\log\beta|, \quad \log D_e \sim |\log \alpha|  \\
&\qquad \qquad \qquad \qquad \forall \; \ e \in \cK
\quad  \text{as} \quad     \alpha, \beta, \gamma, \delta  \to 0.
\end{split}
\end{align}
\end{itemize}
We  further assume that
\begin{equation} \label{uniform}
A_e= A, \quad B_e = B, \quad C_e = C, \quad D_e = D \quad \text{ for every } e \in \cK.
\end{equation}
The latter is not  needed  for any of the results in Section \ref{sec:asymp_opt}, but simplifies the implementation of the proposed tests  in Section \ref{subsec:appl_anomalous_indep}, and is also used in Theorem \ref{pure_det_dep}. 
% in Section \ref{sec:appl_depen_struc}.

\section{Asymptotic optimality} \label{sec:asymp_opt}

In this section we show that, at least when the detection and/or isolation subsystems are  large enough,    the tests  introduced in the previous section achieve the minimum expected number of observations   until stopping, simultaneously for every  $\sfP \in \cP_\Psi$, to  a first-order asymptotic approximation as the corresponding error probabilities go to 0. Moreover, we establish sufficient conditions for   asymptotic optimality  using  smaller subsystems.  All these results are established under weak distributional assumptions, which  allow both for temporal  dependence as well as  for dependence among  the sources.
% and are stated next. 

\subsection{Distributional assumptions}
To  state  the  main results of this section we assume that,  for every  
$$e \in \cK,   \quad  \sfP \in \cG_{\Psi,e}, \quad \sfQ \in \cH_{\Psi,e} \cup \cH_0,  \quad  e  \subseteq s \subseteq  [K],$$
 there are positive  numbers, $ \cI \lt(\sfP^s, \sfQ^s\rt)$ and  $ \cI \lt(\sfQ^s, \sfP^s \rt)$, such that  as $n \to \infty$ 
\begin{align}\label{max_series}
\begin{split}
&\sfP\lt(\max_{1 \leq m \leq n} Z_m\lt(\sfP^s, \sfQ^s\rt) \geq n \rho  \rt) \to 0 \quad \text{for all} \; \;  \rho> \cI \lt(\sfP^s, \sfQ^s \rt), \\ 
 &\sfQ\lt(\max_{1 \leq m \leq n} Z_m\lt(\sfQ^s, \sfP^s\rt) \geq  n \rho \rt) \to 0
  \quad \text{for all} \; \;  \rho > \cI \lt(\sfQ^s, \sfP^s \rt) ,
\end{split} 
\end{align}
%and also 
\begin{align}\label{comp_conv_onesided}
\begin{split}
& \sum_{n = 1}^{\infty}\sfP\lt( Z_n\lt(\sfP^{s}, \sfQ^{s}\rt) < n \rho \rt) < \infty  \quad \text{for all} \; \;  \rho< \cI \lt(\sfP^s, \sfQ^s \rt), \\ %\label{comp_conv_onesided1}
&\sum_{n = 1}^{\infty}\sfQ\lt( Z_n\lt(\sfQ^{s}, \sfP^{s}\rt)  < n \rho \rt) < \infty  \quad \text{for all} \; \;   \rho< \cI \lt(\sfQ^s, \sfP^s \rt).  %\label{comp_conv_onesided2}
\end{split}
\end{align}

%\begin{remark}
Conditions   \eqref{max_series} and \eqref{comp_conv_onesided}  imply that 
\begin{align} \label{LLN}
\frac{1}{n}  Z_n \lt(\sfP^s, \sfQ^s\rt)  & \overset{\sfP} \rightarrow  \cI \lt(\sfP^s, \sfQ^s \rt)  \qquad \text{and} \qquad 
\frac{1}{n}  Z_n \lt(\sfQ^s, \sfP^s\rt)    \overset{\sfQ} \rightarrow   \cI \lt(\sfQ^s, \sfP^s \rt)
\end{align}
thus,  the limits,  $ \cI \lt(\sfP^s, \sfQ^s\rt)$ and  $ \cI \lt(\sfQ^s, \sfP^s \rt)$, can be interpreted  as (Kullback-Leibler)  divergences between the global distributions $\sfP$ and $\sfQ$ based on the data  from the sources in $s$.   Several useful properties regarding these numbers are   presented  in  Appendix \ref{app:info_num}.  %\end{remark}

\begin{remark}
When the convergences in \eqref{LLN}  hold not only in probability but also almost surely, 
then condition \eqref{max_series} is satisfied, but  not necessarily    \eqref{comp_conv_onesided}.
\end{remark}

\begin{comment}
\begin{remark} \label{remark:iid}
When  $\{Z_n\lt(\sfP^s, \sfQ^s\rt) , n \in \bN\}$ is a random walk whose increments have a finite \textit{first}  moment under both $\sfP$ and $\sfQ$, then conditions   \eqref{max_series} and \eqref{comp_conv_onesided} hold with
 $$\cI \lt(\sfP^s, \sfQ^s\rt) = \sfE_{\sfP}[Z_1\lt(\sfP^s, \sfQ^s\rt)] \qquad \text{and} \qquad  \cI \lt(\sfQ^s, \sfP^s \rt)= \sfE_{\sfQ}[Z_1\lt(\sfQ^s, \sfP^s\rt)],$$
i.e.,  $ \cI \lt(\sfP^s, \sfQ^s\rt)$ and  $ \cI \lt(\sfQ^s, \sfP^s \rt)$ in this case are the  \textit{Kullback-Leibler} divergences of the increments of the random walk.  Indeed, by   Kolmogorov's Strong Law of Large Numbers  it follows  that   \eqref{LLN}  holds with almost sure convergence, and by the  Chernoff bound it follows that \eqref{comp_conv_onesided}  holds. 
\end{remark}
\end{comment}

\begin{remark} \label{remark:iid}
If  $X^s$ is an iid sequence   and 
 $Z_1\lt(\sfP^s, \sfQ^s\rt)$ has  \textit{non-zero, finite}  expectation under both $\sfP$ and $\sfQ$, then conditions   \eqref{max_series}-\eqref{comp_conv_onesided} hold with
 $$\cI \lt(\sfP^s, \sfQ^s\rt) = \sfE_{\sfP}[Z_1\lt(\sfP^s, \sfQ^s\rt)] \qquad \text{and} \qquad  \cI \lt(\sfQ^s, \sfP^s \rt)= \sfE_{\sfQ}[Z_1\lt(\sfQ^s, \sfP^s\rt)].$$
%i.e.,  $ \cI \lt(\sfP^s, \sfQ^s\rt)$ and  $ \cI \lt(\sfQ^s, \sfP^s \rt)$ in this case are the  \textit{Kullback-Leibler} divergences between the distributions of the increments of the random walk under $\sfP$ and $\sfQ$.  
Indeed,   \eqref{LLN}  holds with almost sure convergence by   Kolmogorov's Strong Law of Large Numbers, and  \eqref{comp_conv_onesided}  by the  Chernoff bound. As a result,  in the special case of  Subsection \ref{gauss_prob_setup}, conditions  \eqref{max_series}-\eqref{comp_conv_onesided}  hold with 
\begin{equation}\label{gauss_info}
\begin{split}
\cI(\sfP^s, \sfQ^s) &= \frac{1}{2}\lt\{\log\frac{det(\Sigma_\sfQ)}{det(\Sigma_\sfP)} - |s| + tr(\Sigma_\sfQ^{-1}\Sigma_\sfP) + (\mu_\sfQ - \mu_\sfP)^t\Sigma_\sfQ^{-1}(\mu_\sfQ - \mu_\sfP)\rt\}, \\
\cI(\sfQ^s, \sfP^s) &= \frac{1}{2}\lt\{\log\frac{det(\Sigma_\sfP)}{det(\Sigma_\sfQ)} - |s| + tr(\Sigma_\sfP^{-1}\Sigma_\sfQ) + (\mu_\sfP - \mu_\sfQ)^t\Sigma_\sfP^{-1}(\mu_\sfP - \mu_\sfQ)\rt\},
\end{split}
\end{equation}
where $\mu_\sfP$ (resp. $\mu_\sfQ$) is the mean vector and $\Sigma_\sfP$ (resp. $\Sigma_\sfQ$)  the covariance matrix of $X_1^s$ under $\sfP$ (resp. $\sfQ$). In general, when $X^s$ is non-iid, these conditions hold under various models, such as autoregressive time-series models, state-space models, etc., see \cite[Section 3.4]{tartakovsky2014sequential} for more details.
\end{remark}

\subsection{Notations}
For any  $\sfP \in \cP_\Psi$,  $\cQ \subseteq \cP_\Psi$ and  $s \subseteq [K]$ we set  
$$\cI(\sfP^s, \cQ^s) := \inf_{\sfQ \in \cQ} \cI(\sfP^s, \sfQ^s),$$
whenever the quantities in the right-hand side are well defined.  We  also  set
$$ \cI(\sfP, \sfQ; s) := \cI(\sfP^s, \sfQ^s) \qquad \text{and}  \qquad 
\cI(\sfP, \cQ; s) :=   \min_{\sfQ \in \cQ}\cI(\sfP, \sfQ; s).$$
As before, we omit  $s$ when $s=[K]$ and replace it by $k$ whenever $s=\{k\}$ for some $k \in [K]$.  

\begin{example}  %(\textcolor{red}{modification here with some intuitions})
Consider the setup of  Subsection \ref{test_mean},  fix $\sfP \in \cP_\Psi$ and, for each $k \in [K]$ denote by $\sigma_k^2$ the variance of $X_1^k$ under 
$\sfP $. Moreover, denote by $\mu_k$  the expectation of  $X_1^k$ under 
$\sfP $ when $\{k\} \in \cA(\sfP)$. Then
 \begin{align*}
\cI(\sfP^k, \cH^k) &= \frac{1}{2} \left(\frac{\mu_k}{ \sigma_k} \right)^2   \qquad \text{if} \; \; \{k\} \in \cA(\sfP),\\
 \cI(\sfP^k, \cG^k) &= \frac{1}{2}
\;  \min_{\theta \in \cM_k \setminus \{0\}} \left( \frac{\theta}{\sigma_k} \right)^2 \qquad \text{if} \; \; \{k\} \notin \cA(\sfP).
 \end{align*}
%where $\mu_k$ is the mean and $\sigma_k^2$ the variance of $X_n^k$ under $\sfP$.   \textcolor{red}{Some comment?}
\end{example}

\begin{example}
Consider the setup of  Subsection \ref{test_corr}
%In this problem, \eqref{indep_subs} holds, and for any $\Psi$ and any $s \subseteq [K]$,  \eqref{max_series}-\eqref{comp_conv_onesided} are satisfied with $\cI(\sfP^s, \sfQ^s)$ and $\cI(\sfQ^s, \sfP^s)$ as defined in \eqref{gauss_info}. Furthermore, If for every $e = \{i, j\} \in \cK$ we consider
with $\cR_{kj}$ equal to  either  $\{0, \rho\}$ or $\{0, \pm\rho\}$ for every $k, j \in [K]$ with $k < j$, for some $\rho \in (0, 1)$. Then, for any $\sfP \in \cP_\Psi$, 
 \begin{align*}
\cI(\sfP^e, \cH^e) &= -\log \sqrt{1-\rho^2}  \qquad \forall  \;  e \in \cA(\sfP), \\
\cI(\sfP^e, \cG^e) &= \log\sqrt{1-\rho^2} + \frac{\rho^2}{1 - \rho^2} \qquad  \forall  \; e \notin \cA(\sfP).
 \end{align*}
%\textcolor{red}{Some comment?}
\end{example}

\subsection{Upper bound on the expected sample size}
We continue with an asymptotic  upper bound on the expected sample size of the proposed tests as their thresholds go to infinity.

\begin{theorem}\label{ub_I}
Suppose that  \eqref{comp_conv_onesided} holds,  $\emptyset \notin \Psi$, and let $\sfP \in \cP_\Psi$. Then, as $\gamma, \delta \to 0$, 
\begin{equation*} %\label{ub_I_1}
\sfE_\sfP\lt[T^*\rt] \lesssim \max_{e \in \cA(\sfP)}   \lt\{ \frac{|\log \gamma|}{\cI\lt(\sfP, \cH_{\Psi, e}; s'_e \rt)}  \rt\} \bigvee
 \max_{e \notin \cA(\sfP)}   \lt\{ \frac{|\log \delta|}{ \cI \lt(\sfP, \cG_{\Psi, e} ; s'_e \rt)}  \rt\}.
\end{equation*}
%When  $\Psi = \Psi_{m, m}$ for some $0<m < |\cK|$, the  upper bound takes the following form:
%\begin{equation*} 
 %\frac{|\log(\gamma \wedge \delta)|}{\cI(\sfP, \cP_\Psi \setminus \{\sfP\} ;s'_e)}.
%\end{equation*}
\end{theorem}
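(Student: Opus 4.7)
The plan is to control $\sfE_\sfP[T^*]$ by finding, for each $n$, an event that forces $T^* \le n$ and then applying the summability in \eqref{comp_conv_onesided} to the resulting tail. First I would observe that, since $\sfP \in \cP_\Psi$ implies $\cA(\sfP) \in \Psi$, whenever $\Lambda_{e,\rm iso}(n) \ge B_e$ for every $e \in \cA(\sfP)$ and $\Lambda_{e,\rm iso}(n) \le 1/A_e$ for every $e \notin \cA(\sfP)$, one has $D_{\rm iso}(n) = \cA(\sfP) \in \Psi$ and all isolation statistics have exited $(1/A_e, B_e)$, so the test has stopped by time $n$. Taking complements yields
$$\{T^* > n\} \;\subseteq\; \bigcup_{e \in \cA(\sfP)} \{\Lambda_{e,\rm iso}(n) < B_e\} \;\cup\; \bigcup_{e \notin \cA(\sfP)} \{\Lambda_{e,\rm iso}(n) > 1/A_e\}.$$

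Next I would reduce each event on the right to one governed by a single log-likelihood ratio. For $e \in \cA(\sfP)$ we have $\sfP^{s'_e} \in \cG_{\Psi,e}^{s'_e}$, so inserting $\sfP^{s'_e}$ as a feasible point in the numerator GLR and using $\Lambda_n(\sfQ, \sfP'_0)/\Lambda_n(\sfP^{s'_e}, \sfP'_0) = \Lambda_n(\sfQ, \sfP^{s'_e})$ gives
$$\log \Lambda_{e,\rm iso}(n) \;\ge\; \min_{\sfQ \in \cH_{\Psi,e}^{s'_e}} Z_n(\sfP^{s'_e}, \sfQ),$$
and symmetrically, since $\emptyset \notin \Psi$ forces $\sfP \notin \cH_0$ and hence $\sfP^{s'_e} \in \cH_{\Psi,e}^{s'_e}$ for every $e \notin \cA(\sfP)$, we obtain $-\log \Lambda_{e,\rm iso}(n) \ge \min_{\sfQ \in \cG_{\Psi,e}^{s'_e}} Z_n(\sfP^{s'_e}, \sfQ)$. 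Now fix $\epsilon \in (0,1)$ and set
$$N \;:=\; (1+\epsilon)\!\left[\max_{e \in \cA(\sfP)} \frac{\log B_e}{\cI(\sfP, \cH_{\Psi,e}; s'_e)} \;\vee\; \max_{e \notin \cA(\sfP)} \frac{\log A_e}{\cI(\sfP, \cG_{\Psi,e}; s'_e)}\right].$$
For $n > N$, each $\log B_e$ (resp.\ $\log A_e$) lies strictly below $n\,\cI(\sfP, \cH_{\Psi,e}; s'_e)/(1+\epsilon)$ (resp.\ the analogous bound), so the union bound over the finitely many distributions in $\cH_{\Psi,e}^{s'_e}$ and $\cG_{\Psi,e}^{s'_e}$ combined with \eqref{comp_conv_onesided} (applied at the fixed rates $\rho = \cI(\cdots)/(1+\epsilon)$) shows that $\sum_{n>N} \sfP(T^* > n) \le C_\epsilon$ for a constant $C_\epsilon$ that does not depend on $(\gamma,\delta)$.

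Finally, I would combine these to get $\sfE_\sfP[T^*] = \sum_{n \ge 0}\sfP(T^* > n) \le N + C_\epsilon$, then use the asymptotics $\log B_e \sim |\log \gamma|$ and $\log A_e \sim |\log \delta|$ from \eqref{A,B,asym} to conclude that $N$ is asymptotically $(1+\epsilon)$ times the right-hand side of the theorem; dividing and letting $\gamma,\delta \to 0$ then $\epsilon \to 0$ gives the desired $\lesssim$ bound. The main obstacle is ensuring that the union-bound constant stays finite uniformly as the thresholds diverge; this relies on choosing $\rho$ strictly below the relevant divergence so that \eqref{comp_conv_onesided} is only invoked at finitely many $(e,\sfQ,\rho)$ triples (thanks to $|\cP|, |\cK| < \infty$), rather than at rates that degrade with $(\gamma,\delta)$.
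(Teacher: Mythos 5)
Your proposal is correct and follows essentially the same route as the paper's proof: reduce each GLR isolation statistic to a minimum of one-sided log-likelihood ratios by inserting $\sfP^{s'_e}$ as a feasible point, and then exploit the complete-convergence hypothesis \eqref{comp_conv_onesided} at a fixed rate $\rho < \cI(\cdot,\cdot;s'_e)$ to control the tail of $T^*$. The only structural difference is that the paper introduces an auxiliary one-sided stopping time $T'_\sfP$ with $T^* \le T'_\sfP$ and appeals to an external lemma for $\sfE_\sfP[T'_\sfP]$, whereas you carry out the tail-sum analysis directly, which amounts to a self-contained version of the same argument.
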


\begin{proof}
The proof can be found in Appendix \ref{pf:ub_DI}.
\end{proof}

\begin{theorem}\label{ub_DI}
Suppose that  \eqref{comp_conv_onesided} holds,  $\emptyset \in \Psi$, and let $\sfP \in \cP_\Psi$.
\begin{enumerate}
\item[(i)] If   $\sfP \in \cH_0$, then  
%as $\beta \to 0$ we have
\begin{align*} %\label{ub_D_1}
\sfE_\sfP\lt[T^*_{det}\rt], \sfE_\sfP\lt[T^*\rt] & \lesssim \;  \max_{e \in \cK}   \lt\{ \frac{|\log \beta|}{\cI \lt(\sfP, \cG_{\Psi,e}; s_e\rt)}  \rt\} \quad \text{as} \quad \beta \to 0, \\
\sfE_\sfP\lt[T^*_{fwer}\rt] & \lesssim \;  \max_{e \in \cK}   \lt\{ \frac{|\log \delta|}{\cI \lt(\sfP, \cG_{\Psi,e}; s_e\rt)}  \rt\}  \quad \text{as} \quad \delta \to 0.
\end{align*}
\item[(ii)] If   $\sfP \notin  \cH_0$, then
\begin{align*} %\label{ub_D_2}
\sfE_\sfP\lt[T^*_{det}\rt] \lesssim &\min_{e \in \cA(\sfP)} \lt\{ \frac{|\log \alpha|}{\cI\lt(\sfP, \cH_0; s_e \rt)} \rt\}  \quad \text{as} \quad \alpha \to 0,\\
\sfE_\sfP\lt[T^*_{fwer}\rt]  \lesssim  &\min_{e \in \cA(\sfP)}  \lt\{ \frac{|\log \gamma|}{\cI\lt(\sfP, \cH_0; s_e \rt)} \rt\} \bigvee \max_{e \in \cA(\sfP)}  \lt\{\frac{|\log \gamma|}{\cI \lt(\sfP, \cH_{\Psi, e}; s'_e \rt) }   \rt\}  \\
&\bigvee \max_{e \notin \cA(\sfP)}  \lt\{ \frac{|\log \delta|}{\cI \lt(\sfP, \cG_{\Psi,e}; s'_e \rt)}  \rt\}    \quad \text{as} \quad \gamma, \delta \to 0,\\
\sfE_\sfP\lt[T^*\rt] \lesssim  &\min_{e \in \cA(\sfP)}  \lt\{ \frac{|\log \alpha|}{\cI\lt(\sfP, \cH_0; s_e \rt)} \rt\} \bigvee \max_{e \in \cA(\sfP)}  \lt\{\frac{|\log \gamma|}{\cI \lt(\sfP, \cH_{\Psi, e}; s'_e \rt) }   \rt\}  \\
&\bigvee \max_{e \notin \cA(\sfP)}  \lt\{ \frac{|\log \delta|}{\cI \lt(\sfP, \cG_{\Psi,e}; s'_e \rt)}  \rt\}   \quad \text{as} \quad \alpha, \gamma, \delta \to 0.
\end{align*}
\end{enumerate}
\end{theorem}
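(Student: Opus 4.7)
The plan is to follow the standard asymptotic sequential-analysis template. The engine is the following device: for any two global distributions $\sfP,\sfQ$ and any $e\subseteq s\subseteq[K]$ with $\cI(\sfP^s,\sfQ^s)$ well defined and positive, and for any $\rho<\cI(\sfP^s,\sfQ^s)$, set
\[ L^s_\rho(\sfP,\sfQ):=1+\sup\{n\geq 1:Z_n(\sfP^s,\sfQ^s)<n\rho\}. \]
Then $\sfE_\sfP[L^s_\rho]<\infty$ by \eqref{comp_conv_onesided}, and for every $n\geq L^s_\rho$ one has $Z_n(\sfP^s,\sfQ^s)\geq n\rho$. Because $\cH_0$, $\cH_{\Psi,e}$, $\cG_{\Psi,e}$ are all finite, taking the maximum of finitely many such times over the needed combinations preserves integrability, and this maximum is $o(N)$ as any threshold magnitude $N=|\log\alpha|\vee|\log\beta|\vee|\log\gamma|\vee|\log\delta|$ tends to infinity.

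For part (i), fix $\sfP\in\cH_0$ and bound $T_0$, exploiting $T^*_{det}\leq T_0$ and $T^*\leq T_0$. For each $e\in\cK$, bounding the denominator of $\Lambda_{e,\rm det}(n)$ from below by its value at the true restriction $\sfP^{s_e}\in\cH_0^{s_e}$ gives $\log\Lambda_{e,\rm det}(n)\leq\max_{\sfQ\in\cG_{\Psi,e}}Z_n(\sfQ^{s_e},\sfP^{s_e})$. Under $\sfP$, each such $Z_n(\sfQ^{s_e},\sfP^{s_e})$ drifts to $-\infty$ at rate $\cI(\sfP,\sfQ;s_e)$, so applying the device above to $-Z_n$ for each of the finitely many $\sfQ$ and choosing $\rho_e$ arbitrarily close to $\cI(\sfP,\cG_{\Psi,e};s_e)$ from below yields an integrable remainder $R$ such that $\log\Lambda_{e,\rm det}(n)\leq -n\rho_e$ on $\{n\geq R\}$. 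Imposing this to be $\leq-\log C_e$ for every $e$ forces $T_0\leq\max_{e\in\cK}\{\log C_e/\rho_e\}\vee R$. Substituting the threshold asymptotics from \eqref{C,D,asym} ($\log C_e\sim|\log\beta|$ for $\chi^*_{det}$ and $\chi^*$, and $\log A_e\sim|\log\delta|$ for $\chi^*_{fwer}$) and letting $\rho_e\uparrow\cI(\sfP,\cG_{\Psi,e};s_e)$ produces the three bounds in (i).

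For part (ii), fix $\sfP\notin\cH_0$. For $\chi^*_{det}$, use $T^*_{det}\leq T_{det}$; for any chosen $e\in\cA(\sfP)$, we have $\sfP^{s_e}\in\cG_{\Psi,e}^{s_e}$, so bounding the numerator of $\Lambda_{e,\rm det}(n)$ from below by its value at $\sfP^{s_e}$ gives
\[ \log\Lambda_{e,\rm det}(n)\geq -\max_{\sfQ\in\cH_0}Z_n(\sfQ^{s_e},\sfP^{s_e}). \]
Applying the device to each $\sfQ\in\cH_0$ produces, after an integrable delay, $\log\Lambda_{e,\rm det}(n)\geq n\rho_e$ with $\rho_e$ arbitrarily close to $\cI(\sfP,\cH_0;s_e)$; requiring $n\rho_e\geq\log D_e\sim|\log\alpha|$ for some $e\in\cA(\sfP)$ triggers $T_{det}$, and optimizing the choice of $e$ yields the claimed minimum. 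For $\sfE_\sfP[T^*]$ and $\sfE_\sfP[T^*_{fwer}]$, bound $T_{joint}$ by verifying its three defining conditions at a single time $n$: (a) for each signal $e\in\cA(\sfP)$, $\log\Lambda_{e,\rm iso}(n)\geq\log B_e$ at rate $\cI(\sfP,\cH_{\Psi,e};s'_e)$; (b) for each non-signal $e\notin\cA(\sfP)$, $\log\Lambda_{e,\rm iso}(n)\leq-\log A_e$ at rate $\cI(\sfP,\cG_{\Psi,e};s'_e)$; (c) for some $e\in\cA(\sfP)$, $\log\Lambda_{e,\rm det}(n)\geq\log D_e$ at rate $\cI(\sfP,\cH_0;s_e)$. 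Conditions (a) and (b) each contribute a \emph{max} (one time bound per unit, all must be met), while (c) contributes a \emph{min} (one unit suffices). Once (a) and (b) hold, $D_{\rm iso}(n)=\cA(\sfP)\in\Psi\setminus\emptyset$ automatically. Taking the $\vee$ of the three and substituting $\log B_e\sim|\log\gamma|$, $\log A_e\sim|\log\delta|$, together with either $\log D_e\sim|\log\alpha|$ for $\chi^*$ or $\log D_e=\log B_e\sim|\log\gamma|$ for $\chi^*_{fwer}$, reproduces the three displayed expressions.

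The main obstacle is bookkeeping: ensuring that the finitely many last-bad-epoch remainders, combined across the required $(e,\sfQ,s,\rho)$-combinations, fit into a single integrable remainder that is negligible relative to the diverging thresholds. A subtler but still easily handled point is that the three stopping conditions of $T_{joint}$ must hold at one and the same $n$; this is automatic because each last-bad-epoch bound is \emph{persistent}---once $n\geq L^s_\rho$, the inequality $Z_n\geq n\rho$ holds for every subsequent time---so simultaneity at the maximum of the individual time bounds requires no additional argument.
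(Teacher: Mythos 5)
Your overall strategy---dominate $T_0$, $T_{det}$, and $T_{joint}$ by one-sided threshold-crossing times for the underlying log-likelihood ratios, bound the GLR statistics $\Lambda_{e,\rm det}$ and $\Lambda_{e,\rm iso}$ by the corresponding worst-case $Z_n(\sfP^s,\sfQ^s)$, and note that once the isolation conditions (a) and (b) are met $D_{\rm iso}(n)=\cA(\sfP)\in\Psi$ automatically---is exactly the paper's approach. The paper's proof constructs auxiliary stopping times $T_\sfP$ and $T'_\sfP$ in terms of $\min_\sfQ \Lambda_n(\sfP^{s_e},\sfQ)$, shows $T_0\leq T_\sfP$ and $T_{joint}\leq T'_\sfP$ by the same chain of inequalities you use, and cites Lemma~F.2 of the supplement of \cite{song_fell2019} for the expected-sample-size bound; you are reproving that lemma inline.

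However, there is a genuine gap in your ``last bad epoch'' engine. You assert that $\sfE_\sfP[L^s_\rho]<\infty$ follows from \eqref{comp_conv_onesided}. It does not. Writing $p_n := \sfP(Z_n(\sfP^s,\sfQ^s)<n\rho)$, condition \eqref{comp_conv_onesided} gives only $\sum_n p_n<\infty$, whereas bounding $\sfE[L^s_\rho]=\sum_{N\geq 0}\sfP(L^s_\rho>N)$ via $\sfP(L^s_\rho>N)\leq\sum_{n\geq N}p_n$ requires $\sum_n n\,p_n<\infty$, a strictly stronger condition that is not assumed (it would hold, say, under exponential decay of $p_n$ in the iid log-likelihood-ratio case, but the theorem only hypothesizes \eqref{comp_conv_onesided}). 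Since your derivation concludes $T\leq N_0\vee R$ and then uses $\sfE[T]\leq N_0+\sfE[R]$, an unbounded $\sfE[R]$ makes the bound vacuous. The fix is standard and does not go through the last bad epoch at all: for $n\geq N_0$ the event $\{T>n\}$ is contained in the event that some threshold condition fails \emph{at time $n$ itself}, which is contained in $\bigcup_{(e,\sfQ)}\{Z_n(\sfP^{s_e},\sfQ^{s_e})<n\rho_e\}$; summing, $\sfE_\sfP[T]\leq N_0+\sum_{(e,\sfQ)}\sum_{n\geq 1}p_n^{(e,\sfQ)}=N_0+O(1)$, which gives $\sfE_\sfP[T]\lesssim N_0$ directly from \eqref{comp_conv_onesided}. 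This also dispenses with your persistence argument for simultaneity, since you no longer need the conditions to remain true forever---only to hold at the particular $n$ whose failure probability you are summing.
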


\begin{proof}
The proof can be found in Appendix \ref{pf:ub_DI}.
\end{proof}

\subsection{Asymptotic optimality}

We next show that the proposed tests achieve the optimal expected sample size in their corresponding classes, to a first-order asymptotic approximation as the corresponding error rates go to $0$, when the detection and/or isolation subsystems are selected to be equal to $[K]$, and in certain cases  even smaller than $[K]$.  Our standing assumption in the remainder of this section is that conditions \eqref{max_series}-\eqref{comp_conv_onesided} hold.\\

%\subsection{Pure isolation}
%We now focus on the case that the main focus is either on the false positive or the false negative control, or both. We start with the special case that the  in the sense that $\gamma \to 0$ is much smaller than $\delta$, and also  $\alpha$  if  the global null is possible.

%The next theorem establishes asymptotic optimality of $\chi^*$ in the \textit{Pure Isolation Regime for False Positive} when the true probability distribution $\sfP \in \cP(\Psi) \setminus \cP_0$. %and the levels $\alpha, \beta, \gamma, \delta \to 0$ in such a way that the control of false positive is much stricter than the control of false alarm and false negative, or in other words, $\gamma \to 0$ in a much faster rate than $\alpha$ and $\delta$. 
%We show that in this case, the ESS of $\chi^*$ is asymptotically equal to that of the pure isolation rule $\chi^*_{iso}$ when $\gamma \to 0$ much faster than $\delta$. %For that reason, we define this asymptotic regime as the \textit{}.
We start with the case that  $\emptyset \notin \Psi$, where  there is no detection task and the only test under consideration is $\chi^*$, defined in  Subsection \ref{subsubsec: pure_isolation_test}.

\begin{theorem}\label{P1P2}
Suppose that   $\emptyset \notin \Psi$,  $\sfP \in \cP_\Psi$, and let the isolation subsystems be selected such that 
\begin{align}
 \min_{e \in \cA(\sfP)} \cI \lt( \sfP , \cH_{\Psi,e}; s'_e \rt)  &= \min_{e \in \cA(\sfP)} \cI \lt( \sfP , \cH_{\Psi,e} \rt),   \label{condP1} \\
 \min_{e \notin \cA(\sfP)} \cI \lt( \sfP , \cG_{\Psi,e}; s'_e \rt)  
&=  \min_{e \notin \cA(\sfP)} \cI \lt( \sfP , \cG_{\Psi,e} \rt) ,
  \label{condP2}
\end{align} 
which is trivially the case when $s'_e=[K]$ for every $e \in \cK$.
Then,  as $\gamma, \delta \to 0$, 
\begin{align*}
\sfE_\sfP\lt[T^*\rt] &\sim \max_{e \in \cA(\sfP)} \lt\{ \frac{|\log\gamma|}{  \cI(\sfP, \cH_{\Psi,e}) }\rt\}  \bigvee 
\max_{e \notin \cA(\sfP)} \lt\{  \frac{|\log\delta|}{  \cI(\sfP, \cG_{\Psi,e} )} \rt\}  \\
& \sim  \inf \lt\{ \sfE_{\sfP}[T]:\; 
(T, D) \in \cC_{\Psi}(\gamma, \delta) \rt\} .
\end{align*}
When, in particular,  $\Psi = \Psi_{m, m}$ for some $m \in (0, |\cK|)$, 
then,  as $\gamma, \delta \to 0$,
\begin{equation*} 
\sfE_\sfP\lt[T^*\rt] \sim
 \frac{|\log(\gamma \wedge \delta)|}{\cI(\sfP, \cP_\Psi \setminus \{\sfP\})} \sim  \inf \lt\{ \sfE_{\sfP}[T]:\; 
(T, D) \in \cC_{\Psi}(\gamma, \delta) \rt\} .
\end{equation*}

%\item If $\emptyset \in \Psi$, then as  $\alpha, \beta, \gamma, \delta \to 0$ such that either $\gamma << \alpha$ or $\delta << \alpha$ we have $$\sfE_\sfP\lt[T^*\rt] \sim \frac{|\log\gamma|}{\min_{e \in \cA(\sfP)}  \cI(\sfP, \cH_{\Psi,e} ) }  \bigvee \frac{|\log\delta|}{\min_{e \notin \cA(\sfP)}  \cI(\sfP, \cG_{\Psi,e} ) }  \sim N^*(\sfP; \alpha, \beta, \gamma, \delta; \Psi).$$\end{itemize}
%\end{enumerate}
\end{theorem}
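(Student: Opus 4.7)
The plan is to establish matching asymptotic upper and lower bounds for $\sfE_\sfP\lt[T^*\rt]$ and for the infimum of $\sfE_\sfP[T]$ over $\cC_\Psi(\gamma,\delta)$. The upper bound is a direct consequence of Theorem \ref{ub_I} applied to $\chi^*$: under conditions \eqref{condP1} and \eqref{condP2}, the subsystem-restricted divergences $\cI(\sfP, \cH_{\Psi,e}; s'_e)$ and $\cI(\sfP, \cG_{\Psi,e}; s'_e)$ coincide with their full versions $\cI(\sfP, \cH_{\Psi,e})$ and $\cI(\sfP, \cG_{\Psi,e})$, yielding the claimed asymptotic upper bound on $\sfE_\sfP\lt[T^*\rt]$.

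The main task is the matching lower bound on $\sfE_\sfP[T]$ for any $(T,D) \in \cC_\Psi(\gamma,\delta)$, which I would reduce to a two-point argument indexed by $e \in \cK$ and a competing distribution $\sfQ$. For $e \in \cA(\sfP)$ and any $\sfQ \in \cH_{\Psi,e}$, the error constraints together with $\emptyset \notin \Psi$ (which forces $D \neq \emptyset$) yield $\sfP(e \in D) \geq 1 - \delta$ and $\sfQ(e \in D) \leq \gamma$; the change-of-measure identity
\begin{align*}
\sfQ(e \in D) = \sfE_\sfP\lt[\mathbbm{1}\{e \in D\}\, e^{-Z_T(\sfP, \sfQ)}\rt] \geq e^{-\lambda}\bigl(\sfP(e \in D) - \sfP(Z_T(\sfP,\sfQ) > \lambda)\bigr)
\end{align*}
then rearranges to $\sfP(Z_T(\sfP,\sfQ) > \lambda) \geq 1 - \delta - e^{\lambda}\gamma$. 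Setting $\lambda = (1-\epsilon)|\log\gamma|$ makes the right-hand side tend to $1$, while condition \eqref{max_series} with $N = \lfloor \lambda/\rho \rfloor$, for any $\rho > \cI(\sfP, \sfQ)$, forces $\sfP(T \leq N, \, Z_T(\sfP,\sfQ) > \lambda) \to 0$; hence $\sfP(T > N) \to 1$, and Markov's inequality gives $\sfE_\sfP[T] \geq N \sfP(T > N)$, so $\sfE_\sfP[T] \gtrsim |\log\gamma|/\cI(\sfP, \sfQ)$. Taking the infimum over $\sfQ \in \cH_{\Psi,e}$ and the maximum over $e \in \cA(\sfP)$ produces the first term of the lower bound; the symmetric argument for $e \notin \cA(\sfP)$, using $\sfQ \in \cG_{\Psi,e}$ and the constraints $\sfP(e \in D) \leq \gamma$ and $\sfQ(e \in D) \geq 1 - \delta$, produces the second.

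For the special case $\Psi = \Psi_{m,m}$ with $m \in (0, |\cK|)$, the remark following the pure isolation formulation collapses the error constraint to $\sfP(D \neq \cA(\sfP)) \leq \gamma \wedge \delta$ for every $\sfP \in \cP_\Psi$. For any $\sfQ \in \cP_\Psi$ with $\cA(\sfQ) \neq \cA(\sfP)$, the events $\{D = \cA(\sfP)\}$ and $\{D = \cA(\sfQ)\}$ are disjoint, so $\sfP(D = \cA(\sfP)) \geq 1 - \gamma \wedge \delta$ and $\sfQ(D = \cA(\sfP)) \leq \gamma \wedge \delta$; the same change-of-measure then yields $\sfE_\sfP[T] \gtrsim |\log(\gamma \wedge \delta)|/\cI(\sfP, \sfQ)$. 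To promote the infimum from $\{\sfQ \in \cP_\Psi : \cA(\sfQ) \neq \cA(\sfP)\}$ to all of $\cP_\Psi \setminus \{\sfP\}$, I would note that any $\sfQ$ with $\cA(\sfQ) = \cA(\sfP)$ cannot sharpen the bound, since the argument only exploits disagreement on the estimated signal set. The matching upper bound follows from the general formula by setting both rates to $\gamma \wedge \delta$ and identifying the double maximum, via the definitions of $\cH_{\Psi,e}$ and $\cG_{\Psi,e}$, with the reciprocal of the minimum divergence over alternatives with different signal set.

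The hard part will be the careful execution of the change-of-measure step: verifying that the concentration estimate from \eqref{max_series} on $Z_n(\sfP, \sfQ)$ delivers the precise first-order constant $1/\cI(\sfP, \sfQ)$ uniformly as $\gamma, \delta \to 0$, and, in the $\Psi_{m,m}$ case, carefully justifying the equality of the infimum over $\cP_\Psi \setminus \{\sfP\}$ with the infimum restricted to alternatives differing from $\sfP$ in their signal set.
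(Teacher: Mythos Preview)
Your proposal is correct and follows essentially the same approach as the paper: the upper bound comes directly from Theorem~\ref{ub_I} under conditions \eqref{condP1}--\eqref{condP2}, and the lower bound is obtained by a change-of-measure argument combined with the concentration condition~\eqref{max_series}, exactly as in the paper's Lemma~\ref{lem:P1P2}. Your packaging of the change-of-measure step (deriving $\sfP(Z_T(\sfP,\sfQ)>\lambda)\geq 1-\delta-e^{\lambda}\gamma$ and then splitting on $\{T\le N\}$) is slightly more compact than the paper's $p_1+p_2$ decomposition with auxiliary parameter $\eta$, but the two are equivalent; your flagged concern about the $\Psi_{m,m}$ case---restricting the infimum to $\sfQ$ with $\cA(\sfQ)\neq\cA(\sfP)$---matches the paper's treatment, which identifies $\bigcup_{e\in\cA(\sfP)}\cH_{\Psi,e}=\bigcup_{e\notin\cA(\sfP)}\cG_{\Psi,e}$ with $\cP_\Psi\setminus\{\sfP\}$ and does not further elaborate.
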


\begin{proof}
The proof can be found in Appendix \ref{pf:P1P2}.
\end{proof}

%\subsection{Joint detection and isolation} \label{optimality_md}

%We start with the case that all nulls are correct and we  next consider the case where  the global null hypothesis is possible, i.e., $\emptyset \in \Psi$, but there is at least one correct alternative  and  the main focus is on the  detection task, in the sense that $\alpha$ is much smaller than $\gamma$ and $\delta$. 

%Asymptotic optimality under the global null.

We continue with the case that  there is also a detection task, i.e.,
$\emptyset \notin \Psi$,  where we  compare the  three  tests, $\chi^*_{det}$, $\chi^*$ and $\chi^*_{fwer}$, introduced in Subsections \ref{subsubsec: pure_detection_test}, \ref{subsubsec: joint_test} and \ref{jdi_to_fwer}, respectively.  We start by establishing sufficient conditions for their asymptotic optimality when there are no signals present. 

\begin{theorem}\label{PminusP0}
Suppose that   $\emptyset \in \Psi$, $\sfP \in \cH_0$,  and let
the detection subsystems be selected such that 
\begin{equation}\label{condPminusP0}
 \min_{e \in \cK} \; \cI \lt( \sfP , \cG_{\Psi,e} ; s_e \rt) =\min_{e \in \cK} \; \cI \lt( \sfP , \cG_{\Psi,e}  \rt),
\end{equation}
which is trivially the case when $s_e=[K]$ for every $e \in \cK$.
Then,   as $ \alpha, \beta \to 0 $,
\begin{align*}
\sfE_\sfP\lt[T^*_{det}\rt] & \; \sim  \; \max_{e \in \cK} \lt\{\frac{|\log\beta|}{ \cI \lt( \sfP , \cG_{\Psi,e} \rt)} \rt\} \; \sim \;   \inf \lt\{ \sfE_{\sfP}[T]:\; 
(T, D) \in \cD_{\Psi}(\alpha, \beta) \rt\},
\end{align*}
as $\alpha, \beta \to 0$, while  $\gamma$ and $\delta$ are either  fixed or go to $0$, 
$$\sfE_\sfP\lt[T^*\rt] \; \sim \; \max_{e \in \cK}  \lt\{ \frac{|\log\beta|}{ \cI \lt( \sfP , \cG_{\Psi,e} \rt)}  \rt\}\; \sim  \;
  \inf \lt\{ \sfE_{\sfP}[T]:\; 
(T, D) \in \cC_{\Psi}(\alpha, \beta,\gamma, \delta) \rt\},$$
and  as $\gamma, \delta \to 0$,
\begin{align*}
\sfE_\sfP\lt[T^*_{fwer}\rt]\; \sim \; \max_{e \in \cK}  \lt\{ \frac{|\log\delta|}{ \cI \lt( \sfP , \cG_{\Psi,e} \rt)}  \rt\}\; \sim  \;
  \inf \lt\{ \sfE_{\sfP}[T]:\; 
(T, D) \in \cE_{\Psi}(\gamma, \delta) \rt\}.
\end{align*}

\end{theorem}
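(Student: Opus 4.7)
The plan is to combine the upper bounds already obtained in Theorem \ref{ub_DI}(i) with matching asymptotic lower bounds derived through a change-of-measure argument. The upper bound direction is essentially immediate: Theorem \ref{ub_DI}(i) yields the three bounds in terms of the subsystem divergences $\cI(\sfP, \cG_{\Psi, e}; s_e)$, and assumption \eqref{condPminusP0} collapses $\min_{e \in \cK} \cI(\sfP, \cG_{\Psi, e}; s_e)$ to $\min_{e \in \cK} \cI(\sfP, \cG_{\Psi, e})$, which delivers the $\lesssim$ inequality with $|\log \beta|$ for $\sfE_\sfP[T^*_{det}]$ and $\sfE_\sfP[T^*]$, and with $|\log \delta|$ for $\sfE_\sfP[T^*_{fwer}]$.

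For the matching lower bound, I would fix an arbitrary test $(T, D)$ in the class under consideration and an arbitrary alternative $\sfQ \in \cG_{\Psi, e}$ for some $e \in \cK$, and translate the applicable error constraint into a bound on $\sfQ(D = \emptyset)$. For $\cD_\Psi(\alpha, \beta)$, and hence for $\cC_\Psi(\alpha, \beta, \gamma, \delta) \subseteq \cD_\Psi(\alpha, \beta)$, this constraint is the missed-detection bound $\sfQ(D = \emptyset) \leq \beta$; for $\cE_\Psi(\gamma, \delta)$, the nonemptiness of $\cA(\sfQ)$ forces the inclusion $\{D = \emptyset\} \subseteq \{\cA(\sfQ) \setminus D \neq \emptyset\}$, yielding $\sfQ(D = \emptyset) \leq \delta$. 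Setting $\xi := \beta$ in the first two cases and $\xi := \delta$ in the third, the standard likelihood-ratio identity
\[
\sfP(D = \emptyset,\, T \leq n_0,\, Z_T(\sfP, \sfQ) \leq c) \leq e^c\, \sfQ(D = \emptyset) \leq e^c\, \xi
\]
combined with the false-alarm constraint $\sfP(D = \emptyset) \geq 1 - \alpha$ (or $1 - \gamma$) gives
\[
1 - \alpha \leq \sfP(T > n_0) + e^c \xi + \sfP\lt(\max_{1 \leq m \leq n_0} Z_m(\sfP, \sfQ) > c\rt).
\]
Choosing $c = (1 - \epsilon)|\log \xi|$ and $n_0 = (1 - \epsilon)^2 |\log \xi|/\cI(\sfP, \sfQ)$ makes the second term vanish, and assumption \eqref{max_series} drives the third to $0$; since $\alpha \to 0$ (or $\gamma \to 0$), we obtain $\sfP(T > n_0) \to 1$, and Markov's inequality combined with $\epsilon \downarrow 0$ yields $\sfE_\sfP[T] \gtrsim |\log \xi|/\cI(\sfP, \sfQ)$.

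Finally, optimizing over $\sfQ \in \cG_{\Psi, e}$ and then over $e \in \cK$, and using the decomposition $\cP_\Psi \setminus \cH_0 = \bigcup_{e \in \cK} \cG_{\Psi, e}$, produces the matching lower bound $\max_{e \in \cK} |\log \xi|/\cI(\sfP, \cG_{\Psi, e})$. The main technical obstacle I anticipate is ensuring uniformity in the auxiliary parameters for the $\chi^*$ statement, where $\gamma$ and $\delta$ may either vanish or stay fixed; the resolution is that the lower bound invokes only the missed-detection constraint inherited from $\cD_\Psi(\alpha, \beta)$, so it is insensitive to $\gamma$ and $\delta$ and applies uniformly across all permitted regimes.
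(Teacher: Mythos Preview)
Your proposal is correct and follows essentially the same route as the paper: the upper bounds come directly from Theorem~\ref{ub_DI}(i) together with condition~\eqref{condPminusP0}, and the matching lower bounds are obtained by the standard change-of-measure argument you describe, which the paper packages as Lemmas~\ref{lem:D_lb}(i), \ref{lem:DI_lb}(i), and \ref{lem:FW_lb}(i). The only cosmetic difference is that for $\cE_\Psi(\gamma,\delta)$ you apply the false-positive and false-negative constraints directly, whereas the paper first embeds $\cE_\Psi(\gamma,\delta)$ into $\cC_\Psi(\gamma,\delta,\gamma,\delta)$ and then invokes the joint-detection-and-isolation lower bound; both paths lead to the same inequality.
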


\begin{proof}
The proof can be found in Appendix \ref{pf:P0}.
\end{proof}

%\begin{remark}

Theorem \ref{PminusP0} shows that, with sufficiently large subsystems so that  \eqref{condPminusP0} holds, $\chi^*$ has
 the same expected sample size  as $\chi^*_{det}$, to a first-order approximation as   $\alpha$ and $\beta$ go to $0$, independently of how $\gamma$ and $\delta$ are selected.  On the other hand,   this is the case for  $\chi^*_{fwer}$ when   $\delta = \beta$,  in which case its  missed detection rate does not exceed $\beta$  (recall \eqref{fwer_and_det}).   Therefore, when $\delta=\beta$ and   \textit{there are not any signals present}, 
 $\chi^*_{fwer}$  does not  lead to  deterioration in  performance   in comparison to   $\chi^*_{det}$,  when $\alpha$ and $\beta$ are small enough. 
 
 We continue with the case that there is at least one signal.

% \end{remark}
 %  $\beta$, $\delta \leq \beta$,  in order to . Thus, if $\delta  \sim  \beta^r$ for some $r > 1$, then the asymptotic relative efficiency  of $\chi^*_{fwer}$ is $r$ times larger than that of $\chi^*$ toa first-order approximation as   $\gamma$ and $\delta$ go to $0$. ????\end{remark}
%We  next  establish sufficient conditions for the  asymptotic optimality of $\chi^*_{det}$, $\chi^*$ and $\chi^*_{fwer}$  when there is at least one signal and the strictest error control is that of  false alarms. 

\begin{theorem}\label{P0}
Suppose that $\emptyset \in \Psi$, $\sfP \in \cP_\Psi \setminus  \cH_0$, and let the detection subsystems be selected so that 
\begin{equation}\label{condP0}
\max _{e \in \cA (\sfP)} \cI(\sfP, \cH_0; s_e) = \cI(\sfP, \cH_0),
\end{equation}
which is trivially the case when $s_e = [K]$ for some $e \in \cA(\sfP)$. Then, as $\alpha, \beta \to 0$,
\begin{align*}
\sfE_\sfP\lt[T^*_{det}\rt] &\sim \frac{|\log\alpha|}{\cI\lt(\sfP, \cH_0\rt)} \sim  \inf \lt\{ \sfE_{\sfP}[T]:\; 
(T, D) \in \cD_{\Psi}(\alpha, \beta) \rt\},
\end{align*}
and,  as   $\alpha, \beta, \gamma, \delta \to 0$  such that 
$|\log \alpha| >> |\log \gamma| \vee |\log \delta|,$ 
$$
\sfE_\sfP\lt[T^*\rt] \sim \frac{|\log\alpha|}{\cI\lt(\sfP, \cH_0\rt)} \sim  \inf \lt\{ \sfE_{\sfP}[T]:\; 
(T, D) \in \cC_{\Psi}(\alpha, \beta,\gamma, \delta) \rt\}.$$
If, also, the isolation subsystems are selected so that \eqref{condP1} holds,  then, as $\gamma, \delta \to 0$ such that $|\log \gamma| >> |\log \delta|$, 
\begin{align*}
\sfE_\sfP\lt[T^*_{fwer}\rt] \sim \max_{e \in \cA(\sfP)} \lt\{ \frac{|\log\gamma|}{\cI(\sfP, \cH_0)  \wedge  \cI(\sfP, \cH_{\Psi,e} ) }\rt\} %\bigvee \max_{e \notin \cA(\sfP)}  \lt\{ \frac{|\log\delta|}{ \cI(\sfP, \cG_{\Psi,e} ) }  \rt\} \\
\sim   \inf \lt\{ \sfE_{\sfP}[T]:\;  (T, D) \in \cE_{\Psi}(\gamma, \delta) \rt\}.
\end{align*}

\end{theorem}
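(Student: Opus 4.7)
The plan is to combine the asymptotic upper bounds from Theorem \ref{ub_DI}(ii) with matching lower bounds obtained via a standard change-of-measure argument, handling the three tests in parallel.

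For the upper bounds, I would invoke Theorem \ref{ub_DI}(ii) and simplify using the stated hypotheses. For $\chi^*_{det}$, the bound reads $\sfE_\sfP[T^*_{det}] \lesssim |\log\alpha|/\max_{e \in \cA(\sfP)} \cI(\sfP, \cH_0; s_e)$, which reduces to $|\log\alpha|/\cI(\sfP, \cH_0)$ by \eqref{condP0}. For $\chi^*$, the $|\log\alpha|$-term becomes $|\log\alpha|/\cI(\sfP, \cH_0)$ and dominates the other two terms, which are $O(|\log\gamma|) \vee O(|\log\delta|) = o(|\log\alpha|)$ under the regime $|\log\alpha| \gg |\log\gamma| \vee |\log\delta|$. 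For $\chi^*_{fwer}$, conditions \eqref{condP0} and \eqref{condP1} turn the first two terms into $|\log\gamma|/\cI(\sfP, \cH_0)$ and $\max_{e \in \cA(\sfP)}|\log\gamma|/\cI(\sfP, \cH_{\Psi,e})$, while the third is $o(|\log\gamma|)$ under $|\log\gamma| \gg |\log\delta|$; combining them via the elementary identity $C \wedge \min_e D_e = \min_e (C \wedge D_e)$ yields $\max_{e \in \cA(\sfP)}|\log\gamma|/(\cI(\sfP, \cH_0) \wedge \cI(\sfP, \cH_{\Psi,e}))$.

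For the lower bounds I would rely on the following standard sequential-testing estimate: under \eqref{max_series}, whenever $A \in \cF_T$ satisfies $\sfP(A) \geq 1 - p$ (with $p$ bounded away from $1$) and $\sfQ(A) \leq q$, one has $\sfE_\sfP[T] \geq |\log q|/\cI(\sfP, \sfQ)\,(1+o(1))$ as $q \to 0$. This follows from Markov's inequality applied to $\Lambda_T(\sfP, \sfQ)\mathbbm{1}_A$ combined with the maximal estimate \eqref{max_series}. For any $(T, D) \in \cD_\Psi(\alpha, \beta)$, and hence for any $(T, D) \in \cC_\Psi(\alpha, \beta, \gamma, \delta)$ via the inclusion $\cC_\Psi(\alpha, \beta, \gamma, \delta) \subseteq \cD_\Psi(\alpha, \beta)$, I would take $A = \{D \neq \emptyset\}$ and $\sfQ \in \cH_0$: then $\sfP(A) \geq 1 - \beta$ and $\sfQ(A) \leq \alpha$, and minimizing over the finite family $\cH_0$ gives $\sfE_\sfP[T] \geq |\log\alpha|/\cI(\sfP, \cH_0)\,(1+o(1))$. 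For $(T, D) \in \cE_\Psi(\gamma, \delta)$ and fixed $e \in \cA(\sfP)$, I would choose $A = \{e \in D\}$: since $e \in \cA(\sfP)$, $\sfP(A^c) \leq \sfP(\cA(\sfP) \setminus D \neq \emptyset) \leq \delta$; and for every $\sfQ \in \cH_0 \cup \cH_{\Psi, e}$ one has $e \notin \cA(\sfQ)$, hence $\sfQ(A) \leq \sfQ(D \setminus \cA(\sfQ) \neq \emptyset) \leq \gamma$. Minimizing the estimate over $\sfQ \in \cH_0 \cup \cH_{\Psi, e}$ and then maximizing over $e \in \cA(\sfP)$ produces the desired lower bound.

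The main obstacle, in my view, lies in establishing the change-of-measure lower bound in the full generality of assumption \eqref{max_series} alone, without an iid structure for the log-likelihood ratios. Some care is also needed to ensure that the $o(1)$ remainder in that estimate is uniform in $p = \beta$ or $p = \delta$ as these tend to $0$, so that the leading constants of the upper and lower bounds align under each of the three asymptotic regimes and the orderings $|\log\alpha| \gg |\log\gamma| \vee |\log\delta|$ and $|\log\gamma| \gg |\log\delta|$ are actually exploited. Finiteness of $\cP$ is convenient throughout: the relevant infima over $\cH_0$ and over $\cH_0 \cup \cH_{\Psi,e}$ are attained, so the change-of-measure estimate can be applied at each minimizer individually.
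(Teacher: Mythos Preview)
Your proposal is correct and follows essentially the same approach as the paper: the upper bounds are taken from Theorem~\ref{ub_DI}(ii) and simplified via \eqref{condP0}--\eqref{condP1} under the stated asymptotic regimes, while the lower bounds are obtained by the same change-of-measure argument (the paper packages these into Lemmas~\ref{lem:D_lb}(ii), \ref{lem:DI_lb}(iii), and \ref{lem:FW_lb}(ii), but their proofs use exactly the events $\{D \neq \emptyset\}$ and $\{e \in D\}$ you describe). One small difference worth noting: for the $\cE_\Psi$ lower bound you treat $\sfQ \in \cH_0$ and $\sfQ \in \cH_{\Psi,e}$ simultaneously via the single event $\{e \in D\}$, whereas the paper handles the $\cH_0$ term separately via $\{D \neq \emptyset\}$ and only then combines; your unification is slightly cleaner but yields the identical bound.
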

\begin{proof}
The proof can be found in Appendix \ref{pf:P0}. 
\end{proof}

%\begin{remark}
Theorem \ref{P0} shows that, when  the subsystems are large enough so that \eqref{condP0} holds,  $\chi^*$ has the same expected sample size  as $\chi^*_{det}$ to a first-order approximation as  $\alpha$ goes to $0$ much faster than $\gamma$ and $\delta$. 
On the other hand,  when   $\gamma = \alpha$,  in which case the  false alarm rate of 
 $\chi^*_{fwer}$  does not exceed $\alpha$  (recall \eqref{fwer_and_det}),   then from the the above theorem it follows that 
 $$\frac{\sfE_\sfP[T^*_{det}]}{\sfE_\sfP[T_{fwer}^*]}  \to  \min_{e \in \cA(\cP)}  \frac{\cI(\sfP, \cH_{\Psi, e})}{\cI(\sfP,  \cH_0)} \wedge 1
 $$
as   $\alpha, \beta, \gamma, \delta \to 0$  such that 
$|\log \alpha| >> |\log \gamma| \vee |\log \delta|$.   Intuitively speaking,  this means that  $\chi^*_{fwer}$  has the same asymptotic performance as
$\chi^*_{det}$  when there is at least one signal and   $\gamma = \alpha$ is much smaller than  $\delta$,   as long as the detection problem is harder than the isolation problem in the sense that 
\begin{equation*}
\cI(\sfP, \cH_0) \leq \min_{e \in \cA(\sfP)} \cI(\sfP, \cH_{\Psi, e}).
\end{equation*}
Typically, however, especially when there are more than one signals under $\sfP$, the isolation problem is (much) harder, and hence, the above limit will be (much) smaller than 1, suggesting that when $\gamma=\alpha$  is much smaller than $\delta$,   $\chi^*_{fwer}$  will perform much worse than $\chi^*_{det}$. 
\\

%however, the reverse is true,  which case its  false alarm rate does not exceed $\alpha$  (recall \eqref{fwer_and_det}), as per the equivalent approximation as  $\gamma$ goes to $0$ much faster than $\delta$.   Therefore,   $\chi^*_{fwer}$  leads to a significant deterioration in  performance, in comparison to  $\chi^*$ and  $\chi^*_{det}$, when \textit{false alarm control is the strictest} but \textit{the corresponding isolation problem is more difficult than the detection problem}. The latter arises, in general, when the true number of signals is larger than $1$. %This is not the case, however, when there is at least one signal, as we show next. 

% we know that it  guarantees that the  false alarm rate of  $\chi^*_{fwer}$ does not exceed that of  $\chi^*_{det}$, implies that.....

%\end{remark}

% must not exceed $\alpha$, i.e.,$\gamma \leq \alpha$. Thus, if $\gamma = O(\alpha^r)$, \textcolor{red}{(or, $\gamma \lesssim \alpha^r$)} for some $r > 1$ then the asymptotic performance of $\chi^*_{fwer}$ is $r$ times worse than that of $\chi^*$ as $\gamma$ goes to $0$ much faster than $\delta$.

Finally, we consider the general case where  all  error rates go to $0$ at arbitrary rates, and there is at least one signal.

%establish sufficient conditions for the  asymptotic optimality of $\chi^*$ and $\chi^*_{fwer}$ when there is at least one signal and 

\begin{theorem}\label{P0P1P2}
Suppose that $\emptyset \in \Psi$,  $\sfP \in \sfP_\Psi \setminus  \cH_0$, and let the detection subsystems  be selected so that \eqref{condP0}  holds, 
 and let  the isolation subsystems  be selected so that \eqref{condP1} - \eqref{condP2}  hold.  Then,  as $\gamma, \delta \to 0$,
\begin{align*}
\sfE_\sfP\lt[T^*_{fwer}\rt] &\sim \max_{e \in \cA(\sfP)} \lt\{ \frac{|\log\gamma|}{\cI(\sfP, \cH_0)  \wedge  \cI(\sfP, \cH_{\Psi,e} ) }\rt\} \bigvee \max_{e \notin \cA(\sfP)}  \lt\{ \frac{|\log\delta|}{ \cI(\sfP, \cG_{\Psi,e} ) }  \rt\} \\
& \sim   \inf \lt\{ \sfE_{\sfP}[T]:\;  (T, D) \in \cE_{\Psi}(\gamma, \delta) \rt\},
\end{align*}
and,  as $\alpha, \beta, \gamma, \delta \to 0$, %we have%If also the  detection subsystems are selected so that  \eqref{condP0} holds, then as   $\alpha, \beta, \gamma, \delta \to 0$ we have 
\begin{align*}
\sfE_\sfP\lt[T^*\rt] &\sim   \frac{|\log\alpha|}{\cI(\sfP, \cH_0)} \bigvee \max_{e \in \cA(\sfP)}  \lt\{\frac{|\log\gamma|}{ \cI(\sfP, \cH_{\Psi,e} ) } \rt\} \bigvee   \max_{e \notin \cA(\sfP)}  \lt\{ \frac{|\log\delta|}{ \cI(\sfP, \cG_{\Psi,e} ) } \rt\} \\
& \sim 
  \inf \lt\{ \sfE_{\sfP}[T]:\; 
(T, D) \in \cC_{\Psi}(\alpha, \beta,\gamma, \delta) \rt\}.
\end{align*}
\end{theorem}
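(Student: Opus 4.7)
The plan is to match the upper bounds of Theorem \ref{ub_DI}(ii) with three separate lower bounds on $\sfE_\sfP[T]$ (one for each term in the claimed asymptotic expressions), each obtained via a change-of-measure argument using condition \eqref{max_series}.

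For the upper bounds, Theorem \ref{ub_DI}(ii) already provides estimates on $\sfE_\sfP[T^*_{fwer}]$ and $\sfE_\sfP[T^*]$ of the desired form but involving the \emph{subsystem} divergences $\cI(\sfP, \cdot\,;\,s_e)$ and $\cI(\sfP, \cdot\,;\,s'_e)$. Using the identities $\min_e(1/x_e) = 1/\max_e x_e$ and $\max_e(1/x_e) = 1/\min_e x_e$ in conjunction with \eqref{condP0}--\eqref{condP2}, these subsystem divergences reduce to the full divergences $\cI(\sfP, \cH_0)$, $\cI(\sfP, \cH_{\Psi,e})$ and $\cI(\sfP, \cG_{\Psi,e})$, giving the claimed upper bounds on $\sfE_\sfP[T^*_{fwer}]$ and $\sfE_\sfP[T^*]$.

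For the lower bounds, I would fix an arbitrary test $(T,D)$ in the relevant class and invoke the standard change-of-measure inequality: if $A_\epsilon \in \cF_T$ with $\sfP(A_\epsilon) \to 1$ and $\sfQ(A_\epsilon) \leq \epsilon$, then
\[
\sfE_\sfP[T] \geq (1 - o(1))\,\frac{|\log\epsilon|}{\cI(\sfP, \sfQ)} \quad \text{as } \epsilon \to 0,
\]
which follows from the likelihood-ratio bound $\sfP(A, T \leq n) \leq e^{n\rho}\,\sfQ(A) + \sfP(\max_{m \leq n} Z_m(\sfP, \sfQ) > n\rho)$ for any $\rho > \cI(\sfP, \sfQ)$, with $n \approx (1-\eta)|\log\epsilon|/\rho$, since the second term vanishes by \eqref{max_series}. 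I would apply this tool with three choices of $(\sfQ, A)$: (i) for the detection term, $\sfQ \in \cH_0$ and $A = \{D \neq \emptyset\}$, so that $\sfP(A) \geq 1 - \beta$ (missed detection) and $\sfQ(A) \leq \alpha$ (false alarm), and minimize over $\sfQ$; (ii) for each $e \in \cA(\sfP)$, $\sfQ \in \cH_{\Psi,e}$ and $A = \{e \in D\}$, so that $\sfP(A) \geq 1 - \beta - \delta$ and $\sfQ(A) \leq \gamma$ (false positive); and (iii) for each $e \notin \cA(\sfP)$, $\sfQ \in \cG_{\Psi,e}$ together with the refined event $A = \{e \notin D,\, D \neq \emptyset\}$, so that $\sfP(A) \geq 1 - \beta - \gamma$ and, crucially, $\sfQ(A) \leq \delta$ via the false-negative constraint alone. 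Taking the max of the three bounds yields the desired lower bound.

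The main obstacle is the refinement in step (iii) above: the naive choice $A = \{e \notin D\}$ yields only $\sfQ(A) \leq \beta + \delta$ and hence the weaker rate $|\log(\beta+\delta)|/\cI$, which fails when $\beta$ and $\delta$ decay at different rates; restricting to $\{e \notin D,\, D \neq \emptyset\}$ decouples the false-negative contribution from the missed-detection rate $\beta$ and allows $\alpha,\beta,\gamma,\delta$ to decay at arbitrary rates. For the familywise class $\cE_\Psi(\gamma, \delta)$, the three arguments go through with $\alpha$ replaced by $\gamma$ in (i) and the naive $A = \{e \notin D\}$ suffices in (iii), since $\cE_\Psi$ treats the missed detection as a false negative and both are controlled by $\delta$. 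Combining the matching upper and lower bounds yields the first $\sim$ in each claim, and the second $\sim$ (optimality over the class) follows since the upper bound is attained by $\chi^*$ (respectively $\chi^*_{fwer}$).
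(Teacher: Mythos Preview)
Your proposal is correct and follows essentially the same approach as the paper. The paper packages the lower bounds into separate lemmas (Lemma~\ref{lem:DI_lb}(iii) for $\cC_\Psi(\alpha,\beta,\gamma,\delta)$ and Lemma~\ref{lem:FW_lb}(ii) for $\cE_\Psi(\gamma,\delta)$), but the change-of-measure arguments there use precisely the three pairs $(\sfQ,A)$ you describe, including the refined event $\{e\notin D,\,D\neq\emptyset\}$ in step~(iii) to decouple the $\delta$-rate from $\beta$; the only cosmetic difference is that for $\cE_\Psi$ the paper embeds $\cE_\Psi(\gamma,\delta)\subseteq\cC_\Psi(\gamma,\delta,\gamma,\delta)$ and reuses Lemma~\ref{lem:DI_lb} rather than rerunning the argument directly.
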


\begin{proof}
The proof can be found in Appendix \ref{pf:P0P1P2}.
\end{proof}

Theorem \ref{P0P1P2}   generalizes 
Theorem \ref{P0}  and allows for
a  comparison between  $\chi^*$ and $\chi^*_{fwer}$ as $\alpha, \beta, \gamma, \delta$ go to 0 at arbitrary rates.  For example, an implication of this theorem is that, 
when the  detection and isolation subsystems are large enough so that  conditions  \eqref{condP0}  and  \eqref{condP1}-\eqref{condP2} hold,    $\chi^*$ and $\chi^*_{fwer}$ have the same expected sample size, to a first-order asymptotic approximation,  when there is at least one signal and either $\alpha = \gamma, \delta \to 0$  or $\delta$ goes to $0$ much faster than $\alpha$ and $\gamma$.

\begin{remark}\label{rem:P1orP2}
In Theorems \ref{P1P2} and  \ref{P0P1P2},  the isolation subsystems do not need to satisfy  \eqref{condP2}  (resp.  \eqref{condP1})  when $|\log \gamma| >> |\log \delta|$ (resp.     $|\log \delta| >> |\log \gamma|$).   The proof for this claim is presented in Appendix \ref{pf:rem:P1orP2}.  
\end{remark}

\begin{remark}\label{rem:fwer}
For the asymptotic approximation regarding  the expected sample size of  $T^*_{fwer}$ in Theorems \ref{P0} and \ref{P0P1P2}, the isolation subsystems do not need to satisfy  \eqref{condP1}  when 
\begin{equation}\label{FWERcondP0}
\cI(\sfP, \cH_0) \leq \min_{e \in \cA(\sfP)} \cI(\sfP, \cH_{\Psi, e}) \quad \text{and} \quad  \max _{e \in \cA (\sfP)} \cI(\sfP, \cH_0; s_e) \leq \min_{e \in \cA(\sfP)} \cI \lt( \sfP , \cH_{\Psi,e}; s'_e \rt),
\end{equation}
and the detection subsystems do not need to satisfy \eqref{condP0} when 
\begin{equation}\label{FWERcondP1}
\cI(\sfP, \cH_0) \geq \min_{e \in \cA(\sfP)} \cI(\sfP, \cH_{\Psi, e}) \quad \text{and} \quad  \max _{e \in \cA (\sfP)} \cI(\sfP, \cH_0; s_e) \geq \min_{e \in \cA(\sfP)} \cI \lt( \sfP , \cH_{\Psi,e}; s'_e \rt).
\end{equation}
The proof for this claim is presented  in Appendix \ref{pf:rem:fwer}.
\end{remark}

\begin{remark}
The conditions of the  previous   theorems may  be satisfied when the  detection and isolation subsystems  are  strict subsets of $[K]$,  even when $s_e=s'_e=e$ for every $e \in \cK$.  We elaborate  this point  in Proposition  \ref{prop:P1} of the supplementary file, as well as in the two following sections, where we specialize the general setup which is considered so far. 
% to two concrete problems.  
\end{remark}

\section{Detection and isolation of anomalous sources} \label{sec:appl_anomalous}  In this section we  apply the results of   Sections  \ref{sec:prob_form}-\ref{sec:asymp_opt} to the  problem of  detecting and isolating anomalous sources. That is, throughout this section  the units are the data sources themselves, i.e., $ \cK = \{\{k\} : k \in [K]\}$ and, as a result,   the hypotheses refer to the marginal distributions of the data sources. Moreover,  we assume that the distributional assumptions \eqref{max_series}-\eqref{comp_conv_onesided} are satisfied, thus,  all asymptotic optimality results in Section  \ref{sec:asymp_opt} hold when the  subsystems are large enough. 
% i.e.,    $s_k=[K]$ and/or $s'_k=[K]$ for every $k \in [K]$.  

In Subsection \ref{subsec:appl_anomalous_indep}  we consider the case that the   data sources are independent, i.e., 
\begin{equation} \label{product}
\sfP = \sfP^{1} \otimes  \ldots \otimes \sfP^{K} \quad \text{ for every } \;
\sfP \in \cP_\Psi,
\end{equation} 
and there are  a priori bounds on the  number of signals,  i.e.,  $\Psi$  is of the form 
$\Psi_{l,u}$, % for some positive integers $l<u$,  
and we show that the asymptotically optimal procedures in this context are  also \text{easily implementable}. In Subsection \ref{subsec:appl_anomalous_depen} we consider the general case where  the  data sources are not necessarily independent, 
 and  asymptotically optimal procedures  are not, in general, easy to implement.  In this context,  we focus on computationally simple versions of the proposed tests,  upper bound their asymptotic relative efficiencies, and establish sufficient conditions for their asymptotic optimality.

\begin{remark}
An implication of the independence assumption \eqref{product} is that  conditions \eqref{max_series}-\eqref{comp_conv_onesided},
 which refer to \textit{global} distributions, are satisfied as long as similar  conditions are satisfied by the marginal distributions of the  data  sources.  For example, they hold  when,
  for every $k \in [K]$,  $X^k$ is an iid sequence and $Z_1 (\sfP^k, \sfQ^k)$ has  finite, non-zero  expectation  under  both $\sfP^k$ and  $\sfQ^k$.  For more details on this point, we refer to  Proposition \ref{propo:sufficient}. 
\end{remark}

\subsection{The case of independence}\label{subsec:appl_anomalous_indep} 
Throughout this subsection we assume that the  data sources are independent,  i.e., \eqref{product} holds,  and that arbitrary lower and upper bounds, $l$ and  $u$, are given on the number of signals, i.e.,  $\Psi=\Psi_{l,u}$. In this setup, we show that  asymptotically optimal procedures  are always easy to implement, as they  only require  \textit{ordering at each time $n \in \bN$  the  local  statistics $\Lambda_k(n), k \in [K]$}, defined in \eqref{LLR}. %, and since the units are singletons, we  denote  the latter simply by  .%,  their ordered versions  by  $\Lambda_{(1)}(n) \geq \dots \geq \Lambda_{(K)}(n)$, and   the corresponding indices by  $i_1(n), \dots, i_K(n)$.\\
%Our second goal in this section is to recover  certain asymptotically optimal testing procedures  that have been obtained in the context of familywise error control. Even for this well-studied problem however,  we will obtain certain new, computationally efficient,  asymptotically optimal tests. All these test though are to be compared with the corresponding asymptotically optimal rule  for joint detection and isolation, which is  is equally easy to  apply and is much more flexible. 

We start by showing that, even when the isolation subsystems are selected as large as possible, i.e.,  $s'_k=[K]$ for every $k \in [K]$,  the estimated  signal set, defined in \eqref{D_iso}, at  $T^*$ or $T^*_{fwer}$, admits a very simple form.

\begin{theorem}\label{decision_indep}
%Let $(T, D)$ be either $\chi^*$ or $ \chi^*_{fwer}$. 
Suppose that either $s'_k = \{k\} \; \forall \; k \in [K]$, or  $s'_k = [K] \; \forall \; k \in [K]$. When $l = 0$, suppose also  that either $s_k = \{k\}  \; \forall \; k \in [K]$, or  $s_k = [K]
\; \forall \; k \in [K]$. Then:
%$$D = \lt\{i_1(T), \dots, i_{l \vee p(T) \wedge u} (T)\rt\},$$
$$D_{\rm iso}(T) = \lt\{i_1(T), \dots, i_{l \vee p(T) \wedge u} (T)\rt\} \quad \text{for}  \; \; T \in \{T^*, T^*_{fwer}\},$$
with the understanding that if  $l = p(T) = 0$, then $D_{\rm iso}(T) = \emptyset$.
%\textcolor{red}{Where is $p(n)$ defined?}
%and if $s'_k = \{k\}$ for every $k \in [K]$ then
%$D = \lt\{i_1(T), \dots, i_{p(T)} (T)\rt\}.$
\end{theorem}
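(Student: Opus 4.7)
The plan is to exploit the independence assumption \eqref{product} together with the explicit combinatorial structure of $\Psi=\Psi_{l,u}$ to express the isolation statistics $\Lambda_{k,\rm iso}(n)$ (and, in the $l=0$ case, the detection statistics $\Lambda_{k,\rm det}(n)$) as monotone functions of the ordered local statistics $\Lambda_{(1)}(n)\geq\cdots\geq\Lambda_{(K)}(n)$. Set $A^*(n):=\{i_1(n),\ldots,i_{m(n)}(n)\}$ with $m(n):=l\vee p(n)\wedge u$; the target identity will follow from showing $D_{\rm iso}(T)=A^*(T)$ at the stopping time by a monotonicity--dichotomy argument combined with the stopping condition.

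First I would reduce the isolation statistic. When $s'_k=\{k\}$, one has $\Lambda_{k,\rm iso}(n)=\Lambda_k(n)$ and so $D_{\rm iso}(n)=\{i_1(n),\ldots,i_{p(n)}(n)\}$ directly; here the stopping constraint $D_{\rm iso}(T)\in\Psi$ forces $p(T)\in[l,u]$ and hence $m(T)=p(T)$, matching the formula. When $s'_k=[K]$, the product form of the distributions in $\cG_{\Psi,k}^{[K]}$ and $\cH_{\Psi,k}^{[K]}$ and the cancellation of the common per-source null factors in numerator and denominator of the GLRs yield
\[
\Lambda_{k,\rm iso}(n)\;=\;\frac{\max\{\prod_{j\in A}\Lambda_j(n):A\in\Psi_{l,u},\,k\in A\}}{\max\{\prod_{j\in A}\Lambda_j(n):A\in\Psi_{l,u}\setminus\{\emptyset\},\,k\notin A\}}.
\]
Since including an index with local statistic above $1$ strictly increases the product while including one below $1$ decreases it, the unconstrained maximizer of $\prod_{j\in A}\Lambda_j(n)$ over $A\in\Psi_{l,u}$ is $A^*(n)$. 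This gives the key dichotomy: $\Lambda_{k,\rm iso}(n)\geq 1$ when $k\in A^*(n)$ and $\Lambda_{k,\rm iso}(n)\leq 1$ when $k\notin A^*(n)$.

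Next I would invoke the stopping conditions. Whenever $T\in\{T^*,T^*_{fwer}\}$ stops via the isolation rule, the requirement $\Lambda_{k,\rm iso}(T)\notin(1/A_k,B_k)$ with $A_k,B_k>1$ excludes the boundary value $1$, upgrading the weak inequalities to strict ones and producing $D_{\rm iso}(T)=A^*(T)$; the size $m(T)\in[l,u]$ is automatic in the $s'_k=[K]$ case and follows from $D_{\rm iso}(T)\in\Psi$ in the $s'_k=\{k\}$ case. When $l=0$ the test may stop at $T_0$, where the isolation condition is not active; here the additional hypothesis on $s_k$ enters. If $s_k=\{k\}$ then $\Lambda_{k,\rm det}(n)=\Lambda_k(n)$, while if $s_k=[K]$ then $\Lambda_{k,\rm det}(n)\geq\Lambda_k(n)$ because the singleton $\{k\}$ is feasible in the numerator of the detection GLR when $\{k\}\in\Psi_{0,u}$, i.e., $u\geq 1$. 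In either case $\Lambda_{k,\rm det}(T_0)\leq 1/C_k<1$ forces $\Lambda_k(T_0)<1$ for every $k$, so $p(T_0)=0$ and $m(T_0)=0$, matching the empty range convention in the formula.

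The main obstacle will be the careful handling of the $l=0$, $s'_k=[K]$ subcase at $T_0$: although $p(T_0)=0$ is established from the detection stopping, the ratio $\Lambda_{k,\rm iso}(T_0)$ is a ratio of small positive numbers, and the dichotomy above only yields weak inequalities without the isolation stopping to rule out the boundary $1$. Reconciling this with the claimed $D_{\rm iso}(T_0)=\emptyset$ requires using the fact that the test's output at $T_0$ is effectively $\emptyset$ (the outer decision $D^*$ equals $\emptyset$ on $\{T^*=T_0<T_{\text{joint}}\}$), together with the convention in the theorem that the index range is empty when $l=p(T)=0$. Ties in the local statistics are otherwise harmless, since at $T_{\text{joint}}$ the open-interval stopping condition excludes $\Lambda_{k,\rm iso}=1$ and so uniquely determines $A^*(T)$ up to reordering of strictly equal entries.
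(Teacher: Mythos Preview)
Your proposal is correct and proceeds along the same lines as the paper: both exploit the product structure under \eqref{product} to reduce $\Lambda_{k,\rm iso}(n)$ (and, when $l=0$, $\Lambda_{k,\rm det}(n)$) to quantities depending only on the ordered local statistics, and both treat the $l=0$, $p(T)=0$ case separately via the detection stopping at $T_0$.

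The packaging differs. The paper derives exact closed-form expressions for $\Lambda_{k,\rm iso}(n)$ through a case analysis on the position of $p(n)$ relative to $l\vee 1$ and $u$ (Lemmas \ref{iso_char_gap_int1}--\ref{iso_char_gap_int5}, built on Lemma \ref{max_char_gap_int}), reading off $D_{\rm iso}(n)$ in each case. Your dichotomy argument---comparing both numerator and denominator of $\Lambda_{k,\rm iso}(n)$ to the single pivot value $\prod_{j\in A^*(n)}\Lambda_j(n)$---is more economical for this theorem alone, but it does not produce the explicit formulas for $\Lambda_{k,\rm iso}(n)$ that the paper subsequently reuses to obtain the implementable stopping rules in Theorems \ref{th:iso_omega}--\ref{th:indep_fwer}. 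Note also that for your dichotomy to apply you need $A^*(n)\neq\emptyset$ (so that it is feasible for the denominator, which excludes $\emptyset$); this is exactly why the case $l=0$, $p(n)=0$ must be handled separately, as you do. Finally, your observation about $D_{\rm iso}(T_0)$ is apt: the paper's proof tacitly identifies $D_{\rm iso}(T)$ with the test's actual output $D^*$ there, and your reading of the theorem's ``understanding'' clause matches that intent.
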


\begin{proof} 
When  $s'_k = \{k\}$ for every $k \in [K]$, the proof is straightforward.  Indeed, in this case the isolation statistics reduce to the local statistics and  $D_{\rm iso}(T) = \lt\{i_1(T), \dots, i_{p(T)} (T)\rt\}$. At the same time,  at least $l$ and at most $u$ many local statistics are greater than $1$ at the time of stopping, i.e., $l \leq p(T) \leq u$. When $s'_k = [K]$ for every $k \in [K]$, on the other hand,  the proof is much more involved and  is presented in  Appendix \ref{pf:decision_indep}.
\end{proof}

In the rest of this subsection we  set either $s'_k = \{k\}$ for every $k \in [K]$ or $s'_k =[K]$ for every $k \in [K]$, and focus  on the form of the stopping rules.    We start with the  case that $l\geq 1$, where there is no detection task.

%\subsection{Pure isolation}
% and we show that  setting $s'_k = [K]$,  $k \in [K]$ leads to a very simple representation for $T^*$. %which is convenient for practical implementation.

\begin{theorem}\label{th:iso_omega}
Suppose that   $ l\geq 1$ and let $s'_k = [K]$ for every  $k \in [K]$.
\begin{enumerate}
\item[(i)] If $\ell=u <K$, then
\begin{align*}
T^{*} &= \inf\lt\{n \in \bN : \Lambda_{(u)}(n) \geq \Lambda_{(u+1)}(n) \;  \max\{A, B\}\rt\}. %\quad D^{*} = \{i_1(T^*), \dots, i_u(T^*)\}.
\end{align*}
\item[(ii)] If $ l<u\leq K$, then
\begin{align*}
T^* &=
 \begin{cases}
 T_1 \wedge  T_2  \wedge  T_4 \wedge  T_5 \wedge  T_6   &\text{if} \quad   u<K \\
 T_1 \wedge  T_2  \wedge T_3   &  \text{if} \quad   u=K \\
\end{cases}, %\quad 
 %D^* =  \lt\{i_1(T^*), \dots, i_{l \vee p(T^*) \wedge u} (T^*)\rt\}, 
\end{align*}
where
\begin{align*}
T_1 &:= \inf\lt\{n \in \bN : \;  p(n) < l  \quad  \text{and} \quad  \Lambda_{(l)}(n) \geq \Lambda_{(l+1)}(n) \; \max\{A, B\}\rt\},\\
T_2 &:= \inf\lt\{n \in \bN : \;  p(n) = l \quad \text{and} \quad \Lambda_{(l+1)}(n) \leq \min \lt\{  1/A,  \; \Lambda_{(l)}(n) / B \rt\}  \rt\},\\
T_3 &:= \inf\lt\{n \in \bN : \;    p(n) >l  \quad   \text{and} \quad  \Lambda_k(n) \notin \lt(1/A, B\rt) \quad \forall \; k \in [K] \rt\}, \\
T_4 &:= \inf\lt\{n \in \bN : \;   p(n) \in (l,u)   \quad  \text{and}  \quad  \Lambda_k(n) \notin \lt(1/A, B\rt) \quad \forall  \;  k \in [K] \rt\},\\
T_5 &:= \inf\lt\{n \in \bN :\;   p(n) = u  \quad  \text{and} \quad  \Lambda_{(u)}(n) \geq  \max \lt\{ B, \; A \, \Lambda_{(u+1)}(n)  \rt\}  \rt\},\\
T_6 &:= \inf\lt\{n \in \bN :\;  p(n) > u \quad \text{and} \quad  \Lambda_{(u)}(n) \geq \Lambda_{(u + 1)}(n) \;  \max\{A, B\}\rt\}.
\end{align*}
\end{enumerate}
\end{theorem}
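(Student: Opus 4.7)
The approach is to unpack the isolation statistic $\Lambda_{k, \rm iso}(n)$ explicitly under the independence assumption \eqref{product}, reduce it to a ratio of products of ordered local statistics, and then recover the piecewise stopping times from the stopping criterion of $T^*$. First I would factorize: because \eqref{product} holds and $s'_k = [K]$, both maximizations defining $\Lambda_{k,\rm iso}(n)$ (numerator over $\cG_{\Psi,k}^{[K]}$, i.e., global distributions with $k$ a signal and signal set in $\Psi_{l,u}$; denominator over $\cH_{\Psi,k}^{[K]}$, with $k$ not a signal) split across the $K$ independent coordinates. Normalizing by an arbitrary global null to rewrite each coordinate as either a local statistic $\Lambda_j(n)$ (for the alternative) or $1$ (for the null), I obtain
\begin{align*}
\Lambda_{k,\rm iso}(n) \; = \; \Lambda_k(n) \, \cdot \, \frac{\max \lt\{ \prod_{j \in B} \Lambda_j(n) \,:\, B \subseteq [K] \setminus \{k\}, \; |B| \in [l-1, u-1] \rt\}}{\max \lt\{ \prod_{j \in A} \Lambda_j(n) \,:\, A \subseteq [K] \setminus \{k\}, \; |A| \in [l, u] \rt\}}.
\end{align*}
Writing $\Lambda_{(1)}^{(-k)}(n) \geq \dots \geq \Lambda_{(K-1)}^{(-k)}(n)$ for the ordered local statistics with $k$ removed and $p^{(-k)}(n)$ for the number of them exceeding $1$, the optimal $B$ (resp.\ $A$) of each admissible size is a top slice, and the function $m \mapsto \prod_{i=1}^{m} \Lambda_{(i)}^{(-k)}(n)$ is unimodal with peak at $m = p^{(-k)}(n)$; so each maximum is attained at the median of $p^{(-k)}(n)$ and the endpoints of the size range.

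From this closed form I would proceed by case analysis on $p(n)$, tracking in each case (a) the value of $\Lambda_{k, \rm iso}(n)$ for each $k$ (which depends on whether $k$ is in the top-$p(n)$ or not, through $p^{(-k)}(n) = p(n) - \mathbbm{1}\{k \in \text{top-}p(n)\}$), (b) the membership $D_{\rm iso}(n) = \{k : \Lambda_{k,\rm iso}(n) > 1\}$ and its size, and (c) the translation of the threshold conditions $\Lambda_{k,\rm iso}(n) \geq B$ or $\leq 1/A$ into inequalities on the ordered $\Lambda_{(i)}(n)$. Part (i) reduces to a single case: when $l = u$ the two size ranges are singletons and $\Lambda_{k,\rm iso}(n) = \Lambda_k(n)/\Lambda_{(l)}^{(-k)}(n)$, so $D_{\rm iso}(n)$ is always the top-$l$ (hence in $\Psi_{l,l}$) and the two-sided criterion collapses to $\Lambda_{(l)}(n) \geq \max\{A,B\}\,\Lambda_{(l+1)}(n)$. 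For part (ii), in the interior regime $p(n) \in (l, u)$ both maxima are attained at $m = p^{(-k)}(n)$ for every $k$ and the ratio collapses to $1$, giving $\Lambda_{k,\rm iso}(n) = \Lambda_k(n)$ and the condition $\Lambda_k(n) \notin (1/A, B)$ for all $k$, which is $T_4$; on the upper boundaries ($p(n) = u$ or $p(n) > u$) and lower boundaries ($p(n) = l$ or $p(n) < l$) the numerator and denominator maxima are taken at \emph{different} slice sizes whenever $k$'s removal pushes $p^{(-k)}(n)$ out of the range $[l,u]$, and simplifying the resulting ratios yields $T_5$ and $T_6$ above and $T_1$, $T_2$ below. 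When $u = K$ the cases $p(n) = u$ and $p(n) > u$ merge into ``$p(n) > l$'' since $\Lambda_{(u+1)}(n)$ is vacuous, collapsing $T_4, T_5, T_6$ into $T_3$. Since the regimes of $p(n)$ are disjoint and exhaustive, $T^*$ is the minimum of the corresponding $T_i$'s.

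The main obstacle is the boundary case analysis in part (ii). Within each regime of $p(n)$, one must further partition by whether $k$ lies in the top-$p(n)$, strictly between rank $p(n)$ and the relevant boundary rank ($l$ or $u$), or further down; each sub-regime yields a slightly different form for $\Lambda_{k,\rm iso}(n)$ (involving either $\Lambda_{(l)}(n)$, $\Lambda_{(l+1)}(n)$, $\Lambda_{(u)}(n)$, or $\Lambda_{(u+1)}(n)$ in the denominator), and one must identify the tightest inequality across \emph{all} $k$ and all sub-regimes to arrive at the single combined threshold such as $\max\{A,B\}\,\Lambda_{(l+1)}(n)$ in $T_1$ or $\max\{B,\,A\,\Lambda_{(u+1)}(n)\}$ in $T_5$. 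Verifying that the resulting $D_{\rm iso}(n)$ always has size in $[l, u]$ in each regime, so that the constraint $D_{\rm iso}(n) \in \Psi$ does not add further restrictions beyond those captured by the $T_i$'s, is the final bookkeeping needed to close the argument.
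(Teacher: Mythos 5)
Your proposal is correct and follows essentially the same route as the paper's proof: factorize $\Lambda_{k,\rm iso}(n)$ under independence into a ratio of products of ordered local statistics excluding $k$, use unimodality of the partial products to reduce the two maximizations to median-type indices in $\{l-1, p_k(n), u-1\}$ and $\{l, p_k(n), u\}$ (the paper's Lemma \ref{max_char_gap_int}), case-analyze on the value of $p(n)$ using $p_k(n) \in \{p(n)-1, p(n)\}$ to get the explicit forms in each regime (the paper's Lemmas \ref{iso_char_gap_int1}--\ref{iso_char_gap_int5} and \ref{iso_char_gap}), and finally observe that $|D_{\rm iso}(n)| = \operatorname{median}(l, p(n), u)$ is automatically in $[l,u]$ so the membership constraint is vacuous. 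The remaining bookkeeping you flag — combining the per-$k$ two-sided threshold conditions into the single ordered-statistic inequalities defining $T_1, \dots, T_6$ — is exactly what the paper carries out in the proof of this theorem.
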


\begin{proof}
The proof can be found in Appendix \ref{pf:th:iso_omega}.
\end{proof}

\begin{remark}
When $l=u$, the stopping time  in Theorem \ref{th:iso_omega}   coincides with  the so-called ``gap rule'', which was proposed in \cite{song2017asymptotically} and  where its  asymptotic optimality was established only in the case that the sources generate iid data.   When $l<u$, on the other hand,  the stopping time in Theorem \ref{th:iso_omega}   differs from the  ``gap-intersection rule''  proposed  in \cite{song2017asymptotically}. In particular, it is  defined up to only 2, instead of 4, distinct thresholds,  while being the  minimum of 6, instead of 3, stopping times.
\end{remark}

%\subsection{Joint detection and isolation}
We continue with the  case that $l = 0$, where there is also a detection task. 
% detection task is present. %and  consider the stopping rules for  the problems of pure detection and  joint detection and isolation. 

\begin{theorem}\label{th:dete_indep}
Suppose that $l=0$ and $s_k = [K]$  for every $k \in [K]$. Then, the stopping times, $T_0$ and $T_{det}$,  defined as in Subsection \ref{subsubsec: pure_detection_test}, take the following form:
 %$T^*_{det} = T_0 \wedge T_{det}$, where
\begin{align*}
T_{0} &= \inf\lt\{n \in \bN : \; \Lambda_{(1)}(n) \leq 1/C\rt\}, \quad T_{det} =  \inf \{n \in \bN : \; \prod_{i = 1}^{p(n) \wedge u} \Lambda_{(i)}(n) \geq D \}.
\end{align*}
If, also, $s'_k= [K]$  for every $k \in [K]$, then
\begin{align*}
T_{joint} &=
\begin{cases} 
T_1 \wedge T_3 \wedge T_4 \wedge T_5  &  \text{if} \quad u<K\\
T_1 \wedge T_2   & \text{if} \quad u=K 
 \end{cases},
%D^* &= 
%\begin{cases}
%\emptyset &\text{if} \quad T^* = T_0, \\
%\lt\{i_1(T^*), \dots, i_{p(T^*)}(T^*)\rt\} & \text{if} \quad u=K \\ 
%\lt\{i_1(T^*), \dots, i_{p(T^*) \wedge u}(T^*)\rt\}\ & \text{otherwise}
%\end{cases},
\end{align*}
where %$T_0$ is as in Theorem \ref{th:dete_indep}, and 
\begin{align*}
T_{1} &= \inf \{n \in \bN : \; \Lambda_{(1)}(n) \geq D, \quad \Lambda_{(2)}(n) \leq \min\{1/A, \; \Lambda_{(1)}(n) / B\} \} ,\\
T_{2} &= \inf \{n \in \bN : \; p(n) >1, \quad \prod_{i = 1}^{p(n)} \Lambda_{(i)}(n) \geq D,  \quad \Lambda_k(n) \notin \lt(1/A, B\rt)\quad  \forall \; k \in [K] \} , \\
%T_{0} &= \inf \{n \in \bN : \Lambda_{(1)}(n) \leq 1/C \},\\
%T_{1} &= \inf \{n \in \bN : \Lambda_{(1)}(n) \geq D, \quad \frac{\Lambda_{(1)}(n)}{\Lambda_{(2)}(n)} \geq B, \quad \Lambda_{(2)}(n) \leq 1/A  \},\\
T_{3} &= \inf \{n \in \bN :  \; p(n) \in (1, u), \quad \prod_{i = 1}^{p(n)} \Lambda_{(i)}(n) \geq D, \quad \Lambda_k(n) \notin \lt(1/A, B\rt) \quad  \forall \; k \in [K] \},\\
T_{4} &= \inf \{n \in \bN : \;  p(n) = u, \quad \prod_{i = 1}^{u} \Lambda_{(i)}(n) \geq D, \quad  \Lambda_{(u)}(n) \geq  \max \lt\{ B, \; A \, \Lambda_{(u+1)}(n)  \rt\} \},\\
T_{5} &= \inf \{n \in \bN : \;  p(n) > u, \quad \prod_{i = 1}^{u} \Lambda_{(i)}(n) \geq D, \quad   \Lambda_{(u)}(n) \geq 
\Lambda_{(u+1)}(n) \max\{A , B\}   \}.
\end{align*}
%Furthermore, if $A, B, C$ and $D$ are selected such that  $\chi^* \in \cC(\alpha, \beta, \gamma, \delta; \Psi)$ with
%$\log A \sim |\log\delta|$, $\log B \sim |\log\gamma|$, $\log C \sim |\log\beta|$, and $\log D \sim |\log \alpha|$,
%for example, according to the maximum of the corresponding quantities in \eqref{thresholds_I} and \eqref{thresholds_D}, %as in Theorem \ref{errcon}. 
%then the rule is AO.
%with $$A = \frac{K}{\delta}\sum_{m=1}^{u \wedge K-1} C_m^{K-1}, \quad \quad B = \frac{K}{\gamma}\sum_{m=1}^u C_{m-1}^{K-1}, \quad \quad C = \frac{1}{\beta}, \quad \quad D = \frac{K}{\alpha}\sum_{m=1}^u C_{m-1}^{K-1}.$$
%$A = C = \frac{K+1}{\delta}\sum_{m=1}^{u \wedge K-1} C_m^{K-1}, \quad B = D = \frac{K}{\gamma}\sum_{m=1}^u C_{m-1}^{K-1},$$. %\in \cC_{fwer}(\gamma, \delta; \Psi_{0, u})$
\end{theorem}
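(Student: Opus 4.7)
The plan is to leverage the independence assumption \eqref{product} together with $s_k = s'_k = [K]$ so that both the detection and isolation statistics factorize into products of the local statistics $\Lambda_k(n)$ defined in \eqref{LLR}, and then to enumerate cases according to the value of $p(n)$ relative to $u$. First, under \eqref{product} the global likelihood ratio splits as $\Lambda_n(\sfP, \sfP_0) = \prod_{k = 1}^K \Lambda_n(\sfP^k, \sfP_0^k)$, so maximizing over $\cG_{\Psi,k}^{[K]}$ (respectively $\cH_0^{[K]}$) under the cardinality constraint $|\cA(\sfP)| \leq u$ reduces to an independent source-by-source maximization. The ratio then collapses to
$$
\Lambda_{k, \rm det}(n) \;=\; \Lambda_k(n) \cdot \max_{S \subseteq [K] \setminus \{k\},\, |S| \leq u-1} \prod_{j \in S} \Lambda_j(n).
$$
In particular $\Lambda_{k, \rm det}(n) \geq \Lambda_k(n)$ (take $S = \emptyset$), so the condition defining $T_0$ collapses to $\Lambda_{(1)}(n) \leq 1/C$, and the supremum over $k$ of the right-hand side above is attained at $k = i_1(n)$ with $S = \{i_2(n), \ldots, i_{p(n) \wedge u}(n)\}$, yielding $\max_k \Lambda_{k, \rm det}(n) = \prod_{i = 1}^{p(n) \wedge u} \Lambda_{(i)}(n)$ and hence the stated form of $T_{det}$.

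Next, the same factorization gives
$$
\Lambda_{k, \rm iso}(n) \;=\; \Lambda_k(n) \cdot \frac{\max_{S \subseteq [K] \setminus \{k\},\, |S| \leq u-1} \prod_{j \in S} \Lambda_j(n)}{\max_{S' \subseteq [K] \setminus \{k\},\, 1 \leq |S'| \leq u} \prod_{j \in S'} \Lambda_j(n)},
$$
and I would translate the one-sided inequalities $\Lambda_{k, \rm iso}(n) \geq B$ and $\Lambda_{k, \rm iso}(n) \leq 1/A$ into explicit inequalities among the ordered local statistics, splitting according to whether $k$ lies in or outside $\{i_1(n), \ldots, i_{p(n) \wedge u}(n)\}$. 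By Theorem \ref{decision_indep}, $D_{\rm iso}(n) = \{i_1(n),\dots,i_{p(n)\wedge u}(n)\}$, so the constraint $D_{\rm iso}(n) \in \Psi \setminus \emptyset$ simply reads $p(n) \geq 1$.

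Finally, to identify $T_{joint}$, I would partition the event $\{T_{joint} = n\}$ by $p(n)$ versus $u$. When $u = K$ the only nontrivial cases are $p(n) = 1$ and $p(n) > 1$, producing $T_1$ and $T_2$ respectively; when $u < K$ the four cases $p(n) = 1$, $1 < p(n) < u$, $p(n) = u$, and $p(n) > u$ produce $T_1$, $T_3$, $T_4$, and $T_5$ respectively. For example, when $p(n) = u < K$, the isolation inequality at the $u$-th largest source forces $\Lambda_{(u)}(n) \geq B$, the isolation inequality at the $(u+1)$-th forces $\Lambda_{(u+1)}(n) \leq \Lambda_{(u)}(n)/A$, and combined with the detection condition $\prod_{i=1}^u \Lambda_{(i)}(n) \geq D$ this reproduces exactly $T_4$.

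The main obstacle will be the combinatorial bookkeeping in translating the numerator/denominator ratios defining $\Lambda_{k, \rm iso}(n)$ into the clean threshold conditions in $T_1$--$T_5$, and verifying that enforcing those conditions simultaneously for \emph{every} $k \in [K]$ (including both declared and non-declared units) reduces the intersection of the three stopping requirements to the union indicated in the statement. This step closely parallels the corresponding analysis in the pure isolation setting of Theorem \ref{th:iso_omega}, whose combinatorial arguments I would adapt case by case.
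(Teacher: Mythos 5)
Your plan takes essentially the same route as the paper: factorize the global likelihood ratio using independence so that the detection and isolation statistics become explicit functions of the ordered local statistics, then case-split on $p(n)$ relative to $u$; this parallels the paper's use of Lemma \ref{max_char_gap_int}, Lemma \ref{max_det_char}, and Lemmas \ref{iso_char_gap_int2}--\ref{iso_char_gap_int5}.

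One small but worth-fixing slip: your claimed identity $\max_{k}\Lambda_{k,\rm det}(n)=\prod_{i=1}^{p(n)\wedge u}\Lambda_{(i)}(n)$ is false when $p(n)=0$. In that case the optimal $S$ is empty, so $\Lambda_{k,\rm det}(n)=\Lambda_k(n)$ for every $k$ and $\max_k\Lambda_{k,\rm det}(n)=\Lambda_{(1)}(n)<1$, whereas the product on the right is the empty product, equal to $1$. The paper's Lemma \ref{max_det_char} gives the correct general formula $\max_k\Lambda_{k,\rm det}(n)=\prod_{j=1}^{m(n)}\Lambda_{(j)}(n)$ with $m(n)=\mathrm{median}\{1,p(n),u\}$, which reduces to $\Lambda_{(1)}(n)$ when $p(n)=0$ and to $\prod_{i=1}^{p(n)\wedge u}\Lambda_{(i)}(n)$ when $p(n)\geq 1$. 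To salvage the argument you should add the observation (made explicitly in the paper's proof) that because $D>1$ the stopping event $\{\max_k\Lambda_{k,\rm det}(n)\geq D\}$, and likewise $\{\prod_{i=1}^{p(n)\wedge u}\Lambda_{(i)}(n)\geq D\}$, can occur only when $p(n)\geq 1$; intersecting both stopping events with $\{p(n)\geq 1\}$ makes the two quantities agree and the stated form of $T_{det}$ follows. The same remark is what justifies restricting the $T_{joint}$ case analysis to $p(n)\geq 1$.
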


\begin{proof}
The proof can be found in Appendix \ref{pf:th:dete_indep}.
\end{proof}

\begin{remark}
The corresponding rule for the pure detection problem has been  considered in \cite{Inv_seq_det2001, det_Tart2002, tartakovsky2003sequential,  fellouris2017multichannel}.%, \textcolor{red}{however, when there is no prior information. }
\end{remark}

Subsection \ref{jdi_to_fwer} implies that  setting $A = C$ and $B = D$  in Theorem \ref{th:dete_indep} leads to an asymptotically optimal test with respect to \textit{familywise error control},  since $s_k = s'_k = [K]$ for every $k \in [K]$.   In the next theorem we show that such an asymptotically optimal,  and \textit{equally feasible}, test can also be obtained even with 
$s_k=\{k\}$ and $s'_k = \{k\}$ (resp. $[K]$) for every $k \in [K]$ when $u = K$ (resp. $u < K$).

\begin{theorem}\label{th:indep_fwer}
Suppose that  $l=0$ and $s_k = \{k\}$  for every $k \in [K]$. Then $\chi^*_{fwer}$ is asymptotically optimal, as $\gamma, \delta \to 0$, under every $\sfP \in \cP_\Psi$
\begin{enumerate}
\item[(i)]  when $u=K$ and  $s'_k =\{k\}$    for every $k \in [K]$,  in which case 
\begin{align*}
T^*_{fwer} &= \inf \lt\{n \in \bN : \; \Lambda_k(n) \notin \lt(1/A, B\rt) \quad \forall \; k \in [K ]\rt\},
\end{align*}
\item[(ii)]  when  $u<K$ and  $s'_k =[K]$    for every $k \in [K]$, in which case 
\begin{align*}
T^*_{fwer} &= T_0 \wedge T_1 \wedge T_2 \wedge T_3 \wedge T_4,
\end{align*}
where
\begin{align*}
T_{0} &= \inf\lt\{n \in \bN : \; \Lambda_{(1)}(n) \leq 1/A\rt\},\\
T_{1} &= \inf\lt\{n \in \bN : \; \Lambda_{(1)}(n) \geq B, \quad \Lambda_{(2)}(n) \leq \min\{1/A, \; \Lambda_{(1)}(n) / B\} \rt\},\\
T_{2} &= \inf\lt\{n \in \bN : \;  p(n) \in (1, u), \quad \Lambda_{(1)}(n) \geq B,  \quad \Lambda_k(n) \notin \lt(1/A, B\rt) \quad \forall  \; k \in [K] \rt\},\\
T_{3} &= \inf\lt\{n \in \bN : \; p(n) = u,  \quad \Lambda_{(u)}(n) \geq \max \lt\{B, \;  A \, \Lambda_{(u+1)}(n) \rt\}\rt\},\\ %\quad \textcolor{red}{\Lambda_{(1)}(n) \geq B},
T_{4} &= \inf\lt\{n \in \bN :\;  p(n) > u, \quad  \Lambda_{(1)}(n) \geq B, \quad \Lambda_{(u)}(n) \geq \Lambda_{(u + 1)}(n) \;  \max\{A, B\}   \rt\}.
\end{align*}
\end{enumerate}
\end{theorem}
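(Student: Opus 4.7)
The plan is to handle the two claims of the theorem in sequence: first derive the explicit form of $T^*_{fwer}$ in each part, and then establish asymptotic optimality via the general theory of Section \ref{sec:asymp_opt}. Recalling that, by Subsection \ref{jdi_to_fwer}, $T^*_{fwer}=T_0\wedge T_{joint}$ with $A_e=C_e=A$ and $B_e=D_e=B$, and that $s_k=\{k\}$ makes $\Lambda_{k,\rm det}(n)=\Lambda_k(n)$, I would immediately get $T_0=\inf\{n:\Lambda_{(1)}(n)\leq 1/A\}$ in both cases.

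For part (i), with $s'_k=\{k\}$, the isolation statistics also reduce to local statistics, and since $u=K$ the constraint $D_{\rm iso}(n)\in\Psi_{0,K}\setminus\emptyset$ reduces to ``at least one $\Lambda_k(n)>1$''. Combining the defining events of $T_0$ and $T_{joint}$ collapses to the single requirement that every $\Lambda_k(n)$ lies outside $(1/A,B)$, yielding the stated formula. For part (ii), with $s'_k=[K]$, the independence assumption \eqref{product} lets me factor the maximizations defining $\Lambda_{k,\rm iso}(n)$ into (a) the marginal at $k$ (signal versus null) and (b) a combinatorial optimization over the size-constrained signal configuration, giving
\begin{equation*}
\Lambda_{k,\rm iso}(n) \,=\, \Lambda_k(n)\cdot\frac{N^{(-k)}_{u-1}(n)}{D^{(-k)}_u(n)},
\end{equation*}
where $N^{(-k)}_{u-1}(n)$ (resp.\ $D^{(-k)}_u(n)$) is the maximum of $\prod_{j\in S}\Lambda_j(n)$ over $S\subseteq[K]\setminus\{k\}$ with $|S|\leq u-1$ (resp.\ $1\leq|S|\leq u$). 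I would then case-analyze on $p(n)$: for $p(n)=0$ only $k_*:=i_1(n)$ can enter $D_{\rm iso}(n)$ and the only relevant stopping time is $T_0$; for $p(n)=1$ the isolation statistic at $k_*$ becomes $\Lambda_{(1)}/\Lambda_{(2)}$ while the others reduce to local statistics, yielding $T_1$; for $1<p(n)<u$ all isolation statistics equal the local ones, yielding $T_2$; for $p(n)=u$ the formula gives $\Lambda_{(r)}$ when $r\leq u$ and $\Lambda_{(r)}/\Lambda_{(u)}$ when $r>u$, yielding $T_3$; and for $p(n)>u$ it gives $\Lambda_{(r)}/\Lambda_{(u+1)}$ when $r\leq u$ and $\Lambda_{(r)}/\Lambda_{(u)}$ when $r>u$, yielding $T_4$. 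Taking the minimum of these reconstitutes $T^*_{fwer}$.

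For asymptotic optimality, under independence and $\Psi=\Psi_{0,u}$ the key identities are $\cI(\sfP,\cH_0)=\sum_{k\in\cA(\sfP)}\cI(\sfP^k,\cH^k)$ and $\cI(\sfP,\cH_{\Psi,k})=\cI(\sfP^k,\cH^k)$ for $k\in\cA(\sfP)$. For $\sfP\in\cH_0$, condition \eqref{condPminusP0} holds with $s_k=\{k\}$, so Theorem \ref{PminusP0} delivers the asymptotic optimality. For $\sfP\in\cP_\Psi\setminus\cH_0$, condition \eqref{condP1} holds because both sides equal $\cI(\sfP^k,\cH^k)$, and condition \eqref{condP2} holds because either $u=K$ makes adding a signal at $k$ unconstrained (part (i)) or $s'_k=[K]$ trivializes the equality (part (ii)); condition \eqref{condP0} fails when $|\cA(\sfP)|>1$ since the sum exceeds its maximum summand, but both inequalities in \eqref{FWERcondP1} hold (the first because $\sum\geq\min$ among positive terms, the second because its LHS is the maximum and its RHS the minimum of the same collection $\{\cI(\sfP^k,\cH^k):k\in\cA(\sfP)\}$), so Remark \ref{rem:fwer} lets me invoke Theorem \ref{P0P1P2} without \eqref{condP0}. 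The main obstacle is the case analysis in part (ii): the form of $\Lambda_{k,\rm iso}(n)$ depends both on the position of $k$ in the ordering of local statistics and on $p(n)$ relative to $u$, and careful bookkeeping is required to verify that the resulting stopping conditions assemble into exactly the five sub-stopping-times $T_0,\ldots,T_4$.
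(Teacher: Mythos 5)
Your derivations of the explicit forms of $T^*_{fwer}$ in both parts are correct and follow essentially the same combinatorial route as the paper (your $N^{(-k)}_{u-1}/D^{(-k)}_{u}$ reformulation is just a repackaging of Lemma~\ref{max_char_gap_int}, and the $p(n)$-case-analysis matches Lemmas~\ref{iso_char_gap_int1}--\ref{iso_char_gap_int5}). The optimality argument under the null, via \eqref{condPminusP0} and Theorem~\ref{PminusP0}, is also fine.

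There is, however, a genuine gap in the optimality argument for part~(i) when $|\cA(\sfP)|=1$. You assert that ``condition~\eqref{condP1} holds because both sides equal $\cI(\sfP^k,\cH^k)$,'' and treat the case where \eqref{condP0} holds (namely $|\cA(\sfP)|=1$) as the case where Theorem~\ref{P0P1P2} applies directly with all three conditions. But by Lemma~\ref{P-P1}, when $|\cA(\sfP)|=1$ with $\cA(\sfP)=\{\{k_1\}\}$ one has
\begin{equation*}
\min_{\{k\}\in\cA(\sfP)}\cI(\sfP,\cH_{\Psi,k}) \;=\; \cI(\sfP^{k_1},\cH^{k_1}) \;+\; \min_{\{k\}\notin\cA(\sfP)}\cI(\sfP^k,\cG^k),
\end{equation*}
which is strictly larger than the left side of \eqref{condP1} computed with $s'_{k_1}=\{k_1\}$, namely $\cI(\sfP^{k_1},\cH^{k_1})$. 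So \eqref{condP1} \emph{fails} precisely in the case where you rely on it. The correct repair, which the paper uses, is to apply Remark~\ref{rem:fwer} on the other side: when $|\cA(\sfP)|=1$ one has $\cI(\sfP,\cH_0)=\cI(\sfP^{k_1},\cH^{k_1})$ by Lemma~\ref{P-P0}, so both inequalities in \eqref{FWERcondP0} hold, allowing Theorem~\ref{P0P1P2} to be invoked \emph{without} \eqref{condP1}; and \eqref{condP0} does hold here as you observed. Thus the paper's argument for part~(i) bifurcates into $|\cA(\sfP)|=1$ (use \eqref{FWERcondP0} to drop \eqref{condP1}) and $|\cA(\sfP)|>1$ (use \eqref{FWERcondP1} to drop \eqref{condP0}), while you only handle the second half correctly. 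In part~(ii) this issue is masked since $s'_k=[K]$ trivially satisfies \eqref{condP1}, so your reasoning there, though stated imprecisely, reaches the right conclusion.
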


\begin{proof}
The proof can be found in Appendix \ref{pf:th:indep_fwer}.
\end{proof}

\begin{remark}
When  $u=K$, the stopping rule in Theorem \ref{th:indep_fwer}  coincides with the    ``Intersection rule'',  proposed in \cite{de2012sequential, de2012step}, whose asymptotic optimality was established in  \cite{song2017asymptotically, song_fell2019}. 
 When $u<K$, on the other hand,    the stopping rule  in Theorem \ref{th:indep_fwer}    differs from the ``gap-intersection'' rule,  which was proposed  and analyzed in the iid setup in \cite{song2017asymptotically}, in that it is defined up to 2, instead of 4, distinct thresholds, while being the  minimum of 5, instead of 3, stopping times.
\end{remark}

\subsection{The general case}  \label{subsec:appl_anomalous_depen} 
In this subsection we do not assume that the independence assumption \eqref{product} holds,  and we do not make any assumption regarding the form of the prior information, $\Psi$.  We only assume that $\emptyset \in \Psi$, so that the detection problem is relevant, and  focus on the  joint detection and isolation problem, restricting our attention to the test  $\chi^*$, introduced in Subsection \ref{subsubsec: joint_test}. 

As mentioned earlier, if the subsystems are large enough, $\chi^*$ is asymptotically  optimal, but may not always  be easy to implement in this general setup. For this reason,  we focus on  the case that  the subsystems  are selected   as small as possible, i.e.,  $s_k =s'_k = \{k\}$ for every $k \in [K]$, so that the  implementation of  $\chi^*$  requires only the ordering of the local statistics, introduced in  \eqref{LLR}. For this  version of  $\chi^*$,  we upper bound the  asymptotic relative efficiency, 
%In order to analyze the asymptotic performance of $\chi^*$ in general, we introduce the following definition which is extensively used in Section \ref{sec:appl_dep} and \ref{sec:appl_depen_struc}. \begin{definition}We define the asymptotic relative efficiency of $\chi^*$ to be the following
\begin{equation} \label{ARE}
{\sf{ARE}}_{\sfP}[\chi^*] := \limsup \; \frac{\sfE_{\sfP}[T^*]}{\inf \lt\{ \sfE_{\sfP}[T]: \; 
(T, D) \in \cC_{\Psi}(\alpha, \beta,\gamma, \delta) \rt\}},
\end{equation}
%as $\alpha, \beta, \gamma, \delta \to 0$. %or $\alpha, \beta \to 0$, while $\gamma$ and $\delta$ are fixed.\end{definition}
as $\alpha, \beta, \gamma, \delta$ go to 0 at various relative rates,  for different  $\sfP \in \cP_{\Psi}$ and $\Psi$, and  establish sufficient conditions for its asymptotic optimality. Moreover, we specialize the above  results to the problem of detecting and identifying non-zero Gaussian means. \\

%By definition, this quantity is greater than one and it measures the performance loss implied by $\chi$ with respect to the optimal one.

%\begin{definition}%[Isolated pair]
%A data source is called \textit{isolated} \textcolor{red}{under $\sfP$} if it is independent of the rest of the sources  \textcolor{red}{under $\sfP$}.
%\end{definition}
%The following theorem provides the condition for asymptotic optimality of the proposed rules in the pure isolation regime for false positive when the true distribution belongs to $\cP(\Psi) \setminus \cP_0$.

We start with the asymptotic regime  where $\gamma$ (resp. $\delta$) goes to 0 much faster than $\alpha$ and $\delta$ (resp. $\gamma$). If, in particular, it is assumed  that there can be at most $u$ signals, i.e.,  $\Psi = \Psi_{0, u}$, we show that  the computationally simplest version of $\chi^*$ is asymptotically optimal  when the  true number of signals is larger than $1$ (resp. smaller than $u$), as long as  one of the  most difficult to isolate sources
%based on its own observations o,
 is a signal (resp. non-signal) independent of the other sources.

%The above theorem states that under the scenario when false negative rate is much smaller than the other possible error rates, asymptotic optimality is guaranteed when at least one source, in which null is true and is the most difficult to isolate based only on the observations from that source, is independent of the rest, and in this case asymptotic optimality is achieved even by setting the isolation subsystems $s'_k = \{k\}$ for every $k \in [K]$. This condition is often satisfied when the dependence structure is \textit{sparse}. 

%The second result states that in testing of means under Gaussian setup asymptotic optimality is achieved even by setting $s'_k = \{k\}$ for every $k \in [K]$ and it is not affected by any dependence between the streams.

\begin{theorem}\label{w_eP1_marg}
Suppose that $\emptyset \in \Psi$, $\sfP \in \cP_\Psi \setminus \cH_0$ and $s'_k = \{k\}$ for every $k \in [K]$. 

\begin{enumerate}
\item[(i)] If $\alpha, \beta, \gamma, \delta \to 0$ such that $|\log \gamma| >> |\log \alpha| \vee |\log \delta|$, then 
$${\sf{ARE}}_{\sfP}[\chi^*] \leq \frac{\min_{\{k\} \in \cA(\sfP)} \cI \lt( \sfP , \cH_{\Psi, k} \rt)}{\min_{\{k\} \in \cA(\sfP)} \cI \lt( \sfP^k , \cH^k \rt)}.$$
If, also, $\Psi = \Psi_{0, u}$, $|\cA(\sfP)| > 1$, and there is a source  that is independent of the other ones under $\sfP$ and achieves
$\min_{\{k\} \in \cA(\sfP)} \; \cI(\sfP^k, \cH^k)$, then $${\sf{ARE}}_{\sfP}[\chi^*] =1.$$

\item[(ii)]
If  $\alpha, \beta, \gamma, \delta \to 0$ such that $|\log \delta| >> |\log \alpha| \vee |\log \gamma|$, then
$${\sf{ARE}}_{\sfP}[\chi^*] \leq \frac{\min_{\{k\} \notin \cA(\sfP)} \cI \lt( \sfP , \cG_{\Psi,k} \rt)}{\min_{\{k\} \notin \cA(\sfP)} \cI \lt( \sfP^k , \cG^k \rt)}.$$
If also $\Psi = \Psi_{0, u}$, $|\cA(\sfP)| < u$, and there is a source that is independent of the other ones under  $\sfP$ and  achieves  $\min_{\{k\} \notin \cA(\sfP)} \; \cI(\sfP^k, \cG^k)$, then  $${\sf{ARE}}_{\sfP}[\chi^*]=1.$$
\end{enumerate}
\end{theorem}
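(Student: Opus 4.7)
The plan is to combine the general upper bound on $\sfE_\sfP[T^*]$ from Theorem \ref{ub_DI}(ii), specialized to $s'_k = \{k\}$, with the matching asymptotic expression for the optimal expected sample size provided by Theorem \ref{P0P1P2}, and then take ratios. For part (i), in the asymptotic regime $|\log \gamma| \gg |\log \alpha| \vee |\log \delta|$ only the middle term in Theorem \ref{ub_DI}(ii) survives; moreover, since $\sfQ \in \cH_{\Psi,k}$ implies $\sfQ^k \in \cH^k$ by definition (so $\cH_{\Psi,k}^k \subseteq \cH^k$),
\begin{align*}
\sfE_\sfP[T^*] \lesssim \max_{\{k\} \in \cA(\sfP)} \frac{|\log \gamma|}{\cI(\sfP^k, \cH_{\Psi,k}^k)} \leq \max_{\{k\} \in \cA(\sfP)} \frac{|\log \gamma|}{\cI(\sfP^k, \cH^k)}.
\end{align*}
At the same time, Theorem \ref{P0P1P2}, applied in this regime, gives
\begin{align*}
\inf\lt\{\sfE_\sfP[T] : (T, D) \in \cC_\Psi(\alpha, \beta, \gamma, \delta)\rt\} \sim \max_{\{k\} \in \cA(\sfP)} \frac{|\log \gamma|}{\cI(\sfP, \cH_{\Psi,k})}.
\end{align*}
Dividing these two asymptotic estimates yields the first upper bound on ${\sf ARE}_\sfP[\chi^*]$.

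For the tightness claim, assume $\Psi = \Psi_{0, u}$, $|\cA(\sfP)| > 1$, and that there exists $k^* \in \cA(\sfP)$ which is independent of the remaining sources under $\sfP$ and achieves $\cI(\sfP^{k^*}, \cH^{k^*}) = \min_{\{k\} \in \cA(\sfP)} \cI(\sfP^k, \cH^k)$. It suffices to show that $\min_{\{k\} \in \cA(\sfP)} \cI(\sfP, \cH_{\Psi,k}) = \min_{\{k\} \in \cA(\sfP)} \cI(\sfP^k, \cH^k)$. The direction ``$\geq$'' follows from the data-processing inequality $\cI(\sfP^k, \cH_{\Psi,k}^k) \leq \cI(\sfP, \cH_{\Psi,k})$ (a standard property of the information numbers, presented in Appendix \ref{app:info_num}) combined with the inclusion $\cH_{\Psi,k}^k \subseteq \cH^k$. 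For ``$\leq$'', let $\sfQ_0 \in \cH^{k^*}$ attain $\cI(\sfP^{k^*}, \cH^{k^*})$, write $\sfP^{-k^*}$ for the marginal of $\sfP$ on $[K] \setminus \{k^*\}$, and define the global distribution $\sfQ := \sfQ_0 \otimes \sfP^{-k^*}$. Since $|\cA(\sfP)| \geq 2$, the signal set of $\sfQ$ is $\cA(\sfP) \setminus \{k^*\}$, which is non-empty and of size at most $u$, so $\sfQ \in \cH_{\Psi,k^*}$. By the independence of $k^*$ from the remaining sources under $\sfP$ and the additivity of the KL divergence across independent coordinates,
\begin{align*}
\cI(\sfP, \sfQ) = \cI(\sfP^{k^*}, \sfQ_0) + \cI(\sfP^{-k^*}, \sfP^{-k^*}) = \cI(\sfP^{k^*}, \cH^{k^*}),
\end{align*}
whence $\cI(\sfP, \cH_{\Psi,k^*}) \leq \cI(\sfP^{k^*}, \cH^{k^*})$, yielding $\min_{\{k\} \in \cA(\sfP)} \cI(\sfP, \cH_{\Psi,k}) \leq \min_{\{k\} \in \cA(\sfP)} \cI(\sfP^k, \cH^k)$ as required.

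Part (ii) is entirely analogous: in the regime $|\log \delta| \gg |\log \alpha| \vee |\log \gamma|$ it is the $|\log \delta|$-term in Theorem \ref{ub_DI}(ii) that dominates, the relevant inclusion becomes $\cG_{\Psi,k}^k \subseteq \cG^k$, and in the tightness argument one takes $\sfQ := \sfQ_0 \otimes \sfP^{-k^*}$ with $\sfQ_0 \in \cG^{k^*}$ attaining $\cI(\sfP^{k^*}, \cG^{k^*})$; now the condition $|\cA(\sfP)| < u$ ensures that $\cA(\sfQ) = \cA(\sfP) \cup \{k^*\}$ has cardinality at most $u$, so $\sfQ \in \cG_{\Psi,k^*}$. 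The main obstacle in both parts is verifying that the constructed witness lies in the correct class of global distributions, and it is precisely here that the one-sided cardinality hypothesis on $|\cA(\sfP)|$ together with the product-type structure of $\Psi = \Psi_{0,u}$ are needed.
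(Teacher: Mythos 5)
Your proof is correct and follows essentially the same route as the paper. The upper bound from Theorem \ref{ub_DI}(ii) with $s'_k = \{k\}$, the lower bound on the infimum (whether via Lemma \ref{lem:DI_lb}(iii), which the paper uses directly, or via Theorem \ref{P0P1P2} with the full subsystems $s_e = s'_e = [K]$, as you do — both yield the same test-independent asymptotic for the infimum), and then the ratio computation are identical. For the tightness claim, the paper invokes Proposition \ref{prop:P1}(i) and verifies its hypothesis \eqref{prop:P1_1}, while you inline that argument: fix the independent minimizer $k^*$, take $\sfQ_0 \in \cH^{k^*}$ attaining $\cI(\sfP^{k^*},\cH^{k^*})$, form $\sfQ = \sfQ_0 \otimes \sfP^{-k^*}$, check that $\cA(\sfQ)=\cA(\sfP)\setminus\{k^*\}$ is nonempty of size at most $u$ using $|\cA(\sfP)|>1$ and $\Psi=\Psi_{0,u}$, and use additivity of the information numbers (Lemma \ref{info_ineq2}) to get $\cI(\sfP,\sfQ)=\cI(\sfP^{k^*},\cH^{k^*})$. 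This is exactly what Proposition \ref{prop:P1}(i) packages. Your explicit statement of the two-step chain $\cI(\sfP^k,\cH^k)\le\cI(\sfP^k,\cH_{\Psi,k}^k)\le\cI(\sfP,\cH_{\Psi,k})$ is a small improvement in clarity, since the paper's proof conflates $\cH^k$ with the projected class $\cH_{\Psi,k}^k$ without remark. Part (ii) is symmetric, as you observe.
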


\begin{proof}
The proof can be found in Appendix \ref{pf:w_eP1_marg}.
\end{proof}

We next consider the asymptotic regime where  $\alpha$  is much smaller than  $\gamma$ and $\delta$, and we show that 
the computationally simplest version of $\chi^*$
 is asymptotically optimal when  (i) there is exactly one  signal that is independent of the other sources, as well as   when (ii)  $\Psi_{1,1} \subseteq \Psi$,  there is no signal, and one of the most difficult to isolate  sources
%based on the observations from that source only, 
is independent of the rest.

%and in this case asymptotic optimality is achieved even by setting the detection subsystems $s_k = \{k\}$ for every $k \in [K]$. 

%This condition is often satisfied when the dependence structure is \textit{sparse}. 

\begin{theorem}\label{w_ePminusP0_marg}
Suppose that $\emptyset \in \Psi$ and $s_k = \{k\}$ for every $k \in [K]$. 
\begin{enumerate}
\item[(i)] If   $\sfP \in \cP_{\Psi} \setminus \cH_0$ and $\alpha, \beta, \gamma, \delta \to 0$  such that 
$|\log \alpha| >> |\log \gamma| \vee |\log \delta|,$ then 
$${\sf{ARE}}_{\sfP}[\chi^*] \leq \min_{\{k\} \in \cA(\sfP)}  \left\{ \frac{\cI(\sfP, \cH_0) }{\cI(\sfP^k, \cH^k)} \right\}.$$
If, also, there is  exactly one signal that is independent of the other sources under $\sfP$, then 
$${\sf{ARE}}_{\sfP}[\chi^*]=1.$$ 
\item[(ii)] If $\sfP \in \cH_0$ and $\alpha, \beta \to 0$, while  $\gamma$ and $\delta$ are either  fixed or go to $0$,  then 
$${\sf{ARE}}_{\sfP}[\chi^*] \leq \frac{\min_{k \in [K]} \cI \lt( \sfP , \cG_{\Psi,k} \rt)}{\min_{k \in [K]} \cI \lt( \sfP^k , \cG^k \rt)}.$$
If, also, $\Psi_{1,1} \subseteq \Psi$, and there is a source that is independent of the other ones under $\sfP$ and achieves 
$ \min_{k \in [K]} \; \cI(\sfP^k, \cG^k)$, then $${\sf{ARE}}_{\sfP}[\chi^*]=1.$$
\end{enumerate}
\end{theorem}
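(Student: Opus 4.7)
The plan is to derive the claimed bounds on ${\sf{ARE}}_{\sfP}[\chi^*]$ by pairing the upper bound on $\sfE_\sfP[T^*]$ from Theorem \ref{ub_DI} applied with the \emph{minimal} subsystems $s_k = s'_k = \{k\}$, with the first-order optimal constants from Theorems \ref{P0} and \ref{PminusP0}, which are attained with the \emph{maximal} subsystems $s_k = [K]$. The strict optimality claims (${\sf{ARE}}_{\sfP}[\chi^*] = 1$) will then follow from a construction of an extremal comparator distribution in the appropriate global class, exploiting the hypothesized source-independence together with the additivity of the Kullback--Leibler divergence under product measures.

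For part (i), with $s_k = s'_k = \{k\}$ Theorem \ref{ub_DI}(ii) provides three terms scaling with $|\log\alpha|$, $|\log\gamma|$ and $|\log\delta|$ in the upper bound on $\sfE_\sfP[T^*]$. Under the dominance $|\log \alpha| \gg |\log\gamma| \vee |\log\delta|$ only the first contributes to leading order, giving $\sfE_\sfP[T^*] \lesssim |\log\alpha|/\max_{\{k\} \in \cA(\sfP)}\cI(\sfP^k,\cH_0^k)$, where $\cI(\sfP, \cH_0; \{k\}) = \cI(\sfP^k, \cH_0^k)$. Dividing by the optimal constant $|\log\alpha|/\cI(\sfP,\cH_0)$ from Theorem \ref{P0} and using $\cH_0^k \subseteq \cH^k$, hence $\cI(\sfP^k,\cH_0^k) \geq \cI(\sfP^k,\cH^k)$, yields the stated ARE bound. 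For the equality, suppose $\cA(\sfP) = \{k^*\}$ with $X^{k^*}$ independent of $X^{[K]\setminus\{k^*\}}$ under $\sfP$. For each $\sfQ^{k^*} \in \cH^{k^*}$ approximating $\cI(\sfP^{k^*},\cH^{k^*})$ I will construct a global $\sfQ \in \cH_0$ with marginals $\sfQ^{k^*}$ on source $k^*$ and $\sfP^{[K]\setminus\{k^*\}}$ elsewhere, and with $X^{k^*}$ independent of the rest under $\sfQ$; since the units are singletons and $\sfP^k \in \cH^k$ for every $k \neq k^*$, this $\sfQ$ has $\cA(\sfQ) = \emptyset$. Additivity of $\cI$ under this product structure gives $\cI(\sfP,\sfQ) = \cI(\sfP^{k^*},\sfQ^{k^*})$, whence $\cI(\sfP,\cH_0) \leq \cI(\sfP^{k^*},\cH^{k^*})$; the reverse inequality follows from the data processing inequality combined with $\cH_0^{k^*} \subseteq \cH^{k^*}$. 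The derived bound thus collapses to $1$, and combined with the automatic inequality ${\sf{ARE}}_{\sfP}[\chi^*] \geq 1$ gives equality.

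Part (ii) is entirely analogous: with $s_k = \{k\}$ and $\sfP \in \cH_0$, Theorem \ref{ub_DI}(i) gives $\sfE_\sfP[T^*] \lesssim |\log\beta|/\min_{k \in [K]} \cI(\sfP^k,\cG_{\Psi,k}^k)$ as $\beta \to 0$, while Theorem \ref{PminusP0} yields the optimal constant $|\log\beta|/\min_{k \in [K]} \cI(\sfP,\cG_{\Psi,k})$; the inclusion $\cG_{\Psi,k}^k \subseteq \cG^k$ and a short manipulation of minima produce the stated ARE bound. For equality, let $k^*$ denote a source independent of the rest under $\sfP$ that achieves $\min_{k \in [K]} \cI(\sfP^k,\cG^k)$. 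The hypothesis $\Psi_{1,1} \subseteq \Psi$ ensures that the singleton signal set $\{k^*\}$ is admissible, so I can construct a global $\sfQ \in \cG_{\Psi,k^*}$ with $\sfQ^{k^*} \in \cG^{k^*}$ approximating $\cI(\sfP^{k^*},\cG^{k^*})$, $\sfQ^{[K]\setminus\{k^*\}} = \sfP^{[K]\setminus\{k^*\}}$, and $X^{k^*}$ independent of the rest under $\sfQ$. Additivity then yields $\cI(\sfP,\cG_{\Psi,k^*}) = \cI(\sfP^{k^*},\cG^{k^*})$, collapsing the two minima and giving ${\sf{ARE}}_{\sfP}[\chi^*] = 1$. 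The main obstacle throughout is carrying out these comparator constructions inside $\cP$, which implicitly relies on $\cP$ being closed under combining independent marginals from the stated hypothesis classes -- a standard assumption that holds automatically in the Gaussian setting of Subsection \ref{gauss_prob_setup}.
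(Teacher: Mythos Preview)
Your argument is correct and follows essentially the same route as the paper. Both proofs combine the upper bound of Theorem~\ref{ub_DI} (specialized to $s_k=s'_k=\{k\}$) with the universal lower bound on the infimum (the paper invokes Lemma~\ref{lem:DI_lb} directly, whereas you read the optimal constants off Theorems~\ref{P0} and~\ref{PminusP0}; these contain the same information). For the equality claims, the paper packages the comparator construction into Proposition~\ref{prop:P1} (parts (iii) and (iv)), while you carry out the same construction by hand: take $\sfQ=\sfQ_1^{\{k^*\}}\otimes\sfP^{[K]\setminus\{k^*\}}$, use additivity of $\cI$ under independence (Lemma~\ref{info_ineq2}), and match the global and marginal divergences. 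The two arguments are interchangeable.

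Two minor remarks. First, you are slightly more careful than the paper in one respect: you note that $\cI(\sfP,\cH_0;\{k\})=\cI(\sfP^k,\cH_0^k)$ and then use $\cH_0^k\subseteq\cH^k$ (and similarly $\cG_{\Psi,k}^k\subseteq\cG^k$) to pass to the local classes $\cH^k,\cG^k$; the paper writes $\cI(\sfP^k,\cH^k)$ directly, implicitly identifying $\cH_0^k$ with $\cH^k$. Your inequality suffices and is cleaner. Second, you correctly flag that the comparator $\sfQ_1^{\{k^*\}}\otimes\sfP^{[K]\setminus\{k^*\}}$ must belong to $\cP$; the paper makes exactly the same implicit closure assumption (it appears as the hypothesis \eqref{prop:P0_1} or \eqref{prop:PminusP0_1} in Proposition~\ref{prop:P1}, whose verification in the proof also stops at checking $\cA(\cdot)\in\Psi$).
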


\begin{proof}
The proof can be found in Appendix \ref{pf:w_ePminusP0_marg}.
\end{proof}

\subsection{Detection and isolation of non-zero Gaussian means}
We next specialize  Theorems \ref{w_eP1_marg} and \ref{w_ePminusP0_marg} to the  testing problem in Subsection \ref{test_mean}. % when  $\Psi = \Psi_{0, u}$. 
%Then the following result establishes upper bounds on ${\sf{ARE}}$ of $\chi^*$ under the asymptotic regimes considered in
\begin{corollary}\label{cor:dep_source_gauss}
Consider the setup of  Subsection \ref{test_mean}, and suppose that $\Psi = \Psi_{0, u}$ and
$s_k = s'_k = \{k\}$ for every $k \in [K]$.  Fix $\sfP \in \cP_\Psi$ and let $R$ be the correlation matrix and $\sigma_k^2$ (resp. $\mu_k$) be the variance (resp. mean) of $X_1^k$ for every $k \in [K]$ (resp. $\{k\} \in \cA(\sfP)$) under $\sfP$. 
%where $\mu_k$ is the mean  under $\sfP$.  
\begin{itemize}
\item[(i)] If $\alpha, \beta, \gamma, \delta \to 0$ such that $|\log \gamma| >> |\log \alpha| \vee |\log \delta|$ and $|\cA(\sfP)| > 1$, then
$${\sf{ARE}}_{\sfP}[\chi^*] \leq \min\lt\{R^{-1}_{jj} : \{j\} \in \argmin_{\{k\} \in \cA(\sfP)} \frac{\mu_k^2}{\sigma_k^2}\rt\}.
$$
Moreover,   ${\sf{ARE}}_{\sfP}[\chi^*] =1$ when there is a source that is independent of others and achieves 
$$\min_{\{k\} \in \cA(\sfP)} ({\mu_k^2}/{\sigma_k^2}).$$
\item[(ii)] If $\alpha, \beta, \gamma, \delta \to 0$ such that $|\log \delta| >> |\log \alpha| \vee |\log \gamma|$ and $1 \leq |\cA(\sfP)| < u$, then
$${\sf{ARE}}_{\sfP}[\chi^*] \leq \min\lt\{R^{-1}_{jj} : \{j\} \in \argmin_{\{k\} \notin \cA(\sfP)} \min_{\theta \in \cM_k \setminus \{0\}} \frac{\theta^2}{\sigma_k^2}\rt\}.$$ 
Moreover,   ${\sf{ARE}}_{\sfP}[\chi^*] =1$ when there is a source that is independent of others and achieves 
$$\min_{\{k\} \notin \cA(\sfP)} \min_{\theta \in \cM_k \setminus \{0\}} ({\theta^2}/{\sigma_k^2}).$$
\item[(iii)] If   $\sfP \in \cP_{\Psi} \setminus \cH_0$ and $\alpha, \beta, \gamma, \delta \to 0$  such that 
$|\log \alpha| >> |\log \gamma| \vee |\log \delta|,$ then 
$${\sf{ARE}}_{\sfP}[\chi^*] \leq \sum_{\{i\}, \{j\} \in \cA(\sfP)} \frac{\mu_iR^{-1}_{ij}\mu_j}{\sigma_i\sigma_j}\bigg/\max_{\{k\} \in \cA(\sfP)} \frac{\mu_k^2}{\sigma_k^2}.$$
\item[(iv)] If $\sfP \in \cH_0$ and $\alpha, \beta \to 0$, while  $\gamma$ and $\delta$ are either  fixed or go to $0$, then
$${\sf{ARE}}_{\sfP}[\chi^*] \leq \min\lt\{R^{-1}_{jj} : j \in \argmin_{k \in [K]} \min_{\theta \in \cM_k \setminus \{0\}} \frac{\theta^2}{\sigma_k^2}\rt\}.$$
Moreover,   ${\sf{ARE}}_{\sfP}[\chi^*] =1$ when there is a source that is independent of others and achieves 
$$\min_{k \in [K]} \min_{\theta \in \cM_k \setminus \{0\}} ({\theta^2}/{\sigma_k^2}).$$
\end{itemize}
\end{corollary}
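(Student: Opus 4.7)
The strategy is to apply Theorems~\ref{w_eP1_marg} and~\ref{w_ePminusP0_marg} by evaluating, or upper bounding, every information number that appears in their conclusions using the explicit Gaussian Kullback--Leibler formula~\eqref{gauss_info} from Remark~\ref{remark:iid}. The marginal pieces are immediate: taking $|s|=1$ and the relevant choices of means and variances in~\eqref{gauss_info} gives
\begin{align*}
\cI(\sfP^k, \cH^k) = \tfrac{1}{2}\,\mu_k^2/\sigma_k^2 \;\text{ for } \{k\} \in \cA(\sfP), \quad
\cI(\sfP^k, \cG^k) = \tfrac{1}{2}\min_{\theta \in \cM_k \setminus\{0\}}\theta^2/\sigma_k^2 \;\text{ for } \{k\} \notin \cA(\sfP),
\end{align*}
which supply the denominators appearing in each of the four parts of the corollary.

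The main computation is a suite of upper bounds on the joint divergences $\cI(\sfP, \cH_0)$, $\cI(\sfP, \cH_{\Psi, k})$, and $\cI(\sfP, \cG_{\Psi, k})$. The key observation is that when $\sfQ$ is taken to have the same covariance matrix $\Sigma = DRD$ as $\sfP$, where $D = \mathrm{diag}(\sigma_1, \ldots, \sigma_K)$, formula~\eqref{gauss_info} collapses to the quadratic form $\cI(\sfP, \sfQ) = \tfrac{1}{2}(\mu_\sfP - \mu_\sfQ)^T \Sigma^{-1}(\mu_\sfP - \mu_\sfQ)$, and $(\Sigma^{-1})_{ij} = R^{-1}_{ij}/(\sigma_i \sigma_j)$. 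For part~(iii), take $\sfQ$ with zero mean vector and the same $\Sigma$ as $\sfP$; this $\sfQ$ lies in $\cH_0$ and yields
\begin{align*}
\cI(\sfP, \cH_0) \;\leq\; \tfrac{1}{2} \sum_{\{i\},\{j\} \in \cA(\sfP)} \frac{\mu_i R^{-1}_{ij} \mu_j}{\sigma_i \sigma_j},
\end{align*}
which, combined with the marginal formula above, gives the bound in Theorem~\ref{w_ePminusP0_marg}(i). For parts~(i),~(ii), and~(iv), take $\sfQ$ that differs from $\sfP$ only in the $k$-th mean coordinate, zeroing $\mu_k$ in~(i) and replacing $\mu_k = 0$ by a minimizer $\theta_k^* \in \mathrm{argmin}_{\theta \in \cM_k \setminus \{0\}} \theta^2$ in~(ii) and~(iv). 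Only the diagonal $(k,k)$ term survives in the quadratic form, producing $\cI(\sfP, \cH_{\Psi, k}) \leq \tfrac{1}{2} (\mu_k^2/\sigma_k^2) R^{-1}_{kk}$ and $\cI(\sfP, \cG_{\Psi, k}) \leq \tfrac{1}{2}((\theta_k^*)^2/\sigma_k^2) R^{-1}_{kk}$.

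Substituting these bounds into Theorems~\ref{w_eP1_marg} and~\ref{w_ePminusP0_marg} settles the upper bounds in all four parts: in (i), (ii), and~(iv) the ratio of joint to marginal divergence is bounded by $R^{-1}_{kk}$ for every admissible $k$, and in particular for any $k$ achieving the marginal minimum, producing the announced $\min\{R^{-1}_{jj} : j \in \mathrm{argmin}\}$ expressions. The ARE$\,=1$ assertions then follow from the fact that if source $j$ is independent of the others under $\sfP$, the correlation matrix $R$ is block-diagonal and hence $R^{-1}_{jj} = 1$, so the upper bound collapses to $1$; the matching lower bound is automatic from the ``ARE$\,=1$'' halves of Theorems~\ref{w_eP1_marg}(i),~\ref{w_eP1_marg}(ii), and~\ref{w_ePminusP0_marg}(ii), whose hypotheses translate verbatim into the independence-plus-marginal-minimizer condition stated in the corollary. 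The main (and essentially only) subtle point is checking that the explicit $\sfQ$'s constructed above actually belong to $\cH_0$, $\cH_{\Psi, k}$, or $\cG_{\Psi, k}$ under $\Psi = \Psi_{0, u}$: this reduces to verifying $|\cA(\sfQ)| \leq u$ and either $\cA(\sfQ) = \emptyset$, $\cA(\sfQ) \neq \emptyset$, or $\{k\} \in \cA(\sfQ)$ as appropriate, all of which are direct consequences of the cardinality hypotheses $|\cA(\sfP)|>1$ in~(i), $|\cA(\sfP)|<u$ in~(ii), and $\cA(\sfP) = \emptyset$ in~(iv); the grid constraints on variances and correlations are preserved for free since $\Sigma$ is never modified, so no real obstacle arises.
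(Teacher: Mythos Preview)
Your proposal is correct and follows essentially the same route as the paper's proof: you compute the marginal divergences directly from~\eqref{gauss_info}, construct in each part a comparison distribution $\sfQ$ that shares the covariance $\Sigma$ with $\sfP$ and differs only in one (or all) mean coordinate(s), evaluate the resulting quadratic form $\tfrac12(\mu_\sfP-\mu_\sfQ)^T\Sigma^{-1}(\mu_\sfP-\mu_\sfQ)$, verify membership of $\sfQ$ in the appropriate class via the cardinality hypotheses, and feed the resulting bounds into Theorems~\ref{w_eP1_marg} and~\ref{w_ePminusP0_marg}; the ARE\,$=1$ statements are then read off from $R^{-1}_{jj}=1$ when source $j$ is independent of the rest. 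This is exactly what the paper does.
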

\begin{proof}
The proof of the upper bounds in (i)-(iv) can be found in Appendix \ref{pf:cor:dep_source_gauss}. To show when these upper bounds are equal to 1 and, as a result,  ${\sf{ARE}}_{\sfP}[\chi^*] = 1$,  we recall that no diagonal entry of the inverse of correlation matrix is smaller than $1$, i.e., $R^{-1}_{jj} \geq 1$ for every $j \in [K]$, and that the equality holds if and only if source $j$ is independent of the other sources. In fact, this condition for asymptotic optimality 
is more generally stated, for example, in Theorem \ref{w_eP1_marg}(i).
\end{proof}

\begin{comment}
On the other hand, as $\alpha, \beta, \gamma, \delta \to 0$  such that 
$|\log \alpha| >> |\log \gamma| \vee |\log \delta|,$
$${\sf{ARE}}_{\sfP}[\chi^*] \leq \frac{1}{\Sigma_{kk}^2}\sum_{\{i\}, \{j\} \in \cA(\sfP)}\Sigma^{-1}_{ij}.$$
\end{comment}
 
\section{Detection and isolation of a  dependence structure} \label{sec:appl_depen_struc}
We next  apply the general theory,  developed in  Sections  \ref{sec:prob_form}-\ref{sec:asymp_opt}, to the problem of  detecting and/or identifying  dependent  sources.  To be  specific,  throughout this section the units are pairs of data sources, i.e., 
$$\cK = \big\{\{i, j\} : 1 \leq i < j \leq K\big\},$$
and  the components of $X^e$   are  independent (resp. dependent) under $\sfP$ when   $\sfP^e$ belongs to $\cH^e$ (resp. $\cG^e$)
for  every  $e \in \cK$ and $\sfP \in \cP_\Psi$.  We  also assume that conditions  \eqref{max_series} and \eqref{comp_conv_onesided} are satisfied and,  as a result, all asymptotic optimality results in Section  \ref{sec:asymp_opt}  hold when the detection and isolation subsystems are equal to $[K]$.   The next theorem provides a setup where this choice of subsystems leads to a test whose implementation has relatively low complexity.  
For this result, as well as some others in this section, we  also  assume that, under any $\sfP \in \cP_\Psi$,
\begin{align}\label{indep_subs}
& \;\; s_1, s_2 \subseteq [K]  \;\; \text{are disjoint and   independent} \;\; & \Leftrightarrow  \quad  \{i, j\} \notin \cA(\sfP) \quad \forall \; i \in s_1,  j \in s_2.
\end{align}
This assumption is satisfied, for example,  in the Gaussian setup of  Subsection \ref{test_corr}.  % \textcolor{red}{This theorem uses the notations for the order statistics of $\Lambda_e(n)$, which was defined in the case of independent sources. So what should we do here? Introduce here}

\begin{comment}
\begin{theorem}\label{pure_det_dep} 
Suppose that \eqref{indep_subs} holds and that  $\Psi$ is either 
 $\Psi_{clus}$ or $ \Psi_{dis}$. If   $s_e = [K]$ for every $e \in \cK$,  then
\begin{align*}
T_0 &= \inf\{n \in \bN : \Lambda_{(1)}(n) \leq 1/C\}.
\end{align*}
When in particular $\Psi = \Psi_{dis}$,  then
\begin{align*}
T_{det} = \inf \{n \in \bN : \; \max_{\mathcal{B} \subseteq 
\{i_1(n), \dots i_{p(n)}(n)  \}  , \; \mathcal{B} \in \Psi_{dis} }  \prod_{e \in \mathcal{B}} \Lambda_{e}(n) \geq D \}.
\end{align*}
\end{theorem}
\end{comment}

\begin{theorem}\label{pure_det_dep} 
Suppose that \eqref{indep_subs} holds and $\Psi = \Psi_{dis}$. If $s_e = [K]$ for every $e \in \cK$,  then
\begin{align*}
T_0 &= \inf\{n \in \bN : \Lambda_{(1)}(n) \leq 1/C\},\\
T_{det} &= \inf \{n \in \bN : \; \max_{\mathcal{B} \subseteq 
\{i_1(n), \dots i_{p(n)}(n)  \}  , \; \mathcal{B} \in \Psi}  \prod_{e \in \mathcal{B}} \Lambda_{e}(n) \geq D \},
\end{align*}
where $T_0$ and $T_{det}$ are defined as in Subsection \ref{subsubsec: pure_detection_test}.
\end{theorem}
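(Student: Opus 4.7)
The plan is to rewrite the global detection statistic $\Lambda_{e,\mathrm{det}}(n)$ (with $s_e=[K]$) purely in terms of the local pairwise statistics $\{\Lambda_{e'}(n)\}_{e'\in\cK}$, and then to translate this identity into the two claimed formulas for $T_0$ and $T_{det}$.

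First I would observe that under \eqref{indep_subs} and $\Psi=\Psi_{dis}$, any $\sfP\in\cP_{\Psi}$ with $\cA(\sfP)=A$ factorizes as $\sfP=\bigl(\bigotimes_{e'\in A}\sfP^{e'}\bigr)\otimes\bigl(\bigotimes_{k\notin\cup A}\sfP^{k}\bigr)$, since the pairs in $A$ are pairwise disjoint and every cross-pair between them is a null unit, hence independent; similarly, each $\sfP_0\in\cH_0$ factorizes fully across sources, so $\sfP_0^{e'}=\sfP_0^i\otimes\sfP_0^j$ for every $e'=\{i,j\}$. Consequently,
\[
\Lambda_n(\sfP,\sfP_0)=\prod_{e'\in A}\Lambda_n(\sfP^{e'},\sfP_0^{e'})\cdot\prod_{k\notin\cup A}\Lambda_n(\sfP^{k},\sfP_0^{k}),
\]
while $\max_{\sfQ\in\cH_0}\Lambda_n(\sfQ,\sfP_0)=\prod_{k\in[K]}\max_{\sfQ^k}\Lambda_n(\sfQ^k,\sfP_0^k)$.

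Second, I would maximize the numerator of $\Lambda_{e,\mathrm{det}}(n)$ first over the free marginals for fixed $A\ni e$ and then over $A\in\Psi_{dis}$. The factors for $k\notin\cup A$ cancel identically against the corresponding factors in the denominator, and in each remaining factor the pair denominator decouples as $\max_{\sfQ^i}\Lambda_n(\sfQ^i,\sfP_0^i)\cdot\max_{\sfQ^j}\Lambda_n(\sfQ^j,\sfP_0^j)$ because $\cH^{e'}$ forces independence of the pair. The ratio in each factor is then exactly $\Lambda_{e'}(n)$, which yields the key identity
\[
\Lambda_{e,\mathrm{det}}(n)=\max_{A\in\Psi_{dis},\,e\in A}\prod_{e'\in A}\Lambda_{e'}(n).
\]

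Third, both formulas follow by a short algebraic argument. For $T_{det}$, taking the outer maximum over $e\in\cK$ gives $\max_{A\in\Psi_{dis},\,A\neq\emptyset}\prod_{e'\in A}\Lambda_{e'}(n)$; since the threshold $D\geq 1$ and $\Psi_{dis}$ is closed under taking subsets, an optimal $A$ can be restricted to pairs with $\Lambda_{e'}(n)>1$, i.e., $A\subseteq\{i_1(n),\dots,i_{p(n)}(n)\}$, which is the stated expression. For $T_0$, one direction uses $\{e\}\in\Psi_{dis}$ to get $\Lambda_{e,\mathrm{det}}(n)\geq\Lambda_e(n)$, so $\Lambda_{e,\mathrm{det}}(n)\leq 1/C$ for every $e$ forces $\Lambda_{(1)}(n)\leq 1/C$; conversely, when $\Lambda_{(1)}(n)\leq 1/C\leq 1$ every factor in $\prod_{e'\in A}\Lambda_{e'}(n)$ is at most $1$, so the product is dominated by any single factor and $\Lambda_{e,\mathrm{det}}(n)\leq\Lambda_e(n)\leq 1/C$.

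The principal technical obstacle is the factorization and cancellation in the second step: one must carefully argue that the maxima over the free marginal components distribute across the product (granted by \eqref{indep_subs}) and that the factors over sources outside $\cup A$ match exactly between numerator and denominator. Once this identity is established, the rest is routine bookkeeping using monotonicity of the product in factors greater than $1$.
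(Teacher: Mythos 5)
Your proof is correct and follows essentially the same route as the paper: decompose the GLR statistic using the disjoint-pair/independence structure to obtain the key identity $\Lambda_{e,\mathrm{det}}(n)=\max_{A\in\Psi_{dis},\,e\in A}\prod_{e'\in A}\Lambda_{e'}(n)$, then read off the stopping conditions using $C,D>1$. The only differences are cosmetic (the paper introduces auxiliary quantities $\Lambda^1_e,\Lambda^0_e$, and derives $T_0$ by noting $\max_e\Lambda_{e,\rm det}(n)<1\Leftrightarrow p(n)=0$ rather than by your direct two-sided inequality), and both are valid.
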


\begin{proof}
The proof can be found in Appendix \ref{pf:pure_det_dep}.
\end{proof}

In general,  however, setting $s_e=s'_e=[K]$  for every $e \in \cK$ may not  lead to computationally simple  tests.  For this reason,  in the remainder of this section we  obtain sufficient conditions  for asymptotic optimality when the subsystems are smaller than  $[K]$, and especially when  $s_e =s'_e =e$ for every $e \in \cK$.  When these conditions are not satisfied, we upper bound the asymptotic relative efficiency  of  $\chi^*$, in its simplest version.  
Moreover, we focus on the joint detection and isolation problem,  thus, we  assume that  $\emptyset \in \Psi$,  and restrict our attention to the test $\chi^*$, introduced in Subsection \ref{subsubsec: joint_test}. We also note that,  for every $\sfP \in \cP_\Psi$,
there is a disjoint partition of $[K]$,   $v_0, v_1, \dots, v_{L(\sfP)}$, such that 
$$\sfP = \sfP^{v_0} \otimes \sfP^{v_1} \otimes \dots \otimes \sfP^{v_{L(\sfP)}},$$
%Next, we denote by $\{V_1, \dots, V_{L(\sfP)} \}$ the set of independent subsystems each of which consist of dependent streams. 
where   $v_0$ is the subset of $[K]$ that consists of the independent sources under $\sfP$. This decomposition is used in the statement of the main results of this section.

%, %\textcolor{red}{i.e., $\sfP^{v_0} = \sfP_0^{v_0}$ for some $\sfP_0 \in \cH_0$ : this relation holds when \eqref{indep_subs} holds}, with the understanding that $v_0 = \emptyset$ when there are not any independent sources under $\sfP$.  %for any $e \in \cA(\sfP)$ there exists $l \in [L(\sfP)]$ such that $e \subseteq V_l$. 

\begin{remark}
Suppose that \eqref{indep_subs} holds. If $\Psi=\Psi_{clus}$, then $v_1, \dots, v_{L(\sfP)}$ are clusters themselves.  If $\Psi=\Psi_{dis}$, then $L(\sfP) = |\cA(\sfP)|$ and $|v_l| = 2$ for every $l \in [L(\sfP)]$.
\end{remark}

\subsection{General results}
 We start with the asymptotic regime where $\gamma$ (resp. $\delta$) goes to 0 much faster than $\alpha$ and $\delta$ (resp. $\gamma$). In this setup, we show that  $\chi^*$ is asymptotically optimal  when the isolation subsystem that corresponds to each dependent (resp. independent) pair contains the subsets in the partition that include that  pair. Moreover, we show that  $\chi^*$ is asymptotically optimal even with $s'_e = e$ for every $e \in \cK$, whenever one of the  most difficult to isolate,   dependent (resp. independent) pairs  is  independent of all  other data sources. 
%\end{enumerate}

\begin{theorem}\label{w_eP1}
Suppose that $\emptyset \in \Psi$,  $\sfP \in \cP_\Psi \setminus \cH_0$, and  $\alpha, \beta, \gamma, \delta \to 0$ such that $|\log \gamma| >> |\log \alpha| \vee |\log \delta|$.  
\begin{enumerate}
\item[(i)]  If, for every $e \in \cA(\sfP)$, there is an  $ l \in [L(\sfP)]$ so that  $s'_e \supseteq v_l  \supseteq e$, then \  ${\sf{ARE}}_{\sfP}[\chi^*]=1$.

%  when  $L(\sfP) > 1$.%\begin{equation*}s'_e \supseteq v_l \;\; \text{where} \;\; v_l  \supseteq e \;\; \text{for some?} \;\; l \in [L(\sfP)]. \end{equation*}

\item[(ii)]  If   $s'_e = e$ for every $e \in \cK$, then 
$${\sf{ARE}}_{\sfP}[\chi^*] \leq \frac{\min_{e \in \cA(\sfP)} \cI \lt( \sfP , \cH_{\Psi,e} \rt)}{\min_{e \in \cA(\sfP)} \cI \lt( \sfP^e , \cH^e \rt)}.$$
Moreover,  ${\sf{ARE}}_{\sfP}[\chi^*]=1$   when one of the following set of conditions is satisfied:
\begin{itemize}
\item  \eqref{indep_subs} holds, \,  $\Psi = \Psi_{dis}$, \, $|\cA(\sfP)| > 1$,
\item   $\Psi $ is either $\Psi_{dis}$, or $\Psi_{clus}$, or the powerset of $\cK$, $L(\sfP) > 1$, and there is a pair  of data sources that is independent of all other data sources under $\sfP$ and  achieves 
$$\min_{e \in \cA(\sfP)} \; \cI(\sfP^e, \cH^e).$$
\end{itemize}
\end{enumerate}
\end{theorem}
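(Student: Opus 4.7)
The plan is to derive, in the regime $|\log \gamma| \gg |\log \alpha| \vee |\log \delta|$, matching asymptotic formulas for the numerator $\sfE_\sfP[T^*]$ and the denominator in the definition of the ARE. The upper bound on $\sfE_\sfP[T^*]$ from Theorem \ref{ub_DI} and the universal lower bound on the infimum (derived via the change-of-measure argument used in the proof of Theorem \ref{P0P1P2}) both collapse in this regime to a single $|\log \gamma|$-dominated term, yielding
\begin{equation*}
{\sf{ARE}}_{\sfP}[\chi^*] \leq \frac{\min_{e \in \cA(\sfP)} \cI(\sfP, \cH_{\Psi, e})}{\min_{e \in \cA(\sfP)} \cI(\sfP, \cH_{\Psi, e}; s'_e)}.
\end{equation*}
This ratio is at least $1$ by data processing, and the goal is to show it equals $1$ in part (i) and to specialize it to $s'_e = e$ in part (ii), using the identity $\cI(\sfP, \cH_{\Psi, e}; e) = \cI(\sfP^e, \cH^e)$, which holds because any $\sfR \in \cH^e$ admits a non-null extension to a global distribution in $\cP_\Psi$ that still has $e$ as a non-signal.

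For the pointwise identity $\cI(\sfP, \cH_{\Psi, e}; s'_e) = \cI(\sfP, \cH_{\Psi, e})$ needed in part (i), the key construction is as follows. Given any $\sfQ^* \in \cH_{\Psi, e}$, I would build $\sfQ$ that agrees with $\sfQ^{*, v_l}$ on $v_l$, with $\sfP^{[K] \setminus v_l}$ on $[K] \setminus v_l$, and makes $v_l$ independent of $[K] \setminus v_l$ under $\sfQ$, mirroring the product structure of $\sfP$. Since $v_l$ is independent of its complement under both $\sfP$ and $\sfQ$, the divergence factorizes and
\begin{equation*}
\cI(\sfP, \sfQ) \;=\; \cI(\sfP^{v_l}, \sfQ^{*, v_l}) \;\leq\; \cI(\sfP^{s'_e}, \sfQ^{*, s'_e}),
\end{equation*}
the inequality by data processing (using $v_l \subseteq s'_e$). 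Optimizing the right-hand side over $\sfQ^*$ then gives the matching lower bound $\cI(\sfP, \cH_{\Psi, e}) \leq \cI(\sfP, \cH_{\Psi, e}; s'_e)$, provided the constructed $\sfQ$ lies in $\cH_{\Psi, e}$.

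The main obstacle is precisely this membership requirement: verifying $\cA(\sfQ) \in \Psi$ with $\sfQ$ non-null. The condition $e \notin \cA(\sfQ)$ is automatic from $\sfQ^{*, e} \in \cH^e$, but the structural constraint $\cA(\sfQ) \in \Psi$ requires a case-by-case check. Under Case A of part (ii), i.e., $\Psi = \Psi_{dis}$ together with \eqref{indep_subs} and $|\cA(\sfP)| > 1$, each signal is its own two-element cluster $v_l = e$, and replacing $\sfP^e$ by any $\sfR \in \cH^e$ leaves $\cA(\sfQ) = \cA(\sfP) \setminus \{e\}$, which is disjoint and non-empty. Under Case B, the hypothesized independence of the minimizer $e^*$ of $\cI(\sfP^e, \cH^e)$ from the remaining sources forces $e^* = v_l$ for some $l$; removing this isolated signal preserves membership in $2^{\cK}$ and $\Psi_{dis}$ trivially and in $\Psi_{clus}$ via a direct symmetric-difference check exploiting that $e^*$ is disjoint from every other signal, while $L(\sfP) > 1$ keeps $\cA(\sfQ)$ non-empty.

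Putting the pieces together, in both cases of part (ii) the construction produces $\sfQ \in \cH_{\Psi, e^*}$ with $\cI(\sfP, \sfQ) = \cI(\sfP^{e^*}, \cH^{e^*})$, so the upper bound in the ARE inequality is also a lower bound and equals $1$. For part (i), the same divergence calculation applies to each $e \in \cA(\sfP)$ separately once $v_l \subseteq s'_e$ is given; the residual verification is that the modification on $v_l$ keeps $\sfQ$ in $\cP_\Psi$, which is the single delicate step of the proof and where most of the remaining work lies.
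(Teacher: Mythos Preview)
Your proposal is correct and follows essentially the same route as the paper: combine the upper bound of Theorem~\ref{ub_DI}(ii) with the lower bound of Lemma~\ref{lem:DI_lb}(iii), reduce the resulting ARE ratio to condition~\eqref{condP1}, and verify~\eqref{condP1} via the product construction $\sfQ = \sfQ^{*,v_l}\otimes \sfP^{[K]\setminus v_l}$, checking that this $\sfQ$ lands in $\cH_{\Psi,e}$. The paper packages this construction as Proposition~\ref{prop:P1}(i) together with Lemma~\ref{w_e_simpli}(i), but the content is identical to what you wrote.

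Two small remarks. First, for the ARE bound in part~(ii) you only need the inequality $\cI(\sfP,\cH_{\Psi,e};e)\geq \cI(\sfP^e,\cH^e)$, which is immediate from $\cH_{\Psi,e}^{e}\subseteq \cH^e$; the full identity you assert requires that every $\sfR\in\cH^e$ extend to a non-null global distribution in $\cP_\Psi$ with $e$ a non-signal, which is not guaranteed without structural assumptions on $\Psi$. Second, for part~(i) the paper only verifies the \emph{min}-equality~\eqref{condP1} (via Proposition~\ref{prop:P1}(i), applied at the minimizer $\bar e$), not the pointwise identity you aim for; the pointwise version is stronger and may be harder to check, though in practice the verification you describe goes through at each $e$ under the stated hypotheses.
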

\begin{proof}
The proof can be found in Appendix \ref{pf:w_eP1}.
\end{proof}

\begin{comment}
\begin{example}\label{gaussian_P1}
Consider the  setup of  Example \ref{gaussian_P0}. In that case, one can obtain
$$\min_{e \in \cA(\sfP)} \cI(\sfP, \cH_{\Psi, e}) = -\frac{1}{2}\log\lt\{\frac{1 + \rho_1(m - 2) - \rho_1^2(m-1)}{1 + \rho_1(m - 2)}\rt\} \leq -\frac{1}{2}\log(1 - \rho_1),$$
where the inequality is obtained by letting $m  \to \infty$ since it is an increasing function. Thus, the ARE in Theorem \ref{w_eP1} can be further bounded by
\begin{align*}
\frac{\log(1 - \rho_1)}{\log(1 - \rho_1^2)},
\end{align*}
as it is further illustrated in Figure \ref{ubP1}.
\end{example}
\end{comment}

\begin{comment}
The next theorem states that under the scenario when false negative rate is much smaller than the other possible error rates, asymptotic optimality is guaranteed if any of the following two conditions hold. First is, if for every independent pair, its corresponding isolation subsystem happens to contain at least all sources which are dependent to the elements of that pair, and therefore, carries the maximum possible information to isolate that pair as an independent one. The second is, when the independent pair, that is the most difficult to isolate based only on the observations from the pair, is independent of the rest and in this case asymptotic optimality is achieved even by setting the isolation subsystems $s'_e = e$ for every $e \in \cK$. Again, the second condition is often satisfied when the dependence structure is sparse.
\end{comment}

\begin{theorem}\label{w_eP2}
Suppose that $\emptyset \in \Psi$,  $\sfP \in \cP_\Psi \setminus \cH_0$, and  $\alpha, \beta, \gamma, \delta \to 0$ such that $|\log \delta| >> |\log \alpha| \vee |\log \gamma|$.
\begin{enumerate}
\item[(i)] ${\sf{ARE}}_{\sfP}[\chi^*]=1$  when, for every $e = \{i, j\} \notin \cA(\sfP)$, % we have
\begin{equation*}
s'_e \supseteq
\begin{cases}
v_l \quad &\text{if} \quad v_l \supseteq e \quad \text{for some} \quad l \in [L(\sfP)],\\
v_{l} \cup v_{l'} \quad &\text{if} \quad i \in v_l \quad \text{and} \quad j \in v_{l'} \quad \text{for some} \quad l \neq l',  \quad l, l' \in [L(\sfP)],\\
e \cup v_l \quad &\text{if} \quad i \;\; \text{or} \;\; j \in v_0 \quad \text{and} \quad e \setminus v_0 \in v_l \quad \text{for some} \quad l \in [L(\sfP)],\\
e \quad &\text{if} \quad v_0 \supseteq e.
\end{cases}
\end{equation*}

\item[(ii)] If $s'_e = e$ for every $e \in \cK$, then 
$${\sf{ARE}}_{\sfP}[\chi^*] \leq \frac{\min_{e \notin \cA(\sfP)} \cI \lt( \sfP , \cG_{\Psi,e} \rt)}{\min_{e \notin \cA(\sfP)} \cI \lt( \sfP^e , \cG^e \rt)}.$$
Moreover,  ${\sf{ARE}}_{\sfP}[\chi^*]=1$ when $\Psi$ is either $\Psi_{dis}$, or $\Psi_{clus}$, or the powerset of $\cK$, and there is a pair of data sources that is independent of all other data sources under $\sfP$ and  achieves 
$$\min_{e \notin \cA(\sfP)} \; \cI(\sfP^e, \cG^e).$$

\end{enumerate}
\end{theorem}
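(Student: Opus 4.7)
The plan is to compare an upper bound on $\sfE_\sfP[T^*]$ from Theorem \ref{ub_DI}(ii) with the asymptotic optimal expected sample size from Theorem \ref{P0P1P2}, the latter applied with both detection and isolation subsystems equal to $[K]$ so that conditions \eqref{condP0} and \eqref{condP2} are automatic (and \eqref{condP1} is redundant in view of $|\log\delta| \gg |\log\gamma|$ by Remark \ref{rem:P1orP2}). In this asymptotic regime both expressions are dominated by their $|\log\delta|/\cI(\cdot)$ terms involving non-signal units, which yields
\[
{\sf ARE}_\sfP[\chi^*] \;\leq\; \frac{\min_{e \notin \cA(\sfP)}\cI(\sfP, \cG_{\Psi,e})}{\min_{e \notin \cA(\sfP)} \cI(\sfP, \cG_{\Psi,e}; s'_e)}.
\]

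For part (i), the goal is to show that each specified choice of $s'_e$ forces $\cI(\sfP, \cG_{\Psi,e}; s'_e) = \cI(\sfP, \cG_{\Psi,e})$, so the ratio above collapses to $1$. The inequality $\leq$ is the information inequality from Appendix \ref{app:info_num}; for the reverse direction I would pick a minimizer $\sfQ^*$ of $\cI(\sfP^{s'_e}, \sfQ^{s'_e})$ over $\sfQ \in \cG_{\Psi,e}$ and construct a new global distribution $\tilde\sfQ := \sfQ^{*,\,s'_e} \otimes \sfP^{[K]\setminus s'_e}$, using the product decomposition $\sfP = \sfP^{v_0} \otimes \sfP^{v_1} \otimes \cdots \otimes \sfP^{v_{L(\sfP)}}$. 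The four cases in the hypothesis are arranged so that $s'_e$ is precisely the union of the blocks $v_l$ that meet $e$, together with $e$ itself. This guarantees simultaneously that $\cI(\sfP, \tilde\sfQ) = \cI(\sfP^{s'_e}, \tilde\sfQ^{s'_e})$ (by block-independence under both $\sfP$ and $\tilde\sfQ$) and that $\cA(\tilde\sfQ) \in \Psi$, since every straddling pair becomes independent under $\tilde\sfQ$ and is not a signal. Consequently $\tilde\sfQ \in \cG_{\Psi,e}$, giving $\cI(\sfP, \cG_{\Psi,e}) \leq \cI(\sfP, \tilde\sfQ) = \cI(\sfP, \cG_{\Psi,e}; s'_e)$.

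For part (ii), setting $s'_e = e$ and using $\cG_{\Psi,e}^e \subseteq \cG^e$ yields $\cI(\sfP, \cG_{\Psi,e}; e) \geq \cI(\sfP^e, \cG^e)$, from which the displayed ARE bound follows. For the equality claim, pick $e^* \notin \cA(\sfP)$ attaining $\min_{e \notin \cA(\sfP)} \cI(\sfP^e, \cG^e)$ and independent of all other sources under $\sfP$, so that $e^* \subseteq v_0$. Let $\sfQ^* := \sfP^{[K]\setminus e^*} \otimes \widehat\sfQ^{e^*}$, where $\widehat\sfQ^{e^*}$ minimizes $\cI(\sfP^{e^*}, \cdot)$ over $\cG^{e^*}$; block independence then forces $\cI(\sfP, \sfQ^*) = \cI(\sfP^{e^*}, \cG^{e^*})$. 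The main obstacle is verifying $\sfQ^* \in \cG_{\Psi,e^*}$, i.e., $\cA(\sfQ^*) = \cA(\sfP) \cup \{e^*\} \in \Psi$: this is immediate for the powerset of $\cK$; for $\Psi_{dis}$ it follows because $e^* \subseteq v_0$ is disjoint from every pair in $\cA(\sfP)$ (whose components lie outside $v_0$); and for $\Psi_{clus}$ no symmetric difference $e \triangle e^*$ with $e \in \cA(\sfP)$ can belong to $\cK$, since any shared component would simultaneously lie in $v_0$ and in some $v_l$ with $l \geq 1$. Combined with the reverse inequality $\cI(\sfP, \cG_{\Psi,e^*}) \geq \cI(\sfP^{e^*}, \cG^{e^*})$ (information inequality again, using $\cG_{\Psi,e^*}^{e^*} \subseteq \cG^{e^*}$), this produces equality in the min, so the bound from (ii) reduces to $1$ and ${\sf ARE}_\sfP[\chi^*] = 1$.
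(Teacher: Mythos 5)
Your proposal is correct and follows essentially the same route as the paper: the ARE bound comes from combining the upper bound of Theorem \ref{ub_DI}(ii) with the lower bound on the optimal expected sample size (Lemma \ref{lem:DI_lb}(iii), equivalently the lower-bound half of Theorem \ref{P0P1P2}) and extracting the dominant $|\log\delta|$ terms, while the equality claims reduce to showing that the product measure $\sfQ^{*,s'_e}\otimes\sfP^{(s'_e)^c}$ belongs to $\cG_{\Psi,e}$, which is precisely what the paper establishes via Proposition \ref{prop:P1}(ii). The only step you leave implicit is the monotonicity argument (Lemma \ref{w_e_simpli}(ii) in the paper) needed to pass from the minimal block-union choice of $s'_e$ to the general $\supseteq$ hypothesis of part (i); your explicit three-case check that $\cA(\sfQ^*)\in\Psi$ in part (ii) is slightly more detailed than the paper's, which merely asserts it.
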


\begin{proof}
The proof can be found in Appendix \ref{pf:w_eP2}.
\end{proof}

We next consider the asymptotic regime where $\alpha$ goes to 0 much faster than $\gamma$ and $\delta$. In this setup, we show that   $\chi^*$ is asymptotically optimal 
when the detection subsystem that corresponds to some dependent pair contains all dependent sources. Moreover, we show that  $\chi^*$ is asymptotically optimal even with $s_e = e$ for every $e \in \cK$, whenever there is exactly one dependent pair that is independent of all other data sources. %\textcolor{red}{which are also independent under $\sfP$}. \end{enumerate}

\begin{theorem}\label{w_eP0}
Suppose that $\emptyset \in \Psi$,  $\sfP \in \cP_\Psi \setminus \cH_0$, and  $\alpha, \beta, \gamma, \delta \to 0$ such that $|\log \alpha| >> |\log \gamma| \vee |\log \delta|$.
\begin{enumerate}
\item[(i)] If   there is an $e \in \cA(\sfP)$ such that   $s_{e} \supseteq v_1 \cup \ldots \cup v_{L(\sfP)}$, then  \; ${\sf{ARE}}_{\sfP}[\chi^*]=1$.
 %Condition \eqref{condP0} is satisfied and all conclusions in Theorem \ref{P0} hold when there is an  $e \in \cA(\sfP)$ such that 
%, e.g., when  condition \eqref{indep_subs} holds, $|\cA(\sfP)| = 1$ and  $s_e = e$ for every $e \in \cK$.
\item[(ii)]  If $s_e = e$ for every $e \in \cK$, then  %Then as $\alpha, \beta, \gamma, \delta \to 0$  such that 
%$|\log \alpha| >> |\log \gamma| \vee |\log \delta|,$
$${\sf{ARE}}_{\sfP}[\chi^*] \leq \min_{e \in \cA(\sfP)} \left\{ \frac{\cI(\sfP, \cH_0)}{\cI(\sfP^e, \cH^e)} \right\}.$$
Moreover,  ${\sf{ARE}}_{\sfP}[\chi^*] = 1$  when there is exactly one dependent pair that is independent of all other data sources under $\sfP$, for example, when
\eqref{indep_subs} holds and $|\cA(\sfP)| = 1$.
\end{enumerate}
\end{theorem}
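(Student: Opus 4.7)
For Part (i), the plan is to verify condition \eqref{condP0} of Theorem \ref{P0}, which would then directly yield the first-order asymptotic optimality of $\chi^*$ and hence ${\sf{ARE}}_{\sfP}[\chi^*]=1$. Writing $\sfP = \sfP^{v_0} \otimes \sfP^{v_1} \otimes \cdots \otimes \sfP^{v_{L(\sfP)}}$, the sources in $v_0$ are independent of everything else under $\sfP$, so matching the $v_0$-marginal of any $\sfQ \in \cH_0$ to $\sfP^{v_0}$ eliminates their contribution to the KL divergence. By the factorization properties of information numbers collected in Appendix \ref{app:info_num}, this yields $\cI(\sfP, \cH_0) = \cI(\sfP, \cH_0; v_1 \cup \cdots \cup v_{L(\sfP)})$. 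Since $s_e \supseteq v_1 \cup \cdots \cup v_{L(\sfP)}$, monotonicity in the subsystem forces $\cI(\sfP, \cH_0; s_e) \geq \cI(\sfP, \cH_0)$, and the reverse is automatic, giving \eqref{condP0}.

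For the upper bound in Part (ii), I would combine Theorem \ref{ub_DI}(ii) with the characterization of the optimum from Theorem \ref{P0}. In the regime $|\log \alpha| \gg |\log \gamma| \vee |\log \delta|$, the dominant term in Theorem \ref{ub_DI}(ii) is $\min_{e \in \cA(\sfP)} |\log \alpha| / \cI(\sfP, \cH_0; s_e)$. With $s_e = e$, the inclusion $\cH_0^e \subseteq \cH^e$ (any marginal of a global null makes a pair independent) gives $\cI(\sfP, \cH_0; e) \geq \cI(\sfP^e, \cH^e)$, so that $\sfE_\sfP[T^*] \lesssim |\log \alpha| / \max_{e \in \cA(\sfP)} \cI(\sfP^e, \cH^e)$. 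Dividing by the optimum $\sim |\log \alpha| / \cI(\sfP, \cH_0)$, which Theorem \ref{P0} provides (applied, say, with $s_e = [K]$ so that \eqref{condP0} trivially holds) as the characterization of $\inf \sfE_\sfP[T]$ over $\cC_\Psi(\alpha, \beta, \gamma, \delta)$, yields the claimed upper bound on ${\sf{ARE}}_{\sfP}[\chi^*]$.

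For the equality ${\sf{ARE}}_{\sfP}[\chi^*] = 1$ under the stated condition, it suffices to show $\cI(\sfP, \cH_0) = \cI(\sfP^{e^*}, \cH^{e^*})$, where $e^* \in \cA(\sfP)$ is the unique signal. Under this hypothesis $\sfP = \sfP^{e^*} \otimes \sfP^{[K] \setminus e^*}$, and $|\cA(\sfP)|=1$ combined with \eqref{indep_subs} forces joint independence of the sources in $[K] \setminus e^*$, so that $\sfP^{[K] \setminus e^*} \in \cH_0^{[K] \setminus e^*}$. Then for any $\sfR \in \cH_0^{e^*}$, the product $\sfQ = \sfR \otimes \sfP^{[K] \setminus e^*}$ lies in $\cH_0$ and the factorization gives $\cI(\sfP, \sfQ) = \cI(\sfP^{e^*}, \sfR)$; taking the infimum yields $\cI(\sfP, \cH_0) \leq \cI(\sfP^{e^*}, \cH_0^{e^*}) = \cI(\sfP^{e^*}, \cH^{e^*})$, where the last equality reflects that every pairwise-null marginal extends to a global null in this setup. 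The reverse inequality is marginalization monotonicity, so equality holds and the Part (ii) bound collapses to $1$. The main technical obstacle I anticipate is the careful manipulation of composite information numbers via the factorization and monotonicity identities collected in Appendix \ref{app:info_num}, especially in writing out the decomposition $\cI(\sfP, \cH_0) = \cI(\sfP, \cH_0; v_1 \cup \cdots \cup v_{L(\sfP)})$ for general (possibly composite) null families.
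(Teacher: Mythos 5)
Your proposal is correct and follows essentially the same route as the paper's proof. In Part (i) you reduce to verifying \eqref{condP0} via the factorization $\sfP = \sfP^{v_0} \otimes \sfP^{v_1} \otimes \cdots \otimes \sfP^{v_{L(\sfP)}}$; the paper does the same, just packaged through Lemma \ref{w_e_simpli}(iv) and Proposition \ref{prop:P1}(iv) (the construction $\sfQ_1^s \otimes \sfP^{v_0} \in \cH_0$ followed by Lemma \ref{info_ineq2} is exactly the "matching the $v_0$-marginal" step you describe informally). In Part (ii) the paper obtains the ARE bound by inserting $s_e = e$ into the intermediate bound $\min_{e \in \cA(\sfP)} \cI(\sfP,\cH_0)/\cI(\sfP,\cH_0;s_e)$ derived in Part (i), while you arrive at the same conclusion via the containment $\cH_0^e \subseteq \cH^e$, which yields $\cI(\sfP,\cH_0;e) \geq \cI(\sfP^e,\cH^e)$ and hence a valid (possibly looser, but still sufficient) bound; both are fine. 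For the equality case, the paper simply observes that $s_{\bar e} = \bar e = v_1 = v_1 \cup \cdots \cup v_{L(\sfP)}$ when $L(\sfP)=1$ and invokes Part (i), whereas you compute $\cI(\sfP,\cH_0) = \cI(\sfP^{e^*},\cH^{e^*})$ directly; the two are equivalent, and your computation makes the sandwich slightly more explicit. No gaps.
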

\begin{proof}
The proof can be found in Appendix \ref{pf:w_eP0}.
\end{proof}

We next consider the case that there are no signals and we show that $\chi^*$ is asymptotically  optimal, with $s_e = e$ for every $e \in \cK$, if it is \textit{a priori} possible that there exists exactly one dependent pair of data sources, i.e.,  $\Psi_{1,1} \subseteq \Psi$, and one of the  most difficult  pairs to detect is  independent of all other data sources.

\begin{theorem}\label{depcase_P0}
Suppose that $\emptyset \in \Psi$, $\sfP \in \cH_0$, and $\alpha, \beta \to 0$ while $\gamma$ and $\delta$ are either fixed or go to $0$. 
If $s_e = e$ for every $e \in \cK$ then
$${\sf{ARE}}_{\sfP}[\chi^*] \leq \frac{\min_{e \in \cK} \cI \lt( \sfP , \cG_{\Psi,e} \rt)}{\min_{e \in \cK} \cI \lt( \sfP^e , \cG^e \rt)}.$$
If, also, $\Psi_{1,1} \subseteq \Psi$, and there is a pair of data sources that is independent of all other data sources under $\sfP$ and achieves 
$\min_{e \in \cK} \; \cI(\sfP^e, \cG^e)$, e.g., when \eqref{indep_subs} holds, then \,${\sf{ARE}}_{\sfP}[\chi^*]=1.$

%, $\emptyset \in \Psi$, $\Psi \supseteq \Psi_{1,1}$ and $\sfP \in \cH_0$. If  $s_e=e$ for every $e \in \cK$,  then 
%${\sf{ARE}}_{\sfP}[\chi^*] = 1$ as $\alpha, \beta, \gamma, \delta \to 0$.
%condition \eqref{condPminusP0} is satisfied and  all  conclusions of Theorem \ref{PminusP0} hold. 
\end{theorem}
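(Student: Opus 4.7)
The plan is to bound $\sfE_\sfP[T^*]$ from above via Theorem \ref{ub_DI}(i) and to pin down $\inf_{(T,D) \in \cC_\Psi(\alpha,\beta,\gamma,\delta)} \sfE_\sfP[T]$ via Theorem \ref{PminusP0}, then divide.

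For the upper bound, with $s_e = e$ for every $e \in \cK$, Theorem \ref{ub_DI}(i) yields
\begin{equation*}
\sfE_\sfP[T^*] \; \lesssim \; \max_{e \in \cK} \frac{|\log\beta|}{\cI(\sfP, \cG_{\Psi,e}; e)},
\end{equation*}
as $\alpha, \beta \to 0$, whether $\gamma$ and $\delta$ are held fixed or go to $0$. Since $\{\sfQ^e : \sfQ \in \cG_{\Psi,e}\} \subseteq \cG^e$ and KL divergence is monotone under restriction to a sub-$\sigma$-field, $\cI(\sfP, \cG_{\Psi,e}; e) \geq \cI(\sfP^e, \cG^e)$, and therefore $\sfE_\sfP[T^*] \lesssim |\log\beta|/\min_{e \in \cK} \cI(\sfP^e, \cG^e)$. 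On the other hand, Theorem \ref{PminusP0} supplies the matching asymptotic characterization $\inf \sfE_\sfP[T] \sim |\log\beta|/\min_{e \in \cK} \cI(\sfP, \cG_{\Psi,e})$, and dividing gives the stated bound on ${\sf{ARE}}_{\sfP}[\chi^*]$.

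For the optimality claim, the strategy is to produce a global alternative that simultaneously witnesses $\min_{e \in \cK} \cI(\sfP, \cG_{\Psi,e})$ and matches $\min_{e \in \cK} \cI(\sfP^e, \cG^e)$. Concretely, pick $e^* \in \cK$ that is independent of all other sources under $\sfP$ and attains $\min_{e \in \cK} \cI(\sfP^e, \cG^e)$, and let $\sfR^* \in \cG^{e^*}$ achieve $\cI(\sfP^{e^*}, \cG^{e^*})$ (such $\sfR^*$ exists because $\cG^{e^*}$ is finite). Define the product measure $\sfQ^* := \sfR^* \otimes \sfP^{(e^*)^c}$. By construction $e^* \in \cA(\sfQ^*)$; any pair disjoint from $e^*$ retains its $\sfP$-distribution and is independent because $\sfP \in \cH_0$; and any pair straddling $e^*$ and $(e^*)^c$ is independent under $\sfQ^*$ by the product structure. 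Hence $\cA(\sfQ^*) = \{e^*\}$, which belongs to $\Psi$ by $\Psi_{1,1} \subseteq \Psi$, so $\sfQ^* \in \cG_{\Psi,e^*}$. Additivity of KL divergence for product measures then gives
\begin{equation*}
\cI(\sfP, \sfQ^*) \; = \; \cI(\sfP^{e^*}, \sfR^*) + \cI\lt(\sfP^{(e^*)^c}, \sfP^{(e^*)^c}\rt) \; = \; \cI(\sfP^{e^*}, \sfR^*) \; = \; \min_{e \in \cK} \cI(\sfP^e, \cG^e),
\end{equation*}
so $\min_{e \in \cK} \cI(\sfP, \cG_{\Psi,e}) \leq \cI(\sfP, \cG_{\Psi, e^*}) \leq \cI(\sfP, \sfQ^*) = \min_{e \in \cK} \cI(\sfP^e, \cG^e)$. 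The reverse inequality was already established above, forcing equality in the ARE bound, hence ${\sf{ARE}}_{\sfP}[\chi^*]=1$.

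The main obstacle will be the bookkeeping when constructing $\sfQ^*$: one must verify that $\cA(\sfQ^*) = \{e^*\}$ \emph{exactly}, ruling out that the product surgery spuriously creates or destroys a dependent pair elsewhere, and then combine this with the additivity of $\cI$ across independent blocks (as catalogued in Appendix \ref{app:info_num}) to obtain the crucial identity $\cI(\sfP, \sfQ^*) = \cI(\sfP^{e^*}, \sfR^*)$. Everything else is a direct assembly of Theorems \ref{ub_DI}(i) and \ref{PminusP0} with the data-processing inequality.
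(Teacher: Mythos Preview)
Your proof is correct and follows essentially the same route as the paper: upper bound on $\sfE_\sfP[T^*]$ via Theorem~\ref{ub_DI}(i), lower bound on the optimal performance, then a direct construction of a witness distribution $\sfQ^*$ for the equality part (which is exactly what the paper packages as Proposition~\ref{prop:P1}(iii)). One small point worth tightening: you invoke Theorem~\ref{PminusP0} for the asymptotic characterization of $\inf \sfE_\sfP[T]$, but that theorem carries the hypothesis \eqref{condPminusP0} on the detection subsystems, which is not assumed for the first part of Theorem~\ref{depcase_P0} and need not hold with $s_e=e$; the paper instead cites Lemma~\ref{lem:DI_lb}(i) directly, which gives the needed lower bound $\inf \sfE_\sfP[T] \gtrsim |\log\beta|/\min_{e} \cI(\sfP,\cG_{\Psi,e})$ unconditionally. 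Your invocation still works if read as ``apply Theorem~\ref{PminusP0} with $s_e=[K]$ to extract the infimum characterization,'' since the infimum over $\cC_\Psi(\alpha,\beta,\gamma,\delta)$ does not depend on which subsystems are used in $\chi^*$.
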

\begin{proof}
The proof can be found in Appendix \ref{pf:depcase_P0}.
\end{proof}

Finally, we  consider the case that it is a priori known that %there is at least one signal and 
all signals are disjoint pairs, i.e., $\Psi = \Psi_{dis}$, and we establish the asymptotic optimality of the 
% the error  control of interest is the FWER and we show that asymptotic optimality is achieved by the 
\textit{Intersection rule}, defined  in \eqref{intersection_rule}, %, i.e., with $s_e = s'_e = e$ for every $e \in \cK$ 
whenever \eqref{indep_subs} holds and one of the  most difficult to isolate,  independent pairs is  independent of all other data sources.

\begin{comment}
The next theorem, we provide the asymptotic optimality result of the practical rule when $\Psi = \Psi_{dis}$ and we are interested to control the FWER. The above theorem states that when the prior information allows the dependent sources to exist only as disjoint pairs, intersection rule is asymptotically optimal if the independent pair, that is the most difficult to isolate based only on the observations from the pair, is independent of the rest. 
\end{comment}

\begin{theorem}\label{dep_case_psi_di_prac_rule}
Suppose  that  $\Psi = \Psi_{dis}$, $\sfP \in \cP_\Psi$ and $s_e = s'_e = e$ for every $e \in \cK$. If \eqref{indep_subs} holds, and  there is a pair of data sources that is independent of all other data sources under $\sfP$ and achieves  $\min_{e \notin \cA(\sfP)} \; \cI(\sfP^e, \cG^e)$, then 
$\chi^*_{fwer}$ is asymptotically optimal as
 $\gamma, \delta \to 0$. 
\end{theorem}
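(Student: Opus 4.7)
The plan is to derive the claim from Theorems \ref{PminusP0} and \ref{P0P1P2} (together with Remark \ref{rem:fwer}), by verifying that the minimal choice $s_e = s'_e = e$ satisfies the hypotheses under which those results apply. The analysis splits naturally according to whether $\sfP$ is a global null or not.

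If $\sfP \in \cH_0$, then by \eqref{indep_subs} all $K$ sources are mutually independent under $\sfP$, so for every $e \in \cK$ one can make pair $e$ dependent without altering any other marginal, producing $\sfQ \in \cG_{\Psi,e}$ with $\cI(\sfP,\sfQ) = \cI(\sfP^e,\cG^e)$. This yields $\cI(\sfP, \cG_{\Psi,e}) = \cI(\sfP^e,\cG^e) = \cI(\sfP, \cG_{\Psi,e}; e)$, which is condition \eqref{condPminusP0}, so Theorem \ref{PminusP0} applies directly. If $\sfP \in \cP_\Psi \setminus \cH_0$, set $m := |\cA(\sfP)|$. Under \eqref{indep_subs} with $\Psi = \Psi_{dis}$, a direct inspection of the product decomposition $\sfP = \sfP^{v_0} \otimes \sfP^{v_1} \otimes \cdots \otimes \sfP^{v_m}$, in which $v_1,\dots,v_m$ coincide with the pairs in $\cA(\sfP)$, gives the identities $\cI(\sfP,\cH_0) = \sum_{e \in \cA(\sfP)} \cI(\sfP^e,\cH^e)$, and for $e \in \cA(\sfP)$, $\cI(\sfP,\cH_{\Psi,e}) = \cI(\sfP^e,\cH^e)$ when $m \geq 2$ (whereas for $m = 1$ it exceeds $\cI(\sfP^{e_0},\cH^{e_0})$ by a strictly positive additive term reflecting that an auxiliary signal must be introduced in $\sfQ$). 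Using these identities, when $m \geq 2$ one verifies \eqref{FWERcondP1} and (trivially) \eqref{condP1}, while when $m = 1$ one verifies \eqref{FWERcondP0} and (trivially) \eqref{condP0}; Remark \ref{rem:fwer} then bypasses the remaining detection/isolation condition in each branch.

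The main obstacle is the verification of \eqref{condP2}, namely the identity $\min_{e \notin \cA(\sfP)} \cI(\sfP, \cG_{\Psi,e}; e) = \min_{e \notin \cA(\sfP)} \cI(\sfP, \cG_{\Psi,e})$. For a generic $e = \{i,j\} \notin \cA(\sfP)$ whose sources overlap with existing signals in $\cA(\sfP)$, promoting $e$ under the $\Psi_{dis}$ constraint forces those overlapping signals to be demoted, so that the global divergence $\cI(\sfP, \cG_{\Psi,e})$ strictly exceeds the marginal $\cI(\sfP^e,\cG^e) = \cI(\sfP, \cG_{\Psi,e}; e)$. The hypothesis that the pair $e^*$, with both sources in $v_0$ (hence independent of all others), achieves the marginal minimum $\min_{e \notin \cA(\sfP)} \cI(\sfP^e,\cG^e)$ is precisely what removes this obstruction: at $e^*$ no demotion is required, so $\cI(\sfP, \cG_{\Psi, e^*}) = \cI(\sfP^{e^*},\cG^{e^*})$, and both sides of \eqref{condP2} collapse to this common value. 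Theorem \ref{P0P1P2} then yields the asymptotic optimality of $\chi^*_{fwer}$ as $\gamma,\delta \to 0$.
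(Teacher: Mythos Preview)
Your proposal is correct and follows essentially the same approach as the paper's proof. Both split into the cases $\sfP\in\cH_0$ versus $\sfP\notin\cH_0$ (and within the latter, $|\cA(\sfP)|=1$ versus $|\cA(\sfP)|>1$), invoke Theorem~\ref{PminusP0} for the null case and Theorem~\ref{P0P1P2} together with Remark~\ref{rem:fwer} for the non-null case, use the identity $\cI(\sfP,\cH_0)=\sum_{e\in\cA(\sfP)}\cI(\sfP^e,\cH^e)$ (Lemma~\ref{psi_di_prac_rule_lem} in the paper), and exploit the hypothesis on the independent pair exactly to establish \eqref{condP2}. The only cosmetic difference is that the paper routes the verification of \eqref{condPminusP0}, \eqref{condP1}, \eqref{condP2} through the sufficient conditions of Proposition~\ref{prop:P1}, whereas you compute the relevant information numbers directly; the underlying computations are the same.
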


\begin{proof}
The proof can be found in Appendix \ref{pf:dep_case_psi_di_prac_rule}.
\end{proof}

\subsection{Detection and isolation of a Gaussian dependence structure}
We next specialize the  results of this section to the setup  of Subsection \ref{test_corr}, when it is  assumed  a priori that a subset of data sources follow an equicorrelated multivariate Gaussian distribution with  common positive correlation $\rho$, and are independent of the rest of the sources.  Specifically, we show that when there is only one such  subset, of size $m$,   the other  sources are independent, and $\gamma$ (resp. $\delta$) goes to $0$ faster than $\alpha$ and $\delta$ (resp. $\gamma$),   then  the asymptotic relative efficiency of $\chi^*$,  defined as in \eqref{ARE}, with  $s_e = s'_e = e$ for every $e \in \cK$,
 is bounded above by a quantity that depends only on $\rho$ and $m$.  This upper bound approaches 1 as   $\rho$ increases, while  $m$ is fixed, and  some number strictly greater than 1 as $m$ increases, while $\rho$ is fixed. We illustrate this upper bound in Figure \ref{ubP1}. 

%To be more specific, %Specifically, %To incorporate this structure, %we fix $\rho \in \cR$ such that $\rho > 0$ and we set $\Psi = \Psi_{clus}$ and  $\cR_{ij} = \{0, \rho\}$ for every $e = \{i, j\} \in \cK$, and  

\begin{corollary}\label{cor:dep_struc}
Consider the setup of  Subsection \ref{test_corr} with   $\cR_{ij} = \{0, \rho\}$ for every $i, j \in [K] \; \text{and}  \; i < j$. Suppose that $\Psi = \Psi_{clus}$ and $s_e = s'_e = e$ for every $e \in \cK$. Let  $\sfP \in \cP_\Psi$  such that $L(\sfP) = 1$ and $|v_1| = m$.
%then %\begin{align*} %\cI(\sfP, \cH_0) &\leq -(m-1) \log \sqrt{1-\rho} - \log \sqrt{1 + (m - 1)\rho_1}.  %\end{align*} %Then 
\begin{enumerate}
\item[(i)] If {$2 < m \leq K$} and $\alpha, \beta, \gamma, \delta \to 0$ such that $|\log \gamma| >> |\log \alpha| \vee |\log \delta|$, then
\begin{align*}
{\sf{ARE}}_{\sfP}[\chi^*] &\leq  \log\lt( 1- \rho \frac{ \rho(m-1)}{1 + \rho(m - 2)}\rt) / \log(1 - \rho^2)  \leq \frac{\log(1 - \rho)}{\log(1 - \rho^2)}.
\end{align*}

\item[(ii)]  If {$2 \leq m < K$} and $\alpha, \beta, \gamma, \delta \to 0$ such that $|\log \delta| >> |\log \alpha| \vee |\log \gamma|$, then
\begin{align*}
{\sf{ARE}}_{\sfP}[\chi^*] 
&\leq  \lt\{ \log\sqrt{1 - \frac{\rho^2m}{1 + \rho(m - 1)} } + \frac{\rho^2 m}{(1 + \rho m)(1 - \rho)} \rt\} \bigg/\lt\{\log\sqrt{1-\rho^2} + \frac{\rho^2}{1 - \rho^2}\rt\}\\
&\leq \lt( \log(1 - \rho) + \frac{2\rho}{1 - \rho}\rt)\bigg/
 \lt( \log(1 - \rho^2) + \frac{2\rho^2}{1 - \rho^2}\rt).
\end{align*}

\item[(iii)]  If {$2 \leq m \leq K$}  and $\alpha, \beta, \gamma, \delta \to 0$  such that  $|\log \alpha| >> |\log \gamma| \vee |\log \delta|$,  then
$${\sf{ARE}}_{\sfP}[\chi^*] \leq \frac{(m-1)\log (1-\rho) + \log (1 + (m - 1)\rho)}{\log (1-\rho^2)}.$$
\end{enumerate}
\end{corollary}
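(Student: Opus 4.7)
The plan is to apply Theorems \ref{w_eP1}(ii), \ref{w_eP2}(ii), and \ref{w_eP0}(ii) respectively, and to compute the information numbers on their right-hand sides via the Gaussian formula \eqref{gauss_info}. Two observations simplify the computation uniformly: first, since the parameters of any alternative $\sfQ$ can be matched to those of $\sfP$ outside the affected coordinates at zero cost, only the block on $v_1$ (or $v_1\cup\{j\}$) contributes to the KL divergence; second, the quantities $\cI(\sfP^e,\cH^e)=-\tfrac12\log(1-\rho^2)$ for $e\in\cA(\sfP)$ and $\cI(\sfP^e,\cG^e)=\tfrac12\log(1-\rho^2)+\rho^2/(1-\rho^2)$ for $e\notin\cA(\sfP)$ depend only on $\rho$, so the minima over $e$ in the denominators of the bounds equal these common values. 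Throughout, write $R_m=(1-\rho)I_m+\rho J_m$ for the $m\times m$ equicorrelated covariance, so that $det(R_m)=(1-\rho)^{m-1}(1+(m-1)\rho)$.

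For part (i), I would bound $\min_{e\in\cA(\sfP)}\cI(\sfP,\cH_{\Psi,e})$ by choosing $e=\{i,j\}\subseteq v_1$ and the specific $\sfQ$ that splits $v_1$ into the singleton $\{i\}$ and the sub-cluster $v_1\setminus\{i\}$ of size $m-1\geq 2$; then $\sfQ\in\cP_\Psi\setminus\cH_0$ with $\cA(\sfQ)\in\Psi_{clus}$ and $e\notin\cA(\sfQ)$. With $\Sigma_\sfP=R_m$ and $\Sigma_\sfQ$ block-diagonal with blocks $1$ and $R_{m-1}$, direct block multiplication yields $tr(\Sigma_\sfQ^{-1}\Sigma_\sfP)=m$, so the trace term in \eqref{gauss_info} cancels $m$, leaving $\cI(\sfP,\sfQ)=-\tfrac12\log(1-(m-1)\rho^2/(1+(m-2)\rho))$. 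Dividing by $-\tfrac12\log(1-\rho^2)$ gives the first inequality, and the second reduces to $(1-\rho)(1+(m-2)\rho)\leq 1+(m-2)\rho-(m-1)\rho^2$, i.e.\ $\rho\geq\rho^2$, which holds on $(0,1)$.

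For part (ii), I would bound $\min_{e\notin\cA(\sfP)}\cI(\sfP,\cG_{\Psi,e})$ by choosing $e=\{i,j\}$ with $i\in v_1$ and $j\in v_0$ (possible since $m<K$) and the specific $\sfQ$ that absorbs $j$ into the cluster, so $\cA(\sfQ)$ corresponds to the single cluster $v_1\cup\{j\}$ and $\sfQ\in\cG_{\Psi,e}$. With $\Sigma_\sfP=\mathrm{diag}(R_m,1)$ and $\Sigma_\sfQ=R_{m+1}$, the principal $m\times m$ block $A$ of $R_{m+1}^{-1}=(1-\rho)^{-1}(I_{m+1}-\rho(1+m\rho)^{-1}J_{m+1})$ satisfies $AR_m=I_m+\rho^2((1-\rho)(1+m\rho))^{-1}J_m$; combining this with the last diagonal entry of $R_{m+1}^{-1}$ gives $tr(\Sigma_\sfQ^{-1}\Sigma_\sfP)-(m+1)=2m\rho^2/((1-\rho)(1+m\rho))$, and \eqref{gauss_info} then produces the first inequality. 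For the second inequality, set $g(m):=\tfrac12\log((1-\rho)(1+m\rho)/(1+(m-1)\rho))+m\rho^2/((1-\rho)(1+m\rho))$; differentiating and clearing positive factors one checks that $g'(m)>0$ (the condition reduces to $1+(m-1)\rho+m\rho^2>0$), so $g(m)\leq\lim_{m\to\infty}g(m)=\tfrac12\log(1-\rho)+\rho/(1-\rho)$, which yields the claimed limit expression.

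For part (iii), the constancy of $\cI(\sfP^e,\cH^e)$ over $e\in\cA(\sfP)$ reduces the bound in Theorem \ref{w_eP0}(ii) to $\cI(\sfP,\cH_0)/(-\tfrac12\log(1-\rho^2))$. The minimum KL to $\cH_0$ is attained by matching all marginals of $\sfP$ and zeroing all correlations; only the $v_1$-block contributes, so \eqref{gauss_info} with $\Sigma_\sfP=R_m$ and $\Sigma_\sfQ=I_m$ (hence $tr(R_m)=m$) gives $\cI(\sfP,\cH_0)=-\tfrac12\log det(R_m)=-\tfrac12[(m-1)\log(1-\rho)+\log(1+(m-1)\rho)]$, which after division yields the stated bound. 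The main obstacle will be the computation in part (ii): the principal submatrix of $R_{m+1}^{-1}$ is not $R_m^{-1}$, so the trace cannot be obtained by any block-diagonal cancellation and must instead be handled through the explicit $AR_m$ identity above; once that algebraic step is in place, the remaining determinant and monotonicity arithmetic is routine.
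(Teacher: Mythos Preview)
Your proposal is correct and follows essentially the same approach as the paper: in each part you choose the same comparison distribution $\sfQ$ (split off one source from the cluster in (i), absorb one independent source into the cluster in (ii), zero all correlations in (iii)), compute the Gaussian KL via \eqref{gauss_info}, and plug into the corresponding ARE bound from Theorems \ref{w_eP1}(ii), \ref{w_eP2}(ii), \ref{w_eP0}(ii). The only minor difference is that the paper obtains the $m$-free second inequalities in (i) and (ii) by arguing monotonicity in $m$ and passing to the limit, whereas you verify (i) by a direct algebraic inequality and (ii) by an explicit derivative computation; both routes are valid and lead to the same conclusions.
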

\begin{proof}
The proof can be found in Appendix \ref{pf:cor:dep_struc}.
\end{proof}

%The above result shows that under this setup, if $\gamma$ (resp. $\delta$) goes to $0$ faster than $\alpha$ and $\delta$ (resp. $\gamma$) and a subset of $m$ data sources follows equicorrelated multivariate Gaussian with common positive correlation $\rho$ whereas the rest of the sources are independent, then even with considering $s_e = s'_e = e$ for every $e \in \cK$, the asymptotic relative efficiency of $\chi^*$ is bounded above by a quantity \textit{independent of} both $m$ and $K$, and it is closer to one when $\rho$ is larger. Although it shifts further from one with increase in $m$, the extent of shift is not significant when $m$ is large.We illustrate the first two upper bounds in Figure \ref{ubP1}. 

\begin{figure}[H]
\centering
\includegraphics[scale = 0.32]{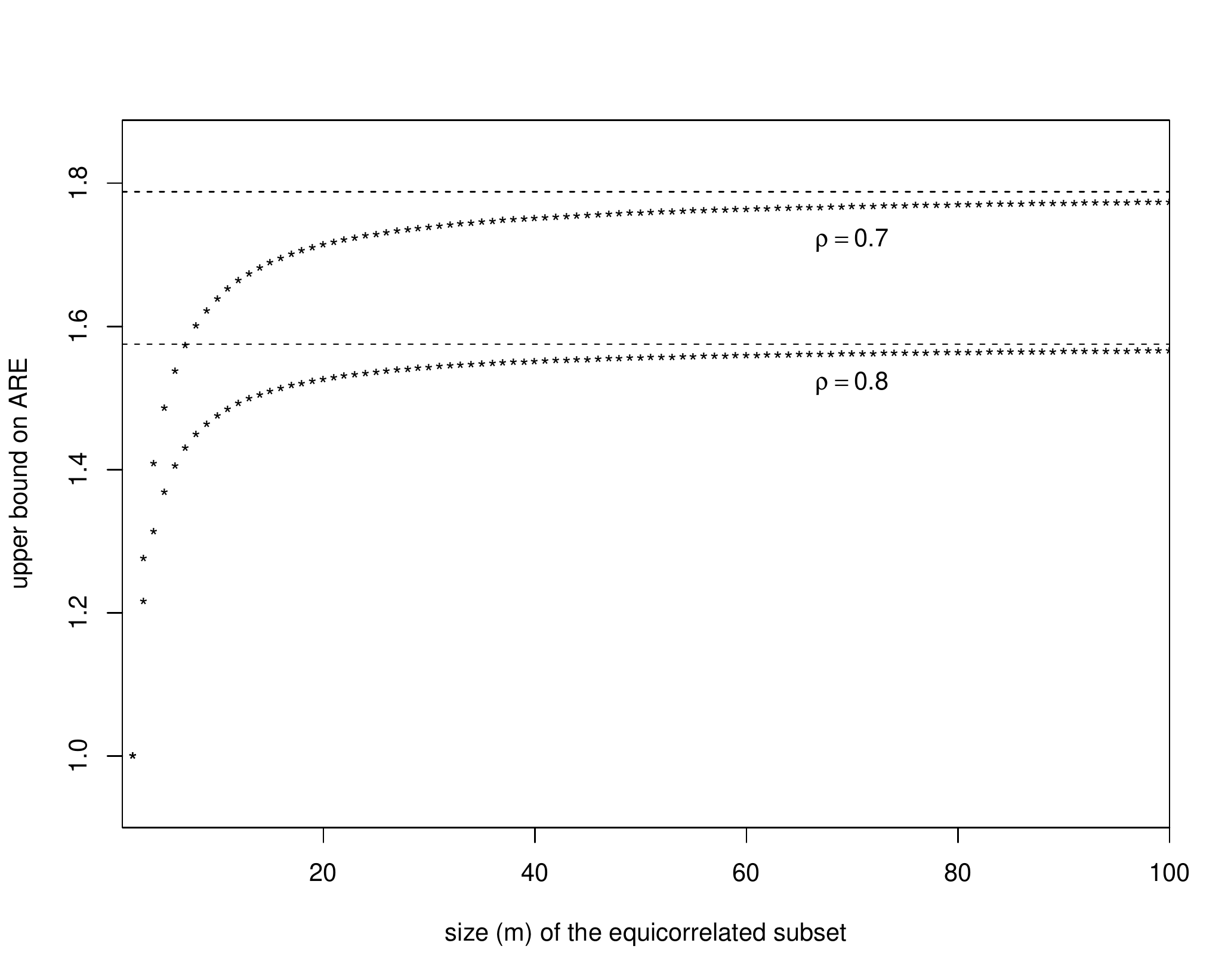} \includegraphics[scale = 0.32]{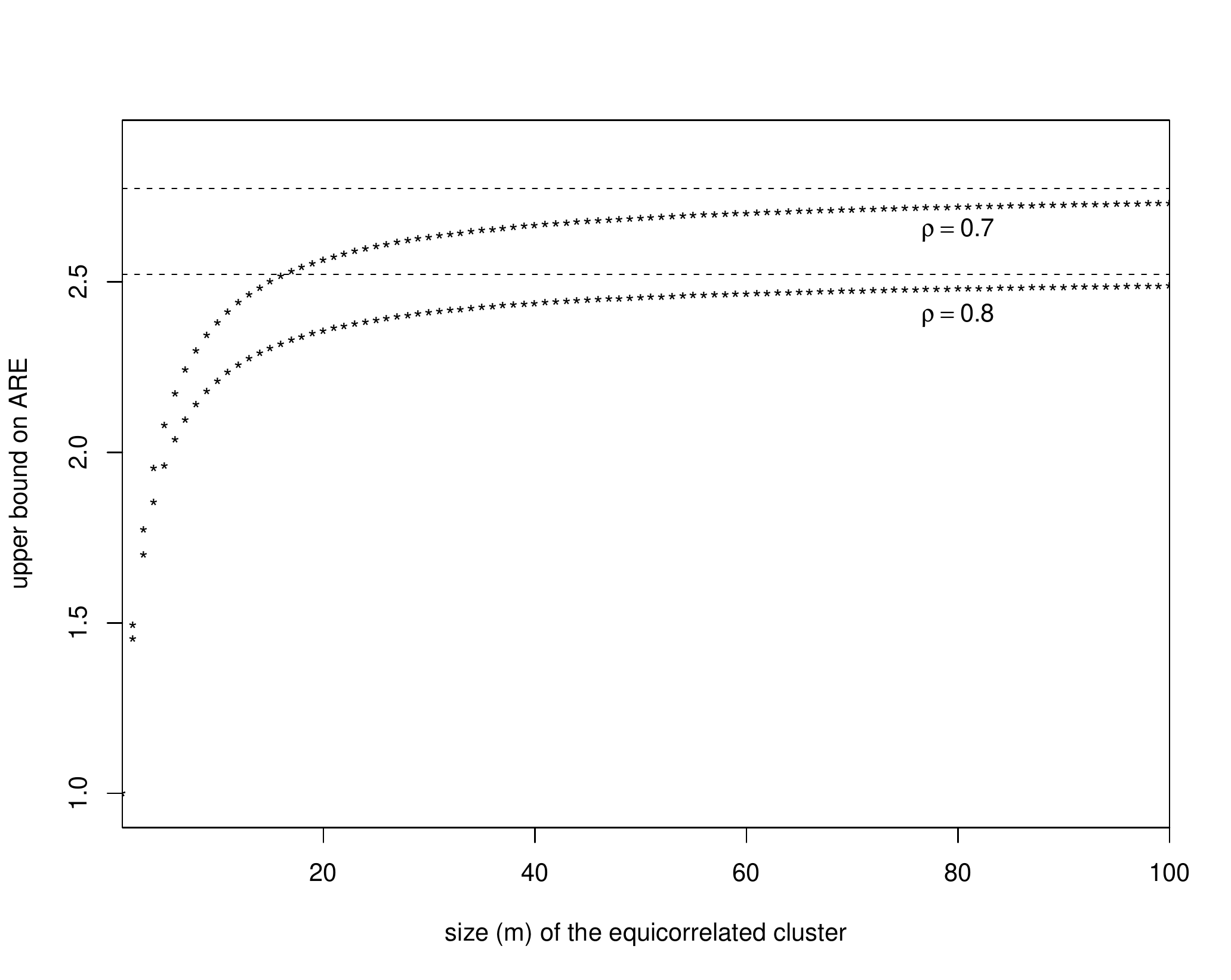}
\caption{The  graph in the left/right refers to the upper bounds of part (i)/(ii)  of Corollary \ref{cor:dep_struc}(i).  In both of them, the curves represent the bounds that depend on both $m$ and $\rho$, whereas the straight lines  the bounds that depend only on $\rho$.}
\label{ubP1}
\end{figure}

\section{Simulation Study}\label{sec:sim_stud}
In this section  we present the results of two  simulation studies for the problem of  joint detection and isolation  of a dependence structure  in Section \ref{sec:appl_depen_struc}. In both of them
 we  consider the Gaussian setup of Subsection \ref{test_corr},  fix the marginal distributions of the sources to be standard Gaussian, i.e.,  we set $\cM_k = \{0\}$, $\cV_k = \{1\}$ for every $k \in [K]$, and we consider a  \textit{two-sided} testing problem for each correlation coefficient, i.e.,  we set  $\cR_{ij}=\{0, \pm \rho\}$ for each $i, j \in [K]$ with $i < j$, where $\rho = 0.7$.   
 
 In the first study, there are  $K = 7$ data sources and  two clusters,  $\{4, 5\}$ and   $\{1, 2, 3\}$.  In particular, $\{4, 5\}$  is a dependent pair, with correlation $\rho$,  independent of all other sources, $\{6, 7\}$ is an independent pair,  also independent of all other sources, and the covariance matrix that corresponds to  $\{1, 2, 3\}$ is
\begin{equation}\label{sig_mat}
\begin{bmatrix}
1 & \rho &-\rho\\
\rho & 1 &-\rho\\
-\rho &-\rho &1
\end{bmatrix}.
\end{equation}
In the second study there are $K = 3$  data sources, whose  covariance matrix  is \eqref{sig_mat}. 

In both studies we  consider two  versions of the  testing procedure, $\chi^*$,  in both of which  the isolation and detection subsystems coincide, i.e.,   $s_e = s'_e$ for every $e \in \cK$. In the first version, the subsystems are of size  2, i.e.,  $s_e = e$ for every $e \in \cK$. In the second, they are of size 3 (the third source in each subsystem, other than the pair itself, is chosen  in some arbitrary way).  For each of these two versions of $\chi^*$,  
we consider two further  cases depending on  the available prior information. In the first one, there is no prior information whatsoever, i.e., $\Psi$ is equal to the powerset of $\cK$, whereas in the second, it is known a priori that there is a dependence structure of cluster type, i.e.,  $\Psi=\Psi_{clus}$.

 % , $s_e = s'_e$, each with size $3$, and %which implies $a_e = a$, $b_e = b$, $c_e = c$ and $d_e = d$. The values of $a, b, c$ and $d$ are specified in Table \ref{tab:coeff} when $K =3, 7$.
%This implies $a_e = 9$, $b_e = d_e = 14$ and $c_e = 1$ under $\Psi = \; \text{powerset of} \; \cK$, and $a_e = 5$, $b_e = d_e = 6$ and $c_e = 1$ under $\Psi = \Psi_{clus}$.\item[2.] 
%and thus, $a_e =  c_e = 1$ and $b_e = d_e = 2$ for any $\rho$, $\Psi$ and $K$.
%This implies $a_e =  c_e = 1$, $b_e = d_e = 2$ both under $\Psi = \; \text{powerset of} \; \cK$, and $\Psi = \Psi_{clus}$.
%\end{itemize} 

%\subsection{Design of the testing procedures}
For each of the resulting four versions of  $\chi^*$  and each  value of the target error probabilities, $\alpha, \beta, \gamma, \delta$, we select the thresholds  according to Theorem \ref{th:errcon2}(iii). 
%i.e., $$A_e = a_e|\cK|/\gamma, \quad B_e = b_e|\cK|/\delta, \quad C_e = c_e/\beta, \quad  D_e = d_e|\cK|/\alpha \quad \text{for all} \quad e \in \cK,$$ 
 When the subsystems are of size $2$,  for the constants $\{a_e, b_e, c_e, d_e\}_{e \in \cK}$   in \eqref{a,b,c,d} we have $a_e =  c_e = 1$,  $b_e = d_e = 2$  for every $e \in \cK$, regardless of $\Psi$ and $K$.  When the subsystems are of size $3$, we have $a_e = a$, $b_e = b$, $c_e = c$, $d_e = d$
for every $e \in \cK$,  where  $a, b, c, d$ are specified in Table \ref{tab:coeff}. % for  $K \in\{3, 7\}$  and for each of the two cases we consider for $\Psi$.  
Moreover, we  set 
 $\alpha = \beta = \gamma = \delta$, i.e.,  all error metrics  are of the same order of magnitude.

\begin{table}[H]
\caption{Coefficients for the thresholds}
\label{tab:coeff}
\begin{tabular}{@{}lccrr|rrrrrr@{}}
\hline
&\multicolumn{4}{c}{$K = 3$} & \multicolumn{4}{c}{$K = 7$}\\
\cline{2-9}
 $\Psi$ & $a$ & $b$ & $c$ & $d$ & $a$ & $b$ & $c$ & $d$\\[1pt]
\hline
$\text{powerset of} \; \cK$ & $8$ & $14$ & $1$ & $14$ & $9$ & $14$ & $1$ & $14$ \\
$\Psi_{clus}$ & $4$ & $6$ & $1$ & $6$ & $5$ & $6$ & $1$ & $6$ \\[1pt]
\hline
\end{tabular}
\end{table}

%In the following setups we consider the true probability distributions $\sfP$ in such a way that $\cA(\sfP) \in \Psi_{clus}$, and thus, we also consider both tests mentioned above when $\Psi = \Psi_{clus}$.

In the context of the first study,  conditions  \eqref{condP1} and \eqref{condP2} hold for all four versions of $\chi^*$, 
since the  conditions of  Theorem \ref{w_eP1}(ii) and Theorem \ref{w_eP2}(ii) are satisfied.    Furthermore, since  $\alpha = \gamma$ and the true distribution $\sfP$ satisfies (see, in particular, Lemma \ref{info_ineq1} in Appendix \ref{app:info_num})
$$\cI(\sfP, \cH_0) \geq \min_{e \in \cA(\sfP)} \cI(\sfP^e, \cH^e) = \min_{e \in \cA(\sfP)} \cI(\sfP, \cH_{\Psi, e}) ,$$ 
by Theorem \ref{P0P1P2} it follows that all four versions of $\chi^*$  are asymptotically optimal as $\alpha \to 0$.  

In the context of the second study, the versions of $\chi^*$, with subsystems of size 3 remain asymptotically optimal, as in this case we  have $s_e=s'_e=[K]$ for every $e \in \cK$. However, this is not true  for  the versions of $\chi^*$ with subsystems of size 2, which use only local data to isolate the global dependence structure. % Indeed,  for every pair of sources, test 2 uses observations only from that pair, whereas with bigger subsystems test 1 also uses observation from another source to make decision, and therefore stops faster than test 2.

%\subsection{Results}
For each study, we vary   $\alpha$  from $10^{-8}$ to $10^{-1}$, and  estimate the expected sample size of each test using $16,000$ Monte Carlo replications. 
We observe that the simplest version of $\chi^*$ performs the same regardless of the presence of prior information since the number of sources is small in both studies.
We plot the results for the first study in Figure \ref{fig:K7-rh07} and  for the second study in Figure \ref{fig:K3-rh07}. 
%which refer to the first and the second study respectively.  %In part (c) we plot the asymptotic relative efficiency of the tests, as defined in \eqref{ARE}.
The x-axis in all figures is $|\log_{10} \alpha|$. On the other hand, in each figure, the y-axis is   the expected sample size of the tests in part (a),  the ratio of the expected sample size of 
the versions without prior information with respect to that of the version with bigger subsystem as well as prior information in part (b), and the ratio of expected sample size of all tests over the first-order approximation of the corresponding optimal performance  in part (c).

From parts (a) and (b)  we see, not surprisingly,  that the expected sample size is  smaller when  using bigger subsystems and utilize prior information, and that this  effect  is more substantial  in the second study and  when $\alpha$ is small. %Furthermore, both studies show that a test, which utilizes prior information, performs better than the corresponding version which assumes there is no prior information, as can be seen in  part (a) of Figure \ref{fig:K7-rh07} and \ref{fig:K3-rh07}. 
Moreover, we see that  parts (b) and (c) 
%of Figure \ref{fig:K7-rh07} and \ref{fig:K3-rh07}  
are  consistent with the asymptotic optimality  results mentioned above, according to which all versions of $\chi^*$ are asymptotic optimal in the first study, and only those with the subsystems of size 3 in the second study.
% under both studies as well as the asymptotic optimality (resp. suboptimality) of the other tests only in the first (resp. second) study. 
%  in order to illustrate their rate of converge as $\alpha$ becomes small.
The Monte Carlo standard error in the estimation of every expectation did not exceed $2\%$ of the corresponding estimate.

\begin{comment}
\begin{table}[H]
\caption{Coefficients of thresholds for test $1$ when $K = 3, 7$}
\label{tab:coeff}
%
\begin{tabular}{@{}lccrrr|rrrrr@{}}
\hline
& &\multicolumn{4}{c}{$K = 7$} & \multicolumn{4}{c}{$K = 3$}\\
\cline{3-10}
$\rho$ & $\Psi$ & $a$ & $b$ & $c$ & $d$ & $a$ & $b$ & $c$ & $d$\\[1pt]
\hline
$0.4$ & $\text{powerset of} \; \cK$ & $9$ & $18$ & $1$ & $18$ & $8$ & $18$ & $1$ & $18$\\
          & $\Psi_{clus}$ & $5$ & $10$ & $1$ & $10$ & $4$ & $10$ & $1$ & $10$\\[1pt]
          \hline
$0.7$ & $\text{powerset of} \; \cK$ & $9$ & $14$ & $1$ & $14$ & $8$ & $14$ & $1$ & $14$\\
          & $\Psi_{clus}$ & $5$ & $6$ & $1$ & $6$ & $4$ & $6$ & $1$ & $6$\\[1pt]
\hline
\end{tabular}
%
\end{table}
\end{comment}

\begin{comment}
Note that the values of the above constants, specifically $b$ and $d$, change when $\rho$ changes from $0.4$ to $0.7$. This is because for every $e \in \cK$ when the subsystems $s_e = s'_e$ remain the same, then for different values of $\rho$ the number of possible covariance matrices of $X^{s_e}$ are also different, which is subsequently reflected in change in the value of $|\cG_{\Psi, e}^{s_e}|$.  
\end{comment}

%\subsubsection{Practical rule is asymptotically optimal}

\begin{figure}%[H]
\centering
\makebox[\linewidth]{
\subfloat[]{\label{fig:ESS-K7-rh07} 
\includegraphics[width=0.32\linewidth]{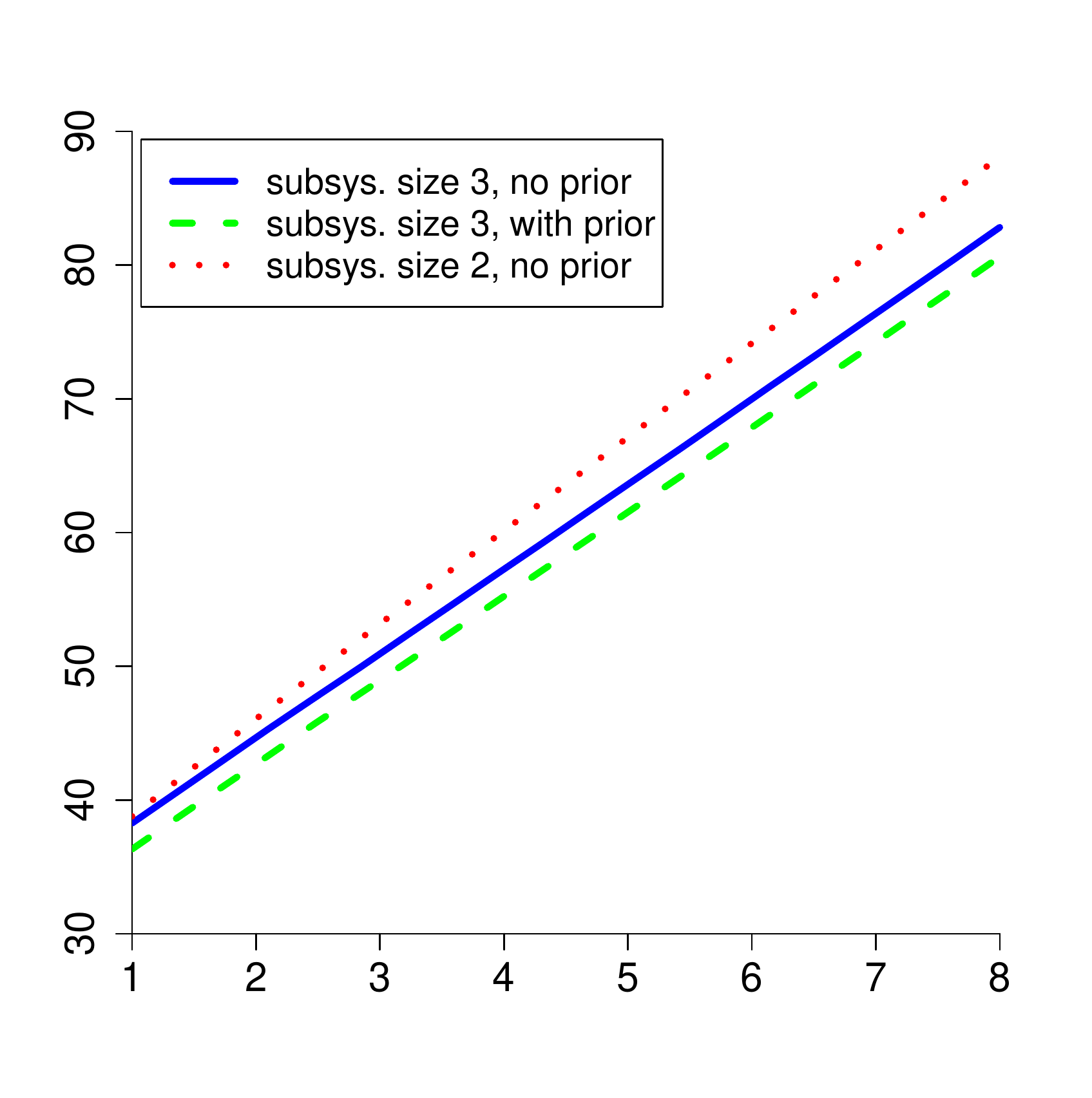}  
}
%\hspace{1cm}
\subfloat[]{\label{fig:Ratio-K7-rh07} 
\includegraphics[width=0.32\linewidth]{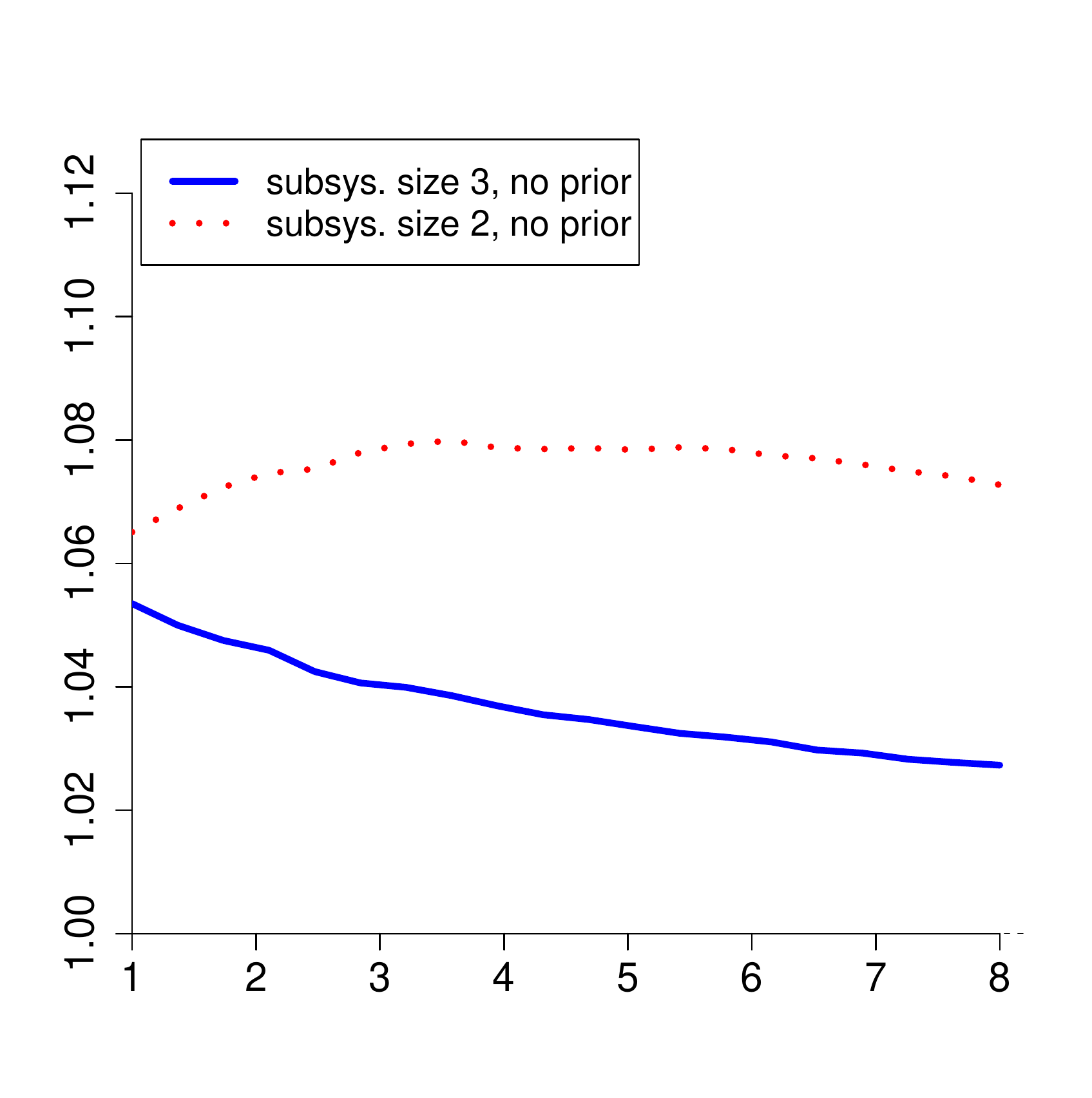}  
}
\subfloat[]{\label{fig:ARE-K7-rh07}  
\includegraphics[width=0.32\linewidth]{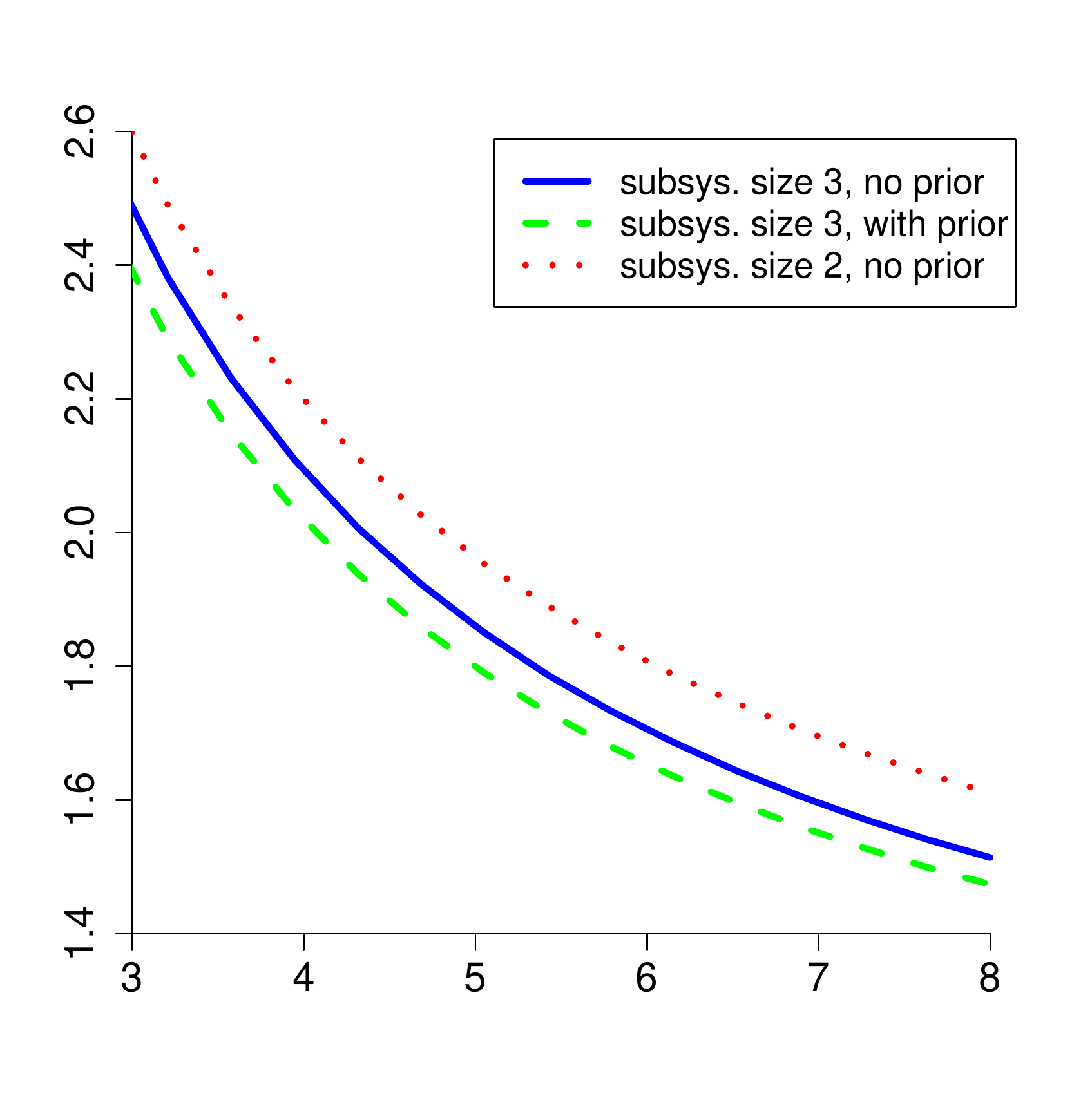}  
}
}

\caption{Performance of the tests under the first study, where $K = 7$. %(a)-(c) corresponds to the case when $\rho = 0.4$ and (d)-(f) to the case when $\rho = 0.7$. 
%The x-axis in all figures is $|\log_{10} \alpha|$ and the y-axis for (a) is the expected sample size of the tests, for (b) the ratio of the expected sample size of 
%the versions without prior information with respect to that of the version with bigger subsystem as well as prior information, and for (c) the ratio of expected sample size of all tests with respect to the first order approximation of the corresponding optimal performance.
}
\label{fig:K7-rh07}
\vspace{-0.3cm}
\end{figure}

\begin{figure}[H]
\centering
\makebox[\linewidth]{
\subfloat[]{\label{fig:ESS-K3-rh07} 
\includegraphics[width=0.32\linewidth]{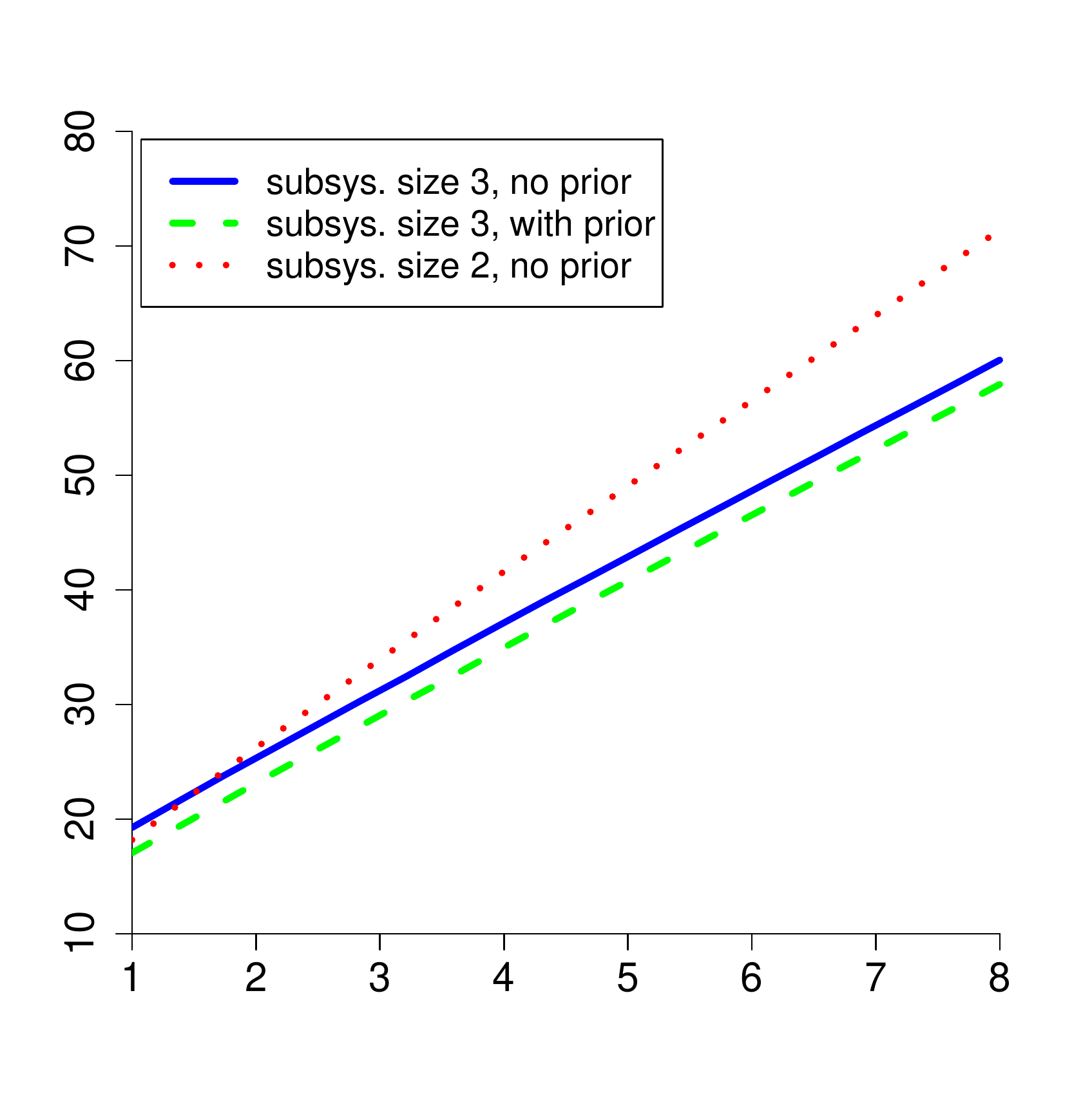}  
}
%\hspace{1cm}
\subfloat[]{\label{fig:Ratio-K3-rh07} 
\includegraphics[width=0.32\linewidth]{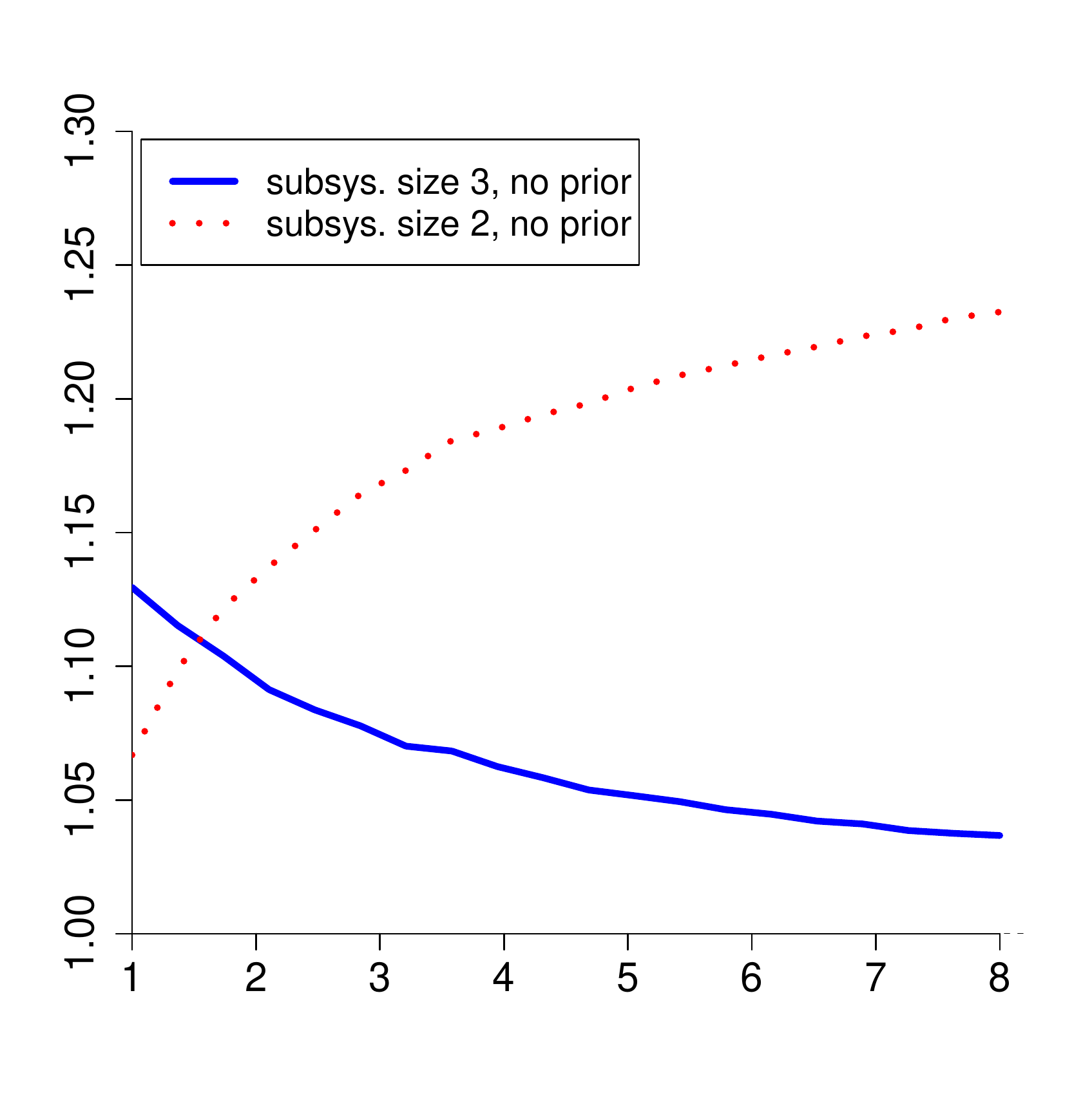}  
}
\subfloat[]{\label{fig:ARE-K3-rh07}  
\includegraphics[width=0.32\linewidth]{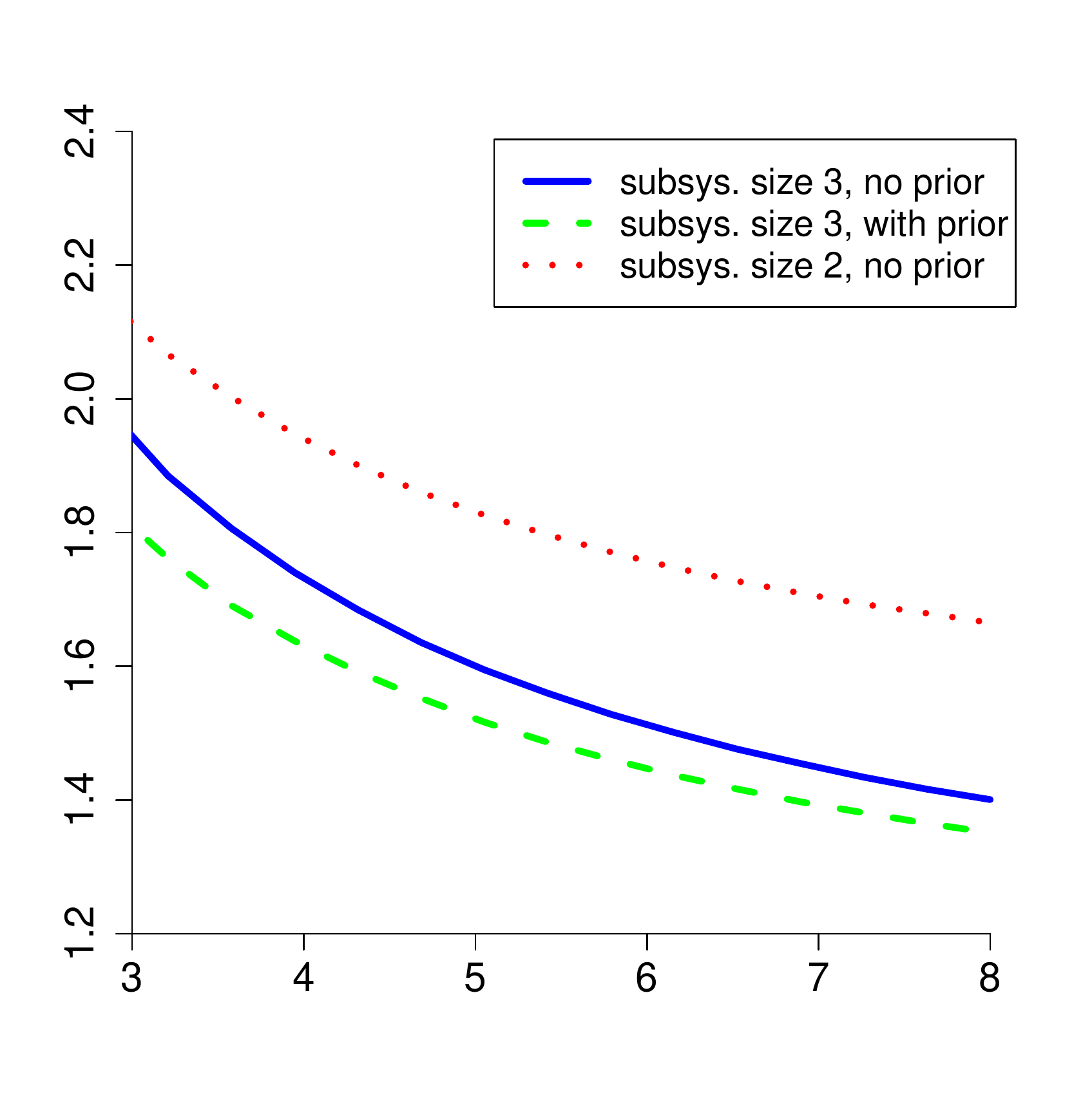}  
}
}

\caption{
Performance of the tests under the second study, where $K = 3$. %(a)-(c) corresponds to the case when $\rho = 0.4$ and (d)-(f) to the case when $\rho = 0.7$. 
%The x-axis in all figures is $|\log_{10} \alpha|$ and the y-axis for (a) is ESS of all tests under $\sfP$, for (b) is ratio of ESS of test 1 and test 2 without prior info with respect to ESS of test 1 with prior info, and for (c) is the ratio of ESS of all test with respect to the first order approximation of the optimal performance.
}
\label{fig:K3-rh07}
\vspace{-0.3cm}
\end{figure}

\section{Conclusion}\label{sec:conclusion}
In the present paper we consider a setup where multiple  data streams are   generated by distinct data sources and we propose a novel multiple testing formulation  that  unifies the  problems of detection and isolation. Specifically, we introduce a family of  testing procedures,  of various computational complexities, that control 4 distinct error metrics, two of which are related to   the pure detection problem and  two  to the pure isolation problem.  We characterize the optimal expected sample size in the class of all sequential tests that control  these error probabilities to a first-order asymptotic approximation as the latter go to 0.  Moreover, 
we evaluate the asymptotic relative efficiency of low-complexity rules in the proposed family, and establish  sufficient conditions for their asymptotic optimality.   

Unlike most previous works in the literature, the results  in the present paper  do not rely on the assumption of independence among the data sources.  Thus,  we apply them   to the problem of detecting and isolating  (i) anomalous, possibly dependent, data streams, (ii) an unknown dependence structure. For the first of these problems, in the case of independence we  obtain asymptotically optimal rules that are easily implementable in practice, under general conditions that allow for temporal dependence. Thus, we  generalize  previous results in the  literature, which focus  either  on the  pure detection or the pure isolation problem.

%optimal asymptotic performance;  and when these are not satisfied, we  obtain upper bounds on their asymptotic relative efficiencies. % by compromising its asymptotic performance to some allowable extent. 

%We show that the practical rule is asymptotically optimal under a wide range of probability distributions. In these problems, we establish sufficient conditions for computationally tractable  rules in the proposed  family of tests to achieve asymptotic optimality and when this is not possible, we  obtain bounds on its asymptotic relative efficiency.

% and show that under certain conditions it is asymptotically optimal. Finally, we apply the general theory to the  consider the problem of joint detection and isolation 
 % A simulation study is also performed to further illustrate these results.

 There are various natural generalizations of the present work. For example, we can obtain a similar asymptotic theory using alternative metrics for the false positive and false negative rates, such as FDR, pFDR, etc., working similarly to \cite{He_Bart_2021}. Moreover, we can extend the results of the current work  to the case that  the distribution of each  unit  under each hypothesis  belongs to a parametric family, working similarly to \cite[Section 6]{song_fell2019}.
 
%  replacit is interesting to develop optimal procedures for

% One direction is to allow the families of distributions (hypotheses) associated to each unit to be infinite, for example, if we consider a parametric family and allow the parameter of interest to belong to some interval

Finally, there are many open questions and  directions for   further  research,  such as  a more precise description of the optimal performance,  the proof of   stronger optimality properties, 
%in the present work we have established a first-order asymptotic optimality theory. 
%Another direction of interest is to   consider 
asymptotic regimes where the number of sources also goes to infinity as the error probabilities vanish, modeling the  data  streams using spatial models, e.g., a  Markov random field, where the special  underlying dependence  can lead to further insights, 
closed-form expressions  for the thresholds that are less conservative than the ones we obtain in this work, more efficient design of subsystems, etc.  Finally, another direction of interest is  to allow for the sampling to be terminated  at a different time in each data stream, as in  \cite{2014arXiv_Bartroff, bartroff2014sequential, malloy2014sequential}.

%Moreover, although in practice the  thresholds of the proposed testing procedures can be determined using Monte-Carlo simulation, it would be convenient to have Another potential direction of interest would be to consider the case . 

%Moreover, in our asymptotic analysis we have considered the number of sources to be constant. It would be of interest to % hroughout the paper but it can be very large in practice, for example, in high dimensional settings. 

%\newpage

%Here put the example with testing of $\rho$ in multivariate normal with unknown mean and variance.
%Compare to example with known mean and variance

%%%%%%%%%%%%%%%%%%%%%%%%%%%%%%%%%%%%%%%%%%%%%%
%% Example with single Appendix:            %%
%%%%%%%%%%%%%%%%%%%%%%%%%%%%%%%%%%%%%%%%%%%%%%
%%%%%%%%%%%%%%%%%%%%%%%%%%%%%%%%%%%%%%%%%%%%%%
%% Example with multiple Appendixes:        %%
%%%%%%%%%%%%%%%%%%%%%%%%%%%%%%%%%%%%%%%%%%%%%%

%%%%%%%%%%%%%%%%%%%%%%%%%%%%%%%%%%%%%%%%%%%%%%
%% Support information, if any,             %%
%% should be provided in the                %%
%% Acknowledgements section.                %%
%%%%%%%%%%%%%%%%%%%%%%%%%%%%%%%%%%%%%%%%%%%%%%

\begin{comment}
\begin{acks}[Acknowledgments]
The authors would like to thank the anonymous referees, an Associate
Editor and the Editor for their constructive comments that improved the
quality of this paper.
\end{acks}
\end{comment}

%%%%%%%%%%%%%%%%%%%%%%%%%%%%%%%%%%%%%%%%%%%%%%
%% Funding information, if any,             %%
%% should be provided in the                %%
%% funding section.                         %%
%%%%%%%%%%%%%%%%%%%%%%%%%%%%%%%%%%%%%%%%%%%%%%
\begin{funding}
This work was supported in part by NSF Grant DMS-1737962.
\end{funding}

\newpage

\begin{appendix}

\section{Proofs regarding Error Control}\label{app:error_control}

\subsection{Some important lemmas}
In this subsection we establish some lemmas which are critical in establishing the error control in Theorems \ref{th:errcon} and \ref{th:errcon2}.
\begin{lemma}\label{D_e}
%Assume that \eqref{infty_det_iso} holds. 
Suppose $\emptyset \in \Psi$. Fix any $e \in \cK$ and $D_e > 1$. If $\sfP \in \cH_0$ then
\begin{align*}
\sfP\lt(\Lambda_{e, \rm det}\lt(n\rt) \geq D_e\ \;\; \text{for some}\ \;\; n \in \bN\rt) \leq \frac{|\cG_{\Psi, e}^{s_e}|}{D_e}.
\end{align*}
\end{lemma}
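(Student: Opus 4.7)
The key observation is that when $\sfP \in \cH_0$, the restriction $\sfP^{s_e}$ lies in $\cH_0^{s_e}$, so it is one of the candidates in the maximum appearing in the denominator of $\Lambda_{e,\rm det}(n)$. Hence
\[
\max\{\Lambda_n(\sfP',\sfP_0) : \sfP' \in \cH_0^{s_e}\} \; \geq \; \Lambda_n\bigl(\sfP^{s_e},\sfP_0\bigr),
\]
which, combined with the chain rule $\Lambda_n(\sfP',\sfP_0)/\Lambda_n(\sfP^{s_e},\sfP_0) = \Lambda_n(\sfP',\sfP^{s_e})$, yields the pathwise bound
\[
\Lambda_{e,\rm det}(n) \; \leq \; \max\bigl\{\Lambda_n(\sfP',\sfP^{s_e}) : \sfP' \in \cG_{\Psi,e}^{s_e}\bigr\}.
\]
This is a purely algebraic step that holds for every $n$, and it reduces the problem to controlling finitely many individual likelihood-ratio processes under $\sfP^{s_e}$.

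Next I would apply a union bound over the finite family $\cG_{\Psi,e}^{s_e}$, giving
\[
\sfP\bigl(\Lambda_{e,\rm det}(n) \geq D_e \text{ for some } n\bigr)
\; \leq \; \sum_{\sfP' \in \cG_{\Psi,e}^{s_e}} \sfP^{s_e}\Bigl(\sup_{n \in \bN} \Lambda_n(\sfP',\sfP^{s_e}) \geq D_e \Bigr).
\]
For each fixed $\sfP'$, $\{\Lambda_n(\sfP',\sfP^{s_e})\}_{n \in \bN}$ is a non-negative $\{\cF_n^{s_e}\}$-martingale under $\sfP^{s_e}$ with $\sfE_{\sfP^{s_e}}[\Lambda_n(\sfP',\sfP^{s_e})] = 1$, by the mutual absolute continuity assumed at the start of Section~\ref{sec:prop_proced}. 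Ville's maximal inequality then gives
\[
\sfP^{s_e}\Bigl(\sup_{n \in \bN} \Lambda_n(\sfP',\sfP^{s_e}) \geq D_e\Bigr) \; \leq \; \frac{1}{D_e}.
\]
Summing over the $|\cG_{\Psi,e}^{s_e}|$ many distributions in the union bound produces the advertised upper bound $|\cG_{\Psi,e}^{s_e}|/D_e$.

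The only conceptual step is the first one, recognizing that the GLR in the denominator is bounded below by the likelihood ratio at the true measure; after that, the rest is a routine union bound plus Ville. There is no real obstacle, although one should be careful to note that the event $\{\sup_n \Lambda_n(\sfP',\sfP^{s_e}) \geq D_e\}$ lies in $\cF^{s_e}$, so its probability under $\sfP$ coincides with that under the marginal $\sfP^{s_e}$, which is what lets us invoke Ville's inequality directly with respect to $\sfP^{s_e}$.
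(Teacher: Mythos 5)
Your proposal is correct and follows essentially the same route as the paper's proof: bound the GLR denominator below by $\Lambda_n(\sfP^{s_e},\sfP_0)$ using $\sfP^{s_e}\in\cH_0^{s_e}$, reduce via the chain rule to individual likelihood ratios $\Lambda_n(\sfQ,\sfP^{s_e})$, apply Boole's inequality over the finite family $\cG_{\Psi,e}^{s_e}$, and finish with Ville's maximal inequality for each nonnegative martingale of mean one. Your remark about the stopping event being $\cF^{s_e}$-measurable, which justifies working with the marginal $\sfP^{s_e}$, is exactly the first identity in the paper's argument.
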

\begin{proof}
We have,
\begin{align*}
&\sfP\lt(\Lambda_{e, \rm det}\lt(n\rt) \geq D_e\ \;\; \text{for some}\ \;\; n \in \bN\rt)\\
&= \sfP^{s_e}\lt(\Lambda_{e, \rm det}\lt(n\rt) \geq D_e\ \;\; \text{for some}\ \;\; n \in \bN\rt)\\
&\leq \sfP^{s_e}\lt(\max_{\sfQ \in \cG_{\Psi, e}^{s_e}}\frac{\Lambda_n(\sfQ, \sfP_0)}{\Lambda_n(\sfP^{s_e}, \sfP_0)} \geq D_e\  \;\; \text{for some}\ \;\; n \in \bN\rt),\ \text{since}\ \;\; \sfP^{s_e} \in \cH_0^{s_e}\\
&\leq \sfP^{s_e}\lt(\bigcup_{\sfQ \in \cG_{\Psi, e}^{s_e} }\lt\{\frac{\Lambda_n(\sfQ, \sfP_0)}{\Lambda_n(\sfP^{s_e}, \sfP_0)} \geq D_e\ \;\; \text{for some}\ \;\; n \in \bN\rt\}\rt)\\
&\leq \sum_{\sfQ \in \cG_{\Psi, e}^{s_e}}\sfP^{s_e}\lt(\Lambda_n(\sfQ, \sfP^{s_e}) \geq D_e\ \;\; \text{for some}\ \;\; n \in \bN\rt)\\
&\leq \sum_{\sfQ \in \cG_{\Psi, e}^{s_e}}\frac{1}{D_e} = \frac{|\cG_{\Psi, e}^{s_e}|}{D_e},
\end{align*}
where the third inequality follows from Boole's inequality and the last one follows from Ville's inequality.
\end{proof}

\begin{lemma}\label{C_e}
%Assume that \eqref{infty_det_iso} holds. 
Suppose $\emptyset \in \Psi$. Fix any $\sfP \in \cP_\Psi \setminus \cH_0$, $e \in \cA(\sfP)$ and $C_e > 1$. Then
\begin{align*}
\sfP\lt(\Lambda_{e, \rm det}\lt(n\rt) \leq \frac{1}{C_e}\ \;\; \text{for some}\ \;\; n \in \bN\rt) \leq \frac{|\cH_0^{s_e}|}{C_e}.
\end{align*}
\end{lemma}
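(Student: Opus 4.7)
The plan is to mirror the argument of Lemma~\ref{D_e}, but to exploit the opposite direction of the detection statistic. The key structural fact I would use at the outset is that, since $e \in \cA(\sfP)$, by definition $\sfP^e \in \cG^e$, and since $s_e \supseteq e$ and $\sfP \in \cP_\Psi$, this gives $\sfP^{s_e} \in \cG_{\Psi,e}^{s_e}$. Consequently, the numerator in the definition of $\Lambda_{e,\rm det}(n)$ satisfies
$$\max\{\Lambda_n(\sfP',\sfP_0) : \sfP' \in \cG_{\Psi,e}^{s_e}\} \geq \Lambda_n(\sfP^{s_e},\sfP_0).$$

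The next step is to translate the event $\{\Lambda_{e,\rm det}(n) \leq 1/C_e\}$ into a statement about a likelihood ratio where $\sfP^{s_e}$ sits in the denominator. Plugging in the lower bound above, this event is contained in
$$\left\{ \max_{\sfQ \in \cH_0^{s_e}} \frac{\Lambda_n(\sfQ,\sfP_0)}{\Lambda_n(\sfP^{s_e},\sfP_0)} \geq C_e \right\} = \left\{ \max_{\sfQ \in \cH_0^{s_e}} \Lambda_n(\sfQ,\sfP^{s_e}) \geq C_e \right\},$$
using the chain rule for Radon--Nikodym derivatives together with the mutual absolute continuity assumed in Section~\ref{sec:prop_proced}. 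Note that $\sfP_0$ drops out, which is important because $\sfP_0$ was an arbitrary reference measure in $\cH_0^{s_e}$.

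Finally, I would apply Boole's inequality over the finite family $\cH_0^{s_e}$ and then Ville's (maximal) inequality to each term: for every fixed $\sfQ \in \cH_0^{s_e}$, the process $\{\Lambda_n(\sfQ,\sfP^{s_e})\}_{n \in \bN}$ is a nonnegative $\sfP^{s_e}$-martingale with mean one, so
$$\sfP^{s_e}\!\left(\Lambda_n(\sfQ,\sfP^{s_e}) \geq C_e \text{ for some } n \in \bN\right) \leq \frac{1}{C_e}.$$
Summing these $|\cH_0^{s_e}|$ contributions yields the bound $|\cH_0^{s_e}|/C_e$. There is no real obstacle here beyond ensuring that the reference measure $\sfP_0$ cancels cleanly after inverting the ratio, and that $\sfP^{s_e}$ genuinely belongs to $\cG_{\Psi,e}^{s_e}$ so that the reduction to a $\sfP^{s_e}$-martingale is legitimate; both follow from $e \in \cA(\sfP)$ and the standing absolute continuity assumption.
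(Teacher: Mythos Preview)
Your proposal is correct and follows essentially the same approach as the paper: you use $\sfP^{s_e}\in\cG_{\Psi,e}^{s_e}$ to lower-bound the numerator of $\Lambda_{e,\rm det}(n)$, invert the inequality to obtain $\max_{\sfQ\in\cH_0^{s_e}}\Lambda_n(\sfQ,\sfP^{s_e})\geq C_e$, and then apply Boole's inequality together with Ville's inequality, exactly as the paper does.
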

\begin{proof}
Since $e \in \cA(\sfP)$ and $\sfP \in \cP_\Psi \setminus \cH_0$, we have $\sfP \in \cG_{\Psi, e}$. Now,
\begin{align*}
&\sfP\lt(\Lambda_{e, \rm det}\lt(n\rt) \leq \frac{1}{C_e}\ \;\; \text{for some}\ \;\; n \in \bN\rt)\\
&= \sfP^{s_e}\lt(\Lambda_{e, \rm det}\lt(n\rt) \leq \frac{1}{C_e}\ \;\; \text{for some}\ \;\; n \in \bN\rt)\\
&\leq \sfP^{s_e}\lt(\max_{\sfQ \in \cH_0^{s_e}}\frac{\Lambda_n(\sfQ, \sfP_0)}{\Lambda_n(\sfP^{s_e}, \sfP_0)} \geq C_e\ \;\; \text{for some}\ \;\; n \in \bN\rt),\ \text{since}\ \;\; \sfP^{s_e} \in \cG_{\Psi, e}^{s_e}\\
&\leq \sfP^{s_e}\lt(\bigcup_{\sfQ \in \cH_0^{s_e}}\lt\{\frac{\Lambda_n(\sfQ, \sfP_0)}{\Lambda_n(\sfP^{s_e}, \sfP_0)} \geq C_e\ \;\; \text{for some}\ \;\; n \in \bN\rt\}\rt)\\
&\leq \sum_{\sfQ \in \cH_0^{s_e}}\sfP^{s_e}\lt(\Lambda_n(\sfQ, \sfP^{s_e}) \geq C_e\ \;\; \text{for some}\ \;\; n \in \bN\rt)\\
&\leq \sum_{\sfQ \in \cH_0^{s_e}}\frac{1}{C_e} = \frac{|\cH_0^{s_e}|}{C_e},
\end{align*}
where the third inequality follows from Boole's inequality and the last one follows from Ville's inequality.
\end{proof}

\begin{lemma}\label{B_e}
%Assume that \eqref{infty_det_iso} holds. 
Suppose $\emptyset \in \Psi$.
Fix any $\sfP \in \cP_\Psi \setminus \cH_0$, $e \notin \cA(\sfP)$ and $B_e > 1$. Then
\begin{align*}
\sfP\lt(\Lambda_{e, \rm iso}\lt(n\rt) \geq B_e\ \;\; \text{for some}\ \;\; n \in \bN\rt) \leq \frac{|\cG_{\Psi, e}^{s'_e}|}{B_e}.
\end{align*}
\end{lemma}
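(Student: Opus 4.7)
The plan is to follow exactly the template established by Lemmas \ref{D_e} and \ref{C_e}, which this statement mirrors structurally: the only difference is that we are now bounding the isolation statistic under a distribution that makes $e$ a null (rather than an alternative) unit.

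First, I would reduce the global probability to a marginal one on $\cF^{s'_e}$, since $\Lambda_{e,\rm iso}(n)$ is $\cF_n^{s'_e}$-measurable, so the probability in question equals the same probability computed under $\sfP^{s'_e}$. Next, I use the hypothesis $e \notin \cA(\sfP)$ together with $\sfP \in \cP_\Psi \setminus \cH_0$ to conclude that $\sfP \in \cH_{\Psi,e}$, hence $\sfP^{s'_e} \in \cH_{\Psi,e}^{s'_e}$. This is the key membership that will let the marginal of $\sfP$ serve as a valid ``test'' distribution in the denominator of $\Lambda_{e,\rm iso}(n)$.

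The main step is the pointwise bound
$$\Lambda_{e,\rm iso}(n) \;=\; \frac{\max\{\Lambda_n(\sfQ,\sfP'_0): \sfQ \in \cG_{\Psi,e}^{s'_e}\}}{\max\{\Lambda_n(\sfQ,\sfP'_0): \sfQ \in \cH_{\Psi,e}^{s'_e}\}} \;\leq\; \frac{\max\{\Lambda_n(\sfQ,\sfP'_0): \sfQ \in \cG_{\Psi,e}^{s'_e}\}}{\Lambda_n(\sfP^{s'_e},\sfP'_0)} \;=\; \max_{\sfQ \in \cG_{\Psi,e}^{s'_e}} \Lambda_n(\sfQ,\sfP^{s'_e}),$$
where the inequality uses that $\sfP^{s'_e}$ is a member of $\cH_{\Psi,e}^{s'_e}$ (so it is a valid element of the set being maximized over in the denominator), and the final equality uses that the arbitrary reference $\sfP'_0$ cancels in the ratio.

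From here the argument is routine and parallels Lemmas \ref{D_e} and \ref{C_e} verbatim: the event $\{\Lambda_{e,\rm iso}(n)\ge B_e \text{ for some } n\}$ is contained in $\bigcup_{\sfQ\in\cG_{\Psi,e}^{s'_e}}\{\Lambda_n(\sfQ,\sfP^{s'_e})\ge B_e \text{ for some } n\}$; applying Boole's inequality, and then Ville's inequality to each nonnegative $\sfP^{s'_e}$-martingale $\Lambda_n(\sfQ,\sfP^{s'_e})$ (which has mean $1$ under $\sfP^{s'_e}$), yields the bound $|\cG_{\Psi,e}^{s'_e}|/B_e$. There is no serious obstacle; the only point that requires care is verifying the membership $\sfP^{s'_e}\in\cH_{\Psi,e}^{s'_e}$, which is immediate from the definition of $\cH_{\Psi,e}$ and the fact that $e\subseteq s'_e$.
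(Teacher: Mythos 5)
Your proposal is correct and follows exactly the same line of reasoning as the paper's proof: reduce to the marginal on $\cF^{s'_e}$, use $\sfP\in\cH_{\Psi,e}$ (hence $\sfP^{s'_e}\in\cH_{\Psi,e}^{s'_e}$) to lower-bound the denominator of the isolation statistic by $\Lambda_n(\sfP^{s'_e},\sfP'_0)$, then apply Boole's and Ville's inequalities to the resulting family of likelihood-ratio processes $\Lambda_n(\sfQ,\sfP^{s'_e})$, $\sfQ\in\cG_{\Psi,e}^{s'_e}$. No gaps; the argument is the same.
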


\begin{proof}
Since $e \notin \cA(\sfP)$ and $\sfP \in \cP_\Psi \setminus \cH_0$, we have $\sfP \in \cH_{\Psi, e}$. Now,
\begin{align*}
&\sfP\lt(\Lambda_{e, \rm iso}\lt(n\rt) \geq B_e\ \;\; \text{for some}\ \;\; n \in \bN\rt)\\
&= \sfP^{s'_e}\lt(\Lambda_{e, \rm iso}\lt(n\rt) \geq B_e\ \;\; \text{for some}\ \;\; n \in \bN\rt)\\
&\leq \sfP^{s'_e}\lt(\max_{\sfQ \in \cG_{\Psi, e}^{s'_e}}\frac{\Lambda_n(\sfQ, \sfP_0)}{\Lambda_n(\sfP^{s'_e}, \sfP_0)} \geq B_e\ \;\; \text{for some}\ \;\; n \in \bN\rt),\ \text{since}\ \;\; \sfP^{s'_e} \in \cH_{\Psi, e}^{s'_e}\\
&\leq \sfP^{s'_e}\lt(\bigcup_{\sfQ \in \cG_{\Psi, e}^{s'_e}}\lt\{\frac{\Lambda_n(\sfQ, \sfP_0)}{\Lambda_n(\sfP^{s'_e}, \sfP_0)} \geq B_e\ \;\; \text{for some}\ \;\; n \in \bN\rt\}\rt)\\
&\leq \sum_{\sfQ \in \cG_{\Psi, e}^{s'_e}}\sfP^{s'_e}\lt(\Lambda_n(\sfQ, \sfP^{s'_e}) \geq B_e\ \;\; \text{for some}\ \;\; n \in \bN\rt)\\
&\leq \sum_{\sfQ \in \cG_{\Psi, e}^{s'_e}}\frac{1}{B_e} = \frac{|\cG_{\Psi, e}^{s'_e}|}{B_e},
\end{align*}
where the third inequality follows from Boole's inequality and the last one follows from Ville's inequality.
\end{proof}

\begin{lemma}\label{A_e}
%Assume that \eqref{infty_det_iso} holds.
Suppose $\emptyset \in \Psi$. 
Fix any $\sfP \in \cP_\Psi \setminus \cH_0$, $e \in \cA(\sfP)$ and $A_e > 1$. Then
\begin{align*}
\sfP\lt(\Lambda_{e, \rm iso}\lt(n\rt) \leq \frac{1}{A_e}\ \;\; \text{for some}\ \;\; n \in \bN\rt) \leq \frac{|\cH_{\Psi, e}^{s'_e}|}{A_e}.
\end{align*}
\end{lemma}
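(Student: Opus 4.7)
The plan is to prove Lemma \ref{A_e} by essentially mirroring the arguments used in Lemmas \ref{D_e}, \ref{C_e}, and \ref{B_e}. The structure is symmetric: the previous three lemmas each bounded the probability of a single GLR-type statistic crossing a threshold in the ``wrong'' direction, and the current lemma handles the remaining case, namely the isolation statistic $\Lambda_{e,\rm iso}$ crossing its lower threshold $1/A_e$ when $e$ is in fact a signal. So the proof should follow the same three-step template: reduce to the marginal distribution on $s'_e$, bound the numerator's maximum by a union bound, and apply Ville's inequality to each term.

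First, since $e \in \cA(\sfP)$ and $\sfP \in \cP_\Psi \setminus \cH_0$, we have $\sfP \in \cG_{\Psi, e}$, and hence $\sfP^{s'_e} \in \cG_{\Psi, e}^{s'_e}$. Since $\Lambda_{e,\rm iso}(n)$ is $\cF_n^{s'_e}$-measurable, the probability of interest depends only on $\sfP^{s'_e}$. By definition,
\[
\Lambda_{e,\rm iso}(n) = \frac{\max\{\Lambda_n(\sfQ,\sfP_0') : \sfQ \in \cG_{\Psi,e}^{s'_e}\}}{\max\{\Lambda_n(\sfQ,\sfP_0') : \sfQ \in \cH_{\Psi,e}^{s'_e}\}},
\]
so the event $\{\Lambda_{e,\rm iso}(n) \le 1/A_e\}$ implies
\[
\max_{\sfQ \in \cH_{\Psi,e}^{s'_e}} \frac{\Lambda_n(\sfQ,\sfP_0')}{\Lambda_n(\sfP^{s'_e},\sfP_0')} \;\ge\; A_e,
\]
since $\sfP^{s'_e} \in \cG_{\Psi,e}^{s'_e}$ makes the denominator above a lower bound for the numerator of $\Lambda_{e,\rm iso}(n)$.

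Next, Boole's inequality over $\sfQ \in \cH_{\Psi,e}^{s'_e}$ reduces the problem to bounding, for each such $\sfQ$, the probability that the likelihood ratio $\Lambda_n(\sfQ, \sfP^{s'_e})$ ever reaches level $A_e$ under $\sfP^{s'_e}$. Since this ratio is a non-negative $\sfP^{s'_e}$-martingale with initial value $1$ (by mutual absolute continuity on each $\cF_n^{s'_e}$), Ville's inequality bounds each term by $1/A_e$. Summing gives $|\cH_{\Psi,e}^{s'_e}|/A_e$, which completes the proof.

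The proof is essentially routine and directly parallels Lemma \ref{B_e} with the roles of $\cG_{\Psi,e}^{s'_e}$ and $\cH_{\Psi,e}^{s'_e}$ swapped, and with the reference measure $\sfP_0'$ in place of $\sfP_0$. I do not anticipate any real obstacle; the only point requiring minor care is verifying the direction of the inequality after reducing $\Lambda_{e,\rm iso}(n) \le 1/A_e$ to a statement about the denominator being large compared to $\Lambda_n(\sfP^{s'_e},\sfP_0')$, which is valid precisely because $\sfP^{s'_e}$ itself belongs to $\cG_{\Psi,e}^{s'_e}$ and therefore its likelihood ratio is dominated by the maximum defining the numerator.
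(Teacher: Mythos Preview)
Your proposal is correct and follows essentially the same approach as the paper's proof: reduce to the marginal on $s'_e$, use $\sfP^{s'_e}\in\cG_{\Psi,e}^{s'_e}$ to bound $\{\Lambda_{e,\rm iso}(n)\le 1/A_e\}$ by $\{\max_{\sfQ\in\cH_{\Psi,e}^{s'_e}}\Lambda_n(\sfQ,\sfP^{s'_e})\ge A_e\}$, apply Boole's inequality, and finish with Ville's inequality term by term. The argument is a direct mirror of Lemma~\ref{B_e} with the roles of $\cG_{\Psi,e}^{s'_e}$ and $\cH_{\Psi,e}^{s'_e}$ swapped, exactly as you describe.
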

\begin{proof}
Since $e \in \cA(\sfP)$ and $\sfP \in \cP_\Psi \setminus \cH_0$, we have $\sfP \in \cG_{\Psi, e}$. Now,
\begin{align*}
&\sfP\lt(\Lambda_{e, \rm iso}\lt(n\rt) \leq \frac{1}{A_e}\ \;\; \text{for some}\ \;\; n \in \bN\rt)\\
&= \sfP^{s'_e}\lt(\Lambda_{e, \rm iso}\lt(n\rt) \leq \frac{1}{A_e}\ \;\; \text{for some}\ \;\; n \in \bN\rt)\\
&\leq \sfP^{s'_e}\lt(\max_{\sfQ \in \cH_{\Psi, e}^{s'_e}}\frac{\Lambda_n(\sfQ, \sfP_0)}{\Lambda_n(\sfP^{s'_e}, \sfP_0)} \geq A_e\ \;\; \text{for some}\ \;\; n \in \bN\rt),\ \text{since}\ \;\; \sfP^{s'_e} \in \cG_{\Psi, e}^{s'_e}\\
&\leq \sfP^{s'_e}\lt(\bigcup_{\sfQ \in \cH_{\Psi, e}^{s'_e}}\lt\{\frac{\Lambda_n(\sfQ, \sfP_0)}{\Lambda_n(\sfP^{s'_e}, \sfP_0)} \geq A_e\ \;\; \text{for some}\ \;\; n \in \bN\rt\}\rt)\\
&\leq \sum_{\sfQ \in \cH_{\Psi, e}^{s'_e}}\sfP^{s'_e}\lt(\Lambda_n(\sfQ, \sfP^{s'_e}) \geq A_e\ \;\; \text{for some}\ \;\; n \in \bN\rt)\\
&\leq \sum_{\sfQ \in \cH_{\Psi, e}^{s'_e}}\frac{1}{A_e} = \frac{|\cH_{\Psi, e}^{s'_e}|}{A_e},
\end{align*}
where the third inequality follows from Boole's inequality and the last one follows from Ville's inequality.
\end{proof}

\subsection{Proof of Theorem \ref{th:errcon}}\label{pf:th:errcon}

\begin{proof}
The first part of the proof is similar to the proof of Theorem \ref{th:errcon2}(iii), which is done later.

For the second part, note that when $\Psi = \Psi_{m, m}$ for some $0 < m < |\cK|$, then for any $(T, D) \in \cC_\Psi(\gamma, \delta)$, we have
$\sfP \lt(D \setminus \cA(\sfP) \neq \emptyset\rt) = \sfP\lt(\cA(\sfP) \setminus D \neq \emptyset  \rt) = \sfP\lt(D \neq \cA(\sfP)\rt) \leq \gamma \wedge \delta$.
Therefore, replacing both $\gamma$ and $\delta$ by $\gamma \wedge \delta$ in the lower bounds of $A_e$ and $B_e$ gives us the result.
\end{proof}

\subsection{Proof of Theorem \ref{th:errcon2}}\label{pf:th:errcon2}

\begin{proof}
First, we prove (iii) since the proof of (i) follows similarly. Fix the thresholds $\{A_e, B_e, C_e, D_e\}_{e \in \cK} > 1$. We detect that there is at least one unit in which alternative is correct only when $T_{joint} < T_0$ which means $\lt\{\Lambda_{e, \rm det}\lt(T^*\rt) \geq D_e\rt\}$ for some $e \in \cK$. Formally,
$$\lt\{D^* \neq \emptyset\rt\} \subseteq \bigcup_{e \in \cK}\lt\{\Lambda_{e, \rm det}\lt(T^*\rt) \geq D_e\rt\}.$$
Now, consider any arbitrary $\sfP \in \cH_0$. Then, we have
\begin{align*}
\sfP\lt(D^* \neq \emptyset\rt) &\leq \sum_{e \in \cK} \sfP\lt(\Lambda_{e, \rm det}\lt(T^*\rt) \geq D_e\rt)\\
&\leq \sum_{e \in \cK} \sfP\lt(\Lambda_{e, \rm det}\lt(n\rt) \geq D_e\ \;\; \text{for some}\ \;\; n \in \bN\rt)\\
&\leq\sum_{e \in \cK}\frac{|\cG_{\Psi, e}^{s_e}|}{D_e} %= \sum_{e \in \cK}\frac{1}{D},\ \text{letting}\ \frac{|\cP_{e, 1}^{s_e}|}{D_e} = \frac{1}{D}\ \text{for all}\ e \in \cK\\
%&=\frac{|\cK|}{D},
\end{align*}
where the first inequality follows from Boole's inequality and the third one follows from Lemma \ref{D_e}.
Thus, if we select $D_e$ according to (i), the probability of false alarm is controlled. 

For the rest of the proof of (iii), consider any arbitrary $\sfP \in \cP_\Psi \setminus \cH_0$.
We detect that null hypothesis is true in all units only if $T_0 < T_{joint}$, which means $\lt\{\Lambda_{e, \rm det}\lt(T^*\rt) \leq \frac{1}{C_e}\rt\}$ for all $e \in \cK$, and therefore, for
%which means
%$$\lt\{D^* = \emptyset\rt\} \subseteq \bigcap_{e \in \cK}\lt\{\Lambda_{e, \rm det}^{s_e}\lt(T^*\rt) \leq \frac{1}{C}\rt\} \subseteq $$
any arbitrary $e \in \cA(\sfP)$. Formally,
$$\lt\{D^* = \emptyset\rt\} \subseteq \lt\{\Lambda_{e, \rm det}\lt(T^*\rt) \leq \frac{1}{C_e}\rt\},$$ which implies
\begin{align*}
\sfP\lt(D^* = \emptyset\rt) &\leq \sfP\lt(\Lambda_{e, \rm det}\lt(T^*\rt) \leq \frac{1}{C_e}\rt)\\
&\leq \sfP\lt(\Lambda_{e, \rm det}\lt(n\rt) \leq \frac{1}{C_e}\ \;\; \text{for some}\ \;\; n \in \bN\rt)\\
&\leq \frac{|\cH_0^{s_e}|}{C_e},
\end{align*}
where the third inequality follows from Lemma \ref{C_e}.
Thus, if we select $C_e$ according to (i), the probability of missed detection is controlled.

Note that, at least one false positive is made only if $T_{joint} < T_0$ and we mistakenly isolate a unit in which null is true. Formally, the event $\lt\{\Lambda_{e, \rm iso}(T^*) \geq B_e\rt\}$ happens for some $e \notin \cA(\sfP)$, i.e.,
$$\lt\{D^* \setminus \cA(\sfP) \neq \emptyset\rt\} \subseteq \bigcup_{e \notin \cA(\sfP)}\lt\{\Lambda_{e, \rm iso}(T^*) \geq B_e\rt\},$$ which implies
\begin{align*}
\sfP\lt(D^* \setminus \cA(\sfP) \neq \emptyset\rt) &\leq \sum_{e \notin \cA(\sfP)}\sfP\lt(\Lambda_{e, \rm iso}\lt(T^*\rt) \geq B_e\rt)\\
&\leq \sum_{e \notin \cA(\sfP)}\sfP\lt(\Lambda_{e, \rm iso}\lt(n\rt) \geq B_e\ \;\; \text{for some}\ \;\; n \in \bN\rt)\\
&\leq\sum_{e \notin \cA(\sfP)}\frac{|\cG_{\Psi, e}^{s'_e}|}{B_e} \leq \sum_{e \in \cK} \frac{|\cG_{\Psi, e}^{s'_e}|}{B_e},%= \sum_{e \notin \cA(\sfP)}\frac{1}{B},\ \text{letting}\ \frac{|\cP_{e, 1}^{s'_e}|}{B_e} = \frac{1}{B}\ \text{for all}\ e \in \cK\\
%&=\frac{|\cA(\sfP)^c|}{B} \leq \frac{|\cK|}{B},
\end{align*}
where the first inequality follows from Boole's inequality and the third one follows from Lemma \ref{B_e}.
Thus, if we select $B_e$ as in Theorem \ref{th:errcon} we obtain the desired error control. 

Next, observe that at least one false negative is made \textit{and} $D^* \neq \emptyset$ only if $T_{joint} < T_0$ and we fail to isolate a unit in which alternative is true. Formally, the event $\lt\{\Lambda_{e, \rm iso}(T^*) \leq \frac{1}{A_e}\rt\}$ happens for some $e \in \cA(\sfP)$, i.e.,
$$\lt\{\cA(\sfP) \setminus D^* \neq \emptyset, D^* \neq \emptyset\rt\} \subseteq \bigcup_{e \in \cA(\sfP)}\lt\{\Lambda_{e, \rm iso}(T^*) \leq \frac{1}{A_e}\rt\},$$
which implies,
\begin{align*}
\sfP\lt(\cA(\sfP) \setminus D^* \neq \emptyset, D^* \neq \emptyset\rt) &\leq \sum_{e \in \cA(\sfP)}\sfP\lt(\Lambda_{e, \rm iso}\lt(T^*\rt) \leq \frac{1}{A_e}\rt)\\
&\leq \sum_{e \in \cA(\sfP)}\sfP\lt(\Lambda_{e, \rm iso}\lt(n\rt) \leq \frac{1}{A_e}\ \;\; \text{for some}\ \;\; n \in \bN\rt)\\
&\leq\sum_{e \in \cA(\sfP)}\frac{|\cH_{\Psi, e}^{s'_e}|}{A_e} \leq \sum_{e \in \cK} \frac{|\cH_{\Psi, e}^{s'_e}|}{A_e} %= \sum_{e \in \cA(\sfP)}\frac{1}{A},\ \text{letting}\ \frac{|\cP_{e, 0}^{s'_e}|}{A_e} = \frac{1}{A}\ \text{for all}\ e \in \cK\\
%&=\frac{|\cA(\sfP)|}{A} \leq \frac{|\cK|}{A},
\end{align*}
where the first inequality follows from Boole's inequality and the third one follows from Lemma \ref{A_e}.
Thus, if we select $A_e$ as in Theorem \ref{th:errcon} we obtain the desired error control. 

Next, we prove (ii) regarding $\chi^*_{fwer}$, where the thresholds in $\chi^*$ are selected such that $A_e = C_e$ and $B_e = D_e$ for every $e \in \cK$. Fix any arbitrary $\sfP \in \cP_\Psi$. 
Note that, at least one false positive is made if there exists some $e \in D^*_{fwer} \setminus \cA(\sfP)$. Now, $e \notin \cA(\sfP)$ means either $\sfP \in \cH_0$ or $\sfP \in \cH_{\Psi, e}$. %which in turn implies, $\sfP^{s_e} \in \cH_0^{s_e}$ or $\sfP^{s'_e} \in \cH_{\Psi, e}^{s'_e}$. 
Suppose the first case holds, that is, $\sfP \in \cH_0$. Then,
\begin{align} \notag
\sfP\lt(D^*_{fwer} \setminus \cA(\sfP) \neq \emptyset\rt) &= \sfP\lt(D^*_{fwer} \neq \emptyset\rt)\\ \notag
&\leq \sum_{e \in \cK}\sfP\lt(\Lambda_{e, \rm det}\lt(T^*_{fwer}\rt) \geq B_e\rt)\\ \notag
&\leq \sum_{e \in \cK}\sfP\lt(\Lambda_{e, \rm det}\lt(n\rt) \geq B_e\ \;\; \text{for some}\ \;\; n \in \bN\rt)\\ \label{thr_MT1}
&\leq\sum_{e \in \cK}\frac{|\cG_{\Psi, e}^{s_e}|}{B_e} %= \sum_{e \in \cK}\frac{1}{B},\ \text{letting}\ \frac{|\cP_{e, 1}^{s_e}|}{D_e} = \frac{1}{D}\ \text{for all}\ e \in \cK\\
%&=\frac{|\cK|}{D},
\end{align}
where the first inequality follows from Boole's inequality and the third one follows from Lemma \ref{D_e}.
%Thus, let $D = \frac{|\cK|}{\alpha}$ to satisfy the error control for false alarm. 
If the second case holds, that is, $\sfP \in \cH_{\Psi, e}$, then
\begin{align} \notag
\sfP\lt(D^*_{fwer} \setminus \cA(\sfP) \neq \emptyset\rt) &\leq \sum_{e \notin \cA(\sfP)} \sfP\lt(\Lambda_{e, \rm iso}(T^*_{fwer}) \geq B_e\rt)\\ \notag
&\leq \sum_{e \notin \cA(\sfP)} \sfP\lt(\Lambda_{e, \rm iso}\lt(n\rt) \geq B_e\ \;\; \text{for some}\ \;\; n \in \bN\rt)\\ \label{thr_MT2}
&\leq \sum_{e \notin \cA(\sfP)} \frac{|\cG_{\Psi, e}^{s'_e}|}{B_e} \leq \sum_{e \in \cK} \frac{|\cG_{\Psi, e}^{s'_e}|}{B_e},
\end{align}
where the first inequality follows from Boole's inequality and the third one follows from Lemma \ref{B_e}. %Suppose $B > 1$ is such that for all $e \in \cK$,
%$$\frac{|\cP_{e, 1}^{s_e}| \vee |\cP_{e, 1}^{s'_e}|}{B_e} \leq \frac{1}{B}.$$ 
Then, from \eqref{thr_MT1} and \eqref{thr_MT2}, we have
$$\sfP\lt(D^*_{fwer} \setminus \cA(\sfP) \neq \emptyset\rt) \leq \sum_{e \in \cK} \frac{|\cG_{\Psi, e}^{s_e}| \vee |\cG_{\Psi, e}^{s'_e}|}{B_e}.$$ 
Thus, if we select $B_e$ according to (ii), we obtain the desired error control.

Next, observe that at least one false negative is made if either $T_0 < T_{joint}$ and we simply declare $D^*_{fwer} = \emptyset$, or $T_{joint} < T_0$ and we incur the event $\{\cA(\sfP) \setminus D^*_{fwer} \neq \emptyset, D^*_{fwer} \neq \emptyset\}$. Formally,
$$\{\cA(\sfP) \setminus D^*_{fwer} \neq \emptyset\} = \{D^*_{fwer} = \emptyset\} \cup \{\cA(\sfP) \setminus D^*_{fwer} \neq \emptyset, D^*_{fwer} \neq \emptyset\}.$$
For any $e \in \cA(\sfP)$, the probability of the first event can be bounded as follows:
\begin{align} \notag
\sfP\lt(D^*_{fwer} = \emptyset\rt) &\leq \sfP\lt(\Lambda_{e, \rm det}\lt(T^*_{fwer}\rt) \leq \frac{1}{A_e}\rt)\\ \notag
&\leq \sfP\lt(\Lambda_{e, \rm det}\lt(n\rt) \leq \frac{1}{A_e}\ \;\; \text{for some}\ \;\; n \in \bN\rt)\\ \label{thr_MT3}
&\leq \frac{|\cH_0^{s_e}|}{A_e},
\end{align}
where the third inequality follows from Lemma \ref{C_e}. Furthermore, for the second event, we have
\begin{align} \notag
\sfP\lt(\cA(\sfP) \setminus D^*_{fwer} \neq \emptyset, D^*_{fwer} \neq \emptyset\rt) &\leq \sum_{e \in \cA(\sfP)}\sfP\lt(\Lambda_{e, \rm iso}\lt(T^*_{fwer}\rt) \leq \frac{1}{A_e}\rt)\\ \notag
&\leq \sum_{e \in \cA(\sfP)}\sfP\lt(\Lambda_{e, \rm iso}\lt(n\rt) \leq \frac{1}{A_e}\ \;\; \text{for some}\ \;\; n \in \bN\rt)\\ \label{thr_MT4}
&\leq \sum_{e \in \cA(\sfP)}\frac{|\cH_{\Psi, e}^{s'_e}|}{A_e} \leq \sum_{e \in \cK}\frac{|\cH_{\Psi, e}^{s'_e}|}{A_e}, 
\end{align}
where the third inequality follows from Lemma \ref{A_e}.
%Suppose $A > 1$ is such that for all $e \in \cK$,
%$$\frac{|\cH_0^{s_e}| \vee |\cP_{e, 0}^{s_e}|}{A_e} \leq \frac{1}{A}.$$
Then from \eqref{thr_MT3} and \eqref{thr_MT4}, we have
\begin{align*}
\sfP\lt(\cA(\sfP) \setminus D^*_{fwer} \neq \emptyset\rt) &\leq \frac{|\cH_0^{s_e}|}{A_e} + \sum_{e \in \cK}\frac{|\cH_{\Psi, e}^{s'_e}|}{A_e}\\
&\leq \frac{|\cH_0^{s_e}| \vee |\cH_{\Psi, e}^{s'_e}|}{A_e} + \sum_{e \in \cK}\frac{|\cH_0^{s_e}| \vee |\cH_{\Psi, e}^{s'_e}|}{A_e}.
\end{align*}
Thus, if we select $A_e$ according to (ii), we obtain the desired error control. 

Further, note that, for any $e \in \cK$ if $s_e = s'_e = e$ then $b_e = d_e = |\cG^e|$, and $a_e = c_e = |\cH^e|$. Thus, $B_e \geq (b_e \vee d_e)|\cK|/\gamma = b_e|\cK|/\gamma$.
%\begin{align*}
%T^* = \inf\{ n \in \bN : \; \Lambda_{e}(n) \notin \lt(1/A_e, B_e\rt) \; \; \text{for every}  \, e \in \cK \}, \quad \text{and} \quad D^* &= D(T^*). 
%\end{align*}
%Therefore, 
On the other hand, one can bound the probability of at least one false negative as follows.
\begin{align*}
\sfP\lt(\cA(\sfP) \setminus D^*_{fwer} \neq \emptyset\rt) &\leq \sum_{e \in \cA(\sfP)} \sfP\lt(\Lambda_e(T^*_{fwer}) \leq \frac{1}{A_e}\rt)\\
&\leq \sum_{e \in \cA(\sfP)} \sfP\lt(\Lambda_e(n) \leq \frac{1}{A_e}\ \;\; \text{for some}\ \;\; n \in \bN\rt)\\
&\leq \sum_{e \in \cA(\sfP)} \frac{|\cH^e|}{A_e} \leq \sum_{e \in \cK} \frac{a_e}{A_e} %= \sum_{e \in \cA(\sfP)} \frac{1}{A}\ \text{letting}\ \frac{|\cH_0^e|}{A_e} = \frac{1}{A}\ \text{for all}\ e \in \cK\\
%&=\frac{|\cA(\sfP)|}{A} \leq \frac{|\cK|}{A},
\end{align*}
where the first inequality follows from Boole's inequality and the last one follows form Lemma \ref{A_e}. Thus, if we select $A_e$ according to (ii), we obtain the desired error control. The proof is complete.
\end{proof}

\section{Some properties regarding the Information Numbers}\label{app:info_num}

In this section, we present some important lemmas about the information numbers introduced in Section \ref{sec:asymp_opt}, which are critical to establishing Proposition \ref{prop:P1} in Appendix \ref{app:asymp_opt} as well as may be of independent interest to the reader.

\begin{lemma}\label{lem_info_mono}
Suppose that \eqref{max_series} holds for two distinct probability distributions $\sfP, \sfQ \in \cP$, %which are mutually absolutely continuous 
when $s = s_1, s_2 \subseteq [K]$.
\begin{equation*}%\label{info_mono}
\text{If} \quad s_1 \subseteq s_2, \quad \text{then} \quad \cI(\sfP, \sfQ; s_1) \leq \cI\lt(\sfP, \sfQ; s_2\rt).
\end{equation*}
\end{lemma}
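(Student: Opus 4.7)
The plan is to prove $\cI_1 \leq \cI_2$ (where $\cI_i := \cI(\sfP, \sfQ; s_i)$) by contradiction, exploiting the tower identity
\[
\Lambda_n(\sfP^{s_1}, \sfQ^{s_1}) = E_\sfQ\bigl[\Lambda_n(\sfP^{s_2}, \sfQ^{s_2}) \,\big|\, \cF_n^{s_1}\bigr],
\]
which holds because $s_1 \subseteq s_2$ forces $\cF_n^{s_1} \subseteq \cF_n^{s_2}$, and because $\sfP^{s_1}$ (respectively $\sfQ^{s_1}$) is the restriction of $\sfP^{s_2}$ (respectively $\sfQ^{s_2}$) to $\cF_n^{s_1}$. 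I will construct an event whose $\sfP$-probability must tend to $1$ under the assumption $\cI_1 > \cI_2$ but whose $\sfP$-probability is also forced to $0$ by a Chernoff-type change-of-measure argument.

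First I would suppose $\cI_1 > \cI_2$ and pick real numbers $\rho_2, \rho_1$ with $\cI_2 < \rho_2 < \rho_1 < \cI_1$. Applying \eqref{max_series} with $s = s_2$ and $\rho = \rho_2$ yields $\sfP(\Lambda_n(\sfP^{s_2}, \sfQ^{s_2}) \geq e^{n \rho_2}) \to 0$. Using \eqref{LLN} (which follows from the standing validity of \eqref{max_series}-\eqref{comp_conv_onesided}) for $s = s_1$, the convergence $Z_n(\sfP^{s_1}, \sfQ^{s_1})/n \overset{\sfP}{\to} \cI_1 > \rho_1$ in probability gives $\sfP(\Lambda_n(\sfP^{s_1}, \sfQ^{s_1}) \geq e^{n \rho_1}) \to 1$. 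Combining these, the event
\[
A_n := \bigl\{\Lambda_n(\sfP^{s_1}, \sfQ^{s_1}) \geq e^{n \rho_1}\bigr\} \cap \bigl\{\Lambda_n(\sfP^{s_2}, \sfQ^{s_2}) < e^{n \rho_2}\bigr\}
\]
satisfies $\sfP(A_n) \to 1$ as $n \to \infty$.

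Next, since $s_1 \subseteq s_2$ gives $\cF_n^{s_1} \subseteq \cF_n^{s_2}$, the event $A_n$ lies in $\cF_n^{s_2}$, so I can change measure between $\sfP^{s_2}$ and $\sfQ^{s_2}$ on $\cF_n^{s_2}$: using the bound $\Lambda_n(\sfP^{s_2}, \sfQ^{s_2}) < e^{n\rho_2}$ on $A_n$ and Markov's inequality applied to $\Lambda_n(\sfP^{s_1}, \sfQ^{s_1})$ (which has unit $\sfQ$-expectation), I obtain
\begin{equation*}
\sfP(A_n) = E_\sfQ\bigl[\Lambda_n(\sfP^{s_2}, \sfQ^{s_2}) \mathbf{1}_{A_n}\bigr] \leq e^{n \rho_2}\, \sfQ(A_n) \leq e^{n \rho_2}\, e^{-n \rho_1} = e^{-n (\rho_1 - \rho_2)},
\end{equation*}
which tends to $0$ as $n \to \infty$ because $\rho_1 > \rho_2$. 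This contradicts $\sfP(A_n) \to 1$, hence $\cI_1 \leq \cI_2$, completing the proof.

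The main technical point is simply to verify that the change-of-measure identity is applied with respect to the correct $\sigma$-field; this is direct once one observes $A_n \in \cF_n^{s_2}$, and the rest of the argument is an essentially standard exponential-tilting/Chernoff bound using that $\Lambda_n(\sfP^{s_1}, \sfQ^{s_1})$ is a non-negative $\sfQ$-martingale with expectation $1$.
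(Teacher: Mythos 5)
Your proof is correct, but it takes a genuinely different route from the paper's. The paper proves monotonicity operationally: it identifies $\cI(\sfP,\sfQ;s)$ with the first-order optimal expected sample size for sequentially testing $\sfP$ against $\sfQ$ using data from $s$ (citing Lemma 3.4.1 of Tartakovsky et al.), and observes that enlarging $s$ enlarges the class of admissible tests, hence can only decrease the optimal sample size, hence can only increase $\cI$. Your proof is instead a direct, self-contained change-of-measure argument: you suppose $\cI_1 > \cI_2$, build the event $A_n$ on which $\Lambda_n(\sfP^{s_1},\sfQ^{s_1})$ is large but $\Lambda_n(\sfP^{s_2},\sfQ^{s_2})$ is small, show $\sfP(A_n)\to 1$ from the two concentration properties, and then derive $\sfP(A_n)\le e^{-n(\rho_1-\rho_2)}\to 0$ by changing measure on $\cF_n^{s_2}$ (legitimate since $A_n\in\cF_n^{s_2}$) and using Markov's inequality with $E_\sfQ[\Lambda_n(\sfP^{s_1},\sfQ^{s_1})]=1$. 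This is the standard "exponential tilting" proof of data-processing monotonicity and avoids any appeal to optimality theory; it buys self-containedness at the cost of being more hands-on. The ``tower identity'' you announce at the outset is in fact never used --- your argument only needs $\cF_n^{s_1}\subseteq\cF_n^{s_2}$ and the unit $\sfQ$-mean of the nested likelihood ratio, and you could drop that sentence.

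One presentational caveat: the lemma as stated assumes only \eqref{max_series}, yet your step establishing $\sfP(\Lambda_n(\sfP^{s_1},\sfQ^{s_1})\ge e^{n\rho_1})\to 1$ invokes the ``$\ge\cI_1$'' half of \eqref{LLN}, which comes from \eqref{comp_conv_onesided}, not from \eqref{max_series}. In practice this is harmless --- the quantity $\cI$ is not uniquely pinned down by \eqref{max_series} alone, so any meaningful reading of the lemma's hypothesis presupposes both conditions, and the paper's own proof (through the $\sim$ relation in Tartakovsky's lemma, which needs an achievability direction) likewise relies implicitly on more than \eqref{max_series}. You should nevertheless state that you are using both \eqref{max_series} and \eqref{comp_conv_onesided}, rather than attributing the needed convergence to \eqref{max_series} alone.
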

%\subsection{Proof of Lemma \ref{lem_info_mono}}
\begin{proof}
Consider the hypotheses testing problem 
$$H_0 : X \sim \sfP\quad \text{versus}\quad H_1 : X \sim \sfQ,$$
where we control both type 1 error and type 2 error below $\alpha$, for some $\alpha \in (0, 1)$.
A test, in this context, is a  pair $ (T, d)$ that consists of an $\{\cF_n\}$-stopping time $T$ that represents the time at which the decision is taken and an $\cF_T$-measurable bernoulli random variable $d$, such that $\{d = i\}$ represents hypothesis $H_i$ is selected  upon stopping, $i = 0, 1$.
Let $\cC^{s}$ be the class of sequential procedures $(T, d)$ which satisfy the desired error control based on $\cF^s$, that is, observations only from $s$. Since  $s_1 \subseteq s_2$, we have $\cF_n^{s_1} \subseteq \cF_n^{s_2}$ for every $n \in \bN$, which implies $\cF^{s_1} \subseteq \cF^{s_2}$ and therefore, $\cC^{s_1} \subseteq \cC^{s_2}$. 
This leads to
\begin{equation}\label{lem_info_mono1}
\inf_{(T, d) \in \cC^{s_1}}\sfE_\sfP\lt[T\rt] \geq \inf_{(T, d) \in \cC^{s_2}}\sfE_\sfP\lt[T\rt].
\end{equation}
Now, from \cite[Lemma 3.4.1]{tartakovsky2014sequential}, as $\alpha \to 0$, we have
$$\inf_{(T, d) \in \cC^{s}} \sfE_\sfP\lt[T\rt] \sim \frac{|\log \alpha|}{\cI\lt(\sfP, \sfQ; s\rt)}, \quad s = s_1, s_2,$$
which combining with \eqref{lem_info_mono1} completes the proof.
\end{proof}

\begin{lemma}\label{info_ineq1}
Consider $\sfP \in \cP$ and class of probability distributions $\cQ \subset \cP$, where $\sfP \notin \cQ$.
Suppose that \eqref{max_series} holds for $\sfP$ and every $\sfQ \in \cQ$, %which are mutually absolutely continuous 
when $s = s_1, s_2 \subseteq [K]$.
\begin{equation*}%\label{info_mono}
\text{If} \quad s_1 \subseteq s_2, \quad \text{then} \quad \cI(\sfP, \cQ; s_1) \leq \cI\lt(\sfP, \cQ; s_2\rt).
\end{equation*}
\end{lemma}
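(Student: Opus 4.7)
The plan is to reduce Lemma \ref{info_ineq1} directly to Lemma \ref{lem_info_mono}, which has already been established in the excerpt. The statement is essentially the ``monotonicity in the conditioning subsystem'' extended from a pairwise divergence to a divergence from a whole class, so the work has been done and only an infimum/minimum manipulation remains.

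First, I would fix an arbitrary $\sfQ \in \cQ$. Since $\sfP \notin \cQ$, the distributions $\sfP$ and $\sfQ$ are distinct, and by hypothesis condition \eqref{max_series} holds for this pair $(\sfP, \sfQ)$ on both $s = s_1$ and $s = s_2$. Therefore Lemma \ref{lem_info_mono} applies and gives
\[
 \cI(\sfP, \sfQ; s_1) \;\leq\; \cI(\sfP, \sfQ; s_2).
\]

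Next, I would take the minimum over all $\sfQ \in \cQ$ on both sides. Since the inequality holds for each fixed $\sfQ \in \cQ$, we obtain
\[
\min_{\sfQ \in \cQ} \cI(\sfP, \sfQ; s_1) \;\leq\; \min_{\sfQ \in \cQ} \cI(\sfP, \sfQ; s_2),
\]
which by the definition $\cI(\sfP, \cQ; s) := \min_{\sfQ \in \cQ} \cI(\sfP, \sfQ; s)$ is precisely the desired conclusion $\cI(\sfP, \cQ; s_1) \leq \cI(\sfP, \cQ; s_2)$.

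There is no real obstacle here: the only subtlety is that the minimum is taken over a finite set (since $\cQ \subseteq \cP$ and $\cP$ is a finite family of global distributions by assumption in Section \ref{sec:prob_form}), so the minimum is attained and preservation under the inequality is immediate. If one preferred to avoid referring back to Lemma \ref{lem_info_mono}, one could repeat its proof applied uniformly over $\cQ$ via the $\sigma$-field inclusion $\cF^{s_1} \subseteq \cF^{s_2}$, but the cleanest route is simply to quote that lemma and take the minimum.
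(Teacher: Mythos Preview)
Your proposal is correct and follows essentially the same approach as the paper: apply Lemma \ref{lem_info_mono} to each $\sfQ \in \cQ$ to get $\cI(\sfP,\sfQ;s_1)\le \cI(\sfP,\sfQ;s_2)$, then take the minimum over $\sfQ \in \cQ$ on both sides. The paper's proof is identical in structure and length.
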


%\subsection{Proof of Lemma \ref{info_ineq1}}

\begin{proof}
From Lemma \ref{lem_info_mono}, for every $\sfQ \in \cQ$, we have
$$\cI(\sfP, \sfQ; s_1) \leq \cI(\sfP, \sfQ; s_2).$$
Therefore, taking minimum over the class $\cQ$ on both sides, we obtain
$$\cI\lt(\sfP, \cQ; s_1\rt) = \min_{\sfQ \in \cQ}\cI\lt(\sfP, \sfQ; s_1\rt) \leq \min_{\sfQ \in \cQ} \cI\lt(\sfP, \sfQ; s_2\rt) = \cI\lt(\sfP, \cQ; s_2\rt),$$
and the proof is complete.
\end{proof}
%\textcolor{red}{Well, I don't think the following lemma can be proven without SLLN, so including it with the SLLN assumption for the time being.}

%\subsection{Proof of Lemma \ref{info_ineq2}}

\begin{lemma}\label{info_ineq2}
Consider two disjoint subsets $s_1, s_2 \subset [K]$.
Suppose that \eqref{max_series} and \eqref{comp_conv_onesided} hold for two distinct probability distribution $\sfP, \sfQ \in \cP$ %which are mutually absolutely continuous 
when $s = s_1, s_2$. If $s_1$ and $s_2$ are independent both under $\sfP$ and $\sfQ$, then \eqref{max_series} and \eqref{comp_conv_onesided} hold for $\sfP$, $\sfQ$ and $s = s_1 \cup s_2$ with
\begin{equation}\label{eq:info_deco}
\cI(\sfP, \sfQ; s) = \cI\lt(\sfP, \sfQ; s_1\rt) + \cI\lt(\sfP, \sfQ; s_2\rt).
\end{equation}
\end{lemma}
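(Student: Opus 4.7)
The starting point is the observation that, under the independence assumption, the joint Radon--Nikodym derivative factorises: restricted to $\cF_n^{s_1\cup s_2}$, both $\sfP^{s_1\cup s_2}$ and $\sfQ^{s_1\cup s_2}$ are product measures, so
\[
Z_n\bigl(\sfP^{s_1\cup s_2},\sfQ^{s_1\cup s_2}\bigr) \;=\; Z_n\bigl(\sfP^{s_1},\sfQ^{s_1}\bigr) \;+\; Z_n\bigl(\sfP^{s_2},\sfQ^{s_2}\bigr),
\]
and similarly with the roles of $\sfP$ and $\sfQ$ reversed. Moreover, under $\sfP$, these two log-likelihood ratio processes are independent, and their marginal $\sfP$-laws coincide with the $\sfP^{s_i}$-laws appearing in the hypotheses of \eqref{max_series}--\eqref{comp_conv_onesided}. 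This will reduce everything to the assumed one-subset estimates combined with a union bound.

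To verify \eqref{max_series} with the candidate value $\cI(\sfP,\sfQ;s_1)+\cI(\sfP,\sfQ;s_2)$, I would fix any $\rho>\cI(\sfP,\sfQ;s_1)+\cI(\sfP,\sfQ;s_2)$ and split it as $\rho=\rho_1+\rho_2$ with $\rho_i>\cI(\sfP,\sfQ;s_i)$. The pointwise inequality $\max_{m\le n}[Z_m(s_1)+Z_m(s_2)]\ge n\rho$ forces $\max_{m\le n}Z_m(s_i)\ge n\rho_i$ for at least one $i$ (take the maximising index $m^*$ and use $\rho_1+\rho_2=\rho$), so a union bound together with the two one-subset versions of \eqref{max_series} gives the required vanishing. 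The same argument, with the roles of $\sfP,\sfQ$ swapped, handles the second line of \eqref{max_series}.

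For \eqref{comp_conv_onesided}, I would fix any $\rho<\cI(\sfP,\sfQ;s_1)+\cI(\sfP,\sfQ;s_2)$, again split $\rho=\rho_1+\rho_2$ with $\rho_i<\cI(\sfP,\sfQ;s_i)$, and observe that
\[
\bigl\{Z_n\bigl(\sfP^{s_1\cup s_2},\sfQ^{s_1\cup s_2}\bigr)<n\rho\bigr\}\;\subseteq\;\bigl\{Z_n(\sfP^{s_1},\sfQ^{s_1})<n\rho_1\bigr\}\,\cup\,\bigl\{Z_n(\sfP^{s_2},\sfQ^{s_2})<n\rho_2\bigr\}.
\]
Applying $\sfP$ and summing in $n$ then yields the desired summability from the two one-subset hypotheses; the analogous argument under $\sfQ$ gives the second line. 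The identification $\cI(\sfP,\sfQ;s_1\cup s_2)=\cI(\sfP,\sfQ;s_1)+\cI(\sfP,\sfQ;s_2)$ follows because the limit in \eqref{LLN}, which \eqref{max_series}--\eqref{comp_conv_onesided} imply, is unique: dividing the decomposition of $Z_n$ by $n$ and invoking \eqref{LLN} for $s_1$ and $s_2$ shows $n^{-1}Z_n(\sfP^{s_1\cup s_2},\sfQ^{s_1\cup s_2})\to \cI(\sfP,\sfQ;s_1)+\cI(\sfP,\sfQ;s_2)$ in $\sfP$-probability.

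No real obstacle is anticipated; the only point worth care is ensuring that, when we evaluate probabilities of events that depend only on coordinates in $s_i$, the global measure $\sfP$ may be replaced by its marginal $\sfP^{s_i}$ so that the assumed one-subset bounds in \eqref{max_series}--\eqref{comp_conv_onesided} apply verbatim.
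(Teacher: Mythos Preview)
Your proposal is correct and follows essentially the same route as the paper: factorise the log-likelihood ratio via the product structure, split $\rho$ as $\rho_1+\rho_2$ with $\rho_i$ on the appropriate side of $\cI(\sfP,\sfQ;s_i)$, and use a union bound to reduce both \eqref{max_series} and \eqref{comp_conv_onesided} to the assumed one-subset estimates. The only cosmetic differences are that the paper first verifies mutual absolute continuity on $\cF_n^{s_1\cup s_2}$ explicitly before writing the factorisation, and for \eqref{max_series} it passes through the inequality $\max_m\{a_m+b_m\}\le \max_m a_m+\max_m b_m$ rather than your (equally valid) maximising-index argument; your remark that the two component processes are independent under $\sfP$ is true but unused in either proof.
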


\begin{proof}
Let $s = s_1 \cup s_2$.
Since $s_1$ and $s_2$ are independent both under $\sfP$ and $\sfQ$, we have
$$\sfP^s = \sfP^{s_1} \otimes \sfP^{s_2}, \quad \text{and} \quad \sfQ^s = \sfQ^{s_1} \otimes \sfQ^{s_2}.$$ 
Now, suppose for a measurable set $A$, $\sfQ^s(A) = 0$, which means
\begin{align*}
&\sfQ^{s_1}(A) \times \sfQ^{s_2}(A) = 0\\
\Rightarrow\;\; &\sfQ^{s_1}(A) = 0 \;\; \text{or}, \;\; \sfQ^{s_2}(A) = 0.
\end{align*}
By the definition of absolute continuity, if the first case happens, then $\sfP^{s_1}(A) = 0$, otherwise, $\sfP^{s_2}(A) = 0$. Anyway, both cases lead to $\sfP^s(A) = \sfP^{s_1}(A) \times \sfP^{s_2}(A) = 0$. An exactly similar argument also shows that if $\sfP(A) = 0$, then $\sfQ(A) = 0$.
Therefore, $\sfP$ and $\sfQ$ are mutually absolutely continuous when restricted to $\cF_n^s$.
Furthermore, we have the following decomposition of the likelihood process $\Lambda_n\lt(\sfP^s, \sfQ^s\rt)$:
\begin{align*}
\Lambda_n\lt(\sfP^s, \sfQ^s\rt) = \frac{d\sfP^{s_1} \otimes d\sfP^{s_2}}{d\sfQ^{s_1} \otimes d\sfQ^{s_2}}\lt(\cF_n^s\rt) &= \frac{d\sfP^{s_1}}{d\sfQ^{s_1}}\lt(\cF_n^{s_1}\rt) \times \frac{d\sfP^{s_2}}{d\sfQ^{s_2}}\lt(\cF_n^{s_2}\rt)\\ &= \Lambda_n\lt(\sfP^{s_1}, \sfQ^{s_1}\rt) \times \Lambda_n\lt(\sfP^{s_2}, \sfQ^{s_2}\rt),
\end{align*}
which subsequently leads us to
$$Z_n\lt(\sfP^s, \sfQ^s\rt) %= \log\Lambda_n\lt(\sfP^s, \sfQ^s\rt) = \log \Lambda\lt(\sfP^{s_1}, \sfQ^{s_1}; n\rt) + \log \Lambda\lt(\sfP^{s_2}, \sfQ^{s_2}; n\rt) 
= Z_n\lt(\sfP^{s_1}, \sfQ^{s_1}\rt) + Z_n\lt(\sfP^{s_2}, \sfQ^{s_2}\rt).$$
Now, in order to prove \eqref{eq:info_deco}, it suffices to show that
\begin{align}\label{deco_max}
\sfP\lt(\max_{1 \leq k \leq n} Z_k\lt(\sfP^s, \sfQ^s\rt) \geq n \rho  \rt) \to 0 \quad \forall \, \rho> \cI \lt(\sfP, \sfQ; s_1\rt) + \cI\lt(\sfP, \sfQ; s_2\rt),
\end{align}
and at the same time
\begin{align}\label{deco_oneside}
\sum_{n = 1}^{\infty}\sfP\lt(Z_n\lt(\sfP^{s}, \sfQ^{s}\rt) < n \rho \rt) < \infty \quad \forall \, \rho< \cI \lt(\sfP, \sfQ; s_1\rt) + \cI\lt(\sfP, \sfQ; s_2\rt),
\end{align}
since these two conditions uniquely characterize the quantity $\cI(\sfP, \sfQ; s)$.

To start with, we fix an arbitrary $\rho > \cI \lt(\sfP, \sfQ; s_1\rt) + \cI\lt(\sfP, \sfQ; s_2\rt)$. Note that, there exist $\rho_1 > \cI \lt(\sfP, \sfQ; s_1\rt)$ and $\rho_2 > \cI\lt(\sfP, \sfQ; s_2\rt)$ such that $\rho = \rho_1 + \rho_2$. Indeed, by letting $\delta = \rho - (\cI \lt(\sfP, \sfQ; s_1\rt) + \cI\lt(\sfP, \sfQ; s_2\rt)) > 0$, one can consider $\rho_1 = \cI \lt(\sfP, \sfQ; s_1\rt) + \delta/2$ and $\rho_2 = \cI \lt(\sfP, \sfQ; s_2\rt) + \delta/2$. Now,
\begin{align} \notag
&\sfP\lt(\max_{1 \leq k \leq n} Z_k\lt(\sfP^s, \sfQ^s\rt) \geq n \rho  \rt)\\ \notag
&= \sfP\lt(\max_{1 \leq k \leq n} \lt\{Z_k\lt(\sfP^{s_1}, \sfQ^{s_1}\rt) + Z_k\lt(\sfP^{s_2}, \sfQ^{s_2}\rt)\rt\} \geq n \rho  \rt)\\ \notag
&\leq \sfP\lt(\max_{1 \leq k \leq n} Z_k\lt(\sfP^{s_1}, \sfQ^{s_1}\rt) + \max_{1 \leq k \leq n}Z_k\lt(\sfP^{s_2}, \sfQ^{s_2}\rt) \geq n \rho_1 + n \rho_2\rt)\\ \label{deco_max1}
&\leq \sfP\lt(\max_{1 \leq k \leq n} Z_k\lt(\sfP^{s_1}, \sfQ^{s_1}\rt) \geq n\rho_1\rt) + \sfP\lt(\max_{1 \leq k \leq n} Z_k\lt(\sfP^{s_2}, \sfQ^{s_2}\rt) \geq n\rho_2\rt),
\end{align}
where the first inequality follows from the fact that, for any two sequences $\{a_n\}_{n \in \bN}$ and $\{b_n\}_{n \in \bN}$, $$\max_{1 \leq k \leq n}\lt\{a_k + b_k\rt\} \leq \max_{1 \leq k \leq n}a_k + \max_{1 \leq k \leq n}b_k, \quad \text{for every} \;\; n \in \bN,$$
and the last inequality is an application of Boole's inequality. Since $\rho_1 > \cI \lt(\sfP, \sfQ; s_1\rt)$ and $\rho_2 > \cI\lt(\sfP, \sfQ; s_2\rt)$, due to condition \eqref{max_series}, both the quantities on the right hand side of \eqref{deco_max1} converge to $0$ as $n \to \infty$. Hence, \eqref{deco_max} follows.  

In order to prove \eqref{deco_oneside}, we fix an arbitrary $\rho < \cI \lt(\sfP, \sfQ; s_1\rt) + \cI\lt(\sfP, \sfQ; s_2\rt)$. Note that, in a similar way like the previous, there exist $\rho_1 < \cI \lt(\sfP, \sfQ; s_1\rt)$ and $\rho_2 < \cI\lt(\sfP, \sfQ; s_2\rt)$ such that $\rho = \rho_1 + \rho_2$. Now,
\begin{align} \notag
&\sum_{n = 1}^{\infty}\sfP\lt( Z_n\lt(\sfP^{s}, \sfQ^{s}\rt) < n \rho \rt)\\ \notag
&= \sum_{n = 1}^{\infty}\sfP\lt(\lt\{Z_n\lt(\sfP^{s_1}, \sfQ^{s_1}\rt) + Z_n\lt(\sfP^{s_2}, \sfQ^{s_2}\rt)\rt\}  < n\rho_1 + n\rho_2 \rt)\\ \notag
&\leq \sum_{n = 1}^{\infty} \sfP\lt(Z_n\lt(\sfP^{s_1}, \sfQ^{s_1}\rt) < n \rho_1 \rt) + \sfP\lt(Z_n\lt(\sfP^{s_2}, \sfQ^{s_2}\rt) < n \rho_2 \rt)\\ \label{deco_oneside1}
&= \sum_{n = 1}^{\infty} \sfP\lt(Z_n\lt(\sfP^{s_1}, \sfQ^{s_1}\rt) < n \rho_1 \rt) + \sum_{n = 1}^{\infty}\sfP\lt(Z_n\lt(\sfP^{s_2}, \sfQ^{s_2}\rt) < n \rho_2 \rt),
\end{align}
where the inequality follows from Boole's inequality. Since $\rho_1 < \cI \lt(\sfP, \sfQ; s_1\rt)$ and $\rho_2 < \cI\lt(\sfP, \sfQ; s_2\rt)$, due to condition \eqref{comp_conv_onesided}, both the quantities on the right hand side of \eqref{deco_oneside1} are finite. Hence, \eqref{deco_oneside} follows and the proof is complete. 

\end{proof}

\begin{comment}
\begin{proof}
From \eqref{SLLN} we have
\begin{align}\label{slln1}
&\sfP\lt(\lim_{n \to \infty} \frac{1}{n} \log\Lambda(\sfP, \sfQ; n) = \cI\lt(\sfP, \sfQ\rt)\rt) = 1,\quad \text{and}\\ \label{slln2}
&\sfP\lt(\lim_{n \to \infty} \frac{1}{n} \log\Lambda^S\lt(\sfP^S, \sfQ^S; n\rt) = \cI^S\lt(\sfP^S, \sfQ^S\rt)\rt) = 1\\ \label{slln3}
&\sfP\lt(\lim_{n \to \infty} \frac{1}{n} \log\Lambda^{S^c}\lt(\sfP^{S^c}, \sfQ^{S^c}; n\rt) = \cI^{S^c}\lt(\sfP^{S^c}, \sfQ^{S^c}\rt)\rt) = 1.
\end{align}
Now, since the substreams $X^S$ and $X^{S^c}$ are independent both under $\sfP$ and $\sfQ$, we have them as product measures:
$$\sfP = \sfP^S \otimes \sfP^{S^c}, \quad \sfQ = \sfQ^S \otimes \sfQ^{S^c},$$ 
and one can have the following decomposition of the likelihood process $\Lambda(\sfP, \sfQ; n)$:
\begin{align*}
\Lambda(\sfP, \sfQ; n) = \frac{d\sfP^S \otimes d\sfP^{S^c}}{d\sfQ^S \otimes d\sfQ^{S^c}}(\cF_n) &= \frac{d\sfP^S}{d\sfQ^S}\lt(\cF_n^S\rt) \times \frac{d\sfP^{S^c}}{d\sfQ^{S^c}}\lt(\cF_n^{S^c}\rt)\\ &= \Lambda^S\lt(\sfP^S, \sfQ^S; n\rt) \times \Lambda^{S^c}\lt(\sfP^{S^c}, \sfQ^{S^c}; n\rt),
\end{align*}
which implies
$$\log\Lambda(\sfP, \sfQ; n) = \log \Lambda^S\lt(\sfP^S, \sfQ^S; n\rt) + \log \Lambda^{S^c}\lt(\sfP^{S^c}, \sfQ^{S^c}; n\rt).$$
%$$\frac{1}{n}\log\frac{dP}{dQ}(\cF_n) = \frac{1}{n}\log\frac{dP^S}{dQ^S}\lt(\cF_n^S\rt) + \frac{1}{n}\log\frac{dP^{S^c|S}}{dQ^{S^c|S}}\lt(\cF_n^{S^c}|\cF_n^S\rt).$$
Therefore, using \eqref{slln1}, \eqref{slln2} and \eqref{slln3} we obtain
$$\cI(\sfP, \sfQ) = \cI^S\lt(\sfP^{S}, \sfQ^{S}\rt) + \cI^{S^c}\lt(\sfP^{S^c}, \sfQ^{S^c}\rt).$$
%$$\sfP\lt(\lim_{n \to \infty}\frac{1}{n}\log\frac{dP^{S^c|S}}{dQ^{S^c|S}}\lt(\cF_n^{S^c}|\cF_n^S\rt) = \cI(\sfP, \sfQ) - \cI^S\lt(\sfP^S, \sfQ^S\rt)\rt) = 1.$$
%Again from \eqref{SLLN} we have
%$$\sfP^{S^c|S}\lt(\lim_{n \to \infty}\frac{1}{n}\log\frac{dP^{S^c|S}}{dQ^{S^c|S}}\lt(\cF_n^{S^c}|\cF_n^S\rt) = C \Bigg| \cF_n^S\rt) = 1,$$
%for some $C \geq 0$. We can write
%\begin{align*}
%1 &= \sfP\lt(\lim_{n \to \infty}\frac{1}{n}\log\frac{dP^{S^c|S}}{dQ^{S^c|S}}\lt(\cF_n^{S^c}|\cF_n^S\rt) = \cI(\sfP, \sfQ) - \cI^S\lt(\sfP^S, \sfQ^S\rt)\rt)\\
%&= E_{\sfP^S}\lt[\sfP^{S^c|S}\lt(\lim_{n \to \infty}\frac{1}{n}\log\frac{dP^{S^c|S}}{dQ^{S^c|S}}\lt(\cF_n^{S^c}|\cF_n^S\rt) =  \cI(\sfP, \sfQ) - \cI^S\lt(\sfP^S, \sfQ^S\rt) \Bigg| \cF_n^S\rt)\rt]\\
%\end{align*}
%\textcolor{red}{This proof is not complete and needs some more discussion.}
\end{proof}
\end{comment}

\section{Proofs regarding Upper Bound on the expected sample size}\label{app:upp_bnd}

In this section we only prove Theorem \ref{ub_DI} since Theorem \ref{ub_I} can be established in a similar fashion.

\subsection{Proof of Theorem \ref{ub_DI}}\label{pf:ub_DI}

\begin{proof}
It is sufficient to prove the results only for $\chi^*$ since it can be proved for the other tests in a similar way. 
Fix the thresholds $A_e, B_e, C_e, D_e$ for all $e \in \cK$ according to \eqref{A,B,asym} and \eqref{C,D,asym}. Now for any $\sfP \in \cH_0$, consider the stopping time
\begin{align*}
T_\sfP := \inf\lt\{n \geq 1: \min\lt\{\Lambda_n\lt(\sfP^{s_e}, \sfQ\rt) : \sfQ \in \cG_{\Psi, e}^{s_e}\rt\} \geq C_e\ \;\; \text{for every}\ \;\; e \in \cK\rt\}.
\end{align*}
Under the assumption \eqref{comp_conv_onesided}, from \cite[Lemma F.2]{song_fell2019_supp}, it follows that as $\beta \to 0$, 
$$\sfE_\sfP\lt[T_\sfP\rt] \; \lesssim \; \max_{e \in \cK} \lt\{\frac{|\log\beta|}{\cI\lt(\sfP, \cG_{\Psi, e}; s_e\rt)}\rt\}.$$
Since $T^* \leq T_0$, it suffices to show that $T_0 \leq T_\sfP$ for any given set of thresholds $\{C_e : e \in \cK\}$. 
By the definition of $T_0$, it suffices to show that
\begin{equation}\label{ub1}
\Lambda_{e, \rm det}(T_\sfP) \leq \frac{1}{C_e}\ \;\; \text{for every}\ \;\; e \in \cK.
\end{equation}
To this end, fix any arbitrary $e \in \cK$ and
by the definition of $T_\sfP$, we have
$$\min\lt\{\Lambda_{T_\sfP}\lt(\sfP^{s_e}, \sfQ\rt) : \sfQ \in \cG_{\Psi, e}^{s_e}\rt\} \geq C_e.$$
As a result,
\begin{align*}
\frac{1}{C_e} \geq \max\lt\{\Lambda_{T_\sfP}\lt(\sfQ, \sfP^{s_e}\rt) : \sfQ \in \cG_{\Psi, e}^{s_e}\rt\} &= \frac{\max\lt\{\Lambda_{T_\sfP}\lt(\sfQ, \sfP_0\rt) : \sfQ \in \cG_{\Psi, e}^{s_e}\rt\}}{\Lambda_{T_\sfP}\lt(\sfP^{s_e}, \sfP_0\rt)}\\ 
&\geq \frac{\max\lt\{\Lambda_{T_\sfP}\lt(\sfQ, \sfP_0\rt) : \sfQ \in \cG_{\Psi, e}^{s_e}\rt\}}{\max\lt\{\Lambda_{T_\sfP}\lt(\sfQ, \sfP_0\rt) : \sfQ \in \cH_0^{s_e}\rt\}} \\
%\frac{\max_{\sfQ \in \cG_{\Psi, e}^{s_e}} \Lambda_{T_\sfP}\lt(\sfQ, \sfP_0\rt)}{\max_{\sfP \in \cH_0^{s_e}} \Lambda\lt(\sfP, \sfP_0; T_\sfP\rt)} 
&= \Lambda_{e, \rm det}(T_\sfP),
\end{align*}
and the result follows in view of \eqref{ub1}.

Now for any $\sfP \in \cP_\Psi \setminus \cH_0$, consider the stopping time
\begin{align*}
T'_\sfP := \inf\bigg\{n \geq 1: &\min\lt\{\Lambda_n\lt(\sfP^{s_e}, \sfQ\rt) : \sfQ \in \cH_0^{s_e}\rt\} \geq D_e\ \;\; \text{for some}\ \;\; e \in \cA(\sfP),\\ %\text{and}\\
&\min\lt\{\Lambda_n\lt(\sfP^{s'_e}, \sfQ\rt) : \sfQ \in \cH_{\Psi, e}^{s'_e}\rt\} \geq B_e\ \;\; \text{for every}\ \;\; e \in \cA(\sfP),\quad \text{and}\\ 
&\min\lt\{\Lambda_n\lt(\sfP^{s'_e}, \sfQ\rt) : \sfQ \in \cG_{\Psi, e}^{s'_e}\rt\} \geq A_e\ \;\; \text{for every}\ \;\; e \notin \cA(\sfP)
%&\qquad \qquad \qquad \quad D_{\rm iso}(n) \in \\sfPsi
\bigg\}.
\end{align*}
Under the assumption \eqref{comp_conv_onesided}, from \cite[Lemma F.2]{song_fell2019_supp}, it follows that as $\alpha, \gamma, \delta \to 0$,
$$\sfE_\sfP\lt[T'_{\sfP}\rt] \lesssim \min_{e \in \cA(\sfP)}  \lt\{ \frac{|\log \alpha|}{\cI\lt(\sfP, \cH_0; s_e \rt)} \rt\} \bigvee \max_{e \in \cA(\sfP)}  \lt\{\frac{|\log \gamma|}{\cI \lt(\sfP, \cH_{\Psi, e}; s'_e \rt) }   \rt\}  \bigvee \max_{e \notin \cA(\sfP)}  \lt\{ \frac{|\log \delta|}{\cI \lt(\sfP, \cG_{\Psi,e}; s'_e \rt)}  \rt\}.$$
Since $T^* \leq T_{joint}$, it suffices to show that $T_{joint} \leq T'_{\sfP}$ for any given set of thresholds $\{A_e, B_e, D_e : e \in \cK\}$. By the definition of $T_{joint}$ it suffices to show that
\begin{align}\label{ub21}
\begin{split}
&\Lambda_{e, \rm det}\lt(T'_\sfP\rt) \geq D_e\ \;\; \text{for some}\ \;\; e \in \cA(\sfP),\\
&\Lambda_{e, \rm iso}\lt(T'_\sfP\rt) \geq B_e\ \;\; \text{for every}\ \;\; e \in \cA(\sfP),\\ 
&\Lambda_{e, \rm iso}\lt(T'_\sfP\rt) \leq 1/A_e\ \;\; \text{for every}\ e \notin \cA(\sfP),\\
\text{and}\ \;\; &D_{\rm iso}\lt(T'_\sfP\rt) \in \Psi. 
%\text{if}\ e \in \cA(\sfP)\ \text{is such that}\ \min_{\sfQ \in \cH_0^{s_e}}\Lambda^{s_e}\lt(\sfP^{s_e}, \sfQ; T'_\sfP\rt) \geq D_e,\ \text{then}\ \Lambda_{e, \rm det}^{s_e}\lt(T'_\sfP\rt) \geq D_e,
\end{split}
\end{align}
%and
%\begin{equation}\label{ub22}
%\Lambda_{e, \rm det}^{s_e}\lt(T'_P\rt) \geq D_e, \quad 
%\Lambda_{e, \rm iso}^{s'_e}\lt(T'_\sfP\rt) \geq B_e\ \forall\ e \in \cA(\sfP)\ \text{and}\ \Lambda_{e, \rm iso}^{s'_e}\lt(T'_\sfP\rt) \leq \frac{1}{A_e}\ \forall\ e \notin \cA(\sfP).
%\end{equation}
Now, note that if $e \in \cA(\sfP)$, which means under $\sfP$ alternative is true in unit $e$, then $\sfP^{s} \in \cG_{\Psi, e}^s$ for $s \in \{s_e, s'_e\}$. 

To this end, consider that particular $e \in \cA(\sfP)$ for which by definition of $T'_\sfP$,
$$\min\lt\{\Lambda_{T'_\sfP}\lt(\sfP^{s_e}, \sfQ\rt) : \sfQ \in \cH_0^{s_e}\rt\} \geq D_e.$$
%$$\min_{\sfQ \in \cH_0^{s_e}}\Lambda^{s_e}\lt(\sfP^{s_e}, \sfQ; n\rt) \geq D_e.$$
As a result,
\begin{align*}
D_e \leq \min\lt\{\Lambda_{T'_\sfP}\lt(\sfP^{s_e}, \sfQ\rt) : \sfQ \in \cH_0^{s_e}\rt\} &= \frac{\Lambda_{T'_\sfP}\lt(\sfP^{s_e}, \sfP_0\rt)}{\max\lt\{\Lambda_{T'_\sfP}\lt(\sfQ, \sfP_0\rt) : \sfQ \in \cH_0^{s_e}\rt\}}\\ 
&\leq \frac{\max\lt\{\Lambda_{T'_\sfP}\lt(\sfQ, \sfP_0\rt) : \sfQ \in \cG_{\Psi, e}^{s_e}\rt\}}{\max\lt\{\Lambda_{T'_\sfP}\lt(\sfQ, \sfP_0\rt) : \sfQ \in \cH_0^{s_e}\rt\}} = \Lambda_{e, \rm det}(T'_\sfP),
\end{align*}
where the last inequality follows because $\sfP \in \cG_{\Psi, e}$. 

Now fix any arbitrary $e \in \cA(\sfP)$. Then
by definition of $T'_\sfP$, we have
$$\min\lt\{\Lambda_{T'_\sfP}\lt(\sfP^{s'_e}, \sfQ\rt) : \sfQ \in \cH_{\Psi, e}^{s'_e}\rt\} \geq B_e.$$
As a result,
\begin{align*}
B_e \leq \min\lt\{\Lambda_{T'_\sfP}\lt(\sfP^{s'_e}, \sfQ\rt) : \sfQ \in \cH_{\Psi, e}^{s'_e}\rt\} &= \frac{\Lambda_{T'_\sfP}\lt(\sfP^{s'_e}, \sfP_0\rt)}{\max\lt\{\Lambda_{T'_\sfP}\lt(\sfQ, \sfP_0\rt) : \sfQ \in \cH_{\Psi, e}^{s'_e}\rt\}}\\
&\leq \frac{\max\lt\{\Lambda_{T'_\sfP}\lt(\sfQ, \sfP_0\rt) : \sfQ \in \cG_{\Psi, e}^{s'_e} \rt\}}{\max\lt\{\Lambda_{T'_\sfP}\lt(\sfQ, \sfP_0\rt) : \sfQ \in \cH_{\Psi, e}^{s'_e}\rt\}} = \Lambda_{e, \rm iso}(T'_\sfP),
\end{align*}
where the last inequality follows because $\sfP \in \cG_{\Psi, e}$.

Next, fix any arbitrary $e \notin \cA(\sfP)$. Then by definition of $T'_\sfP$, we have
$$\min\lt\{\Lambda_{T'_\sfP}\lt(\sfP^{s'_e}, \sfQ\rt) : \sfQ \in \cG_{\Psi, e}^{s'_e}\rt\} \geq A_e.$$
As a result,
\begin{align*}
\frac{1}{A_e} \geq \max\lt\{\Lambda_{T'_\sfP}\lt(\sfQ, \sfP^{s'_e}\rt) : \sfQ \in \cG_{\Psi, e}^{s'_e}\rt\} &= \frac{\max\lt\{\Lambda_{T'_\sfP}\lt(\sfQ, \sfP_0\rt) : \sfQ \in \cG_{\Psi, e}^{s'_e}\rt\}}{\Lambda_{T'_\sfP}\lt(\sfP^{s'_e}, \sfP_0\rt)}\\ 
&\geq \frac{\max\lt\{\Lambda_{T'_\sfP}\lt(\sfQ, \sfP_0\rt) : \sfQ \in \cG_{\Psi, e}^{s'_e}\rt\}}{\max\lt\{\Lambda_{T'_\sfP}\lt(\sfQ, \sfP_0\rt) : \sfQ \in \cH_{\Psi, e}^{s'_e} \rt\}} = \Lambda_{e, \rm iso}(T'_\sfP),
\end{align*}
where the last inequality follows because $\sfP \in \cH_{\Psi, e}$. Therefore, 
$$\Lambda_{e, \rm iso}(T'_\sfP) > 1 \quad \text{if and only if} \quad e \in \cA(\sfP),$$
which implies that $D_{\rm iso}(T'_{\sfP}) = \cA(\sfP) \in \Psi$.
Thus, the proof is complete in view of \eqref{ub21}.
\end{proof}

\section{Proofs regarding Asymptotic Optimality}\label{app:asymp_opt}

For any random variable $Y$ with underlying probability distribution $\sfP$ and any event $\Gamma$, we introduce the following notation:
$$\sfE_\sfP\lt[Y; \Gamma\rt] := \int_{\Gamma} Yd\sfP,$$
which will be extensively used in this section.

\subsection{Proof of Theorem \ref{P1P2}}\label{pf:P1P2}

First, we establish a lemma which is critical in establishing an asymptotic lower bound on the optimal performance.
 
\begin{lemma}\label{lem:P1P2}
Suppose that \eqref{max_series} holds,  $\emptyset \notin \Psi$ and $\sfP \in \cP_\Psi$.
Then  as $\gamma, \delta \to 0$ we have 
\begin{align*}
\inf \lt\{ \sfE_{\sfP}[T]:\; 
(T, D) \in \cC_{\Psi}(\gamma, \delta) \rt\} \; \gtrsim \; \max_{e \in \cA(\sfP)} \lt\{ \frac{|\log\gamma|}{  \cI(\sfP, \cH_{\Psi,e}) }\rt\}  \bigvee 
\max_{e \notin \cA(\sfP)} \lt\{  \frac{|\log\delta|}{  \cI(\sfP, \cG_{\Psi,e} )} \rt\} .
\end{align*}
When in particular   $\Psi = \Psi_{m, m}$ for some $0<m < |\cK|$, 
then  as $\gamma, \delta \to 0$ we have 
\begin{equation*} 
\inf \lt\{ \sfE_{\sfP}[T]:\; 
(T, D) \in \cC_{\Psi}(\gamma, \delta) \rt\} \; \gtrsim \;
 \frac{|\log(\gamma \wedge \delta)|}{\cI(\sfP, \cP_\Psi \setminus \{\sfP\})}.
\end{equation*}
\end{lemma}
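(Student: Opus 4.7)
The plan is to apply a standard change-of-measure argument to reduce each desired lower bound to a suitable binary testing inequality. Concretely, for any competing $\sfQ \in \cP_\Psi \setminus \{\sfP\}$ and any event $\Gamma \in \cF_T$, the Wald-type identity $\sfP(\Gamma, T \leq n) = \sfE_\sfQ[\Lambda_T(\sfP, \sfQ); \Gamma, T \leq n]$ together with truncation at a level $cn$ on the log-likelihood gives, for any $c > 0$ and $n \in \bN$,
\begin{equation*}
\sfP(\Gamma, T \leq n) \; \leq \; e^{cn}\, \sfQ(\Gamma) \; + \; \sfP\!\lt(\max_{1 \leq m \leq n} Z_m(\sfP, \sfQ) \geq cn\rt).
\end{equation*}
Picking $c = \cI(\sfP, \sfQ)(1 + \eta)$ for any small $\eta > 0$, the second term on the right vanishes as $n \to \infty$ by assumption \eqref{max_series}. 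Setting $n = \lfloor (1-\eta) |\log \sfQ(\Gamma)| / \cI(\sfP, \sfQ) \rfloor$, the right-hand side tends to $0$ as $\sfQ(\Gamma) \to 0$, so whenever $\sfP(\Gamma) \to 1$, Markov's inequality yields $\sfE_\sfP[T] \geq n \, \sfP(T > n) \gtrsim |\log \sfQ(\Gamma)|/\cI(\sfP, \sfQ)$ as $\eta \downarrow 0$.

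I would then apply this template with $\Gamma = \{D = \cA(\sfP)\}$. Since $\emptyset \notin \Psi$ forces $D \neq \emptyset$ and $\cA(\sfP) \neq \emptyset$, the complement of $\Gamma$ decomposes as $\{D \setminus \cA(\sfP) \neq \emptyset\} \cup \{\cA(\sfP) \setminus D \neq \emptyset, D \neq \emptyset\}$, so the defining constraints on $\cC_\Psi(\gamma, \delta)$ give $\sfP(\Gamma) \geq 1 - \gamma - \delta \to 1$. For the first term of the bound, fix $e \in \cA(\sfP)$ and choose $\sfQ \in \cH_{\Psi, e}$ (noting $\cH_{\Psi, e} = \{\sfP \in \cP_\Psi : e \notin \cA(\sfP)\}$ here because $\cH_0 \cap \cP_\Psi = \emptyset$) within $\eta$ of attaining $\cI(\sfP, \cH_{\Psi, e})$. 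On $\Gamma$, the unit $e$ is in $D \setminus \cA(\sfQ)$, so $\sfQ(\Gamma) \leq \sfQ(D \setminus \cA(\sfQ) \neq \emptyset) \leq \gamma$; the template gives $\sfE_\sfP[T] \gtrsim |\log \gamma|/\cI(\sfP, \cH_{\Psi, e})$. For the second term, fix $e \notin \cA(\sfP)$ and select $\sfQ \in \cG_{\Psi, e}$ nearly attaining $\cI(\sfP, \cG_{\Psi, e})$; on $\Gamma$ we have $e \in \cA(\sfQ) \setminus D$ and $D \neq \emptyset$, so $\sfQ(\Gamma) \leq \delta$, giving $\sfE_\sfP[T] \gtrsim |\log \delta|/\cI(\sfP, \cG_{\Psi, e})$. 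Taking the maximum over all valid $e$ yields the first conclusion.

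For the case $\Psi = \Psi_{m,m}$, every $\sfQ \in \cP_\Psi \setminus \{\sfP\}$ satisfies $\cA(\sfQ) \neq \cA(\sfP)$, and on $\Gamma = \{D = \cA(\sfP)\}$ this forces simultaneously a false positive and a false negative under $\sfQ$; by the remark following the definition of $\cC_\Psi(\gamma, \delta)$ in the $\Psi_{m,m}$ setting, each such error is controlled by $\gamma \wedge \delta$, so $\sfQ(\Gamma) \leq \gamma \wedge \delta$. Applying the template and taking the infimum over $\sfQ \in \cP_\Psi \setminus \{\sfP\}$ yields the claimed $|\log(\gamma \wedge \delta)|/\cI(\sfP, \cP_\Psi \setminus \{\sfP\})$ bound. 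The main obstacle throughout is justifying that the truncation term $\sfP(\max_{m \leq n} Z_m(\sfP, \sfQ) \geq cn)$ is negligible along the chosen sequence of $n$; this is precisely what \eqref{max_series} is designed to handle, and the rest of the argument is essentially bookkeeping around choosing $\eta \downarrow 0$ and verifying that all the error probabilities indeed vanish at the correct rates.
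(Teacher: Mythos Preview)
Your proposal is correct and follows essentially the same change-of-measure/Markov/maximal-inequality argument as the paper. The only cosmetic difference is that you work with the single event $\Gamma = \{D = \cA(\sfP)\}$ for both the $\gamma$- and $\delta$-bounds, whereas the paper uses the larger events $\{e \in D\}$ (for $e \in \cA(\sfP)$) and $\{e \notin D,\, D \neq \emptyset\}$ (for $e \notin \cA(\sfP)$) separately; your $\Gamma$ is contained in each of these, so the bounds $\sfQ(\Gamma) \leq \gamma$ and $\sfQ(\Gamma) \leq \delta$ follow just as easily, and the remaining steps are identical. One small point of care: in your template you set $n$ in terms of $|\log \sfQ(\Gamma)|$, which depends on the particular test $(T,D)$; to take the infimum over $\cC_\Psi(\gamma,\delta)$ cleanly you should instead set $n = \lfloor (1-\eta)\,|\log\gamma|/\cI(\sfP,\sfQ)\rfloor$ (respectively with $\delta$), using $\sfQ(\Gamma) \leq \gamma$ only inside the bound $e^{cn}\sfQ(\Gamma) \leq \gamma^{\eta^2}$, which is exactly what the paper does and what you implicitly do in your application of the template.
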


\begin{proof}
Fix $\sfP \in \cP_\Psi$ and denote the quantity $\inf \lt\{ \sfE_{\sfP}[T]:\; 
(T, D) \in \cC_{\Psi}(\gamma, \delta) \rt\}$ by $N(\gamma, \delta)$. Then we need to show that as $\gamma, \delta \to 0$,
$$N(\gamma, \delta) \gtrsim \max_{e \in \cA(\sfP)} \lt\{ \frac{|\log\gamma|}{  \cI(\sfP, \cH_{\Psi,e}) }\rt\}  \bigvee 
\max_{e \notin \cA(\sfP)} \lt\{  \frac{|\log\delta|}{  \cI(\sfP, \cG_{\Psi,e} )} \rt\}.$$
We set
$$L_{\gamma} := \max_{e \in \cA(\sfP)} \lt\{ \frac{|\log\gamma|}{  \cI(\sfP, \cH_{\Psi,e}) }\rt\},\quad \text{and} \quad L_{\delta} := \max_{e \notin \cA(\sfP)} \lt\{  \frac{|\log\delta|}{  \cI(\sfP, \cG_{\Psi,e} )} \rt\}.$$
By Markov's inequality, for any stopping time $T$ and $\gamma, \delta, \epsilon \in (0, 1)$, 
$$\sfE_\sfP[T] \geq (1-\epsilon)L_{\gamma}\sfP\lt(T \geq (1-\epsilon)L_{\gamma}\rt) \quad \text{and} \quad \sfE_\sfP[T] \geq (1-\epsilon)L_{\delta}\sfP\lt(T \geq (1-\epsilon)L_{\delta}\rt).$$
Thus, it suffices to show for every $\epsilon \in (0, 1)$ we have
\begin{subequations}
\label{lb_iso}
\begin{align}%\label{lb21_iso}
%&\liminf_{\alpha, \beta \to 0} \inf_{(T, D) \in \cC_\Psi(\alpha, \beta, \gamma, \delta)} \sfP\lt(T \geq (1-\epsilon)L_{\alpha}\rt) \geq 1,\\ 
\label{lb22_iso}
&\liminf_{\gamma, \delta \to 0} \inf_{(T, D) \in \cC_\Psi(\gamma, \delta)} \sfP\lt(T \geq (1-\epsilon)L_{\gamma}\rt) \geq 1,\quad \text{and}\\ \label{lb23_iso}
&\liminf_{\gamma, \delta \to 0} \inf_{(T, D) \in \cC_\Psi(\gamma, \delta)} \sfP\lt(T \geq (1-\epsilon)L_{\delta}\rt) \geq 1,
\end{align}
\end{subequations}
since this will further imply that 
$$\liminf_{\gamma, \delta \to 0} \; N(\gamma, \delta)/(L_{\gamma} \vee L_{\delta}) \geq (1-\epsilon),$$ 
and the desired result will follow by letting $\epsilon \to 0$. 
In order to prove \eqref{lb_iso}, we start by fixing arbitrary $(T, D) \in \cC_\Psi(\gamma, \delta)$ and $\gamma, \delta, \epsilon \in (0, 1)$. 
To show \eqref{lb22_iso}, consider the probability distribution $\sfQ_1^*$ which attains
$$\min \lt\{\cI(\sfP, \sfQ) : \sfQ \in \cup_{e \in \cA(\sfP)} \cH_{\Psi, e}\rt\}.$$
Thus, there exists $e \in \cA(\sfP)$ such that $\sfQ_1^* \in \cH_{\Psi, e}$, i.e., $e \notin \cA(\sfQ_1^*)$. Then,
\begin{align}\notag
\sfP\lt(T \geq (1-\epsilon)L_{\gamma}\rt) &\geq \sfP\lt(e \in D, T \geq (1-\epsilon)L_{\gamma}\rt)\\ \notag
&= \sfP\lt(e \in D\rt) - \sfP\lt(e \in D, T < (1-\epsilon)L_{\gamma}\rt)\\ \notag
&= 1 - \sfP\lt(e \notin D\rt) - \sfP\lt(e \in D, T < (1-\epsilon)L_{\gamma}\rt)\\ \label{lb4_iso}
&\geq 1- \delta - \sfP\lt(e \in D, T < (1-\epsilon)L_{\gamma}\rt)
\end{align}
where the last inequality follows from the fact that 
$$\{e \notin D\} \subseteq \{\cA(\sfP) \setminus D \neq \emptyset, D \neq \emptyset\}$$ and the error control that $\sfP\lt(\cA(\sfP) \setminus D \neq \emptyset, D \neq \emptyset\rt) \leq \delta$. We can now decompose the third term as
$$\sfP\lt(e \in D, T < (1-\epsilon)L_{\gamma}\rt) \leq p_1 + p_2,$$
such that 
\begin{align*}
p_1 &:= \sfP\lt(e \in D, \Lambda_T(\sfP, \sfQ_1^*) < \frac{\eta}{\gamma}\rt)\quad \text{and}\\ p_2 &:= \sfP\lt(T < (1-\epsilon)L_{\gamma}, \Lambda_T(\sfP, \sfQ_1^*) \geq \frac{\eta}{\gamma}\rt),
\end{align*}
where $\eta$ is an arbitrary constant in $(0, 1)$. For the first term, by a change of measure from $\sfP$ to $\sfQ_1^*$, we have
\begin{align*}
p_1 &= \sfE_{\sfQ_1^*}\lt[\Lambda_T(\sfP, \sfQ_1^*); e \in D, \Lambda_T(\sfP, \sfQ_1^*) < \frac{\eta}{\gamma}\rt]\\
&\leq \frac{\eta}{\gamma}\sfQ_1^*\lt(e \in D\rt) \leq \eta,
\end{align*}
where the last inequality follows from the fact that 
$$\{e \in D\} \subseteq \{D \setminus \cA(\sfQ_1^*) \neq \emptyset\}$$ and the error control that $\sfQ_1^*\lt(D \setminus \cA(\sfQ_1^*) \neq \emptyset\rt) \leq \gamma$. For the second term,
\begin{align*}
p_2 \leq \sfP\lt(\frac{1}{(1-\epsilon)L_{\gamma}}\max_{1 \leq n \leq (1-\epsilon)L_{\gamma}} \log\Lambda_n(\sfP, \sfQ_1^*) \geq \frac{\log\eta}{(1-\epsilon)L_{\gamma}} + \frac{\cI(\sfP, \sfQ_1^*)}{(1-\epsilon)}\rt) =: \xi_{\epsilon, \eta}(\gamma)
\end{align*}
%\begin{align*}
%\epsilon_{\epsilon, \eta}(\beta, \delta) := \sfP\lt(T < (1-\epsilon)\frac{|\log\beta|}{\cI(\sfP, \sfQ^*)}, \log\Lambda(\sfP, \sfQ^*; T) \geq \log\eta + |\log\beta|\rt)
%\end{align*}
Due to \eqref{max_series}, %the SLLN \eqref{SLLN}, we have
%$$\sfP\lt(\lim_{n \to \infty}\frac{1}{n}\log\Lambda(\sfP, \sfQ_1^*, n) = \cI(\sfP, \sfQ_1^*)\rt) = 1.$$
%Therefore, it follows that 
$\xi_{\epsilon, \eta}(\gamma) \to 0$ as $\gamma \to 0$. 

Combining everything together in \eqref{lb4_iso} we have
\begin{align*}
\sfP(T \geq (1-\epsilon)L_{\gamma}) \geq 1 - \delta - \eta - \xi_{\epsilon, \eta}(\gamma).
\end{align*}
Since $(T, D) \in \cC_\Psi(\gamma, \delta)$ and $\gamma \in (0, 1)$ were chosen arbitrarily, taking infimum over $(T, D)$ and letting $\gamma, \delta \to 0$ we obtain
\begin{equation*}
\liminf_{\gamma, \delta \to 0} \inf_{(T, D) \in \cC_\Psi(\gamma, \delta)} \sfP\lt(T \geq (1-\epsilon)L_{\gamma}\rt) \geq 1 - \eta.
\end{equation*}
Finally, letting $\eta \to 0$ we obtain \eqref{lb22_iso}.

To show \eqref{lb23_iso}, consider the probability distribution $\sfQ_2^*$ which attains
$$\min \lt\{\cI(\sfP, \sfQ) : \sfQ \in \cup_{e \notin \cA(\sfP)} \cG_{\Psi, e}\rt\}.$$
Thus, there exists $e \notin \cA(\sfP)$ such that $\sfQ_2^* \in \cG_{\Psi, e}$, i.e., $e \in \cA(\sfQ_2^*)$. Then,
\begin{align}\notag
\sfP\lt(T \geq (1-\epsilon)L_{\delta}\rt) &\geq \sfP\lt(e \notin D, D \neq \emptyset, T \geq (1-\epsilon)L_{\delta}\rt)\\ \notag
&= \sfP\lt(e \notin D, D \neq \emptyset\rt) - \sfP\lt(e \notin D, D \neq \emptyset, T < (1-\epsilon)L_{\delta}\rt)\\ \notag
&= 1 - \sfP\lt(e \in D, D \neq \emptyset\rt) - \sfP\lt(e \notin D, D \neq \emptyset, T < (1-\epsilon)L_{\delta}\rt)\\ \label{lb5_iso}
&\geq 1 - \gamma - \sfP\lt(e \notin D, D \neq \emptyset, T < (1-\epsilon)L_{\delta}\rt)
\end{align}
where the last inequality follows from the desired error control that $\sfP\lt(D \setminus \cA(\sfP) \neq \emptyset\rt) \leq \gamma$. We can now decompose the third term as
$$\sfP\lt(e \notin D, D \neq \emptyset, T < (1-\epsilon)L_{\delta}\rt) \leq p_1 + p_2,$$
such that 
\begin{align*}
p_1 &:= \sfP\lt(e \notin D, D \neq \emptyset, \Lambda_T(\sfP, \sfQ_2^*) < \frac{\eta}{\delta}\rt)\quad \text{and}\\ p_2 &:= \sfP\lt(T < (1-\epsilon)L_{\delta}, \Lambda_T(\sfP, \sfQ_2^*) \geq \frac{\eta}{\delta}\rt),
\end{align*}
where $\eta$ is an arbitrary constant in $(0, 1)$. For the first term, by a change of measure from $\sfP$ to $\sfQ_2^*$, we have
\begin{align*}
p_1 &= \sfE_{\sfQ_2^*}\lt[\Lambda_T(\sfP, \sfQ_2^*); e \notin D, D \neq \emptyset, \Lambda_T(\sfP, \sfQ_2^*) < \frac{\eta}{\delta}\rt]\\
&\leq \frac{\eta}{\delta}\sfQ_2^*\lt(e \notin D, D \neq \emptyset\rt) \leq \eta,
\end{align*}
where the last inequality follows from the desired error control that $\sfQ_2^*\lt(\cA(\sfQ_2^*) \setminus D \neq \emptyset, D \neq \emptyset\rt) \leq \delta$. For the second term, we can bound it above by the similar manner as shown previously
\begin{align*}
p_2 \leq \sfP\lt(\frac{1}{(1-\epsilon)L_{\delta}}\max_{1 \leq n \leq (1-\epsilon)L_{\delta}} \log\Lambda_n(\sfP, \sfQ_2^*) \geq \frac{\log\eta}{(1-\epsilon)L_{\delta}} + \frac{\cI(\sfP, \sfQ_2^*)}{(1-\epsilon)}\rt) =: \xi_{\epsilon, \eta}(\delta)
\end{align*}
%\begin{align*}
%\epsilon_{\epsilon, \eta}(\beta, \delta) := \sfP\lt(T < (1-\epsilon)\frac{|\log\beta|}{\cI(\sfP, \sfQ^*)}, \log\Lambda(\sfP, \sfQ^*; T) \geq \log\eta + |\log\beta|\rt)
%\end{align*}
Due to \eqref{max_series}, %the SLLN \eqref{SLLN}, we have
%$$\sfP\lt(\lim_{n \to \infty}\frac{1}{n}\log\Lambda(\sfP, \sfQ_2^*, n) = \cI(\sfP, \sfQ_2^*)\rt) = 1.$$
%Therefore, it follows that 
$\xi_{\epsilon, \eta}(\delta) \to 0$ as $\delta \to 0$. 

Combining everything together in \eqref{lb5_iso} we have
\begin{align*}
\sfP(T \geq (1-\epsilon)L_{\delta}) \geq 1 - \gamma - \eta - \xi_{\epsilon, \eta}(\delta).
\end{align*}
Since $(T, D) \in \cC_\Psi(\gamma, \delta)$ and $\delta \in (0, 1)$ were chosen arbitrarily, taking infimum over $(T, D)$ and letting $\gamma, \delta \to 0$ we obtain
\begin{equation*}
\liminf_{\gamma, \delta \to 0} \inf_{(T, D) \in \cC_\Psi(\gamma, \delta)} \sfP\lt(T \geq (1-\epsilon)L_{\delta}\rt) \geq 1 - \eta.
\end{equation*}
Finally, letting $\eta \to 0$ we obtain \eqref{lb23_iso}, which along with \eqref{lb22_iso} completes the first part of the proof.

For the second part, we first show that when $\Psi = \Psi_{m, m}$ we have
$$\cup_{e \in \cA(\sfP)} \cH_{\Psi, e} = \cup_{e \notin \cA(\sfP)} \cG_{\Psi, e} = \cP_\Psi \setminus \{\sfP\}.$$
Indeed, the above holds since for any $\sfQ \in \cP_\Psi \setminus \{\sfP\}$, as $|\cA(\sfP)| = |\cA(\sfQ)| = m$, there exists $e \in \cA(\sfP)$ such that $e \notin \cA(\sfQ)$, and also there exists $e \notin \cA(\sfP)$ such that $e \in \cA(\sfQ)$, i.e., both
$$\sfQ \in \cup_{e \in \cA(\sfP)} \cH_{\Psi, e} \quad \text{and} \quad \sfQ \in \cup_{e \notin \cA(\sfP)} \cG_{\Psi, e}$$
hold equivalently. 

Combined with this and using the fact that, for any $(T, D) \in \cC_\Psi(\gamma, \delta)$,
$\sfP \lt(D \setminus \cA(\sfP) \neq \emptyset\rt) = \sfP\lt(\cA(\sfP) \setminus D \neq \emptyset  \rt) = \sfP\lt(D \neq \cA(\sfP)\rt) \leq \gamma \wedge \delta$, one can follow the similar technique as above to complete the proof.
\end{proof}

\begin{proof}[Proof of Theorem \ref{P1P2}]
Since both \eqref{condP1} and \eqref{condP2} hold, from Theorem \ref{ub_I} as $\gamma, \delta \to 0$, we have
\begin{align*}
&\inf \lt\{ \sfE_{\sfP}[T]:\; (T, D) \in \cC_{\Psi}(\gamma, \delta) \rt\}\\ 
&\leq \sfE_\sfP\lt[T^*\rt] \lesssim \max_{e \in \cA(\sfP)} \lt\{ \frac{|\log\gamma|}{  \cI(\sfP, \cH_{\Psi,e}) }\rt\}  \bigvee 
\max_{e \notin \cA(\sfP)} \lt\{  \frac{|\log\delta|}{  \cI(\sfP, \cG_{\Psi,e} )} \rt\}.
\end{align*}
Thus, the result follows by using Lemma \ref{lem:P1P2}. When $\Psi = \Psi_{m, m}$ the result can be established in a similar fashion. 
\end{proof}

\subsection{Some important lemmas}

Now we prove three important lemmas which will be useful in the proofs of Theorems \ref{PminusP0}, \ref{P0} and \ref{P0P1P2}. The next lemma provides an asymptotic lower bound on the optimal performance corresponding to joint detection and isolation.

\begin{lemma}\label{lem:DI_lb}
Suppose that \eqref{max_series} holds, $\emptyset \in \Psi$ and let $\sfP \in \cP_\Psi$.
\begin{enumerate}
\item[(i)] If $\sfP \in \cH_0$, then as $\alpha, \beta \to 0$ while $\gamma$ and $\delta$ are either fixed or go to $0$, we have
$$ \inf \lt\{ \sfE_{\sfP}[T]:\; 
(T, D) \in \cC_{\Psi}(\alpha, \beta,\gamma, \delta) \rt\} \; \gtrsim \; \max_{e \in \cK}  \lt\{ \frac{|\log\beta|}{ \cI \lt( \sfP , \cG_{\Psi,e} \rt)} \rt\}.$$
\item[(ii)] If $\sfP \notin \cH_0$, then as $\alpha, \beta \to 0$ while $\gamma$ and $\delta$ are either fixed or go to $0$, we have
$$ \inf \lt\{ \sfE_{\sfP}[T]:\; 
(T, D) \in \cC_{\Psi}(\alpha, \beta,\gamma, \delta) \rt\} \; \gtrsim \; \frac{|\log\alpha|}{ \cI \lt( \sfP , \cH_0 \rt)}.$$
\item[(iii)] If $\sfP \notin \cH_0$, then as $\alpha, \beta, \gamma, \delta \to 0$, we have
\begin{align*}
&\inf \lt\{ \sfE_{\sfP}[T]:\; 
(T, D) \in \cC_{\Psi}(\alpha, \beta,\gamma, \delta) \rt\}\\ 
&\gtrsim \; \frac{|\log\alpha|}{\cI(\sfP, \cH_0)} \bigvee \max_{e \in \cA(\sfP)}  \lt\{\frac{|\log\gamma|}{ \cI(\sfP, \cH_{\Psi,e} ) } \rt\} \bigvee   \max_{e \notin \cA(\sfP)}  \lt\{ \frac{|\log\delta|}{ \cI(\sfP, \cG_{\Psi,e} ) } \rt\}.
\end{align*}
\end{enumerate}
\end{lemma}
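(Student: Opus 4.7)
The proof will closely follow the scheme of Lemma \ref{lem:P1P2}: combine Markov's inequality with a change-of-measure argument and condition \eqref{max_series}. For each asymptotic lower bound $L$ appearing on the right-hand side, Markov's inequality gives $\sfE_\sfP[T] \geq (1-\epsilon) L \, \sfP(T \geq (1-\epsilon)L)$ for every $\epsilon \in (0,1)$ and every $(T, D) \in \cC_{\Psi}(\alpha, \beta, \gamma, \delta)$. Hence it suffices to show that, in the stated asymptotic regime,
$$\liminf \; \inf_{(T, D) \in \cC_\Psi(\alpha, \beta, \gamma, \delta)} \sfP\bigl(T \geq (1-\epsilon) L\bigr) \geq 1,$$
and then let $\epsilon \to 0$.

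For (i), fix $\sfP \in \cH_0$ and let $L_\beta := \max_{e \in \cK} |\log \beta| / \cI(\sfP, \cG_{\Psi, e})$. Select $\sfQ^* \in \bigcup_{e \in \cK} \cG_{\Psi, e}$ achieving $\min_{e \in \cK} \cI(\sfP, \cG_{\Psi, e})$; in particular $\sfQ^* \notin \cH_0$, so the missed-detection bound gives $\sfQ^*(D = \emptyset) \leq \beta$. Decompose
$$\sfP\bigl(T \geq (1-\epsilon)L_\beta\bigr) \;\geq\; \sfP(D = \emptyset) - \sfP\bigl(D = \emptyset, T < (1-\epsilon)L_\beta\bigr),$$
and bound $\sfP(D = \emptyset) \geq 1 - \alpha$ using the false-alarm control under $\sfP \in \cH_0$. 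Splitting the second term according to $\{\Lambda_T(\sfP, \sfQ^*) < \eta/\beta\}$ versus its complement, a change of measure to $\sfQ^*$ yields $\sfP(D = \emptyset, \Lambda_T(\sfP, \sfQ^*) < \eta/\beta) \leq (\eta/\beta) \sfQ^*(D = \emptyset) \leq \eta$, while the complementary event is $\sfP\bigl(T < (1-\epsilon) L_\beta, \; \max_{1 \leq n \leq (1-\epsilon)L_\beta} Z_n(\sfP, \sfQ^*) \geq \log(\eta/\beta)\bigr)$, which vanishes by \eqref{max_series} since $\cI(\sfP, \sfQ^*)/(1-\epsilon) > \cI(\sfP, \sfQ^*)$. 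Letting $\eta \to 0$ finishes (i).

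For (ii), fix $\sfP \notin \cH_0$, set $L_\alpha := |\log \alpha|/\cI(\sfP, \cH_0)$, and select $\sfQ^* \in \cH_0$ achieving $\cI(\sfP, \cH_0)$. Now use
$$\sfP\bigl(T \geq (1-\epsilon)L_\alpha\bigr) \;\geq\; \sfP(D \neq \emptyset) - \sfP\bigl(D \neq \emptyset, T < (1-\epsilon)L_\alpha\bigr),$$
where $\sfP(D \neq \emptyset) \geq 1 - \beta$ by the missed-detection control, and a change of measure to $\sfQ^* \in \cH_0$ combined with the false-alarm control $\sfQ^*(D \neq \emptyset) \leq \alpha$ gives $\sfP(D \neq \emptyset, \Lambda_T(\sfP, \sfQ^*) < \eta/\alpha) \leq \eta$. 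The remaining tail vanishes as before by \eqref{max_series}.

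For (iii), the first summand in the max is handled exactly as in (ii). For the bounds involving $|\log \gamma|$ and $|\log \delta|$, the arguments are identical to those used for the corresponding terms in the proof of Lemma \ref{lem:P1P2}: choose $\sfQ_1^* \in \bigcup_{e \in \cA(\sfP)} \cH_{\Psi, e}$ (respectively $\sfQ_2^* \in \bigcup_{e \notin \cA(\sfP)} \cG_{\Psi, e}$) attaining the relevant minimum, decompose through the event $\{e \in D\}$ (respectively $\{e \notin D, D \neq \emptyset\}$), apply the false-negative control under $\sfP$ together with the false-positive control under $\sfQ_1^*$ (respectively the false-positive control under $\sfP$ with the false-negative control under $\sfQ_2^*$), and use \eqref{max_series} to kill the exceptional tail. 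The main technical obstacle is bookkeeping: one must verify that for each candidate change-of-measure distribution the \emph{appropriate} one of the four error controls applies (false alarm / missed detection / false positive / false negative), and that the corresponding indicator sets in the change-of-measure step genuinely lie inside the error event being controlled.
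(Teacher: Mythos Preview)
Your proposal is correct and follows the same change-of-measure scheme as the paper's proof. One small point of bookkeeping you should make explicit in part (iii): because here $\emptyset \in \Psi$ (unlike in Lemma~\ref{lem:P1P2}), the decision $D$ may be empty, so the arguments are not \emph{quite} identical. For the $L_\gamma$ term, $\{e \notin D\} \subseteq \{D = \emptyset\} \cup \{\cA(\sfP) \setminus D \neq \emptyset,\, D \neq \emptyset\}$, hence $\sfP(e \notin D) \le \beta + \delta$; you need the missed-detection control in addition to the false-negative control. Likewise for the $L_\delta$ term, $\sfP(e \notin D,\, D \neq \emptyset) = 1 - \sfP(e \in D) - \sfP(D = \emptyset) \ge 1 - \gamma - \beta$, so the missed-detection control enters again. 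The paper's proof carries these extra $\beta$ terms explicitly; they vanish in the limit, so your argument still goes through once they are included.
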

\begin{proof}
In order to prove (i) fix $\sfP \in \cH_0$ and denote the quantity $ \inf \lt\{ \sfE_{\sfP}[T]:\; 
(T, D) \in \cC_{\Psi}(\alpha, \beta,\gamma, \delta) \rt\}$ by $N(\alpha, \beta, \gamma, \delta)$. Then we need to to show that as $\alpha, \beta \to 0$, while  $\gamma$ and $\delta$ are either  fixed or go to $0$,
$$N(\alpha, \beta, \gamma ,\delta) \; \gtrsim \; \max_{e \in \cK}   \lt\{ \frac{|\log \beta|}{\cI \lt(\sfP, \cG_{\Psi,e}\rt)}  \rt\}.$$
We set $$L_{\beta} := \max_{e \in \cK}   \lt\{ \frac{|\log \beta|}{\cI \lt(\sfP, \cG_{\Psi,e}\rt)}  \rt\},\quad \beta \in (0, 1).$$
By Markov's inequality, for any stopping time $T$, and $\beta, \epsilon \in (0, 1)$, 
$$\sfE_\sfP[T] \geq (1-\epsilon)L_{\beta}\sfP\lt(T \geq (1-\epsilon)L_{\beta}\rt).$$
Thus, it suffices to show for every $\epsilon \in (0, 1)$ we have
\begin{equation}\label{lb11}
\liminf_{\alpha, \beta \to 0} \inf_{(T, D) \in \cC_\Psi(\alpha, \beta, \gamma, \delta)} \sfP\lt(T \geq (1-\epsilon)L_{\beta}\rt) \geq 1,
\end{equation}
since this will further imply that 
$$\liminf_{\alpha, \beta \to 0} \; N(\alpha, \beta, \gamma, \delta)/L_{\beta} \geq (1-\epsilon),$$ 
and the desired result will follow by letting $\epsilon \to 0$. In order to prove \eqref{lb11}, we start by fixing arbitrary $(T, D) \in \cC_\Psi(\alpha, \beta, \gamma, \delta)$ and $\beta, \epsilon \in (0, 1)$. Consider the probability distribution $\sfQ^*$ which attains
$$\min\{\cI(\sfP, \sfQ) : \sfQ \in \cup_{e \in \cK} \cG_{\Psi, e}\}.$$ 
Thus, there exists $e \in \cK$ such that $\sfQ^* \in \cG_{\Psi, e}$. i.e., $e \in \cA(\sfQ^*)$, which means $\sfQ^* \in \cP_\Psi \setminus \cH_0$.
Then,
\begin{align}\notag
\sfP\lt(T \geq (1-\epsilon)L_{\beta}\rt) &\geq \sfP\lt(D = \emptyset, T \geq (1-\epsilon)L_{\beta}\rt)\\ \notag
&= \sfP\lt(D = \emptyset\rt) - \sfP\lt(D = \emptyset, T < (1-\epsilon)L_{\beta}\rt)\\ \notag
&= 1 - \sfP\lt(D \neq \emptyset\rt) - \sfP\lt(D = \emptyset, T < (1-\epsilon)L_{\beta}\rt)\\ \label{lb12}
&\geq 1- \alpha - \sfP\lt(D = \emptyset, T < (1-\epsilon)L_{\beta}\rt)
\end{align}
where the last inequality follows from the desired error control that $\sfP(D \neq \emptyset) \leq \alpha$. We can now decompose the third term as
$$\sfP\lt(D = \emptyset, T < (1-\epsilon)L_{\beta}\rt) \leq p_1 + p_2,$$
such that 
\begin{align*}
p_1 &:= \sfP\lt(D = \emptyset, \Lambda_T(\sfP, \sfQ^*) < \frac{\eta}{\beta}\rt)\quad \text{and}\\ p_2 &:= \sfP\lt(T < (1-\epsilon)L_{\beta}, \Lambda_T(\sfP, \sfQ^*) \geq \frac{\eta}{\beta}\rt),
\end{align*}
where $\eta$ is an arbitrary constant in $(0, 1)$. For the first term, by a change of measure from $\sfP$ to $\sfQ^*$, we have
\begin{align*}
p_1 &= \sfE_{\sfQ^*}\lt[\Lambda_T(\sfP, \sfQ^*); D = \emptyset, \Lambda_T(\sfP, \sfQ^*) < \frac{\eta}{\beta}\rt]\\
&\leq \frac{\eta}{\beta}\sfQ^*\lt(D = \emptyset\rt) \leq \eta,
\end{align*}
where the last inequality follows from the error control that $\sfQ^*\lt(D = \emptyset\rt) \leq \beta$. For the second term, we can bound it above by
\begin{align*}
p_2 &= \sfP\lt(T < (1-\epsilon)L_{\beta},  \log\Lambda_T(\sfP, \sfQ^*) \geq \log\eta + |\log\beta|\rt)\\
&\leq \sfP\lt(\max_{1 \leq n \leq (1-\epsilon)L_{\beta}} \log\Lambda_n(\sfP, \sfQ^*) \geq \log\eta + |\log\beta|\rt)\\
&\leq \sfP\lt(\frac{1}{(1-\epsilon)L_{\beta}}\max_{1 \leq n \leq (1-\epsilon)L_{\beta}} \log\Lambda_n(\sfP, \sfQ^*) \geq \frac{\log\eta}{(1-\epsilon)L_{\beta}} + \frac{\cI(\sfP, \sfQ^*)}{(1-\epsilon)}\rt) =: \xi_{\epsilon, \eta}(\beta)
\end{align*}
%\begin{align*}
%\epsilon_{\epsilon, \eta}(\beta, \delta) := \sfP\lt(T < (1-\epsilon)\frac{|\log\beta|}{\cI(\sfP, \sfQ^*)}, \log\Lambda(\sfP, \sfQ^*; T) \geq \log\eta + |\log\beta|\rt)
%\end{align*}
Due to \eqref{max_series}, %the SLLN \eqref{SLLN}, we have
%$$\sfP\lt(\lim_{n \to \infty}\frac{1}{n}\log \Lambda(\sfP, \sfQ^*, n) = \cI(\sfP, \sfQ^*)\rt) = 1.$$
%Therefore, it follows that 
$\xi_{\epsilon, \eta}(\beta) \to 0$ as $\beta \to 0$. 

Combining everything together in \eqref{lb12} we have
\begin{align*}
\sfP(T \geq (1-\epsilon)L_{\beta}) \geq 1 - \alpha - \eta - \xi_{\epsilon, \eta}(\beta).
\end{align*}
Since $(T, D) \in \cC_\Psi(\alpha, \beta, \gamma, \delta)$ and $\beta \in (0, 1)$ were chosen arbitrarily, taking infimum over $(T, D)$ and letting $\alpha, \beta \to 0$ we obtain
\begin{equation*}
\liminf_{\alpha, \beta \to 0} \inf_{(T, D) \in \cC_\Psi(\alpha, \beta, \gamma, \delta)} \sfP\lt(T \geq (1-\epsilon)L_{\beta}\rt) \geq 1 - \eta.
\end{equation*}
Finally, letting $\eta \to 0$ we obtain \eqref{lb11}, which completes the proof of (i).

Next, in order to prove (ii) and (iii), we fix any arbitrary $\sfP \in \cP_\Psi \setminus \cH_0$ and set
$$L_{\alpha} := \frac{|\log\alpha|}{\cI(\sfP, \cH_0)},\quad L_{\gamma} := \max_{e \in \cA(\sfP)} \; \lt\{\frac{|\log\gamma|}{ \cI(\sfP, \cH_{\Psi,e} ) } \rt\},\quad \text{and} \quad L_{\delta} := \max_{e \notin \cA(\sfP)}  \lt\{ \frac{|\log\delta|}{ \cI(\sfP, \cG_{\Psi,e} ) } \rt\}.$$
In a similar manner, to prove the second and third part of the proof, we will show that for every $\epsilon \in (0, 1)$ we have
\begin{subequations}
\label{lb2ndpart}
\begin{align}\label{lb21}
&\liminf_{\alpha, \beta \to 0} \inf_{(T, D) \in \cC_\Psi(\alpha, \beta, \gamma, \delta)} \sfP\lt(T \geq (1-\epsilon)L_{\alpha}\rt) \geq 1,\\ \label{lb22}
&\liminf_{\beta, \gamma, \delta \to 0} \inf_{(T, D) \in \cC_\Psi(\alpha, \beta, \gamma, \delta)} \sfP\lt(T \geq (1-\epsilon)L_{\gamma}\rt) \geq 1,\quad \text{and}\\ \label{lb23}
&\liminf_{\beta, \gamma, \delta \to 0} \inf_{(T, D) \in \cC_\Psi(\alpha, \beta, \gamma, \delta)} \sfP\lt(T \geq (1-\epsilon)L_{\delta}\rt) \geq 1
\end{align}
\end{subequations}
In order to prove \eqref{lb2ndpart}, we start by fixing arbitrary $(T, D) \in \cC_\Psi(\alpha, \beta, \gamma, \delta)$ and $\alpha, \beta, \gamma, \delta, \epsilon \in (0, 1)$. 
To show \eqref{lb21}, consider the probability distribution $\sfQ_0^*$ which attains
$$\min\{\cI(\sfP, \sfQ) : \sfQ \in \cH_0\}.$$ Then,
\begin{align}\notag
\sfP\lt(T \geq (1-\epsilon)L_{\alpha}\rt) &\geq \sfP\lt(D \neq \emptyset, T \geq (1-\epsilon)L_{\alpha}\rt)\\ \notag
&= \sfP\lt(D \neq \emptyset\rt) - \sfP\lt(D \neq \emptyset, T < (1-\epsilon)L_{\alpha}\rt)\\ \notag
&= 1 - \sfP\lt(D = \emptyset\rt) - \sfP\lt(D \neq \emptyset, T < (1-\epsilon)L_{\alpha}\rt)\\ \label{lb3}
&\geq 1- \beta - \sfP\lt(D \neq \emptyset, T < (1-\epsilon)L_{\alpha}\rt)
\end{align}
where the last inequality follows from the desired error control that $\sfP(D = \emptyset) \leq \beta$. We can now decompose the third term as
$$\sfP\lt(D \neq \emptyset, T < (1-\epsilon)L_{\alpha}\rt) \leq p_1 + p_2,$$
such that 
\begin{align*}
p_1 &:= \sfP\lt(D \neq \emptyset, \Lambda_T(\sfP, \sfQ_0^*) < \frac{\eta}{\alpha}\rt)\quad \text{and}\\ p_2 &:= \sfP\lt(T < (1-\epsilon)L_{\alpha}, \Lambda_T(\sfP, \sfQ_0^*) \geq \frac{\eta}{\alpha}\rt),
\end{align*}
where $\eta$ is an arbitrary constant in $(0, 1)$. For the first term, by a change of measure from $\sfP$ to $\sfQ_0^*$, we have
\begin{align*}
p_1 &= \sfE_{\sfQ_0^*}\lt[\Lambda_T(\sfP, \sfQ_0^*); D \neq \emptyset, \Lambda_T(\sfP, \sfQ_0^*) < \frac{\eta}{\alpha}\rt]\\
&\leq \frac{\eta}{\alpha}\sfQ_0^*\lt(D \neq \emptyset\rt) \leq \eta,
\end{align*}
where the last inequality follows from the error control that $\sfQ_0^*\lt(D \neq \emptyset\rt) \leq \alpha$. For the second term, we can bound it above by the similar manner as shown previously
\begin{align*}
p_2 \leq \sfP\lt(\frac{1}{(1-\epsilon)L_{\alpha}}\max_{1 \leq n \leq (1-\epsilon)L_{\alpha}} \log\Lambda_n(\sfP, \sfQ_0^*) \geq \frac{\log\eta}{(1-\epsilon)L_{\alpha}} + \frac{\cI(\sfP, \sfQ_0^*)}{(1-\epsilon)}\rt) =: \xi_{\epsilon, \eta}(\alpha)
\end{align*}
%\begin{align*}
%\epsilon_{\epsilon, \eta}(\beta, \delta) := \sfP\lt(T < (1-\epsilon)\frac{|\log\beta|}{\cI(\sfP, \sfQ^*)}, \log\Lambda(\sfP, \sfQ^*; T) \geq \log\eta + |\log\beta|\rt)
%\end{align*}
Due to \eqref{max_series}, %the SLLN \eqref{SLLN}, we have
%$$\sfP\lt(\lim_{n \to \infty}\frac{1}{n}\log\Lambda(\sfP, \sfQ_0^*, n) = \cI(\sfP, \sfQ_0^*)\rt) = 1.$$
%Therefore, it follows that 
$\xi_{\epsilon, \eta}(\alpha) \to 0$ as $\alpha \to 0$. 

Combining everything together in \eqref{lb3} we have
\begin{align*}
\sfP(T \geq (1-\epsilon)L_{\alpha}) \geq 1 - \beta - \eta - \xi_{\epsilon, \eta}(\alpha).
\end{align*}
Since $(T, D) \in \cC_\Psi(\alpha, \beta, \gamma, \delta)$ and $\alpha \in (0, 1)$ were chosen arbitrarily, taking infimum over $(T, D)$ and letting $\alpha, \beta \to 0$ we obtain
\begin{equation*}
\liminf_{\alpha, \beta \to 0} \inf_{(T, D) \in \cC_\Psi(\alpha, \beta, \gamma, \delta)} \sfP\lt(T \geq (1-\epsilon)L_{\alpha}\rt) \geq 1 - \eta.
\end{equation*}
Finally, letting $\eta \to 0$ we obtain \eqref{lb21}, which completes the proof of (ii).

To show \eqref{lb22}, consider the probability distribution $\sfQ_1$ which attains
$$\min \lt\{\cI(\sfP, \sfQ) : \sfQ \in \cup_{e \in \cA(\sfP)} \cH_{\Psi, e}\rt\}.$$
Thus, there exists $e \in \cA(\sfP)$ such that $\sfQ_1^* \in \cH_{\Psi, e}$, i.e., $e \notin \cA(\sfQ_1^*)$. Then,
\begin{align}\notag
\sfP\lt(T \geq (1-\epsilon)L_{\gamma}\rt) &\geq \sfP\lt(e \in D, T \geq (1-\epsilon)L_{\gamma}\rt)\\ \notag
&= \sfP\lt(e \in D\rt) - \sfP\lt(e \in D, T < (1-\epsilon)L_{\gamma}\rt)\\ \notag
&= 1 - \sfP\lt(e \notin D\rt) - \sfP\lt(e \in D, T < (1-\epsilon)L_{\gamma}\rt)\\ \label{lb4}
&\geq 1- \beta - \delta - \sfP\lt(e \in D, T < (1-\epsilon)L_{\gamma}\rt)
\end{align}
where the last inequality follows from the fact that 
$$\{e \notin D\} \subseteq \{D = \emptyset\} \cup \{\cA(\sfP) \setminus D \neq \emptyset, D \neq \emptyset\}$$ and the error controls that $\sfP(D = \emptyset) \leq \beta$ and $\sfP\lt(\cA(\sfP) \setminus D \neq \emptyset, D \neq \emptyset\rt) \leq \delta$. We can now decompose the third term as
$$\sfP\lt(e \in D, T < (1-\epsilon)L_{\gamma}\rt) \leq p_1 + p_2,$$
such that 
\begin{align*}
p_1 &:= \sfP\lt(e \in D, \Lambda_T(\sfP, \sfQ_1^*) < \frac{\eta}{\gamma}\rt)\quad \text{and}\\ p_2 &:= \sfP\lt(T < (1-\epsilon)L_{\gamma}, \Lambda_T(\sfP, \sfQ_1^*) \geq \frac{\eta}{\gamma}\rt),
\end{align*}
where $\eta$ is an arbitrary constant in $(0, 1)$. For the first term, by a change of measure from $\sfP$ to $\sfQ_1^*$, we have
\begin{align*}
p_1 &= \sfE_{\sfQ_1^*}\lt[\Lambda_T(\sfP, \sfQ_1^*); e \in D, \Lambda_T(\sfP, \sfQ_1^*) < \frac{\eta}{\gamma}\rt]\\
&\leq \frac{\eta}{\gamma}\sfQ_1^*\lt(e \in D\rt) \leq \eta,
\end{align*}
where the last inequality follows from the fact that 
$$\{e \in D\} \subseteq \{D \setminus \cA(\sfQ_1^*) \neq \emptyset\}$$ and the error control that $\sfQ_1^*\lt(D \setminus \cA(\sfQ_1^*) \neq \emptyset\rt) \leq \gamma$. For the second term, we can bound it above by the similar manner as shown previously
\begin{align*}
p_2 \leq \sfP\lt(\frac{1}{(1-\epsilon)L_{\gamma}}\max_{1 \leq n \leq (1-\epsilon)L_{\gamma}} \log\Lambda_n(\sfP, \sfQ_1^*) \geq \frac{\log\eta}{(1-\epsilon)L_{\gamma}} + \frac{\cI(\sfP, \sfQ_1^*)}{(1-\epsilon)}\rt) =: \xi_{\epsilon, \eta}(\gamma)
\end{align*}
%\begin{align*}
%\epsilon_{\epsilon, \eta}(\beta, \delta) := \sfP\lt(T < (1-\epsilon)\frac{|\log\beta|}{\cI(\sfP, \sfQ^*)}, \log\Lambda(\sfP, \sfQ^*; T) \geq \log\eta + |\log\beta|\rt)
%\end{align*}
Due to \eqref{max_series}, %the SLLN \eqref{SLLN}, we have
%$$\sfP\lt(\lim_{n \to \infty}\frac{1}{n}\log\Lambda(\sfP, \sfQ_1^*, n) = \cI(\sfP, \sfQ_1^*)\rt) = 1.$$
%Therefore, it follows that 
$\xi_{\epsilon, \eta}(\gamma) \to 0$ as $\gamma \to 0$. 

Combining everything together in \eqref{lb4} we have
\begin{align*}
\sfP(T \geq (1-\epsilon)L_{\gamma}) \geq 1 - \beta - \delta - \eta - \xi_{\epsilon, \eta}(\gamma).
\end{align*}
Since $(T, D) \in \cC_\Psi(\alpha, \beta, \gamma, \delta)$ and $\gamma \in (0, 1)$ were chosen arbitrarily, taking infimum over $(T, D)$ and letting $\beta,\gamma, \delta \to 0$ we obtain
\begin{equation*}
\liminf_{\beta, \gamma, \delta \to 0} \inf_{(T, D) \in \cC_\Psi(\alpha, \beta, \gamma, \delta)} \sfP\lt(T \geq (1-\epsilon)L_{\gamma}\rt) \geq 1 - \eta.
\end{equation*}
Finally, letting $\eta \to 0$ we obtain \eqref{lb22}.

To show \eqref{lb23}, consider the probability distribution $\sfQ_2^*$ which attains
$$\min \lt\{\cI(\sfP, \sfQ) : \sfQ \in \cup_{e \notin \cA(\sfP)} \cG_{\Psi, e}\rt\}.$$
Thus, there exists $e \notin \cA(\sfP)$ such that $\sfQ_2^* \in \cG_{\Psi, e}$, i.e., $e \in \cA(\sfQ_2^*)$.Then,
\begin{align}\notag
\sfP\lt(T \geq (1-\epsilon)L_{\delta}\rt) &\geq \sfP\lt(e \notin D, D \neq \emptyset, T \geq (1-\epsilon)L_{\delta}\rt)\\ \notag
&= \sfP\lt(e \notin D, D \neq \emptyset\rt) - \sfP\lt(e \notin D, D \neq \emptyset, T < (1-\epsilon)L_{\delta}\rt)\\ \notag
&= 1 - \sfP\lt(\lt(e \in D\rt) \cup \lt(D = \emptyset\rt)\rt) - \sfP\lt(e \notin D, D \neq \emptyset, T < (1-\epsilon)L_{\delta}\rt)\\ \notag 
&= 1- \sfP\lt(e \in D\rt) - \sfP \lt(D = \emptyset\rt) - \sfP\lt(e \notin D, D \neq \emptyset, T < (1-\epsilon)L_{\delta}\rt)\\ \label{lb5}
&\geq 1 - \gamma - \beta - \sfP\lt(e \notin D, D \neq \emptyset, T < (1-\epsilon)L_{\delta}\rt)
\end{align}
where the third equality holds because the events $\{e \in D\}$ and $\{D = \emptyset\}$ are disjoint and the last inequality follows from the desired error controls that $\sfP\lt(D \setminus \cA(\sfP) \neq \emptyset\rt) \leq \gamma$ and $\sfP\lt(D = \emptyset\rt) \leq \beta$. We can now decompose the third term as
$$\sfP\lt(e \notin D, D \neq \emptyset, T < (1-\epsilon)L_{\delta}\rt) \leq p_1 + p_2,$$
such that 
\begin{align*}
p_1 &:= \sfP\lt(e \notin D, D \neq \emptyset, \Lambda_T(\sfP, \sfQ_2^*) < \frac{\eta}{\delta}\rt)\quad \text{and}\\ p_2 &:= \sfP\lt(T < (1-\epsilon)L_{\delta}, \Lambda_T(\sfP, \sfQ_2^*) \geq \frac{\eta}{\delta}\rt),
\end{align*}
where $\eta$ is an arbitrary constant in $(0, 1)$. For the first term, by a change of measure from $\sfP$ to $\sfQ_2^*$, we have
\begin{align*}
p_1 &= \sfE_{\sfQ_2^*}\lt[\Lambda_T(\sfP, \sfQ_2^*); e \notin D, D \neq \emptyset, \Lambda_T(\sfP, \sfQ_2^*) < \frac{\eta}{\delta}\rt]\\
&\leq \frac{\eta}{\delta}\sfQ_2^*\lt(e \notin D, D \neq \emptyset\rt) \leq \eta,
\end{align*}
where the last inequality follows from the desired error control that $\sfQ_2^*\lt(\cA(\sfQ_2^*) \setminus D \neq \emptyset, D \neq \emptyset\rt) \leq \delta$. For the second term, we can bound it above by the similar manner as shown previously
\begin{align*}
p_2 \leq \sfP\lt(\frac{1}{(1-\epsilon)L_{\delta}}\max_{1 \leq n \leq (1-\epsilon)L_{\delta}} \log\Lambda_n(\sfP, \sfQ_2^*) \geq \frac{\log\eta}{(1-\epsilon)L_{\delta}} + \frac{\cI(\sfP, \sfQ_2^*)}{(1-\epsilon)}\rt) =: \xi_{\epsilon, \eta}(\delta)
\end{align*}
%\begin{align*}
%\epsilon_{\epsilon, \eta}(\beta, \delta) := \sfP\lt(T < (1-\epsilon)\frac{|\log\beta|}{\cI(\sfP, \sfQ^*)}, \log\Lambda(\sfP, \sfQ^*; T) \geq \log\eta + |\log\beta|\rt)
%\end{align*}
Due to \eqref{max_series}, %the SLLN \eqref{SLLN}, we have
%$$\sfP\lt(\lim_{n \to \infty}\frac{1}{n}\log\Lambda(\sfP, \sfQ_2^*, n) = \cI(\sfP, \sfQ_2^*)\rt) = 1.$$
%Therefore, it follows that 
$\xi_{\epsilon, \eta}(\delta) \to 0$ as $\delta \to 0$. 

Combining everything together in \eqref{lb5_iso} we have
\begin{align*}
\sfP(T \geq (1-\epsilon)L_{\delta}) \geq 1 - \gamma - \beta - \eta - \xi_{\epsilon, \eta}(\delta).
\end{align*}
Since $(T, D) \in \cC_\Psi(\alpha, \beta, \gamma, \delta)$ and $\delta \in (0, 1)$ were chosen arbitrarily, taking infimum over $(T, D)$ and letting $\beta, \gamma, \delta \to 0$ we obtain
\begin{equation*}
\liminf_{\beta, \gamma, \delta \to 0} \inf_{(T, D) \in \cC_\Psi(\alpha, \beta, \gamma, \delta)} \sfP\lt(T \geq (1-\epsilon)L_{\delta}\rt) \geq 1 - \eta.
\end{equation*}
Finally, letting $\eta \to 0$ we obtain \eqref{lb23}, which along with \eqref{lb22} and \eqref{lb21} completes the proof of (iii).
\end{proof}

The next lemma provides an asymptotic lower bound on the optimal performance corresponding to pure detection.

\begin{lemma}\label{lem:D_lb}
Suppose that \eqref{max_series} holds, $\emptyset \in \Psi$ and let $\sfP \in \cP_\Psi$.
\begin{enumerate}
\item[(i)] If $\sfP \in \cH_0$, then as $\alpha, \beta \to 0$, we have
$$ \inf \lt\{ \sfE_{\sfP}[T]:\; 
(T, D) \in \cD_{\Psi}(\alpha, \beta) \rt\} \; \gtrsim \; \max_{e \in \cK}  \lt\{ \frac{|\log\beta|}{ \cI \lt( \sfP , \cG_{\Psi,e} \rt)} \rt\}.$$
\item[(ii)] If $\sfP \notin \cH_0$, then as $\alpha, \beta \to 0$, we have
$$ \inf \lt\{ \sfE_{\sfP}[T]:\; 
(T, D) \in \cD_{\Psi}(\alpha, \beta) \rt\} \; \gtrsim \; \frac{|\log\alpha|}{ \cI \lt( \sfP , \cH_0 \rt)}.$$
\end{enumerate}
\end{lemma}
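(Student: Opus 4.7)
The plan is to mimic the change-of-measure argument used in Lemma~\ref{lem:DI_lb} (parts (i) and (ii)), adapting it to the class $\cD_\Psi(\alpha,\beta)$, which only constrains the false alarm and missed detection probabilities. In both cases, after a Markov-type reduction the goal becomes a uniform lower bound on $\sfP(T\geq (1-\epsilon) L)$ for an appropriate $L$; this is then achieved by swinging the probability over to a worst-case alternative via a likelihood ratio identity and controlling the remainder term through Ville's inequality and condition~\eqref{max_series}.

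For part (i), fix $\sfP\in\cH_0$, set $L_\beta := \max_{e\in\cK}\{|\log\beta|/\cI(\sfP,\cG_{\Psi,e})\}$, and pick $\sfQ^*$ attaining $\min\{\cI(\sfP,\sfQ):\sfQ\in\cup_{e\in\cK}\cG_{\Psi,e}\}$, so that $\sfQ^*\in\cP_\Psi\setminus\cH_0$. By Markov's inequality it suffices to show $\liminf_{\alpha,\beta\to 0}\inf_{(T,D)\in\cD_\Psi(\alpha,\beta)}\sfP(T\geq (1-\epsilon)L_\beta)\geq 1$ for each $\epsilon\in(0,1)$. I would lower bound this probability by $\sfP(D=\emptyset)-\sfP(D=\emptyset,\, T<(1-\epsilon)L_\beta)$, noting that $\sfP(D=\emptyset)\geq 1-\alpha$ since $(T,D)\in\cD_\Psi(\alpha,\beta)$ and $\sfP\in\cH_0$. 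Then I would split the remainder around the threshold $\Lambda_T(\sfP,\sfQ^*)\gtrless \eta/\beta$: the small-likelihood part is bounded by $\eta$ after a change of measure to $\sfQ^*$ (using $\sfQ^*(D=\emptyset)\leq\beta$), while the large-likelihood part is handled by \eqref{max_series} applied to $\sfQ^*$, since $L_\beta\cdot\cI(\sfP,\sfQ^*)/(1-\epsilon)\sim |\log\beta|/(1-\epsilon)$ and the threshold grows at rate $|\log\beta|$.

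For part (ii), fix $\sfP\in\cP_\Psi\setminus\cH_0$, set $L_\alpha := |\log\alpha|/\cI(\sfP,\cH_0)$, and choose $\sfQ_0^*\in\cH_0$ attaining $\cI(\sfP,\cH_0)$. By the same Markov reduction I need $\liminf_{\alpha,\beta\to 0}\inf_{(T,D)\in\cD_\Psi(\alpha,\beta)}\sfP(T\geq(1-\epsilon)L_\alpha)\geq 1$. Lower bound via $\sfP(D\neq\emptyset)-\sfP(D\neq\emptyset,\,T<(1-\epsilon)L_\alpha)$; here $\sfP(D\neq\emptyset)\geq 1-\beta$ by the missed-detection control. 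The remainder is again split at $\Lambda_T(\sfP,\sfQ_0^*)\gtrless\eta/\alpha$; changing measure to $\sfQ_0^*$ in the small-likelihood part yields a bound of $\eta$ via $\sfQ_0^*(D\neq\emptyset)\leq\alpha$, and the large-likelihood part vanishes by \eqref{max_series}.

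No genuine obstacle is expected: the argument is essentially contained in the proof of Lemma~\ref{lem:DI_lb}, with the observation that the weaker class $\cD_\Psi(\alpha,\beta)$ only forces one-sided events of the form $\{D=\emptyset\}$ or $\{D\neq\emptyset\}$, each of which is precisely what one needs to execute the change of measure against a worst-case alternative in $\cup_e\cG_{\Psi,e}$ or in $\cH_0$. The only mild care is that, for part (i), the worst-case alternative $\sfQ^*$ lies in $\cP_\Psi\setminus\cH_0$ so that $\sfQ^*(D=\emptyset)\leq\beta$ is actually available from the definition of $\cD_\Psi(\alpha,\beta)$; this is automatic from the construction of $\sfQ^*$.
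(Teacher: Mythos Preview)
Your proposal is correct: the change-of-measure argument you sketch is exactly the computation carried out in the proof of Lemma~\ref{lem:DI_lb}(i)--(ii), and it goes through verbatim for the class $\cD_\Psi(\alpha,\beta)$ since only the constraints $\sfP(D\neq\emptyset)\le\alpha$ for $\sfP\in\cH_0$ and $\sfQ^*(D=\emptyset)\le\beta$ for $\sfQ^*\in\cP_\Psi\setminus\cH_0$ (and their counterparts in part (ii)) are ever invoked.

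The paper, however, takes a shorter route: it observes that setting $\gamma=\delta=1$ makes the isolation constraints in $\cC_\Psi(\alpha,\beta,\gamma,\delta)=\cD_\Psi(\alpha,\beta)\cap\cC_{\Psi\setminus\emptyset}(\gamma,\delta)$ vacuous, so $\cC_\Psi(\alpha,\beta,1,1)=\cD_\Psi(\alpha,\beta)$, and then simply invokes Lemma~\ref{lem:DI_lb}(i)--(ii) with $\gamma,\delta$ held fixed at $1$ (which those parts allow). Your approach redoes the underlying argument; the paper's approach avoids the repetition by a one-line class identity. Both yield the same bound with the same dependence on $\alpha,\beta$, but the reduction is more economical and makes the logical dependence on Lemma~\ref{lem:DI_lb} explicit.
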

\begin{proof}
Let $\gamma = \delta = 1$. Then $\cC_\Psi(\alpha, \beta, \gamma, \delta) = \cD_\Psi(\alpha, \beta)$, which leads to 
$$\inf \lt\{ \sfE_{\sfP}[T]:\; (T, D) \in \cD_{\Psi}(\alpha, \beta) \rt\} = \inf \lt\{ \sfE_{\sfP}[T]:\; (T, D) \in \cC_{\Psi}(\alpha, \beta, 1, 1) \rt\}.$$
Thus, the results follow by using Lemma \ref{lem:DI_lb}(i) and Lemma \ref{lem:DI_lb}(ii) respectively.
\end{proof}

The next lemma provides an asymptotic lower bound on the optimal performance when we are interested to control the FWER.

\begin{lemma}\label{lem:FW_lb}
Suppose that \eqref{max_series} holds, $\emptyset \in \Psi$ and let $\sfP \in \cP_\Psi$.
\begin{enumerate}
\item[(i)] If $\sfP \in \cH_0$, then as $\gamma, \delta \to 0$, we have
$$ \inf \lt\{ \sfE_{\sfP}[T]:\; 
(T, D) \in \cE_{\Psi}(\gamma, \delta) \rt\} \; \gtrsim \; \max_{e \in \cK}  \lt\{ \frac{|\log\delta|}{ \cI \lt( \sfP , \cG_{\Psi,e} \rt)} \rt\}.$$
\item[(ii)] If $\sfP \notin \cH_0$, then as $\gamma, \delta \to 0$, we have
\begin{align*}
&\inf \lt\{ \sfE_{\sfP}[T]:\; 
(T, D) \in \cE_{\Psi}(\gamma, \delta) \rt\}\\ 
&\gtrsim \; \max_{e \in \cA(\sfP)} \lt\{ \frac{|\log\gamma|}{\cI(\sfP, \cH_0)  \wedge  \cI(\sfP, \cH_{\Psi,e} ) }\rt\} \bigvee \max_{e \notin \cA(\sfP)}  \lt\{ \frac{|\log\delta|}{ \cI(\sfP, \cG_{\Psi,e} ) }  \rt\}.
\end{align*}
\end{enumerate}
\end{lemma}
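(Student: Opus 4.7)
The plan is to mimic the change-of-measure arguments of Lemma \ref{lem:P1P2} and Lemma \ref{lem:DI_lb}, but tailored to the two FWER constraints defining $\cE_\Psi(\gamma,\delta)$, namely $\sfP(D\setminus \cA(\sfP)\neq \emptyset)\leq \gamma$ and $\sfP(\cA(\sfP)\setminus D\neq \emptyset)\leq \delta$, \emph{without} a separate missed-detection bound. In each case I would fix arbitrary $(T,D)\in \cE_\Psi(\gamma,\delta)$ and $\epsilon,\eta\in (0,1)$, apply Markov's inequality to reduce the lower bound on $\sfE_\sfP[T]$ to showing $\sfP(T\geq (1-\epsilon)L)\to 1$ for the relevant $L$, and then use a carefully chosen event $\{e\in D\}$ or $\{e\notin D\}$ together with a likelihood-ratio truncation at level $\eta/\gamma$ or $\eta/\delta$.

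For part (i), I set $L_\delta:=\max_{e\in\cK}\{|\log\delta|/\cI(\sfP,\cG_{\Psi,e})\}$ and pick $\sfQ^*$ attaining $\min\{\cI(\sfP,\sfQ):\sfQ\in \cup_{e\in\cK}\cG_{\Psi,e}\}$, so that $e\in \cA(\sfQ^*)$ for some $e\in\cK$. Under $\sfP\in\cH_0$, since $\cA(\sfP)=\emptyset$, the FWER constraint gives $\sfP(D=\emptyset)\geq 1-\gamma$, hence $\sfP(e\notin D)\geq 1-\gamma$, while under $\sfQ^*$ the constraint $\sfQ^*(\cA(\sfQ^*)\setminus D\neq \emptyset)\leq \delta$ together with $e\in \cA(\sfQ^*)$ yields $\sfQ^*(e\notin D)\leq \delta$. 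Writing
\[
\sfP(T\geq (1-\epsilon)L_\delta)\geq \sfP(e\notin D)-\sfP(e\notin D, T<(1-\epsilon)L_\delta),
\]
I would split the second term according to whether $\Lambda_T(\sfP,\sfQ^*)<\eta/\delta$ or not, bound the former by a change of measure to $\sfQ^*$ (which gives $\leq \eta$) and the latter by a maximal-inequality term $\xi_{\epsilon,\eta}(\delta)\to 0$ using \eqref{max_series}. Letting $\gamma,\delta\to 0$ and then $\eta\to 0$ delivers the claim.

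For part (ii), I prove the two bounds separately. For $L_\gamma:=|\log\gamma|/\min_{e\in \cA(\sfP)}\{\cI(\sfP,\cH_0)\wedge \cI(\sfP,\cH_{\Psi,e})\}$ I pick $\sfQ^*$ minimizing $\cI(\sfP,\sfQ)$ over $\cH_0\cup \bigl(\cup_{e\in \cA(\sfP)}\cH_{\Psi,e}\bigr)$, and consider two cases. If $\sfQ^*\in \cH_0$ I take any $e\in \cA(\sfP)$; if $\sfQ^*\in \cH_{\Psi,e^*}$ I take $e=e^*$. In either case $e\notin \cA(\sfQ^*)$, so $\{e\in D\}\subseteq \{D\setminus \cA(\sfQ^*)\neq \emptyset\}$ and $\sfQ^*(e\in D)\leq \gamma$; meanwhile under $\sfP$, since $e\in \cA(\sfP)$, $\{e\notin D\}\subseteq \{\cA(\sfP)\setminus D\neq \emptyset\}$, so $\sfP(e\in D)\geq 1-\delta$. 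The same truncation argument as above, now at level $\eta/\gamma$, gives $\sfP(T\geq (1-\epsilon)L_\gamma)\geq 1-\delta-\eta-\xi_{\epsilon,\eta}(\gamma)$. For $L_\delta':=\max_{e\notin \cA(\sfP)}\{|\log\delta|/\cI(\sfP,\cG_{\Psi,e})\}$ I repeat the argument of part (i), picking $\sfQ^*$ minimizing $\cI(\sfP,\sfQ)$ over $\cup_{e\notin \cA(\sfP)}\cG_{\Psi,e}$ and working with $\{e\notin D\}$: under $\sfP$ one has $\sfP(e\notin D)\geq 1-\gamma$ (from $\sfP(D\setminus \cA(\sfP)\neq \emptyset)\leq \gamma$ and $e\notin \cA(\sfP)$), under $\sfQ^*$ one has $\sfQ^*(e\notin D)\leq \delta$ (from $e\in \cA(\sfQ^*)$ and the missed-signal FWER bound).

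The only subtlety I foresee is the two-case analysis in the first bound of part (ii): one must verify that whether $\sfQ^*$ lies in $\cH_0$ or in some $\cH_{\Psi,e^*}$, the resulting bound $\sfQ^*(e\in D)\leq \gamma$ holds uniformly with the \emph{same} $e\in \cA(\sfP)$ used on the $\sfP$-side. The argument above handles both cases by picking $e=e^*$ when $\sfQ^*\in \cH_{\Psi,e^*}$ and any $e\in \cA(\sfP)$ (which exists because $\sfP\notin \cH_0$) when $\sfQ^*\in \cH_0$; once this is noticed the remainder is a routine replication of the template in Lemma \ref{lem:DI_lb}.
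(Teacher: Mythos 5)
Your argument is correct, but it takes a genuinely different route from the paper. The paper proves Lemma \ref{lem:FW_lb} by a short reduction: it observes that $\cE_\Psi(\gamma,\delta)\subseteq\cD_\Psi(\gamma,\delta)\cap\cC_{\Psi\setminus\emptyset}(\gamma,\delta)=\cC_\Psi(\gamma,\delta,\gamma,\delta)$, then applies the already-established Lemma \ref{lem:DI_lb} (parts (i) and (iii)) with $\alpha=\gamma$, $\beta=\delta$, and merges the terms $|\log\gamma|/\cI(\sfP,\cH_0)$ and $\max_e|\log\gamma|/\cI(\sfP,\cH_{\Psi,e})$ into the stated $\max_e|\log\gamma|/(\cI(\sfP,\cH_0)\wedge\cI(\sfP,\cH_{\Psi,e}))$. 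You instead redo the change-of-measure argument from scratch, tailored to the two FWER constraints. Your version is longer and somewhat repetitive of Lemma \ref{lem:DI_lb}'s machinery, but it is self-contained and has one conceptually pleasant feature: in your $L_\gamma$ bound you minimize $\cI(\sfP,\sfQ)$ directly over the union $\cH_0\cup\bigl(\cup_{e\in\cA(\sfP)}\cH_{\Psi,e}\bigr)$ and use the single event $\{e\in D\}$ in both cases, exploiting the fact that under FWER the constraint $\sfQ^*(D\setminus\cA(\sfQ^*)\neq\emptyset)\leq\gamma$ applies uniformly whether $\sfQ^*$ is a global null or a partial null. This unifies in one stroke what the paper's reduction to Lemma \ref{lem:DI_lb}(iii) obtains as a max of two separate terms (one from the missed-detection constraint with the event $\{D\neq\emptyset\}$, one from the false-negative constraint with the event $\{e\in D\}$). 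The two-case argument you flag as a subtlety is handled correctly: in either case $e\notin\cA(\sfQ^*)$ while $e\in\cA(\sfP)$, so $\sfQ^*(e\in D)\leq\gamma$ and $\sfP(e\notin D)\leq\delta$, which is all the truncation argument needs; and since $\emptyset\in\Psi$, both $\cH_0$ and $\cH_{\Psi,e}$ lie in $\cP_\Psi$, so the error constraints apply to $\sfQ^*$.
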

\begin{proof}
If $\alpha \geq \gamma$ and $\beta \geq \delta$, then by \eqref{fwer_and_det}, we have $\cE_\Psi(\gamma, \delta) \subseteq  \cD_\Psi(\alpha, \beta)$. Furthermore, for any arbitrary $(T, D) \in \cE_\Psi(\gamma, \delta)$ and $\sfP \in \cP_\Psi \setminus \cH_0$, we have from definition,
\begin{align*}
&\sfP \lt(D \setminus \cA(\sfP) \neq \emptyset\rt) \leq \gamma, \quad \text{and}\\
&\sfP\lt(\cA(\sfP) \setminus D \neq \emptyset, D \neq \emptyset \rt) \leq \sfP\lt(\cA(\sfP) \setminus D \neq \emptyset \rt) \leq \delta,
\end{align*}
which implies $(T, D) \in \cC_{\Psi \setminus \emptyset} (\gamma, \delta)$ and thus, $\cE_\Psi(\gamma, \delta) \subseteq \cC_{\Psi \setminus \emptyset} (\gamma, \delta)$. This gives,
$$\cE_\Psi(\gamma, \delta) \subseteq \cD_\Psi(\alpha, \beta) \cap \cC_{\Psi \setminus \emptyset} (\gamma, \delta) = \cC_\Psi(\alpha, \beta, \gamma, \delta).$$
Therefore,
\begin{align*}
\inf \lt\{ \sfE_{\sfP}[T]:\; 
(T, D) \in \cC_{\Psi}(\alpha, \beta,\gamma, \delta) \rt\} \leq  \inf \lt\{ \sfE_{\sfP}[T]:\; 
(T, D) \in \cE_{\Psi}(\gamma, \delta) \rt\}.
\end{align*}
Now, let $\alpha = \gamma$ and $\beta = \delta$, and the results follow immediately by using Lemma \ref{lem:DI_lb}(i) and Lemma \ref{lem:DI_lb}(iii) respectively. 
\end{proof}

\subsection{Proof of Theorem \ref{PminusP0}}\label{pf:PminusP0}
\begin{proof}
Fix $\sfP \in \cH_0$. %We only prove the result for $\chi^*$ since it can be proved for the other tests in a similar way. 
Since \eqref{condPminusP0} holds, from Theorem \ref{ub_DI}(i), as $\beta \to 0$, we have
\begin{align*}
\inf \lt\{ \sfE_{\sfP}[T]:\; (T, D) \in \cD_{\Psi}(\alpha, \beta) \rt\}
\leq \sfE_\sfP[T^*_{det}] \; \lesssim \;  \max_{e \in \cK}   \lt\{ \frac{|\log \beta|}{\cI \lt(\sfP, \cG_{\Psi,e}\rt)}  \rt\},
\end{align*}
\begin{align*}
\inf \lt\{ \sfE_{\sfP}[T]:\; (T, D) \in \cC_{\Psi}(\alpha, \beta, \gamma, \delta) \rt\}
\leq \sfE_\sfP\lt[T^*\rt] \; \lesssim \;  \max_{e \in \cK}   \lt\{ \frac{|\log \beta|}{\cI \lt(\sfP, \cG_{\Psi,e}\rt)}  \rt\},
\end{align*}
and as $\delta \to 0$ we have
$$\inf \lt\{ \sfE_{\sfP}[T]:\; (T, D) \in \cE_{\Psi}(\gamma, \delta) \rt\} \leq \sfE_\sfP\lt[T^*_{fwer}\rt] \; \lesssim \;  \max_{e \in \cK}   \lt\{ \frac{|\log \delta|}{\cI \lt(\sfP, \cG_{\Psi,e}\rt)}  \rt\}.$$
Thus, the results follow by using Lemma \ref{lem:D_lb}(i), Lemma \ref{lem:DI_lb}(i) and Lemma \ref{lem:FW_lb}(i) respectively. 
\end{proof}

\subsection{Proof of Theorem \ref{P0}}\label{pf:P0}

\begin{proof}
Fix $\sfP \in \cP_\Psi \setminus \cH_0$. Since \eqref{condP0} holds, from Theorem \ref{ub_DI}(ii), as $\alpha \to 0$ we have
\begin{equation*} 
\inf \lt\{ \sfE_{\sfP}[T]:\; (T, D) \in \cD_{\Psi}(\alpha, \beta) \rt\} \leq \sfE_\sfP\lt[T^*_{det}\rt] \lesssim \frac{|\log \alpha|}{\cI\lt(\sfP, \cH_0\rt)}.
\end{equation*}
Thus, the result follows by using Lemma \ref{lem:D_lb}(ii). 
Next, as $\alpha,  \gamma, \delta \to 0$ such that $|\log \alpha| >> |\log \gamma| \vee |\log \delta|$, again from Theorem \ref{ub_DI}(ii), we have
\begin{align*} 
\inf \lt\{ \sfE_{\sfP}[T]:\; (T, D) \in \cC_{\Psi}(\alpha, \beta, \gamma, \delta) \rt\} \leq \sfE_\sfP\lt[T^*\rt] \lesssim \frac{|\log \alpha|}{\cI\lt(\sfP, \cH_0\rt)},
\end{align*}
and on the other hand, as $\alpha, \beta, \gamma, \delta \to 0$ such that $|\log \alpha| >> |\log \gamma| \vee |\log \delta|$, by using Lemma \ref{lem:DI_lb}(iii) we have
\begin{align*}
\inf \lt\{ \sfE_{\sfP}[T]:\; (T, D) \in \cC_{\Psi}(\alpha, \beta, \gamma, \delta) \rt\}  \geq \frac{|\log\alpha|}{\cI(\sfP, \cH_0)},
\end{align*} 
which leads to the result.
Furthermore, when both \eqref{condP1} and \eqref{condP0} hold, as $\gamma, \delta \to 0$ such that $|\log \gamma| >> |\log \delta|$, from Theorem \ref{ub_DI}(ii), we have
\begin{align*} 
\inf \lt\{ \sfE_{\sfP}[T]:\; (T, D) \in \cE_{\Psi}(\gamma, \delta) \rt\}
\leq \sfE_\sfP\lt[T^*_{fwer}\rt] \; \lesssim \max_{e \in \cA(\sfP)} \lt\{ \frac{|\log\gamma|}{\cI(\sfP, \cH_0)  \wedge  \cI(\sfP, \cH_{\Psi,e} ) }\rt\},
\end{align*}
and on the other hand, from Lemma \ref{lem:FW_lb}(ii), we have
\begin{align*}
\inf \lt\{ \sfE_{\sfP}[T]:\; 
(T, D) \in \cE_{\Psi}(\gamma, \delta) \rt\}
\gtrsim \; \max_{e \in \cA(\sfP)} \lt\{ \frac{|\log\gamma|}{\cI(\sfP, \cH_0)  \wedge  \cI(\sfP, \cH_{\Psi,e} ) }\rt\},
\end{align*}
which completes the proof.
\end{proof}

\subsection{Proof of Theorem \ref{P0P1P2}}\label{pf:P0P1P2}

\begin{proof}
Fix $\sfP \in \cP_\Psi \setminus \cH_0$. Since \eqref{condP1} - \eqref{condP2} - \eqref{condP0} hold, from Theorem \ref{ub_DI}(ii),  as $\gamma, \delta \to 0$  we have
\begin{align*} 
&\inf \lt\{ \sfE_{\sfP}[T]:\; (T, D) \in \cE_{\Psi}(\gamma, \delta) \rt\}\\
&\leq \sfE_\sfP\lt[T^*_{fwer}\rt] \; \lesssim \max_{e \in \cA(\sfP)} \lt\{ \frac{|\log\gamma|}{\cI(\sfP, \cH_0)  \wedge  \cI(\sfP, \cH_{\Psi,e} ) }\rt\}  \bigvee \max_{e \notin \cA(\sfP)}  \lt\{ \frac{|\log \delta|}{\cI \lt(\sfP, \cG_{\Psi,e}\rt)}  \rt\},
\end{align*}
and as $\alpha,  \gamma, \delta \to 0$  we have
\begin{align*}
&\inf \lt\{ \sfE_{\sfP}[T]:\; (T, D) \in \cC_{\Psi}(\alpha, \beta, \gamma, \delta) \rt\}\\
&\leq \sfE_\sfP\lt[T^*\rt] \; \lesssim \; \frac{|\log \alpha|}{\cI\lt(\sfP, \cH_0\rt)} \bigvee \max_{e \in \cA(\sfP)}  \lt\{\frac{|\log \gamma|}{\cI \lt(\sfP, \cH_{\Psi, e}\rt) }   \rt\}  \bigvee \max_{e \notin \cA(\sfP)}  \lt\{ \frac{|\log \delta|}{\cI \lt(\sfP, \cG_{\Psi,e}\rt)}  \rt\}.
\end{align*}
Thus, the results follow by using Lemma \ref{lem:FW_lb}(ii) and  Lemma \ref{lem:DI_lb}(iii) respectively. 
\end{proof}

\subsection{Proof of Remark \ref{rem:P1orP2}}\label{pf:rem:P1orP2}

\begin{proof}
We only prove for the case of Theorem \ref{P1P2} since for the other case, it follows in a similar fashion. Without loss of generality, suppose that only  \eqref{condP1} holds and $\gamma, \delta \to 0$ such that $|\log \gamma| >> |\log \delta|$. Then
from Theorem \ref{ub_I} we have
\begin{align*}
\inf \lt\{ \sfE_{\sfP}[T]:\; (T, D) \in \cC_{\Psi}(\gamma, \delta) \rt\} \; \leq \; \sfE_\sfP\lt[T^*\rt] \; \lesssim \; \max_{e \in \cA(\sfP)} \lt\{ \frac{|\log\gamma|}{  \cI(\sfP, \cH_{\Psi,e}) }\rt\},
\end{align*}
and from Lemma \ref{lem:P1P2} we have
\begin{align*}
\inf \lt\{ \sfE_{\sfP}[T]:\; 
(T, D) \in \cC_{\Psi}(\gamma, \delta) \rt\} \; \gtrsim \; \max_{e \in \cA(\sfP)} \lt\{ \frac{|\log\gamma|}{  \cI(\sfP, \cH_{\Psi,e}) }\rt\},
\end{align*}
which completes the proof.
\end{proof}

\subsection{Proof of Remark \ref{rem:fwer}}\label{pf:rem:fwer}

\begin{proof}
We only prove for the case of Theorem \ref{P0} since for the other case, it follows in a similar fashion. Without loss of generality, suppose that \eqref{FWERcondP0} is satisfied. Then as $\gamma, \delta \to 0$, from  Theorem \ref{ub_DI}(ii) we have
\begin{align*}
&\inf \lt\{ \sfE_{\sfP}[T]:\; 
(T, D) \in \cE_{\Psi}(\gamma, \delta) \rt\}\\
&\leq \sfE_\sfP\lt[T^*_{fwer}\rt] \; \lesssim \min_{e \in \cA(\sfP)}  \lt\{ \frac{|\log \gamma|}{\cI\lt(\sfP, \cH_0; s_e \rt)} \rt\}  \bigvee \max_{e \notin \cA(\sfP)}  \lt\{ \frac{|\log \delta|}{\cI \lt(\sfP, \cG_{\Psi,e}; s'_e \rt)}  \rt\},
\end{align*}
and from Lemma \ref{lem:FW_lb}(ii) we have
\begin{align*}
\inf \lt\{ \sfE_{\sfP}[T]:\; 
(T, D) \in \cE_{\Psi}(\gamma, \delta) \rt\}
\gtrsim \; \frac{|\log\gamma|}{\cI(\sfP, \cH_0) } \bigvee \max_{e \notin \cA(\sfP)}  \lt\{ \frac{|\log\delta|}{ \cI(\sfP, \cG_{\Psi,e} ) }  \rt\}.
\end{align*}
As $|\log \gamma| >> |\log \delta|$ the second terms from both bounds vanish and since \eqref{condP0} holds, the result follows.
\end{proof}

The conditions \eqref{condP1}-\eqref{condP2}-\eqref{condPminusP0}-\eqref{condP0}  may still be satisfied even if  $s_e$ or $s'_e$  is a strict subset of $[K]$ for every $e \in \cK$. The following proposition provides sufficient conditions for this in the presence of some independence and is crucial in establishing results of asymptotic optimality in Appendix \ref{app:appl_dep_source} and \ref{app:appl_dep}.  

\begin{proposition}\label{prop:P1}
Suppose that \eqref{max_series}-\eqref{comp_conv_onesided} hold and $\sfP \in \cP_\Psi$.
\begin{enumerate}

\item[(i)] If $\sfP \notin \cH_0$, \eqref{condP1} holds when $s = s'_{\bar{e}}$ is  independent  of its complement  under $\sfP$ and  
\begin{align}\label{prop:P1_1}
 %\sfQ_1^{s_{e'}} \otimes \sfP^{s_{e'}^c} \in \cP_\Psi \setminus \cH_0,
 \sfQ_1^{s} \otimes \sfP^{s^c} \in \cP_\Psi \setminus \cH_0,
\end{align}
where  $\bar{e}$ and $\sfQ_1$  are such that
\begin{equation*}
\bar{e} \in \argmin_{e \in \cA(\sfP)} \; \cI \lt(\sfP, \cH_{\Psi, e}; s'_e \rt) \qquad \text{and} \qquad \sfQ_1 \in \argmin_{\sfQ \in \cH_{\Psi, \bar{e}}} \; \cI \lt(\sfP, \sfQ; s'_{\bar{e}} \rt).
\end{equation*}

\item[(ii)] If $\sfP \notin \cH_0$, \eqref{condP2} holds when $s = s'_{\bar{e}}$ is  independent  of its complement  under $\sfP$ and  
\begin{align}\label{prop:P2_1}
 %\sfQ_1^{s_{e'}} \otimes \sfP^{s_{e'}^c} \in \cP_\Psi \setminus \cH_0,
 \sfQ_1^{s} \otimes \sfP^{s^c} \in \cP_\Psi \setminus \cH_0,
\end{align}
where  $\bar{e}$ and $\sfQ_1$  are such that
\begin{equation*}
\bar{e} \in \argmin_{e \notin \cA(\sfP)} \; \cI \lt(\sfP, \cG_{\Psi, e}; s'_e \rt) \qquad \text{and} \qquad \sfQ_1 \in \argmin_{\sfQ \in \cG_{\Psi, \bar{e}}} \; \cI \lt(\sfP, \sfQ; s'_{\bar{e}} \rt).
\end{equation*}

\item[(iii)] If $\sfP \in \cH_0$, \eqref{condPminusP0} holds when $s = s_{\bar{e}}$ is independent of its complement under $\sfP$ and
\begin{align}\label{prop:PminusP0_1} 
\sfQ_1^{s} \otimes \sfP^{s^c} \in \cP_\Psi \setminus \cH_0,
\end{align}
where  $\bar{e}$ and $\sfQ_1$ are such that 
$$\bar{e} \in  \argmin_{e \in \cK} \; \cI \lt(\sfP, \cG_{\Psi, e}; s_e \rt)  \qquad \text{and} \qquad 
\sfQ_1 \in  \argmin_{\sfQ \in \cG_{\Psi, \bar{e}}} \; \cI \lt(\sfP, \sfQ; s_{\bar{e}} \rt),$$

%\begin{equation}
%\label{prop:PminusP0_2}
%$Q_1^{W_{e'}} \otimes \sfP^{W_{e'}^c} \in \cP(\Psi) \setminus \cP_0,$
%\end{equation}
%\end{proposition}\begin{proposition}\label{prop:P0}
%Suppose that %\eqref{comp_conv_onesided}-\eqref{SLLN} 
%\eqref{max_series}-\eqref{comp_conv_onesided} hold and consider the substreams $\{W_e : e \in \cE\}$.
\item[(iv)] If $\sfP \notin \cH_0$, \eqref{condP0} holds when there exists $e \in \cA(\sfP)$ such that  $s = s_{e}$ is independent of its complement under $\sfP$ and 
\begin{align} \label{prop:P0_1} 
\sfQ_1^{s} \otimes \sfP^{s^c} \in \cH_0, \quad \text{where} \quad 
\sfQ_1 \in  \argmin_{\sfQ \in \cH_0} \cI \lt(\sfP, \sfQ; s_{e} \rt).
\end{align}
\end{enumerate}
\end{proposition}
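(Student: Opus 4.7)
The plan is to handle all four parts by the same two-step template. First, the direction $\leq$ in each of the equalities \eqref{condP1}, \eqref{condP2}, \eqref{condPminusP0} (and its analogue $\geq$ for the $\max$ in \eqref{condP0}) is immediate from Lemma \ref{info_ineq1}, since $s'_e, s_e \subseteq [K]$ for every $e \in \cK$. Second, for the reverse direction I would explicitly construct a global distribution $\sfQ^{*}$ in the appropriate class and use the additive decomposition of information under independence (Lemma \ref{info_ineq2}) to show that $\cI(\sfP,\sfQ^{*})$ matches the relevant subsystem-based information number.

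Concretely, for part (i), let $\bar e$ and $\sfQ_1$ be as in the statement and set $s:=s'_{\bar e}$, $\sfQ^{*} := \sfQ_1^{s}\otimes\sfP^{s^c}$. By \eqref{prop:P1_1}, $\sfQ^{*}\in\cP_\Psi\setminus\cH_0$. Since $\bar e\subseteq s$, we have $(\sfQ^{*})^{\bar e}=\sfQ_1^{\bar e}\in\cH^{\bar e}$, hence $\bar e\notin\cA(\sfQ^{*})$, and therefore $\sfQ^{*}\in\cH_{\Psi,\bar e}$. The independence of $s$ and $s^c$ under $\sfP$ (and trivially under $\sfQ^{*}$ by construction) lets me apply Lemma \ref{info_ineq2} to get
\[
\cI(\sfP,\sfQ^{*}) \;=\; \cI(\sfP,\sfQ^{*};s)+\cI(\sfP,\sfQ^{*};s^c) \;=\; \cI(\sfP,\sfQ_1;s'_{\bar e})+0 \;=\; \min_{e\in\cA(\sfP)} \cI(\sfP,\cH_{\Psi,e};s'_e),
\]
where the second equality uses $(\sfQ^{*})^{s^c}=\sfP^{s^c}$ and the third uses the defining choice of $\bar e$ and $\sfQ_1$. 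Since $\sfQ^{*}\in\cH_{\Psi,\bar e}$, the left-hand side is at least $\min_{e\in\cA(\sfP)}\cI(\sfP,\cH_{\Psi,e})$, which combined with the monotonicity bound gives \eqref{condP1}.

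Parts (ii) and (iii) are carried out by the same template with only symbolic changes: replace $\cH_{\Psi,e}$ by $\cG_{\Psi,e}$ and $\cA(\sfP)$ by $\cK\setminus\cA(\sfP)$ in (ii), and take the minimum over $e\in\cK$ with $\cG_{\Psi,e}$ and $s_e$ in (iii); in each case the independence hypothesis on $s$ and the corresponding inclusion assumption (\eqref{prop:P2_1} or \eqref{prop:PminusP0_1}) ensure that $\sfQ^{*}:=\sfQ_1^{s}\otimes\sfP^{s^c}$ lies in the required family, and Lemma \ref{info_ineq2} once again reduces $\cI(\sfP,\sfQ^{*})$ to the subsystem quantity on $s$. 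For part (iv), the distinguished index $e\in\cA(\sfP)$ is fixed in the hypothesis; set $s:=s_e$, $\sfQ^{*}:=\sfQ_1^{s}\otimes\sfP^{s^c}$, and use \eqref{prop:P0_1} to conclude $\sfQ^{*}\in\cH_0$. Lemma \ref{info_ineq2} again gives $\cI(\sfP,\sfQ^{*})=\cI(\sfP,\sfQ_1;s_e)=\cI(\sfP,\cH_0;s_e)$, so $\cI(\sfP,\cH_0)\leq\cI(\sfP,\cH_0;s_e)$; the reverse is monotonicity, and since $\cI(\sfP,\cH_0;s_{e'})\leq\cI(\sfP,\cH_0)$ for every other $e'\in\cA(\sfP)$, the maximum in \eqref{condP0} is attained precisely at this $e$ and equals $\cI(\sfP,\cH_0)$.

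The main obstacle is not the arithmetic but the verification that each constructed $\sfQ^{*}$ actually lies in the intended hypothesis class (e.g., $\cH_{\Psi,\bar e}$ in (i), $\cG_{\Psi,\bar e}$ in (ii), or $\cH_0$ in (iv)): membership in $\cH^{\bar e}$ or $\cG^{\bar e}$ for the unit $\bar e$ is automatic from $\bar e\subseteq s$ and the choice of $\sfQ_1$, but membership in $\cP_\Psi$ (respectively $\cH_0$) imposes a genuine compatibility requirement between $\cA(\sfQ^{*})=\cA(\sfQ_1|_s)\cup(\cA(\sfP)\cap 2^{s^c})$ and the prior information $\Psi$. This is precisely what is encoded by the explicit assumptions \eqref{prop:P1_1}--\eqref{prop:P0_1}, and hence the argument goes through exactly under those hypotheses; no further distributional work beyond Lemmas \ref{info_ineq1}--\ref{info_ineq2} is needed.
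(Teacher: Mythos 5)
Your proof is correct and takes essentially the same route as the paper: the same construction $\sfQ^* = \sfQ_1^s \otimes \sfP^{s^c}$, the same verification that $\sfQ^* \in \cH_{\Psi,\bar e}$ (resp.\ $\cG_{\Psi,\bar e}$, $\cH_0$), and the same pair of lemmas — Lemma~\ref{info_ineq1} for monotonicity and Lemma~\ref{info_ineq2} for the additive decomposition under independence. The paper compresses the two-sided argument into a single chain of (in)equalities whose endpoints coincide, whereas you display the two directions separately, but this is purely a presentational difference.
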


\begin{proof}
The proofs are as follows.
\begin{enumerate}
\item[(i)]
\eqref{condP1} holds because of the following observation that,
\begin{align*}
\min_{e \in \cA(\sfP)} \cI \lt( \sfP , \cH_{\Psi,e} \rt) &\leq \cI\lt(\sfP, \sfQ_1^s \otimes \sfP^{s^c}\rt) = \cI\lt(\sfP^s \otimes \sfP^{s^c}, \sfQ_1^{s} \otimes \sfP^{s^c}\rt)\\
&= \cI\lt(\sfP^s, \sfQ_1^s\rt) = \cI(\sfP, \sfQ_1; s'_{\bar{e}})\\
&= \cI(\sfP, \cH_{\Psi, \bar{e}}; s'_{\bar{e}}) = \min_{e \in \cA(\sfP)} \cI(\sfP, \cH_{\Psi, e}; s'_e) \leq \min_{e \in \cA(\sfP)} \cI(\sfP, \cH_{\Psi, e}).
\end{align*}
In the above, the first inequality holds because $\bar{e} \in \cA(\sfP)$ and $\sfQ_1 \in \cH_{\Psi, \bar{e}}$, which combined with \eqref{prop:P1_1}, implies that
$$\sfQ_1^{s} \otimes \sfP^{s^c} \in \cH_{\Psi, \bar{e}} \subseteq \cup_{e \in \cA(\sfP)} \cH_{\Psi, e}.$$ 
%and the first and fourth equalities follow from \eqref{prop:P1_1}. Moreover, 
The second equality follows from Lemma \ref{info_ineq2} under the assumptions \eqref{max_series}-\eqref{comp_conv_onesided}, and the last inequality follows from Lemma \ref{info_ineq1}.

\item[(ii)]
\eqref{condP2} holds because of the following observation that,
\begin{align*}
\min_{e \notin \cA(\sfP)} \cI \lt( \sfP , \cG_{\Psi,e} \rt) &\leq \cI\lt(\sfP, \sfQ_1^s \otimes \sfP^{s^c}\rt) = \cI\lt(\sfP^s \otimes \sfP^{s^c}, \sfQ_1^{s} \otimes \sfP^{s^c}\rt)\\
&= \cI\lt(\sfP^s, \sfQ_1^s\rt) = \cI(\sfP, \sfQ_1; s'_{\bar{e}})\\
&= \cI(\sfP, \cG_{\Psi, \bar{e}}; s'_{\bar{e}}) = \min_{e \notin \cA(\sfP)} \cI(\sfP, \cG_{\Psi, e}; s'_e) \leq \min_{e \notin \cA(\sfP)} \cI(\sfP, \cG_{\Psi, e}).
\end{align*}
In the above, the first inequality holds because $\bar{e} \notin \cA(\sfP)$ and $\sfQ_1 \in \cG_{\Psi, \bar{e}}$, which combined with \eqref{prop:P2_1}, implies that
$$\sfQ_1^{s} \otimes \sfP^{s^c} \in \cG_{\Psi, \bar{e}} \subseteq \cup_{e \notin \cA(\sfP)} \cG_{\Psi, e}.$$ 
%and the first and fourth equalities follow from \eqref{prop:P1_1}. Moreover, 
The second equality follows from Lemma \ref{info_ineq2} under the assumptions \eqref{max_series}-\eqref{comp_conv_onesided}, and the last inequality follows from Lemma \ref{info_ineq1}.

\item[(iii)]
\eqref{condPminusP0} holds because of the following observation that,
\begin{align*}
\min_{e \in \cK} \cI \lt( \sfP , \cG_{\Psi,e} \rt) &\leq \cI\lt(\sfP, \sfQ_1^s \otimes \sfP^{s^c}\rt) = \cI\lt(\sfP^s \otimes \sfP^{s^c}, \sfQ_1^{s} \otimes \sfP^{s^c}\rt)\\
&= \cI\lt(\sfP^s, \sfQ_1^s\rt) = \cI(\sfP, \sfQ_1; s_{\bar{e}})\\
&= \cI(\sfP, \cG_{\Psi, \bar{e}}; s_{\bar{e}}) = \min_{e \in \cK} \cI(\sfP, \cG_{\Psi, e}; s_e) \leq \min_{e \in \cK} \cI(\sfP, \cG_{\Psi, e}).
\end{align*}
In the above, the first inequality holds because $\sfQ_1 \in \cG_{\Psi, \bar{e}}$, which combined with \eqref{prop:PminusP0_1}, implies that
$$\sfQ_1^{s} \otimes \sfP^{s^c} \in \cG_{\Psi, \bar{e}} \subseteq \cup_{e \in \cK} \cG_{\Psi, e}.$$ 
%and the first and fourth equalities follow from \eqref{prop:P1_1}. Moreover, 
The second equality follows from Lemma \ref{info_ineq2} under the assumptions \eqref{max_series}-\eqref{comp_conv_onesided}, and the last inequality follows from Lemma \ref{info_ineq1}.

\item[(iv)]
\eqref{condP0} holds because of the following observation that,
\begin{align*}
\cI(\sfP, \cH_0) &\leq \cI\lt(\sfP, \sfQ_1^{s} \otimes \sfP^{s^c}\rt) = \cI\lt(\sfP^s \otimes \sfP^{s^c}, \sfQ_1^{s} \otimes \sfP^{s^c}\rt)\\
&= \cI\lt(\sfP^s, \sfQ_1^s\rt) = \cI(\sfP, \sfQ_1; s_e) = \cI(\sfP, \cH_0; s_e) \leq \cI(\sfP, \cH_0).
\end{align*}
In the above, the first inequality, and the last equality hold because of \eqref{prop:P0_1}. The second equality follows from Lemma \ref{info_ineq2} under the assumptions \eqref{max_series}-\eqref{comp_conv_onesided} and the last inequality follows from Lemma \ref{info_ineq1}.
\end{enumerate}
\end{proof}

\section{Detection and isolation of anomalous sources: the case of independence}\label{app:appl_indep}

%\subsection{Proof of Proposition \ref{propo:sufficient}}\label{pf:propo:sufficient}
\subsection{An important proposition}
\begin{proposition} \label{propo:sufficient}
If  for every  $k \in [K]$, $\sfP^k \in \cG^k$, $\sfQ^k \in \cH^k$  there  are positive numbers 
$$ \cI (\sfP^k, \sfQ^k ) \quad \text{ and } \quad   \cI (\sfQ^k, \sfP^k ),$$ 
such that  as $n \to \infty$ we have 
\begin{align}\label{max_series_ind}
\begin{split}
&\sfP^k\lt(\max_{1 \leq m \leq n} Z_m (\sfP^k, \sfQ^k) \geq n \rho  \rt) \to 0 \qquad \forall \, \rho> \cI (\sfP^k, \sfQ^k), \\ %\label{max_series1}
&\sfQ^k\lt(\max_{1 \leq m \leq n} Z_m (\sfQ^k, \sfP^k ) \geq  n \rho \rt) \to 0
 \qquad \forall \, \rho > \cI (\sfQ^k, \sfP^k),
\end{split} 
\end{align}
and 
\begin{align}\label{comp_conv_onesided_ind}
\begin{split}
& \sum_{n = 1}^{\infty}\sfP^k\lt( Z_n (\sfP^k, \sfQ^k) < n \rho \rt) < \infty \qquad \forall \, \rho< \cI (\sfP^k, \sfQ^k ), \\ %\label{comp_conv_onesided1}
&\sum_{n = 1}^{\infty}\sfQ^k\lt( Z_n (\sfQ^k, \sfP^k )  < n \rho \rt) < \infty \qquad \forall \, \rho< \cI (\sfQ^k, \sfP^k),  %\label{comp_conv_onesided2}
\end{split}
\end{align}
then  \eqref{max_series} and \eqref{comp_conv_onesided} hold,  for any $s \subseteq [K]$, with
$$\cI(\sfP^s, \sfQ^s) = \sum_{k\in s}\cI(\sfP^k, \sfQ^k) \qquad \text{and} \qquad \cI(\sfQ^s, \sfP^s) = \sum_{k \in  s}\cI(\sfQ^k, \sfP^k).$$
\end{proposition}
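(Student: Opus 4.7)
My plan is to exploit the product structure of $\sfP$ and $\sfQ$ coming from the independence assumption \eqref{product} that governs Subsection \ref{subsec:appl_anomalous_indep} in which this proposition lives. Under \eqref{product}, for every $\sfP \in \cP_\Psi$ and every $s \subseteq [K]$ we have $\sfP^s = \bigotimes_{k \in s} \sfP^k$, and the analogous factorisation holds for $\sfQ$. Consequently the Radon--Nikodym derivative factorises, so
\[
Z_n(\sfP^s, \sfQ^s) \;=\; \sum_{k \in s} Z_n(\sfP^k, \sfQ^k), \qquad n \in \bN,
\]
and the summands are mutually independent processes under both $\sfP$ and $\sfQ$. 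This reduces the problem to combining one-source information across independent coordinates.

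The main engine is Lemma~\ref{info_ineq2}, which says: if conditions \eqref{max_series}--\eqref{comp_conv_onesided} hold for two disjoint subsystems $s_1, s_2$ that are independent under both $\sfP$ and $\sfQ$, then they hold for $s_1 \cup s_2$ with $\cI(\sfP^{s_1 \cup s_2}, \sfQ^{s_1 \cup s_2}) = \cI(\sfP^{s_1}, \sfQ^{s_1}) + \cI(\sfP^{s_2}, \sfQ^{s_2})$, and similarly with the roles of $\sfP$ and $\sfQ$ swapped. So the plan is: first, observe that for singletons $s = \{k\}$, the hypothesised conditions \eqref{max_series_ind}--\eqref{comp_conv_onesided_ind} \emph{are} the conditions \eqref{max_series}--\eqref{comp_conv_onesided} specialised to $s = \{k\}$, with the numbers $\cI(\sfP^k, \sfQ^k)$ and $\cI(\sfQ^k, \sfP^k)$ playing the role of the limits in \eqref{LLN}. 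Second, induct on $|s|$: given the result for some subset $s' \subsetneq s$, pick any $k \in s \setminus s'$ and apply Lemma~\ref{info_ineq2} to $s_1 = s'$ and $s_2 = \{k\}$. The required independence of $s_1$ and $s_2$ under both $\sfP$ and $\sfQ$ is automatic from \eqref{product}. The additive identity from Lemma~\ref{info_ineq2} then telescopes across $|s|$ steps to yield
\[
\cI(\sfP^s, \sfQ^s) = \sum_{k \in s} \cI(\sfP^k, \sfQ^k), \qquad \cI(\sfQ^s, \sfP^s) = \sum_{k \in s} \cI(\sfQ^k, \sfP^k).
\]

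There is no real obstacle here; the work is entirely packaged into Lemma~\ref{info_ineq2}, whose proof already handles the hardest technical step (bounding $\sfP(\max_{k \leq n}[Z_k(\sfP^{s_1}, \sfQ^{s_1}) + Z_k(\sfP^{s_2}, \sfQ^{s_2})] \geq n\rho)$ by splitting the threshold $\rho = \rho_1 + \rho_2$ with $\rho_i$ slightly exceeding $\cI(\sfP^{s_i}, \sfQ^{s_i})$ and using the subadditivity of running maxima of sums together with Boole's inequality; the one-sided complete convergence is handled analogously via Boole's inequality on $\{Z_n < n\rho\} \subseteq \{Z_n(\sfP^{s_1}, \sfQ^{s_1}) < n\rho_1\} \cup \{Z_n(\sfP^{s_2}, \sfQ^{s_2}) < n\rho_2\}$). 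Thus the whole proof amounts to: verify the base case (a tautology), invoke Lemma~\ref{info_ineq2} inductively, and conclude.
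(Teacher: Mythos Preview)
Your proposal is correct and takes essentially the same approach as the paper: the paper's proof simply fixes $e \in \cK$, $\sfP \in \cG_{\Psi,e}$, $\sfQ \in \cH_{\Psi,e} \cup \cH_0$, $e \subseteq s \subseteq [K]$, notes that the sources in $s$ are independent under both $\sfP$ and $\sfQ$, and then invokes Lemma~\ref{info_ineq2} to obtain \eqref{max_series}--\eqref{comp_conv_onesided} together with the additive formula. Your version makes the (implicit) induction on $|s|$ explicit, but the engine and the logic are identical.
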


\begin{proof}
Fix $e \in \cK$,  $\sfP \in \cG_{\Psi,e}$, $\sfQ \in \cH_{\Psi,e} \cup \cH_0$,  $e  \subseteq s \subseteq  [K].$
Since the sources belonging to $s$ are independent both under $\sfP$ and $\sfQ$, and \eqref{max_series_ind}-\eqref{comp_conv_onesided_ind} hold for every source $k \in [K]$, by an application of Lemma \ref{info_ineq2}, we observe that conditions \eqref{max_series}-\eqref{comp_conv_onesided} hold for $\sfP$, $\sfQ$ and $s$.
Furthermore, we apply the same lemma once again to have
\begin{align*}
\cI(\sfP^s, \sfQ^s) = \sum_{k\in s}\cI(\sfP^k, \sfQ^k) \qquad \text{and} \qquad \cI(\sfQ^s, \sfP^s) = \sum_{k \in  s}\cI(\sfQ^k, \sfP^k),
\end{align*}
which completes the proof.
\end{proof}

In order to prove the next results we introduce some further notations.
For every $k \in [K]$, we order the set of likelihood ratio statistics excluding $\Lambda_k(n)$, i.e., $\{\Lambda_{j}(n) : j \neq k\}$ and denote them by
$$\Lambda_{-k, (1)}(n) \geq \dots \geq \Lambda_{-k, (K-1)}(n),$$
along with the corresponding indices $i_{-k, 1}(n), \dots, i_{-k, K-1}(n)$, i.e.,
$$\Lambda_{-k, (j)}(n) = \Lambda_{i_{-k, j}(n)}(n),\ \text{for every}\ j \in [K-1].$$
Furthermore, we denote by $p_k(n)$ the number of above likelihood ratio statistics that are greater than 1. Formally,
$$\Lambda_{-k, (1)}(n) \geq \dots \geq \Lambda_{-k, (p_k(n))} > 1 \geq \Lambda_{-k, (p_k(n)+1)} \geq \dots \geq \Lambda_{-k, (K-1)}(n).$$
Then it is not difficult to observe that
\begin{equation}\label{p_k(n)}
p_k(n) = 
\begin{cases}
p(n) - 1\ &\text{if}\ k \in \{i_1(n), \dots, i_{p(n)}(n)\}\\
p(n) &\text{otherwise}
\end{cases}.
\end{equation}
Furthermore, when the prior information is $\Psi_{l, u}$, with $0 \leq l \leq u \leq K$, we define the following two quantities for every $k \in [K]$: $$m^1_k(n) := \text{median}\{(l-1) \vee 0, p_k(n), u-1\} \quad \text{and} \quad m^0_k(n) := \text{median}\{l \vee 1, p_k(n), u\}.$$
%In case, $p_e(n) = 0$ for some $e \in \cK$, we simply define $\Lambda_{-e, (0)}(n) \equiv 1$.

For the rest of this section we consider $\Psi = \Psi_{l, u}$, where $l \leq u$.
\subsection{Some important lemmas}

In this section we present some important lemmas which plays a crucial role to simplify the detection and isolation statistics in the proposed procedures so that one can obtain them in a more implementable version.
\begin{lemma}\label{max_char_gap_int}
Suppose $0 \leq l \leq u \leq K$. Then for every $k \in [K]$, $n \in \bN$, and any arbitrary reference distribution $\sfP_0 \in \cH_0$, we have 
\begin{equation}\label{max_char_gap_int1}
\frac{ \max  \lt\{ \Lambda_n(\sfP, \sfP_0) : \sfP \in \cG_{\Psi, k}\rt\} }
 {\max \lt\{ \Lambda_n(\sfP, \sfP_0): \sfP \in \cH_0  \rt\} } = \Lambda_k(n)\prod_{j = 1}^{m^1_k(n)}\Lambda_{-e, (j)}(n),
\end{equation}
and 
\begin{equation}\label{max_char_gap_int2}
\frac{ \max  \lt\{ \Lambda_n(\sfP, \sfP_0) : \sfP \in \cH_{\Psi, k} \rt\} }
 {\max \lt\{ \Lambda_n(\sfP, \sfP_0): \sfP \in \cH_0  \rt\} } = \prod_{j = 1}^{m^0_k(n)}\Lambda_{-e, (j)}(n).
\end{equation}
\end{lemma}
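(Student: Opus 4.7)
The plan is to exploit independence to reduce each of the two ratios to a maximization of a product of local statistics $\Lambda_k(n)$ over subsets of $[K]$ of constrained cardinality, and then to identify the optimizing subset combinatorially via the ordered statistics.

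First I would observe that since the data sources are independent under every $\sfP \in \cP_\Psi$, any such $\sfP$ factorizes as $\sfP = \sfP^1 \otimes \cdots \otimes \sfP^K$, and the global reference $\sfP_0 \in \cH_0$ as $\sfP_0 = \sfP_0^1 \otimes \cdots \otimes \sfP_0^K$, so $\Lambda_n(\sfP, \sfP_0) = \prod_{k=1}^K \Lambda_n(\sfP^k, \sfP_0^k)$. In particular, $\max_{\sfP \in \cH_0}\Lambda_n(\sfP, \sfP_0) = \prod_{k=1}^K N_k(n)$, where $N_k(n) := \max_{\sfP^k \in \cH^k}\Lambda_n(\sfP^k, \sfP_0^k)$, and writing also $M_k(n) := \max_{\sfP^k \in \cG^k}\Lambda_n(\sfP^k, \sfP_0^k)$ one has $\Lambda_k(n) = M_k(n)/N_k(n)$.

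Next I would use the structure of $\Psi=\Psi_{l,u}$ to rewrite the numerators. For $\cG_{\Psi,k}$, the feasible signal sets $\cA$ are precisely those containing $k$ with $l\vee 1 \leq |\cA|\leq u$; maximizing the factorized likelihood over such $\cA$ and over the marginals yields
\begin{equation*}
\frac{\max\{\Lambda_n(\sfP,\sfP_0):\sfP\in\cG_{\Psi,k}\}}{\max\{\Lambda_n(\sfP,\sfP_0):\sfP\in\cH_0\}}
= \Lambda_k(n)\cdot \max_{\substack{B\subseteq[K]\setminus\{k\} \\ (l-1)\vee 0\, \leq\, |B|\, \leq\, u-1}}\prod_{j\in B}\Lambda_j(n).
\end{equation*}
An analogous reformulation works for $\cH_{\Psi,k}$, where the feasible $\cA$ exclude $k$ and satisfy $l\vee 1\leq |\cA|\leq u$, yielding $\max$ of $\prod_{j\in B}\Lambda_j(n)$ over $B\subseteq[K]\setminus\{k\}$ with $l\vee 1\leq |B|\leq u$.

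Then I would identify the optimizer. Since the product $\prod_{j\in B}\Lambda_j(n)$ with $B\subseteq [K]\setminus\{k\}$ of prescribed size $q$ is maximized by taking the top $q$ indices, namely $B=\{i_{-k,1}(n),\dots,i_{-k,q}(n)\}$, the optimization reduces to choosing the best admissible cardinality $q$. The function $q\mapsto \prod_{j=1}^{q}\Lambda_{-k,(j)}(n)$ is increasing for $q\leq p_k(n)$ (since $\Lambda_{-k,(j)}(n)>1$ there) and decreasing for $q>p_k(n)$. Hence the optimal $q$ is $p_k(n)$ clipped to the admissible range: for \eqref{max_char_gap_int1} the range is $[(l-1)\vee 0,\,u-1]$ so the optimum is $\mathrm{median}\{(l-1)\vee 0,\,p_k(n),\,u-1\}=m_k^1(n)$, and for \eqref{max_char_gap_int2} the range is $[l\vee 1,\,u]$ so the optimum is $\mathrm{median}\{l\vee 1,\,p_k(n),\,u\}=m_k^0(n)$.

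The main obstacle will be bookkeeping the boundary cases, in particular $l=0$ (where $\cG_{\Psi,k}$ still forces $|\cA|\geq 1$ via $k\in\cA$, but $\cH_{\Psi,k}$ only forces $|\cA|\geq 1$ via $\sfP\notin\cH_0$, explaining the asymmetry between $(l-1)\vee 0$ and $l\vee 1$) and the case $u=K$ (where the upper constraint becomes inactive for $\cG_{\Psi,k}$ as $u-1=K-1$). Once the admissible ranges are pinned down correctly, the unimodality-in-$q$ argument cleanly delivers the median expressions.
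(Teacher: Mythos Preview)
Your proposal is correct and follows essentially the same approach as the paper: factorize the global likelihood ratio via independence, reduce the ratio to a maximum of $\prod_{j\in A}\Lambda_j(n)$ over admissible signal sets $A$, and then identify the optimizer as the top-$q$ ordered local statistics with $q$ equal to $p_k(n)$ clipped to the admissible cardinality range. Your unimodality-in-$q$ formulation and explicit treatment of the $l=0$ asymmetry are slightly more polished than the paper's casewise enumeration, but the argument is the same.
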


\begin{proof}
We only prove \eqref{max_char_gap_int1} since \eqref{max_char_gap_int2} can be proved in a similar way. Fix any arbitrary reference distribution $\sfP_0 \in \cH_0$. For every $j \in [K]$ define
$$\Lambda^1_j(n) := \max\{\Lambda_n(\sfP, \sfP_0^j) : \sfP \in \cG^j\} \quad \text{and} \quad \Lambda^0_j(n) := \max\{\Lambda_n(\sfP, \sfP_0^j) : \sfP \in \cH^j\}.$$
Note that, for every $j \in [K]$,
\begin{equation}\label{lam1/lam0}
\Lambda_j(n) = \Lambda^1_j(n)/\Lambda^0_j(n).
\end{equation}
Furthermore, for any $\sfP \neq \sfP_0$, we have the following decomposition due to independence across the streams
\begin{equation}\label{decomp-LR}
\Lambda_n(\sfP, \sfP_0) = \prod_{j = 1}^K \Lambda_n(\sfP^j, \sfP_0^j) = \prod_{\{j\} \in \cA(\sfP)} \Lambda_n(\sfP^j, \sfP_0^j) \prod_{\{j\} \notin \cA(\sfP)} \Lambda_n(\sfP^j, \sfP_0^j).
\end{equation}
Fix any arbitrary $k \in [K]$ and a subset of units $A \in \Psi_{l, u}$ such that $\{k\} \in A$. Then from \eqref{decomp-LR} one can obtain that
\begin{align*}
&\max\{\Lambda_n(\sfP, \sfP_0) : \sfP \in \cG_{\Psi, k}, \;\; \cA(\sfP) = A\} = \prod_{\{j\} \in A} \Lambda^1_{j}(n) \prod_{\{j\} \notin A} \Lambda^0_{j}(n), \;\; \text{and}\\
&\max\{\Lambda_n(\sfP, \sfP_0) : \sfP \in \cH_0\} = \prod_{j = 1}^K \Lambda_{j}^0(n).
\end{align*}
Due to \eqref{lam1/lam0}, the above two expressions lead to
$$\frac{\max\{\Lambda_n(\sfP, \sfP_0) : \sfP \in \cG_{\Psi, k}, \;\; \cA(\sfP) = A\}}{\max\{\Lambda_n(\sfP, \sfP_0) : \sfP \in \cH_0\}} = \prod_{\{j\} \in A} \Lambda_{j}(n).$$
Now we further maximize the above quantity over all possible $A \in \Psi_{l, u}$.
Since $\{k\} \in A$ and $l \leq |A| \leq u$, $\argmax_{A \in \Psi_{l, u}} \prod_{\{j\} \in A} \Lambda_j(n)$ consists of sources with indices
\begin{align*}
\{k, i_{-k, 1}(n), \dots, i_{-k, p_k(n)}(n)\} \quad &\text{if} \;\;\; l-1 \vee 0 \leq p_k(n) \leq u-1,\\
\{k, i_{-k, 1}(n), \dots, i_{-k, l-1}(n)\} \quad &\text{if} \;\;\; p_k(n) \leq l - 1 \vee 0, \quad \text{or}\\ 
\{k, i_{-k, 1}(n), \dots, i_{-k, u-1}(n)\} \quad& \text{if} \;\;\; p_k(n) \geq u-1.
\end{align*}
This complete the proof.
\end{proof}

In the next lemma, we provide a simplification for the maximum of the detection statistics when $l = 0$ and $u \leq K$.

\begin{lemma}\label{max_det_char} 
Consider $0 = l < u \leq K$ and $s_k = [K]$ for every $k \in [K]$. Then for every $k \in [K]$ and $n \in \bN$, we have
\begin{equation}\label{max_det_char1}
\max_{k \in [K]} \; \Lambda_{k, \rm det}(n) = \prod_{j = 1}^{m(n)} \Lambda_{(j)}(n),
\end{equation}
where $m(n) := median\{1, p(n), u\}$. Furthermore, when $s_k = [K]$ for every $k \in [K]$ or $s_k = \{k\}$ for every $k \in [K]$, then
$$\max_{k \in [K]} \; \Lambda_{k, \rm det}(n) > 1 \quad \text{if and only if} \quad p(n) \geq 1.$$
\end{lemma}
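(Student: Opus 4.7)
The plan is to leverage Lemma \ref{max_char_gap_int}, which already expresses the numerator and denominator of a GLR-type statistic in closed form. Since $s_k=[K]$ for every $k$, the ratio defining $\Lambda_{k,\rm det}(n)$ is precisely the left-hand side of \eqref{max_char_gap_int1}, so I would begin by writing $\Lambda_{k,\rm det}(n)=\Lambda_k(n)\prod_{j=1}^{m_k^1(n)}\Lambda_{-k,(j)}(n)$. Because $l=0$, the definition of $m_k^1(n)$ simplifies to $m_k^1(n)=p_k(n)\wedge(u-1)$. The RHS of \eqref{max_det_char1} likewise simplifies: when $p(n)=0$ we get $m(n)=1$, when $1\leq p(n)\leq u$ we get $m(n)=p(n)$, and when $p(n)>u$ we get $m(n)=u$. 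The natural strategy is therefore a case analysis on $p(n)$, identifying in each case a ``witness'' index $k^*$ that achieves the claimed value, and showing that no other $k$ does better.

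In Case 1 ($p(n)=0$), $p_k(n)=0$ and hence $m_k^1(n)=0$ for every $k$, so $\Lambda_{k,\rm det}(n)=\Lambda_k(n)$; the maximum is $\Lambda_{(1)}(n)$, matching the claim. In Case 2 ($1\leq p(n)\leq u$), for $k\in\{i_1(n),\dots,i_{p(n)}(n)\}$ relation \eqref{p_k(n)} gives $p_k(n)=p(n)-1$, hence $m_k^1(n)=p(n)-1$, and the set of indices $\{i_{-k,1}(n),\dots,i_{-k,p(n)-1}(n)\}$ coincides with $\{i_1(n),\dots,i_{p(n)}(n)\}\setminus\{k\}$; therefore $\Lambda_{k,\rm det}(n)=\prod_{j=1}^{p(n)}\Lambda_{(j)}(n)$. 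For $k$ outside this set, $\Lambda_k(n)\leq 1$ while the product of the remaining top local statistics can only be smaller, yielding the upper bound. Case 3 ($p(n)>u$) proceeds analogously: for $k\in\{i_1(n),\dots,i_u(n)\}$ we get $m_k^1(n)=u-1$ and the product realizes $\prod_{j=1}^{u}\Lambda_{(j)}(n)$, while for $k=i_j(n)$ with $j>u$ the value $\Lambda_k(n)\prod_{j=1}^{u-1}\Lambda_{(j)}(n)\leq \Lambda_{(u)}(n)\prod_{j=1}^{u-1}\Lambda_{(j)}(n)$ does not exceed this.

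For the second claim, I would split on the two choices of $s_k$. When $s_k=\{k\}$ for every $k$, $\Lambda_{k,\rm det}(n)$ reduces to $\Lambda_k(n)$, so $\max_k\Lambda_{k,\rm det}(n)=\Lambda_{(1)}(n)$, which exceeds $1$ iff $p(n)\geq 1$ by the definition of $p(n)$. When $s_k=[K]$ for every $k$, invoke the first part: if $p(n)\geq 1$ then $\max_k\Lambda_{k,\rm det}(n)\geq\Lambda_{(1)}(n)>1$, while if $p(n)=0$ then Case 1 gives $\max_k\Lambda_{k,\rm det}(n)=\Lambda_{(1)}(n)\leq 1$.

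The main obstacle is bookkeeping: because $m_k^1(n)$ depends on whether $k$ belongs to the top-ranked indices (via the dependence of $p_k(n)$ on $k$), the effective ``set of indices entering the product'' changes with $k$, and the upper-bound step in Cases 2 and 3 for $k$ outside the top requires carefully matching the products $\prod\Lambda_{-k,(j)}(n)$ with the ordered sequence $\Lambda_{(1)}(n)\geq\cdots\geq\Lambda_{(K)}(n)$. No deep argument is needed beyond Lemma \ref{max_char_gap_int} and the definition of the ordered local statistics, but writing out the indexing cleanly is the only nontrivial part.
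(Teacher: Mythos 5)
Your proposal is correct and follows essentially the same route as the paper's proof: invoke Lemma \ref{max_char_gap_int} to write $\Lambda_{k,\rm det}(n)=\Lambda_k(n)\prod_{j=1}^{m_k^1(n)}\Lambda_{-k,(j)}(n)$, use \eqref{p_k(n)} to split by whether $k$ is among the top-ranked indices, and analyze the cases $p(n)=0$, $1\leq p(n)<u$, $p(n)\geq u$. The only cosmetic difference is that the paper explicitly treats only $p(n)<u$ (remarking that $p(n)\geq u$ is similar), while you carry out the $p(n)>u$ case as well; your bookkeeping for the indices $\{i_{-k,j}(n)\}$ matches what the paper uses.
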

\begin{proof}
Let $s_k = [K]$ for every $k \in [K]$. For proving \eqref{max_det_char1}, we only consider the case when $p(n) < u$ since for the case when $p(n) \geq u$, it can be proved in a similar way. Now, in this case, 
$$m(n) = \begin{cases}
p(n)\;\; &\text{if} \;\; p(n) \geq 1\\
1\;\; &\text{if} \;\; p(n) = 0
\end{cases}.$$ 

When $p(n) \geq 1$, for every $k \in \{i_1(n), \dots, i_{p(n)}(n)\}$, we have $m_k^1(n) = p_k(n) = p(n) - 1$, and therefore, from \eqref{max_char_gap_int1} in Lemma \ref{max_char_gap_int},
$$\Lambda_{k, \rm det}(n) = \Lambda_k(n)\prod_{j = 1}^{p(n) - 1}\Lambda_{-e, (j)}(n) = \prod_{j = 1}^{p(n)} \Lambda_{(j)}(n).$$
On the other hand, when $k \notin \{i_1(n), \dots, i_{p(n)}(n)\}$, we have $m_k^1(n) = p_k(n) = p(n)$, and therefore, again from \eqref{max_char_gap_int1} in Lemma \ref{max_char_gap_int},
$$\Lambda_{k, \rm det}(n) =  \Lambda_k(n)\prod_{j = 1}^{p(n)}\Lambda_{-e, (j)}(n) = \Lambda_k(n)\prod_{j = 1}^{p(n)} \Lambda_{(j)}(n) \leq \prod_{j = 1}^{p(n)} \Lambda_{(j)}(n),$$
where the inequality follows from the fact that $\Lambda_k(n) \leq 1$. Thus, 
$$\max_{k \in [K]} \; \Lambda_{k, \rm det}(n) = \prod_{j = 1}^{p(n)} \Lambda_{(j)}(n),$$

When $p(n) = 0$, we have $m_k^1(n) = p_k(n) = 0$ for every $k \in [K]$, and therefore, using \eqref{max_char_gap_int1} in Lemma \ref{max_char_gap_int},
$$\Lambda_{k, \rm det}(n) = \Lambda_k(n).$$
This implies $\max_{k \in [K]} \; \Lambda_{k, \rm det}(n) = \Lambda_{(1)}(n),$
and this completes the first part of the proof.

Now we prove the second part. Suppose that $s_k = [K]$ for every $k \in [K]$. Then from \eqref{max_det_char1} that if $p(n) \geq 1$, we have
\begin{equation*}
\max_{k \in [K]} \; \Lambda_{k, \rm det}(n) = \prod_{j = 1}^{p(n) \wedge u} \Lambda_{(j)}(n) > 1,
\end{equation*}
where the inequality follows
since $\Lambda_{(j)}(n) > 1$ for every $j \in [p(n) \wedge u]$. On the other hand, if $p(n) = 0$, we have $\Lambda_{(1)}(n) < 1$ and $m(n) = 0$. Therefore,
from \eqref{max_det_char1},
$$\max_{k \in [K]} \; \Lambda_{k, \rm det}(n) = \Lambda_{(1)}(n) < 1.$$
Now suppose that $s_k = \{k\}$ for every $k \in [K]$. Then $\Lambda_{k, \rm det}(n) = \Lambda_k(n)$ for every $k \in [K]$, and thus, 
$$\max_{k \in [K]} \; \Lambda_{k, \rm det}(n) = \Lambda_{(1)}(n),$$
which is bigger than $1$ if and only if $p(n) \geq 1$.
The proof is complete.
\end{proof}

In the next lemmas, we provide simplification for the isolation statistics depending on the values of $l$, $u$ and $p(n)$.
\begin{lemma}\label{iso_char_gap_int1}
Consider $l \geq 0$ and $s'_k = [K]$ for every $k \in [K]$. Then we have $p(n) < l \vee 1$ if and only if for every $k \in [K]$,
$$\Lambda_{k, \rm iso}(n) = \frac{\Lambda_k(n)}{\Lambda_{-k, (l \vee 1)}(n)},$$
and in this case $D_{\rm iso}(n)$ consists of sources with indices $\{i_1(n), \dots, i_{l \vee 1}(n)\}$.
\end{lemma}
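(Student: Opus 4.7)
The plan is to apply Lemma \ref{max_char_gap_int} with $s'_k = [K]$ for every $k \in [K]$, which expresses
\[
\Lambda_{k, \rm iso}(n) \;=\; \frac{\Lambda_k(n) \prod_{j=1}^{m^1_k(n)} \Lambda_{-k, (j)}(n)}{\prod_{j=1}^{m^0_k(n)} \Lambda_{-k, (j)}(n)},
\]
where $m^1_k(n) = \mathrm{median}\{(l-1) \vee 0,\, p_k(n),\, u-1\}$ and $m^0_k(n) = \mathrm{median}\{l \vee 1,\, p_k(n),\, u\}$. The identity $(l-1) \vee 0 = (l \vee 1) - 1$ will let us treat the cases $l = 0$ and $l \geq 1$ uniformly.

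For the forward direction, I would assume $p(n) < l \vee 1$. Since \eqref{p_k(n)} gives $p_k(n) \in \{p(n), p(n)-1\}$ for every $k \in [K]$, the assumption forces $p_k(n) \leq (l \vee 1) - 1$ uniformly in $k$. Combined with $l \leq u$ (so $(l \vee 1) - 1 \leq u - 1$ and $l \vee 1 \leq u$), this yields $m^1_k(n) = (l \vee 1) - 1$ and $m^0_k(n) = l \vee 1$. Substitution into the formula above collapses the product, giving
\[
\Lambda_{k, \rm iso}(n) \;=\; \frac{\Lambda_k(n)}{\Lambda_{-k, (l \vee 1)}(n)} \qquad \text{for every } k \in [K].
\]
From this expression, $\Lambda_{k, \rm iso}(n) > 1$ if and only if $\Lambda_k(n) > \Lambda_{-k, (l \vee 1)}(n)$, which by the definition of the ordered statistics and their indices is equivalent to $k \in \{i_1(n), \ldots, i_{l \vee 1}(n)\}$. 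Hence $D_{\rm iso}(n) = \{i_1(n), \ldots, i_{l \vee 1}(n)\}$, which consists of $l \vee 1$ units.

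For the reverse direction I would argue by contraposition: if $p(n) \geq l \vee 1$, pick any $k \notin \{i_1(n), \ldots, i_{p(n)}(n)\}$ (such $k$ exists since $p(n) \leq K-1$ in the non-degenerate case $p(n) < K$; the case $p(n) = K$ is handled by picking $k = i_K(n)$ and noting that then $p_k(n) = p(n) - 1 \geq l \vee 1 - 1$, giving $m^0_k(n) \geq l \vee 1$ strictly when $p(n) - 1 > l \vee 1 - 1$). Then $p_k(n) = p(n) \geq l \vee 1$, so $m^0_k(n) = \mathrm{median}\{l \vee 1, p(n), u\} > (l \vee 1) - 1 = m^1_k(n)$ with at least one extra factor $\Lambda_{-k, (l \vee 1)}(n) \geq \Lambda_{-k, (l \vee 1 + 1)}(n) \cdots$ appearing in the denominator but not the numerator, so the stated form must fail.

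The argument is essentially bookkeeping of the two medians, and the only mild obstacle will be handling the boundary cases $l = 0$ and $u = l$ uniformly; both are absorbed cleanly by the $l \vee 1$ convention and the assumption $l \leq u$, so no separate case analysis is needed for the forward direction.
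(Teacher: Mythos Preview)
Your forward direction is correct and is exactly the paper's argument: use \eqref{p_k(n)} to pass from $p(n) < l\vee 1$ to $p_k(n)\le (l\vee 1)-1$ for every $k$, read off $m^1_k(n)=(l\vee 1)-1$ and $m^0_k(n)=l\vee 1$, and apply Lemma~\ref{max_char_gap_int} to collapse the ratio. The identification of $D_{\rm iso}(n)$ is also right.

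Your reverse direction, however, contains a computational slip. When $p(n)\ge l\vee 1$ and you pick $k\notin\{i_1(n),\dots,i_{p(n)}(n)\}$ so that $p_k(n)=p(n)\ge l\vee 1$, you claim $m^1_k(n)=(l\vee 1)-1$. That is false: since $p_k(n)\ge l\vee 1 > (l-1)\vee 0$, the median gives $m^1_k(n)=\min\{p_k(n),\,u-1\}$, not $(l\vee 1)-1$. In particular, if $l\vee 1\le p_k(n)\le u-1$ you get $m^1_k(n)=m^0_k(n)=p_k(n)$ and hence $\Lambda_{k,\rm iso}(n)=\Lambda_k(n)$, which still differs from $\Lambda_k(n)/\Lambda_{-k,(l\vee 1)}(n)$ generically---so your conclusion survives, but the stated reason does not. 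The paper sidesteps this by arguing the biconditional at the level of the $p_k(n)$'s: $p(n)\le (l\vee 1)-1$ if and only if $p_k(n)\le (l\vee 1)-1$ for every $k$ (immediate from \eqref{p_k(n)}), and this in turn is what pins down both medians.
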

\begin{proof}
From \eqref{p_k(n)}, we have $p(n) \leq l-1$ if and only if $p_k(n) \leq l-1$ for every $k \in [K]$. Equivalently, for every $k \in [K]$, $m_k^1(n) = (l-1) \vee 0$, $m_k^0(n) = l \vee 1$ and therefore, dividing \eqref{max_char_gap_int1} by \eqref{max_char_gap_int2} in Lemma \ref{max_char_gap_int}, we have
$$\Lambda_{k, \rm iso}(n) = \frac{\Lambda_k(n)\prod_{j = 1}^{(l-1) \vee 0}\Lambda_{-k, (j)}(n)}{\prod_{j = 1}^{l \vee 1}\Lambda_{-k, (j)}(n)} = \frac{\Lambda_k(n)}{\Lambda_{-k, (l \vee 1)}(n)}.$$
Therefore, $\Lambda_{k, \rm iso}(n) > 1$ if and only if $k \in \{i_1(n), \dots, i_{l \vee 1}(n)\}$. 
\end{proof}

\begin{lemma}\label{iso_char_gap_int2}
Consider $l \geq 0$ and $s'_k = [K]$ for every $k \in [K]$. Then we have $p(n) = l \vee 1$ if and only if
$$\Lambda_{k, \rm iso}(n) = 
\begin{cases}
\frac{\Lambda_k(n)}{\Lambda_{((l+1) \vee 2)}(n)}\ &\text{if}\ k \in \{i_1(n), \dots, i_{l \vee 1}(n)\}\\
\Lambda_k(n)\ &\text{otherwise}
\end{cases},$$
and in this case $D_{\rm iso}(n)$ consists of sources with indices $\{i_1(n), \dots, i_{l \vee 1}(n)\}$.
\end{lemma}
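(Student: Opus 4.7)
The plan is to mirror closely the proof of Lemma \ref{iso_char_gap_int1}, replacing the inequality case $p(n) < l \vee 1$ with the equality case. The key input will again be the relation \eqref{p_k(n)}, which characterizes $p_k(n)$ in terms of whether $k$ belongs to the indices of the top local statistics.

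First I would observe that $p(n) = l \vee 1$ is equivalent, via \eqref{p_k(n)}, to
$$p_k(n) = \begin{cases}(l-1)\vee 0 & \text{if } k \in \{i_1(n),\dots,i_{l\vee 1}(n)\},\\ l \vee 1 & \text{otherwise}. \end{cases}$$
Then I would evaluate the medians $m_k^1(n) = \mathrm{median}\{(l-1)\vee 0, p_k(n), u-1\}$ and $m_k^0(n) = \mathrm{median}\{l \vee 1, p_k(n), u\}$ in each of the two cases: in the first case $m_k^1(n) = (l-1)\vee 0$ and $m_k^0(n) = l \vee 1$, whereas in the second case $m_k^1(n) = m_k^0(n) = l \vee 1$.

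Next I would substitute these values into Lemma \ref{max_char_gap_int} and form the ratio \eqref{max_char_gap_int1}$/$\eqref{max_char_gap_int2} that defines $\Lambda_{k,\rm iso}(n)$. In the first case, the product telescopes to
$$\Lambda_{k,\rm iso}(n) = \frac{\Lambda_k(n) \prod_{j=1}^{(l-1)\vee 0} \Lambda_{-k,(j)}(n)}{\prod_{j=1}^{l \vee 1} \Lambda_{-k,(j)}(n)} = \frac{\Lambda_k(n)}{\Lambda_{-k,(l\vee 1)}(n)},$$
and since $k$ is among the top $l \vee 1$ local statistics, the identity $\Lambda_{-k,(l \vee 1)}(n) = \Lambda_{((l+1) \vee 2)}(n)$ yields the stated formula; in the second case the ratio collapses to $\Lambda_k(n)$.

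Finally, for the claim about $D_{\rm iso}(n)$: in the first case $k \in \{i_1(n),\dots,i_{l\vee 1}(n)\}$ implies $\Lambda_k(n) \geq \Lambda_{((l+1)\vee 2)}(n)$, whereas in the second case $k \notin \{i_1(n),\dots,i_{l\vee 1}(n)\}$ combined with $p(n) = l \vee 1$ gives $\Lambda_k(n) \leq 1$, so $\Lambda_{k,\rm iso}(n) > 1$ occurs precisely on the first case, yielding $D_{\rm iso}(n) = \{i_1(n),\dots,i_{l\vee 1}(n)\}$. The only real bookkeeping obstacle is the boundary case $l = 0$, but the operator $\vee$ is placed precisely to absorb it uniformly in $l$, so no separate argument is needed.
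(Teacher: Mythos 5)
Your proposal follows the paper's proof step for step: you invoke \eqref{p_k(n)} to split into the two cases according to whether $k$ is among the top $l \vee 1$ indices, evaluate the medians $m_k^1(n)$ and $m_k^0(n)$ in each case, divide \eqref{max_char_gap_int1} by \eqref{max_char_gap_int2} from Lemma \ref{max_char_gap_int}, and use the shift identity $\Lambda_{-k,(l\vee 1)}(n) = \Lambda_{((l+1)\vee 2)}(n)$ for $k$ among the top $l \vee 1$. This is exactly the argument in the paper, including the implicit use of $u \geq (l+1)\vee 2$ to conclude $m_k^1(n) = m_k^0(n) = l\vee 1$ in the second case, so the proposal is correct and not a different route.
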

\begin{proof}
From \eqref{p_k(n)}, we have
\begin{align*}
p(n) = l \vee 1 \quad 
\Leftrightarrow \quad p_k(n) = 
\begin{cases} 
(l-1) \vee 0 \quad &\text{if} \quad  k \in \{i_1(n), \dots, i_{l \vee 1}(n)\},\\
l \vee 1 \quad \quad \; \; &\text{otherwise}.
\end{cases}
\end{align*}
Equivalently, if $k \in \{i_1(n), \dots, i_{l \vee 1}(n)\}$, $m_k^1(n) = (l-1) \vee 0$, $m_k^0(n) = l \vee 1$, and dividing \eqref{max_char_gap_int1} by \eqref{max_char_gap_int2} in Lemma \ref{max_char_gap_int} gives that
$$\Lambda_{k, \rm iso}(n) = \frac{\Lambda_k(n)\prod_{j = 1}^{(l-1) \vee 0}\Lambda_{-k, (j)}(n)}{\prod_{j = 1}^{l \vee 1}\Lambda_{-k, (j)}(n)} = \frac{\Lambda_k(n)}{\Lambda_{-k, (l \vee 1)}(n)} = \frac{\Lambda_k(n)}{\Lambda_{((l+1) \vee 2)}(n)} > 1,$$
otherwise, $m_k^1(n) = m_k^0(n) = l \vee 1$, and
$$\Lambda_{k, \rm iso}(n) = \frac{\Lambda_k(n)\prod_{j = 1}^{l \vee 1}\Lambda_{-k, (j)}(n)}{\prod_{j = 1}^{l \vee 1}\Lambda_{-k, (j)}(n)} = \Lambda_k(n) \leq 1.$$
%Thus, $|D_{\rm iso}(n)| = l \vee 1$ and the proof is complete.
\end{proof}

\begin{lemma}\label{iso_char_gap_int3}
Consider $0 \leq l < u \leq K$ and $s'_k = [K]$ for every $k \in [K]$. Then
we have $l \vee 1 < p(n) < u$ if and only if $\Lambda_{k, \rm iso}(n) = \Lambda_k(n)$ for every $k \in [K]$ and in this case  $D_{\rm iso}(n)$ consists of sources with indices $\{i_1(n), \dots, i_{p(n)}(n)\}$.
\end{lemma}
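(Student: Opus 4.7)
The plan is to follow the same template as the proofs of Lemmas \ref{iso_char_gap_int1} and \ref{iso_char_gap_int2}: translate the condition on $p(n)$ into an equivalent condition on the ``leave-one-out'' counts $p_k(n)$, observe that this forces both medians $m_k^1(n)$ and $m_k^0(n)$ to take the same value, and then apply the GLR characterization from Lemma \ref{max_char_gap_int} to collapse the ratio defining $\Lambda_{k,\rm iso}(n)$.

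First I would use \eqref{p_k(n)}, which asserts that $p_k(n) \in \{p(n)-1, p(n)\}$ for every $k \in [K]$, to establish the equivalence
\[
l \vee 1 < p(n) < u \quad \Longleftrightarrow \quad l \vee 1 \leq p_k(n) \leq u-1 \;\; \text{for every } k \in [K].
\]
Indeed, if $l \vee 1 < p(n) < u$, then $p_k(n) \geq p(n)-1 \geq l \vee 1$ and $p_k(n) \leq p(n) \leq u-1$; conversely, since some source $k$ realizes $p_k(n) = p(n)$ and some other realizes $p_k(n) = p(n)-1$ (whenever $p(n) \geq 1$), the uniform bound forces $p(n) \in (l \vee 1, u)$.

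Next I would observe that whenever $l \vee 1 \leq p_k(n) \leq u-1$, one automatically has $(l-1) \vee 0 \leq p_k(n) \leq u-1$ and $l \vee 1 \leq p_k(n) \leq u$, so
\[
m_k^1(n) = \mathrm{median}\{(l-1) \vee 0,\, p_k(n),\, u-1\} = p_k(n) = \mathrm{median}\{l \vee 1,\, p_k(n),\, u\} = m_k^0(n).
\]
Substituting this common value into \eqref{max_char_gap_int1} and \eqref{max_char_gap_int2} of Lemma \ref{max_char_gap_int}, the product $\prod_{j=1}^{p_k(n)} \Lambda_{-k,(j)}(n)$ appears in both numerator and denominator of $\Lambda_{k,\rm iso}(n)$ and cancels, leaving
\[
\Lambda_{k, \rm iso}(n) \;=\; \Lambda_k(n) \qquad \text{for every } k \in [K].
\]
Finally, from the definition \eqref{D_iso} of $D_{\rm iso}(n)$, this identity yields
\[
D_{\rm iso}(n) = \{k \in [K] : \Lambda_k(n) > 1\} = \{i_1(n), \dots, i_{p(n)}(n)\},
\]
which finishes the proof.

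There is no serious obstacle here: the only place that requires care is the median calculation, which must handle the boundary cases $l = 0$ and $u = K$ correctly (the $l \vee 1$ and $(l-1) \vee 0$ conventions built into the definitions of $m_k^0$ and $m_k^1$ are precisely what makes this work uniformly), but the argument is otherwise a direct bookkeeping exercise once Lemma \ref{max_char_gap_int} is invoked.
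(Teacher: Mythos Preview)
Your proposal is correct and follows essentially the same approach as the paper: translate the condition on $p(n)$ into the equivalent range $l\vee 1 \le p_k(n)\le u-1$ via \eqref{p_k(n)}, observe that this forces $m_k^1(n)=m_k^0(n)=p_k(n)$, and cancel the common product using Lemma~\ref{max_char_gap_int}. The paper's argument is nearly identical, just more terse in handling the converse of the first equivalence.
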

\begin{proof}
From \eqref{p_k(n)}, we have
\begin{align*}
(l+1) \vee 2 \leq p(n) \leq u-1  \;\; &\Leftrightarrow \;\; l \vee 1 \leq p_k(n) \leq u-1 \;\; \text{for every $k \in [K]$},\\
&\Leftrightarrow \;\; m_k^1(n) = m_k^0(n) = p_k(n) \;\; \text{for every $k \in [K]$}.
\end{align*}
Equivalently, for every $k \in [K]$, dividing \eqref{max_char_gap_int1} by \eqref{max_char_gap_int2} in Lemma \ref{max_char_gap_int} gives that,
$$\Lambda_{k, \rm iso}(n) = \frac{\Lambda_k(n)\prod_{j = 1}^{p_k(n)}\Lambda_{-k, (j)}(n)}{\prod_{j = 1}^{p_k(n)}\Lambda_{-k, (j)}(n)} = \Lambda_k(n),$$
which also implies $D_{\rm iso}(n)$ consists of sources with indices $\{i_1(n), \dots, i_{p(n)}(n)\}$.
\end{proof}

\begin{lemma}\label{iso_char_gap_int4}
Consider $u \leq K$ and $s'_k = [K]$ for every $k \in [K]$. Then we have
$p(n) = u$ if and only if
$$\Lambda_{k, \rm iso}(n) = 
\begin{cases}
\Lambda_k(n)\ &\text{if}\ k \in \{i_1(n), \dots, i_u(n)\}\\
\frac{\Lambda_k(n)}{\Lambda_{(u)}(n)}\ &\text{otherwise}
\end{cases},$$
and in this case  $D_{\rm iso}(n)$ consists of sources with indices $\{i_1(n), \dots, i_u(n)\}$.
\end{lemma}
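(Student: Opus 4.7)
The plan is to follow the exact template of Lemmas \ref{iso_char_gap_int1}, \ref{iso_char_gap_int2}, and \ref{iso_char_gap_int3}: translate $p(n) = u$ into a statement about each $p_k(n)$ via \eqref{p_k(n)}, read off the values of $m_k^1(n)$ and $m_k^0(n)$, and then apply Lemma \ref{max_char_gap_int} by dividing \eqref{max_char_gap_int1} by \eqref{max_char_gap_int2}. Note that this lemma is intended for the setting $l < u$ of Theorem \ref{th:iso_omega}(ii), in which case $l \vee 1 \leq u-1$, and I will use that fact in computing the medians.

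First I would observe that by \eqref{p_k(n)}, $p(n) = u$ is equivalent to $p_k(n) = u-1$ whenever $k \in \{i_1(n), \dots, i_u(n)\}$ and $p_k(n) = u$ otherwise. In the first case, $m_k^1(n) = \mathrm{median}\{(l-1) \vee 0,\, u-1,\, u-1\} = u-1$ and $m_k^0(n) = \mathrm{median}\{l \vee 1,\, u-1,\, u\} = u-1$, so Lemma \ref{max_char_gap_int} yields
\[
\Lambda_{k, \rm iso}(n) \;=\; \frac{\Lambda_k(n)\prod_{j=1}^{u-1}\Lambda_{-k,(j)}(n)}{\prod_{j=1}^{u-1}\Lambda_{-k,(j)}(n)} \;=\; \Lambda_k(n),
\]
which exceeds $1$ since $k$ is among the top-$u$ indices and $p(n) = u$. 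In the second case, $m_k^1(n) = \mathrm{median}\{(l-1)\vee 0,\, u,\, u-1\} = u-1$ and $m_k^0(n) = \mathrm{median}\{l \vee 1,\, u,\, u\} = u$, so
\[
\Lambda_{k, \rm iso}(n) \;=\; \frac{\Lambda_k(n)\prod_{j=1}^{u-1}\Lambda_{-k,(j)}(n)}{\prod_{j=1}^{u}\Lambda_{-k,(j)}(n)} \;=\; \frac{\Lambda_k(n)}{\Lambda_{-k,(u)}(n)}.
\]
Since $k \notin \{i_1(n), \dots, i_u(n)\}$, the top $u$ values among $\{\Lambda_j(n) : j \neq k\}$ coincide with the top $u$ values of all the $\Lambda_j(n)$, so $\Lambda_{-k,(u)}(n) = \Lambda_{(u)}(n)$, giving the claimed formula; moreover $\Lambda_k(n) \leq 1 < \Lambda_{(u)}(n)$, so this ratio is at most $1$.

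Combining the two cases shows $\Lambda_{k,\rm iso}(n) > 1$ exactly when $k \in \{i_1(n), \dots, i_u(n)\}$, which yields $D_{\rm iso}(n) = \{i_1(n), \dots, i_u(n)\}$. The converse direction of the iff is immediate from the fact that the five possible ranges of $p(n)$ addressed in Lemmas \ref{iso_char_gap_int1}--\ref{iso_char_gap_int4} (and the yet-to-come $p(n) > u$ case) are mutually exclusive and each produces a distinct formula for $\{\Lambda_{k,\rm iso}(n)\}_{k}$. I do not anticipate any genuine obstacle: the entire argument is a bookkeeping computation of the two medians and a simultaneous reindexing identity $\Lambda_{-k,(u)}(n) = \Lambda_{(u)}(n)$, both of which are straightforward.
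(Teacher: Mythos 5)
Your proposal is correct and follows essentially the same route as the paper: translate $p(n)=u$ via \eqref{p_k(n)}, compute the medians $m_k^1(n)$ and $m_k^0(n)$ (which requires $l\vee 1\leq u-1$, implicit in both proofs since this lemma is applied in the regime $l<u$ with $u\geq 2$), divide the two expressions from Lemma \ref{max_char_gap_int}, and use the reindexing $\Lambda_{-k,(u)}(n)=\Lambda_{(u)}(n)$ when $k\notin\{i_1(n),\dots,i_u(n)\}$. You are slightly more explicit about the median bookkeeping and about the converse of the biconditional, but the argument is the same.
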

\begin{proof}
From \eqref{p_k(n)}, we have
\begin{align*}
p(n) = u \;\;
\Leftrightarrow \;\; p_k(n) = 
\begin{cases} 
u-1 \quad &\text{if} \quad  k \in \{i_1(n), \dots, i_u(n)\}\\
u \quad \quad \; \; &\text{otherwise}
\end{cases},
\end{align*}
Equivalently, if $k \in \{i_1(n), \dots, i_u(n)\}$, $m_k^1(n) = m_k^0(n) = u-1$ and dividing \eqref{max_char_gap_int1} by \eqref{max_char_gap_int2} in Lemma \ref{max_char_gap_int} gives that,
$$\Lambda_{k, \rm iso}(n) = \frac{\Lambda_k(n)\prod_{j = 1}^{u-1}\Lambda_{-k, (j)}(n)}{\prod_{j = 1}^{u-1}\Lambda_{-k, (j)}(n)} = \Lambda_{k}(n) > 1,$$
otherwise, $m_k^1(n) = u-1$, $m_k^0(n) = u$, and
$$\Lambda_{k, \rm iso}(n) = \frac{\Lambda_k(n)\prod_{j = 1}^{u-1}\Lambda_{-k, (j)}(n)}{\prod_{j = 1}^{u}\Lambda_{-k, (j)}(n)} = \frac{\Lambda_k(n)}{\Lambda_{-k, (u)}(n)} = \frac{\Lambda_k(n)}{\Lambda_{(u)}(n)} \leq 1.$$
%Thus, $|D_{\rm iso}(n)| = u$ and the proof is complete.
\end{proof}

\begin{lemma}\label{iso_char_gap_int5}
Consider $u < K$ and $s'_k = [K]$ for every $k \in [K]$.
Then we have $p(n) > u$ if and only if for every $k \in [K]$,
$$\Lambda_{k, \rm iso}(n) = \frac{\Lambda_k(n)}{\Lambda_{-k, (u)}(n)},$$
and in this case  $D_{\rm iso}(n)$ consists of sources with indices $\{i_1(n), \dots, i_{u}(n)\}$.
\end{lemma}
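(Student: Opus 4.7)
The plan is to follow exactly the template of Lemmas \ref{iso_char_gap_int1}--\ref{iso_char_gap_int4}, which systematically characterize $\Lambda_{k,\rm iso}(n)$ by reducing it, via Lemma \ref{max_char_gap_int}, to a ratio of ordered local statistics once the integers $m_k^1(n)$ and $m_k^0(n)$ are identified. The present lemma is the last remaining case, corresponding to $p(n)$ strictly above the upper bound $u$; no new idea is needed, so the work is mostly bookkeeping.

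The first step is to translate the condition $p(n) > u$ into a statement about $p_k(n)$ for each $k$. Recall from \eqref{p_k(n)} that $p_k(n) \in \{p(n)-1,\, p(n)\}$, so $p(n) \geq u+1$ holds if and only if $p_k(n) \geq u$ for every $k \in [K]$. Since $p_k(n) \geq u$, and $(l-1)\vee 0 \leq l \vee 1 \leq u-1 < u$, the medians defining $m_k^1(n)$ and $m_k^0(n)$ clamp at their upper thresholds, giving $m_k^1(n) = u-1$ and $m_k^0(n) = u$ for every $k \in [K]$.

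The second step is a direct application of Lemma \ref{max_char_gap_int}: dividing \eqref{max_char_gap_int1} by \eqref{max_char_gap_int2} with these values of $m_k^1(n)$ and $m_k^0(n)$ yields
\[
\Lambda_{k,\rm iso}(n) \;=\; \frac{\Lambda_k(n)\prod_{j=1}^{u-1}\Lambda_{-k,(j)}(n)}{\prod_{j=1}^{u}\Lambda_{-k,(j)}(n)} \;=\; \frac{\Lambda_k(n)}{\Lambda_{-k,(u)}(n)},
\]
as claimed. The converse direction (that this formula forces $p(n) > u$) is obtained by reading the chain of equivalences in the first step backwards.

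The third step is to identify $D_{\rm iso}(n)$. From the formula just derived, $\Lambda_{k,\rm iso}(n) > 1$ is equivalent to $\Lambda_k(n) > \Lambda_{-k,(u)}(n)$, which in turn happens if and only if $k$ lies among the indices of the $u$ largest local statistics, i.e., $k \in \{i_1(n),\dots,i_u(n)\}$. Hence $D_{\rm iso}(n)$ consists exactly of these $u$ indices. I do not anticipate any real obstacle; the only point requiring a moment of care is verifying that the medians collapse to $u-1$ and $u$ under $p_k(n) \geq u$, but this is immediate once one writes out the three candidates. The proof is therefore a direct analogue of the argument given for Lemma \ref{iso_char_gap_int4}, with the role of $p(n) = u$ replaced by $p(n) > u$.
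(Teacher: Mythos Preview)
Your proof is correct and follows essentially the same approach as the paper's: translate $p(n)>u$ into $p_k(n)\geq u$ via \eqref{p_k(n)}, read off $m_k^1(n)=u-1$ and $m_k^0(n)=u$, apply Lemma \ref{max_char_gap_int}, and identify $D_{\rm iso}(n)$. One minor slip: your chain $(l-1)\vee 0 \leq l\vee 1 \leq u-1$ is stronger than needed (and fails when $l=u$ or $u=1$); for $m_k^0(n)=u$ you only need $l\vee 1 \leq u$, which follows from $l\leq u$.
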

\begin{proof}
From \eqref{p_k(n)}, we have $p(n) \geq u+1 \;\; \Leftrightarrow \;\; p_k(n) \geq u$ for every $k \in [K]$. Equivalently, for every $k \in [K]$, $m_k^1(n) = u-1$, $m_k^0(n) = u$, and dividing \eqref{max_char_gap_int1} by \eqref{max_char_gap_int2} in Lemma \ref{max_char_gap_int} gives that,
$$\Lambda_{k, \rm iso}(n) = \frac{\Lambda_k(n)\prod_{j = 1}^{u-1}\Lambda_{-k, (j)}(n)}{\prod_{j = 1}^{u}\Lambda_{-k, (j)}(n)} = \frac{\Lambda_k(n)}{\Lambda_{-k, (u)}(n)}.$$
Therefore, $\Lambda_{k, \rm iso}(n) > 1$ if and only if $k \in \{i_1(n), \dots, i_u(n)\}$. %which implies $|D_{\rm iso}(n)| = u$. 
\end{proof}

\begin{lemma}\label{iso_char_gap}
Consider $0 < l = u < K$ and $s'_k = [K]$ for every $k \in [K]$.
Then we have for every $k \in [K]$,
$$\Lambda_{k, \rm iso}(n) = \frac{\Lambda_k(n)}{\Lambda_{-k, (u)}(n)},$$
and in this case,  $D_{\rm iso}(n)$ consists of sources with indices $\{i_1(n), \dots, i_{u}(n)\}$.
\end{lemma}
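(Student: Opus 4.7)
The plan is to treat this as a direct specialization of Lemma \ref{max_char_gap_int}. Setting $l=u$ in the definitions $m_k^1(n) = \operatorname{median}\{(l-1)\vee 0, p_k(n), u-1\}$ and $m_k^0(n) = \operatorname{median}\{l\vee 1, p_k(n), u\}$, both medians collapse to constants independent of $p_k(n)$: $m_k^1(n) = u-1$ and $m_k^0(n) = u$ for every $k \in [K]$ and $n \in \bN$. This is the key simplification that makes the $l=u$ case particularly clean compared to the $l<u$ cases handled in Lemmas \ref{iso_char_gap_int1}--\ref{iso_char_gap_int5}.

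Next, I would substitute these values into the ratio of \eqref{max_char_gap_int1} over \eqref{max_char_gap_int2} from Lemma \ref{max_char_gap_int} to obtain
\[
\Lambda_{k, \rm iso}(n) \;=\; \frac{\Lambda_k(n) \prod_{j=1}^{u-1} \Lambda_{-k,(j)}(n)}{\prod_{j=1}^{u} \Lambda_{-k,(j)}(n)} \;=\; \frac{\Lambda_k(n)}{\Lambda_{-k,(u)}(n)},
\]
which is the first claim. This step is purely algebraic once the medians are evaluated.

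For the identification of $D_{\rm iso}(n)$, I would analyze the two cases based on whether $k$ belongs to $\{i_1(n),\ldots,i_u(n)\}$, the set of indices of the top $u$ values of $\Lambda_\cdot(n)$. If $k \in \{i_1(n),\ldots,i_u(n)\}$, removing $\Lambda_k(n)$ from the list makes $\Lambda_{-k,(u)}(n) = \Lambda_{(u+1)}(n)$, and since $\Lambda_k(n) \geq \Lambda_{(u)}(n) > \Lambda_{(u+1)}(n)$, we get $\Lambda_{k,\rm iso}(n) > 1$. If instead $k \notin \{i_1(n),\ldots,i_u(n)\}$, removing $\Lambda_k(n)$ leaves the top $u$ untouched, so $\Lambda_{-k,(u)}(n) = \Lambda_{(u)}(n) \geq \Lambda_k(n)$, yielding $\Lambda_{k,\rm iso}(n) \leq 1$. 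Thus $D_{\rm iso}(n) = \{i_1(n),\ldots,i_u(n)\}$.

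There is no real obstacle here, since the calculation reduces immediately after noticing that the $l=u$ constraint freezes both medians. The only minor subtlety is the possibility of ties between $\Lambda_{(u)}(n)$ and $\Lambda_{(u+1)}(n)$, which can be handled by the standard convention used throughout the paper (e.g., breaking ties in a fixed deterministic manner so that $|D_{\rm iso}(n)| = u$ is well-defined), or simply by noting that ties occur with probability zero under any of the distributional settings of interest.
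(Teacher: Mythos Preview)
Your proposal is correct and follows essentially the same argument as the paper: both observe that when $l=u>0$ the medians collapse to $m_k^1(n)=u-1$ and $m_k^0(n)=u$, apply Lemma~\ref{max_char_gap_int} to obtain the ratio formula, and then identify $D_{\rm iso}(n)$ by comparing $\Lambda_k(n)$ with $\Lambda_{-k,(u)}(n)$. Your case analysis for $D_{\rm iso}(n)$ is slightly more explicit than the paper's one-line remark, and your comment on ties is a reasonable aside, but there is no substantive difference in approach.
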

\begin{proof}
For every $k \in [K]$, we have $m_k^1(n) = u-1$, $m_k^0(n) = u$, and therefore, dividing \eqref{max_char_gap_int1} by \eqref{max_char_gap_int2} in Lemma \ref{max_char_gap_int} gives that
$$\Lambda_{k, \rm iso}(n) = \frac{\Lambda_k(n)\prod_{j = 1}^{u-1}\Lambda_{-k, (j)}(n)}{\prod_{j = 1}^{u}\Lambda_{-k, (j)}(n)} = \frac{\Lambda_k(n)}{\Lambda_{-k, (u)}(n)},$$
which is greater that $1$ if and only if $k \in \{i_1(n), \dots, i_u(n)\}$. %Thus, we have $|D_{\rm iso}(n)| = m$.
\end{proof}

\subsection{Proof of Theorem \ref{decision_indep}}\label{pf:decision_indep}

\begin{proof}
Fix $(T, D)$ to be either $\chi^*$ or $\chi^*_{fwer}$.
First consider the case when $l = 0$. Then by Lemma \ref{max_det_char} if $p(T) = 0$, then
$$\max_{k \in [K]} \; \Lambda_{k, \rm det}(T) < 1,$$
which further implies that the test stops by $T_0$, i.e., $T = T_0$. Thus, by definition $D = \emptyset$.

Next, we consider the general case when $l \geq 0$ and $p(T) \geq 0$. Then by Lemmas \ref{iso_char_gap_int1}, \ref{iso_char_gap_int2}, \ref{iso_char_gap_int3}, \ref{iso_char_gap_int4}, \ref{iso_char_gap_int5}, we have
$$
D(T) =
\begin{cases}
\{i_1(T), \dots, i_{l \vee 1}(T)\} \quad &\text{if} \;\; p(T) \leq l \vee 1\\
\{i_1(T), \dots, i_{p(T)}(T)\} \quad &\text{if} \;\; l \vee 1 < p(T) < u\\
\{i_1(T), \dots, i_{u}(T)\} \quad &\text{if} \;\; u \leq p(T)
\end{cases}.$$
This completes the proof.
\end{proof}

\subsection{Proof of Theorem \ref{th:iso_omega}}\label{pf:th:iso_omega}

\begin{proof}
We have  $ l\geq 1$ and $s'_k = [K]$ for every  $k \in [K]$.
\begin{enumerate}
\item[(i)] By using Lemma \ref{iso_char_gap} and the fact that $A_k = A$ and $B_k = B$ for every $k \in [K]$, one can write the stopping time $T^*$ as
\begin{align*}
T^*&= \inf\bigg\{n \geq 1: \frac{\Lambda_k(n)}{\Lambda_{-k, (u)}(n)} \notin \lt(\frac{1}{A}, B\rt)\;\; \text{for every} \, k \in [K] \quad \text{and} \quad |D_{\rm iso}(n)| = u\bigg\}.
\end{align*}
Now note that, 
$$\Lambda_{-k, (u)}(n) = \begin{cases}
\Lambda_{(u+1)}(n)\ &\text{if}\ k \in \{i_1(n), \dots, i_u(n)\},\\
\Lambda_{(u)}(n)\ &\text{otherwise}.
\end{cases}$$
Therefore,
\begin{align*}
&\lt\{\frac{\Lambda_k(n)}{\Lambda_{-k, (u)}(n)} \notin \lt(\frac{1}{A}, B\rt)\;\; \text{for every} \, k \in [K] \quad \text{and} \quad |D_{\rm iso}(n)| = u\rt\}\\
= &\bigg\{\frac{\Lambda_k(n)}{\Lambda_{(u+1)}(n)} \geq B\;\; \text{for every}\ k \in \{i_1(n), \dots, i_u(n)\}\\
&\qquad \frac{\Lambda_k(n)}{\Lambda_{(u)}(n)} \leq \frac{1}{A}\;\; \text{for every}\ k \notin \{i_1(n), \dots, i_u(n)\}  \quad \text{and} \quad |D_{\rm iso}(n)| = u\bigg\}\\
= &\lt\{\frac{\Lambda_{(u)}(n)}{\Lambda_{(u+1)}(n)} \geq \max\{A, B\}\rt\}.
\end{align*}
Note that, the condition $|D_{\rm iso}(n)| = u$ disappears from the second equality by another application of Lemma \ref{iso_char_gap}.  The proof is complete.
%since the conditions
%$$\Lambda_{k, \rm iso}(n) = 
%\frac{\Lambda_k(n)}{\Lambda_{(m+1)}(n)} \geq B > 1\;\; \text{for every}\ k \in \{i_1(n), \dots, i_m(n)\},$$
%and
%$$\Lambda_{k, \rm iso}(n) = \frac{\Lambda_k(n)}{\Lambda_{(m)}(n)} \leq \frac{1}{A} < 1\;\; \text{for every}\ e \notin \{i_1(n), \dots, i_m(n)\}$$
%together imply that $|D_{\rm iso}(n)| = m$. Therefore, at $T^*_{iso}$ one must have exactly $m$ isolation statistics greater than $1$ and from the above two conditions, this leads to
%$$D^{*}_{iso} = \{i_1(T^*_{iso}), \dots, i_m(T^*_{iso})\}.$$ 
\item[(ii)] Since $A_k = A$ and $B_k = B$ for every $k \in [K]$, one can write the stopping time $T^*$ as
\begin{align*}
T^* = \inf\lt\{n \geq 1 : \Lambda_{k, \rm iso}(n) \notin \lt(\frac{1}{A}, B\rt)\;\; \text{for every}\ k \in [K]\;\; \text{and}\;\; l \leq |D_{\rm iso}(n)| \leq u\rt\}.
\end{align*}
%Define another stopping time
%\begin{align*}
% T^{**}_{I} := \inf\lt\{n \geq 1 : \Lambda_{k, \rm iso}(n) \notin \lt(\frac{1}{A_3}, B_3\rt)\quad \text{for every}\ k \in [K],\ \text{and}\ l \leq p(n) \leq u\rt\}.
%\end{align*}
Now from Lemmas \ref{iso_char_gap_int1}, \ref{iso_char_gap_int2}, \ref{iso_char_gap_int3}, \ref{iso_char_gap_int4} and \ref{iso_char_gap_int5}, one can observe that at any time $n$ and for any $u \leq K$, the condition $l \leq |D_{\rm iso}(n)| \leq u$ is always satisfied irrespective of the value of $p(n)$. Hence, we omit this condition from the stopping event of $T^*$.
Next, let us consider the case when $u < K$ and in this case, we will show that $T^* = \min\{T_1, T_2, T_4, T_5, T_6\}$. In order to do so,
we further intersect the stopping event of $T^*$ with the disjoint decomposition of these following events:
$$\{p(n) < l\} \cup \{p(n) = l\} \cup \{l < p(n) < u\} \cup \{p(n) = u\} \cup \{p(n) > u\}.$$
%and show that each event corresponds to the stopping events of the rules $T_1, \dots, T_5$. 

Now, for the first event, i.e., when $p(n) < l$, by Lemma \ref{iso_char_gap_int1}, we have
\begin{align*}
&\lt\{p(n) < l\ \text{and}\ \Lambda_{k, \rm iso}(n) \notin \lt(\frac{1}{A}, B\rt)\quad \text{for every}\ k \in [K]\rt\}\\
= &\bigg\{p(n) < l, \frac{\Lambda_k(n)}{\Lambda_{(l + 1)}(n)} \geq B\quad \text{for every}\ k \in \{i_1(n), \dots, i_l(n)\}\\
&\qquad \frac{\Lambda_k(n)}{\Lambda_{(l)}(n)} \leq \frac{1}{A}\quad \text{for every}\ k \notin \{i_1(n), \dots, i_l(n)\}\bigg\}\\
= &\lt\{p(n) < l\ \text{and}\ \frac{\Lambda_{(l)}(n)}{\Lambda_{(l + 1)}(n)} \geq \max\{A, B\}\rt\},
\end{align*}
which is the stopping event of $T_1$.
For the second event, i.e., when $p(n) = l$, by Lemma \ref{iso_char_gap_int2}, we have
\begin{align*}
&\lt\{p(n) = l\ \text{and}\ \Lambda_{k, \rm iso}(n) \notin \lt(\frac{1}{A}, B\rt)\quad \text{for every}\ k \in [K]\rt\}\\
= &\bigg\{p(n) = l, \frac{\Lambda_k(n)}{\Lambda_{(l + 1)}(n)} \geq B\quad \text{for every}\ k \in \{i_1(n), \dots, i_l(n)\}\\
&\qquad \Lambda_k(n) \leq \frac{1}{A}\quad \text{for every}\ k \notin \{i_1(n), \dots, i_l(n)\}\bigg\}\\
= &\lt\{p(n) = l, \frac{\Lambda_{(l)}(n)}{\Lambda_{(l + 1)}(n)} \geq B\;\; \text{and} \;\; \Lambda_{(l + 1)}(n) \leq \frac{1}{A}\rt\},
\end{align*}
which is the stopping event of $T_2$.
For the third even, i.e., when $l < p(n) < u$, by Lemma \ref{iso_char_gap_int3}, we have
\begin{align*}
&\lt\{l < p(n) < u\ \text{and}\ \Lambda_{k, \rm iso}(n) \notin \lt(\frac{1}{A}, B\rt)\quad \text{for every}\ k \in [K]\rt\}\\
= &\lt\{l < p(n) < u\ \text{and}\ \Lambda_k(n) \notin \lt(\frac{1}{A}, B\rt)\quad \text{for every}\ k \in [K]\rt\},
\end{align*}
which is the stopping event of $T_4$.
For the fourth event, i.e., when $p(n) = u$, by Lemma \ref{iso_char_gap_int4}, we have
\begin{align*}
&\lt\{p(n) = u\quad \text{and}\quad \Lambda_{k, \rm iso}(n) \notin \lt(\frac{1}{A}, B\rt)\quad \text{for every}\ k \in [K]\rt\}\\
= &\bigg\{p(n) = u, \quad \Lambda_k(n) \geq B\quad \text{for every}\ k \in \{i_1(n), \dots, i_u(n)\}\\
&\qquad \frac{\Lambda_k(n)}{\Lambda_{(u)}(n)} \leq \frac{1}{A}\quad \text{for every}\ k \notin \{i_1(n), \dots, i_u(n)\bigg\}\\
= &\lt\{p(n) = u,\quad \Lambda_{(u)}(n) \geq B\quad \text{and}\ \frac{\Lambda_{(u + 1)}(n)}{\Lambda_{(u)}(n)} \leq \frac{1}{A}\rt\},
\end{align*}
which is the stopping event of $T_5$. 
For the fifth event, i.e. when $p(n) > u$, by Lemma \ref{iso_char_gap_int5}, we have
\begin{align*}
&\lt\{p(n) > u\ \text{and}\ \Lambda_{k, \rm iso}(n) \notin \lt(\frac{1}{A}, B\rt)\quad \text{for every}\ k \in [K]\rt\}\\
= &\bigg\{p(n) > u, \frac{\Lambda_k(n)}{\Lambda_{(u + 1)}(n)} \geq B\quad \text{for every}\ k \in \{i_1(n), \dots, i_u(n)\}\\
&\qquad \frac{\Lambda_k(n)}{\Lambda_{(u)}(n)} \leq \frac{1}{A}\quad \text{for every}\ k \notin \{i_1(n), \dots, i_u(n)\}\bigg\}\\
= &\lt\{p(n) > u\ \text{and}\ \frac{\Lambda_{(u)}(n)}{\Lambda_{(u + 1)}(n)} \geq \max\{A, B\}\rt\},
\end{align*}
which is the stopping event of $T_6$.
Hence, the result follows.

Next consider the case when $u = K$. We similarly intersect the stopping event of $T^*$ with the disjoint decomposition of these following events:
$$\{p(n) < l\} \cup \{p(n) = l\} \cup \{l < p(n) \leq K\}.$$
For the third event in above, i.e., when $l < p(n) \leq K$, by Lemma \ref{iso_char_gap_int3} and \ref{iso_char_gap_int4}, we have
\begin{align*}
&\lt\{l < p(n) \leq K, \;\; \Lambda_{k, \rm iso}(n) \notin \lt(\frac{1}{A}, B\rt) \;\; \text{for every}\ k \in [K]\rt\}\\
= &\lt\{l < p(n) \leq K, \;\;  \Lambda_k(n) \notin \lt(\frac{1}{A}, B\rt) \;\; \text{for every}\ k \in [K]\rt\},
\end{align*}
which is the stopping event of $T_3$. Therefore, in this case, $T^* = \min\{T_1, T_2, T_3\}$.  This completes the proof.
\end{enumerate}
\end{proof}

\subsection{Proof of Theorem \ref{th:dete_indep}}\label{pf:th:dete_indep}

\begin{proof}
Since $C_k = C$ and $D_k = D$ for every $k \in [K]$, one can write the stopping times $T_0$ and $T_{det}$ as
\begin{align*}
T_0 &= \inf\lt\{n  \in \bN : \max_{k \in [K]} \; \Lambda_{k, \rm det}(n) \leq 1/C \rt\},\\
T_{det} &= \inf\lt\{n \in \bN: \max_{k \in [K]} \; \Lambda_{k, \rm det}(n) \geq D\rt\}.
\end{align*}
Since $C, D > 1$, stopping happens by $T_0$ (resp. $T_{det}$) only if $\max_{k \in [K]} \; \Lambda_{k, \rm det}(n) \leq 1$ (resp. $\max_{k \in [K]} \; \Lambda_{k, \rm det}(n) > 1$), and by Lemma \ref{max_det_char}, this happens only if $p(n) = 0$ (resp. $p(n) \geq 1$). Therefore, the stopping events in $T_0$ and $T_{det}$ remain the same even if they are intersected with $p(n) = 0$ and $p(n) \geq 1$ respectively. 
Thus,
\begin{align*}
T_0 &= \inf\lt\{n  \in \bN : \max_{k \in [K]} \; \Lambda_{k, \rm det}(n) \leq 1/C, \;\; p(n) = 0 \rt\}\\
&= \inf\lt\{n  \in \bN : \Lambda_{(1)}(n) \leq 1/C\rt\},
\end{align*}
where the last equality follows from Lemma \ref{max_det_char}. Similarly, we have
\begin{align*}
T_{det} &= \inf\lt\{n  \in \bN : \max_{k \in [K]} \; \Lambda_{k, \rm det}(n) \geq D, \;\; p(n) \geq 1 \rt\}\\
&= \inf\lt\{n  \in \bN : \prod_{i = 1}^{p(n) \wedge u} \Lambda_{(i)}(n) \geq D\rt\},
\end{align*}
where the last equality again follows from Lemma \ref{max_det_char}, and this completes the first part.

Next we consider joint detection and isolation. Since $A_k = A$, $B_k = B$, $C_k = C$ and $D_k = D$ for every $k \in [K]$, one can write the stopping times $T_{joint}$ as
\begin{align*}
%T_0 &= \inf\lt\{n  \in \bN : \max_{k \in [K]} \; \Lambda_{k, \rm det}(n) \leq 1/C \rt\},\\
T_{joint} &= \inf\{ n \in \bN : \; \max_{k \in [K]} \; \Lambda_{k, \rm det}(n) \geq D,\\
& \qquad \qquad \qquad \;\; \Lambda_{k, \rm iso}(n) \notin \lt(1/A, B\rt) \; \; \text{for every}  \, k \in [K] \quad  \text{and} \quad 1 \leq |D_{\rm iso}(n)| \leq u \}.
\end{align*}
Since $D > 1$, stopping happens by $T_{joint}$ only if $\max_{k \in [K]} \; \Lambda_{k, \rm det}(n) > 1$, and by Lemma \ref{max_det_char}, this happens only if $p(n) \geq 1$. Therefore, the stopping event in $T_{joint}$ remains the same even if it is intersected with $p(n) \geq 1$. 
Thus,
\begin{align*}
T_{joint} &= \inf\{ n \in \bN : \; \max_{k \in [K]} \; \Lambda_{k, \rm det}(n) \geq D, \;\; p(n) \geq 1,\\
& \qquad \qquad \qquad \;\; \Lambda_{k, \rm iso}(n) \notin \lt(1/A, B\rt) \; \; \text{for every}  \, k \in [K] \quad  \text{and} \quad 1 \leq |D_{\rm iso}(n)| \leq u \},\\
&= \inf\bigg\{n  \in \bN : \prod_{i = 1}^{p(n) \wedge u} \Lambda_{(i)}(n) \geq D,  \;\; p(n) \geq 1,\\
& \qquad \qquad \qquad \;\; \Lambda_{k, \rm iso}(n) \notin \lt(1/A, B\rt) \; \; \text{for every}  \, k \in [K] \quad  \text{and} \quad 1 \leq |D_{\rm iso}(n)| \leq u \bigg\},
\end{align*}
where the last equality again follows from Lemma \ref{max_det_char}.

Now from Lemmas \ref{iso_char_gap_int2}, \ref{iso_char_gap_int3}, \ref{iso_char_gap_int4} and \ref{iso_char_gap_int5}, one can observe that for any $n \in \bN$ and $u \leq K$, $p(n) \geq 1$ implies $1 \leq |D_{\rm iso}(n)| \leq u$. Hence, we omit this condition from the stopping event of $T_{joint}$.
First, let us consider the case when $u < K$ and in this case, we will show that $T_{joint} = \min\{T_1, T_3, T_4, T_5\}$. In order to do so,
we further intersect the stopping event of $T_{joint}$ with the disjoint decomposition of $\{p(n) \geq 1\}$ into these following events:
$$\{p(n) \geq 1\} = \{p(n) = 1\} \cup \{1 < p(n) < u\} \cup \{p(n) = u\} \cup \{p(n) > u\}.$$
%and show that each event corresponds to the stopping events of the rules $T_1, \dots, T_5$. 
Now, for the first event, i.e., when $p(n) = 1$, by Lemma \ref{iso_char_gap_int2}, we have
\begin{align*}
&\lt\{\prod_{i = 1}^{p(n) \wedge u} \Lambda_{(i)}(n) \geq D,  \;\; p(n) = 1, \;\; \Lambda_{k, \rm iso}(n) \notin \lt(1/A, B\rt) \; \; \text{for every}  \, k \in [K]\rt\}\\
= &\lt\{\Lambda_{(1)}(n) \geq D, \;\; \frac{\Lambda_k(n)}{\Lambda_{(2)}(n)} \geq B \;\; \text{for}\;\; k  = i_1(n), \;\;
\Lambda_k(n) \leq \frac{1}{A} \;\; \text{for every}\ k \neq i_1(n)\rt\}\\
= &\lt\{\Lambda_{(1)}(n) \geq D, \;\; \frac{\Lambda_{(1)}(n)}{\Lambda_{(2)}(n)} \geq B, \;\;
\Lambda_{(2)}(n) \leq \frac{1}{A}\rt\},
\end{align*}
which is the stopping event of $T_1$.
For the second event, i.e., when $1 < p(n) < u$, by Lemma \ref{iso_char_gap_int3}, we have
\begin{align*}
&\lt\{\prod_{i = 1}^{p(n) \wedge u} \Lambda_{(i)}(n) \geq D, \;\; 1 < p(n) < u, \;\; \Lambda_{k, \rm iso}(n) \notin \lt(\frac{1}{A}, B\rt) \;\; \text{for every}\ k \in [K]\rt\}\\
= &\lt\{\prod_{i = 1}^{p(n)} \Lambda_{(i)}(n) \geq D, \;\; 1 < p(n) < u, \;\;  \Lambda_k(n) \notin \lt(\frac{1}{A}, B\rt) \;\; \text{for every}\ k \in [K]\rt\},
\end{align*}
which is the stopping event of $T_3$.
For the third event, i.e., when $p(n) = u$, by Lemma \ref{iso_char_gap_int4}, we have
\begin{align*}
&\lt\{\prod_{i = 1}^{p(n) \wedge u} \Lambda_{(i)}(n) \geq D, \;\; p(n) = u, \;\; \Lambda_{k, \rm iso}(n) \notin \lt(\frac{1}{A}, B\rt) \;\; \text{for every}\ k \in [K]\rt\}\\
= &\Bigg\{\prod_{i = 1}^{u} \Lambda_{(i)}(n) \geq D, \;\; p(n) = u, \;\; \Lambda_k(n) \geq B \;\; \text{for every}\ k \in \{i_1(n), \dots, i_u(n)\},\\
&\qquad \qquad \qquad \frac{\Lambda_k(n)}{\Lambda_{(u)}(n)} \leq \frac{1}{A} \;\; \text{for every}\ k \notin \{i_1(n), \dots, i_u(n)\Bigg\}\\
= &\lt\{\prod_{i = 1}^{u} \Lambda_{(i)}(n) \geq D, \;\; p(n) = u, \;\; \Lambda_{(u)}(n) \geq B \;\; \text{and} \;\; \frac{\Lambda_{(u + 1)}(n)}{\Lambda_{(u)}(n)} \leq \frac{1}{A}\rt\},
\end{align*}
which is the stopping event of $T_4$. 
For the fourth event, i.e., when $p(n) > u$ by Lemma \ref{iso_char_gap_int5}, we have
\begin{align*}
&\lt\{\prod_{i = 1}^{p(n) \wedge u} \Lambda_{(i)}(n) \geq D, \;\; p(n) > u, \;\; \Lambda_{k, \rm iso}(n) \notin \lt(\frac{1}{A}, B\rt) \;\; \text{for every}\ k \in [K]\rt\}\\
= &\Bigg\{\prod_{i = 1}^{u} \Lambda_{(i)}(n) \geq D, \;\; p(n) > u, \;\; \frac{\Lambda_k(n)}{\Lambda_{(u + 1)}(n)} \geq B \;\; \text{for every}\ k \in \{i_1(n), \dots, i_u(n)\}\\
&\qquad \qquad \qquad \frac{\Lambda_k(n)}{\Lambda_{(u)}(n)} \leq \frac{1}{A} \;\; \text{for every}\ k \notin \{i_1(n), \dots, i_u(n)\}\Bigg\}\\
= &\lt\{\prod_{i = 1}^{u} \Lambda_{(i)}(n) \geq D, \;\; p(n) > u, \;\; \frac{\Lambda_{(u)}(n)}{\Lambda_{(u + 1)}(n)} \geq \max\{A, B\}\rt\},
\end{align*}
which is the stopping event of $T_5$. Hence, the result follows.

Next we consider the case when $u = K$, and similarly intersect the stopping event of $T_{joint}$ with the disjoint decomposition of $\{p(n) \geq 1\}$ into these following events:
$$\{p(n) \geq 1\} = \{p(n) = 1\} \cup \{1 < p(n) \leq K\}.$$
For the second event in above, i.e., when $1 < p(n) \leq K$, by Lemma \ref{iso_char_gap_int3} and \ref{iso_char_gap_int4}, we have
\begin{align*}
&\lt\{\prod_{i = 1}^{p(n) \wedge K} \Lambda_{(i)}(n) \geq D, \;\; 1 < p(n) \leq K, \;\; \Lambda_{k, \rm iso}(n) \notin \lt(\frac{1}{A}, B\rt) \;\; \text{for every}\ k \in [K]\rt\}\\
= &\lt\{\prod_{i = 1}^{p(n)} \Lambda_{(i)}(n) \geq D, \;\; 1 < p(n) \leq K, \;\;  \Lambda_k(n) \notin \lt(\frac{1}{A}, B\rt) \;\; \text{for every}\ k \in [K]\rt\},
\end{align*}
which is the stopping event of $T_2$. Therefore, in this case $T_{joint} = \min\{T_1, T_2\}$. 
%Finally, if we let $A = C$ and $B = D$, then for every $k \in [K]$, we have
%$$\log A_e = \log A = \log C = \log C_e \;\quad \text{and} \;\quad \log B_e = \log B = \log D = \log D_e,$$
%which essentially implies $\chi^* = \chi^*_{fwer}$.
This completes the proof.
\end{proof}

\subsection{Some important lemmas}
Consider $l = 0$, i.e., $\Psi = \Psi_{0, u}$.
From Proposition \ref{propo:sufficient}, for any $\sfP, \sfQ \in \cP_\Psi$, we have
\begin{equation}\label{sum_of_info1}
\cI(\sfP, \sfQ) = \sum_{\{k\} \in \cA(\sfP)} \cI(\sfP^k, \sfQ^k) + \sum_{\{k\} \notin \cA(\sfP)} \cI(\sfP^k, \sfQ^k).
\end{equation}
Based on the above observation, we now present two important lemmas which will be useful to establish Theorem \ref{th:indep_fwer}.
\begin{lemma}\label{P-P0}
For any distribution $\sfP \in \cP_\Psi \setminus \cH_0$,
$$\cI(\sfP, \cH_0) = \sum_{\{k\} \in \cA(\sfP)} \cI(\sfP^k, \cH^k).$$
\end{lemma}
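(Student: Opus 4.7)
The plan is to exploit the product structure of global distributions afforded by the independence assumption \eqref{product}. For an arbitrary $\sfQ \in \cH_0$, equation \eqref{sum_of_info1} yields
\begin{equation*}
\cI(\sfP, \sfQ) \;=\; \sum_{\{k\} \in \cA(\sfP)} \cI(\sfP^k, \sfQ^k) \;+\; \sum_{\{k\} \notin \cA(\sfP)} \cI(\sfP^k, \sfQ^k),
\end{equation*}
and $\sfQ^k \in \cH^k$ for every $k \in [K]$ because $\cA(\sfQ) = \emptyset$. Under independence, the marginals $\{\sfQ^k\}_{k \in [K]}$ may be chosen independently within $\cH^k$, so minimization over $\sfQ \in \cH_0$ decouples into $K$ separate marginal minimizations.

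I would then treat the two groups of terms one at a time. For the indices with $\{k\} \notin \cA(\sfP)$, we have $\sfP^k \in \cH^k$, so taking $\sfQ^k = \sfP^k$ is feasible and makes the log-likelihood ratio identically zero, forcing $\cI(\sfP^k, \sfQ^k) = 0$; by non-negativity of the information numbers, this is optimal. For the indices with $\{k\} \in \cA(\sfP)$, minimizing $\cI(\sfP^k, \sfQ^k)$ over $\sfQ^k \in \cH^k$ gives $\cI(\sfP^k, \cH^k)$ directly from the definition of the latter. Summing these marginal minima yields the claimed identity.

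I do not anticipate a serious obstacle; the one point requiring care is the feasibility step, namely that the product of the marginal minimizers actually defines a member of $\cH_0$. This reduces to $\cH_0$ being closed under independent products of marginals drawn from $\{\cH^k\}_{k \in [K]}$, which is immediate in the present setting since \eqref{product} holds, each such product measure has empty signal set, and the constraint $|\cA(\sfQ)| \leq u$ imposed by $\Psi = \Psi_{0,u}$ is trivially satisfied when $\cA(\sfQ) = \emptyset$.
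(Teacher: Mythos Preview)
Your proof is correct and follows essentially the same approach as the paper: both use the additive decomposition \eqref{sum_of_info1} to reduce the global minimization over $\cH_0$ to independent marginal minimizations, then set $\sfQ^k = \sfP^k$ for non-signal sources and take the $\cH^k$-minimizer for signal sources. Your treatment is slightly more explicit about the feasibility of the constructed minimizer in $\cH_0$, but otherwise the arguments coincide.
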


\begin{proof}

Due to \eqref{sum_of_info1} it is not difficult to observe that the minimum of $\cI(\sfP, \sfQ)$ over $\sfQ \in \cH_0$ is obtained by $\sfQ_*$ that satisfies the following: for any $k \in [K]$
$$\sfQ_*^k = 
\begin{cases}
\sfP^k \quad &\text{if} \;\; \{k\} \notin \cA(\sfP)\\
\argmin_{\sfQ \in \cH^k} \cI\lt(\sfP^k, \sfQ\rt) &\text{otherwise}
\end{cases},$$
which yields $$\cI(\sfP, \cH_0) = \cI(\sfP, \sfQ_*) = \sum_{\{k\} \in \cA(\sfP)} \cI(\sfP^k, \sfQ_*^k) + \sum_{\{k\} \notin \cA(\sfP)} \cI(\sfP^k, \sfQ_*^k) = \sum_{\{k\} \in \cA(\sfP)} \cI(\sfP^k, \cH^k).$$
\end{proof}

\begin{lemma}\label{P-P1}
For any distribution $\sfP \in \cP_\Psi \setminus \cH_0$,
$$\min_{\{k\} \in \cA(\sfP)}\cI(\sfP, \cH_{\Psi, k}) = \min_{\{k\} \in \cA(\sfP)} \cI(\sfP^k, \cH^k) + \mathbbm{1}_{\{|\cA(\sfP)| = 1\}}\min_{\{k\} \notin \cA(\sfP)} \cI(\sfP^k, \cG^k),$$
where $\mathbbm{1}_A$ is the indicator of $A$, i.e., it takes the value $1$ if $A$ is true, or $0$, otherwise.
\end{lemma}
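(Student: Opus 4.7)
The plan is to fix $\{k\} \in \cA(\sfP)$, compute $\cI(\sfP, \cH_{\Psi,k})$ by exploiting the additive decomposition~\eqref{sum_of_info1}, and then minimize over $\{k\} \in \cA(\sfP)$. Since under independence each summand $\cI(\sfP^j, \sfQ^j)$ depends only on the $j$-th marginal of $\sfQ$, the minimization separates coordinatewise; the only coupling between coordinates comes from the constraint $\sfQ \in \cP_\Psi \setminus \cH_0$, i.e., that $\cA(\sfQ)$ is a non-empty element of $\Psi_{0,u}$. I will split into two cases based on $|\cA(\sfP)|$.

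In the case $|\cA(\sfP)| > 1$, I would choose $\cA(\sfQ) := \cA(\sfP) \setminus \{k\}$, which is non-empty and of cardinality at most $u$, hence lies in $\Psi_{0,u} \setminus \{\emptyset\}$. Setting $\sfQ^j = \sfP^j$ for every $j \neq k$ kills those summands, while the constraint $\sfQ^k \in \cH^k$ is attained at a minimizer, contributing exactly $\cI(\sfP^k, \cH^k)$. This yields $\cI(\sfP, \cH_{\Psi,k}) = \cI(\sfP^k, \cH^k)$, and minimizing over $\{k\} \in \cA(\sfP)$ gives the claim since the indicator vanishes.

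In the case $|\cA(\sfP)| = 1$, so $\cA(\sfP) = \{k\}$, setting $\sfQ^k \in \cH^k$ and $\sfQ^j = \sfP^j$ for $j \neq k$ would force $\cA(\sfQ) = \emptyset$, i.e., $\sfQ \in \cH_0$, which is forbidden. Hence at least one $j \neq k$ must satisfy $\sfQ^j \in \cG^j$, contributing at least $\cI(\sfP^j, \cG^j) \geq \min_{\{j\} \notin \cA(\sfP)} \cI(\sfP^j, \cG^j)$. Choosing exactly one such $j$ (allowed since $u \geq |\cA(\sfP)| = 1$) attains this bound, matching $\sfP$ on all remaining coordinates, so $\cI(\sfP, \cH_{\Psi,k}) = \cI(\sfP^k, \cH^k) + \min_{\{j\} \notin \cA(\sfP)} \cI(\sfP^j, \cG^j)$, which is the claimed formula with indicator equal to $1$.

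The main subtlety is the compatibility constraint $\cA(\sfQ) \in \Psi_{0,u} \setminus \{\emptyset\}$ in the case $|\cA(\sfP)| = 1$: it is precisely this constraint that forces the additional $\min_{\{j\} \notin \cA(\sfP)} \cI(\sfP^j, \cG^j)$ term, since we cannot simultaneously null out unit $k$ and keep all other units at their null values. Aside from verifying feasibility of $\cA(\sfQ)$ in each case, the argument is a direct coordinatewise minimization afforded by the independence of the data sources and the product structure of the families $\cP_\Psi$, $\cH_{\Psi,k}$.
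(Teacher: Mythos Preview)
Your proposal is correct and follows essentially the same approach as the paper: both exploit the additive decomposition \eqref{sum_of_info1} to reduce the minimization over $\cH_{\Psi,k}$ to a coordinatewise problem, construct the same minimizers in each case, and identify the extra $\min_{\{j\}\notin\cA(\sfP)}\cI(\sfP^j,\cG^j)$ term in the case $|\cA(\sfP)|=1$ as arising from the constraint $\cA(\sfQ)\neq\emptyset$. Your treatment of the lower bound in the $|\cA(\sfP)|=1$ case (``at least one $j\neq k$ must satisfy $\sfQ^j\in\cG^j$'') is in fact slightly more explicit than the paper's, which simply asserts that one may restrict to $|\cA(\sfQ)|=1$.
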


\begin{proof}
Since we want to minimize $\cI(\sfP, \sfQ)$ over all $\sfQ$ such that there exists $\{k\} \in \cA(\sfP)$ for which $\sfQ \in \cH_{\Psi, k}$. i.e., all $\sfQ$ such that $\cA(\sfP) \setminus \cA(\sfQ) \neq \emptyset$ and $|\cA(\sfQ)| \geq 1$. First, consider the case when $|\cA(\sfP)| = 1$ and let $\cA(\sfP) = \{\{k_1\}\}$ for some $k_1 \in [K]$. Then due to \eqref{sum_of_info1} we must restrict our attention to all possible $\sfQ$ such that $\{k_1\} \notin \cA(\sfQ)$ and $|\cA(\sfQ)| = 1$. Fix some $k_2 \in \argmin_{k \neq k_1} \cI\lt(\sfP^k, \cG^k\rt)$. Then, the minimum is obtained by $\sfQ_*$ that satisfies the following: for any $k \in [K]$
$$\sfQ_*^k = 
\begin{cases}
\argmin_{\sfQ \in \cH^k} \cI\lt(\sfP^k, \sfQ\rt) \quad &\text{if} \;\; k = k_1\\
\argmin_{\sfQ \in \cG^k} \cI\lt(\sfP^k, \sfQ\rt) &\text{if} \;\; k = k_2\\
\sfP^k &\text{otherwise}
\end{cases},$$
which yields
$$\min_{\{k\} \in \cA(\sfP)}\cI(\sfP, \cH_{\Psi, k}) = \cI(\sfP, \sfQ_*) = \cI(\sfP^{k_1}, \cH^{k_1}) + \min_{k \neq k_1} \cI(\sfP^k, \cG^k).$$

Next, consider the case when $|\cA(\sfP)| > 1$. Then due to \eqref{sum_of_info1} we must restrict our attention to all possible $\sfQ$ such that its distribution differs from $\sfP$ in exactly one source, which is a signal under $\sfP$ and a non-signal under $\sfQ$. Fix some $k_1 \in  \argmin_{\{k\} \in \cA(\sfP)} \cI(\sfP^k, \cH^k)$ Then, the minimum is obtained by $\sfQ_*$ that satisfies the following: for any $k \in [K]$
$$\sfQ_*^k = 
\begin{cases}
\argmin_{\sfQ \in \cH^k} \cI\lt(\sfP^k, \sfQ\rt) &\text{if} \;\; k = k_1\\
\sfP^k &\text{otherwise}
\end{cases},$$
which yields
$$\min_{\{k\} \in \cA(\sfP)}\cI(\sfP, \cH_{\Psi, k}) = \cI(\sfP, \sfQ_*) = \min_{\{k\} \in \cA(\sfP)} \cI(\sfP^k, \cH^k).$$
The proof is complete.
\end{proof}

\subsection{Proof of Theorem \ref{th:indep_fwer}}\label{pf:th:indep_fwer}

\begin{proof}
We have  $l=0$ and $s_k = \{k\}$  for every $k \in [K]$.
\begin{enumerate}
\item[(i)]
Since $u = K$, $s_k = s'_k = \{k\}$, and $A_k = A$ and $B_k = B$ for every $k \in [K]$, we have
\begin{align*}
T^*_{fwer} &= \inf\{ n \in \bN : \; \Lambda_k(n) \notin \lt(1/A, B\rt) \; \; \text{for every}  \, k \in [K] \;\;  \text{and} \;\; D_{\rm iso}(n) \in \Psi_{0, K} \},
\end{align*}
As $\Psi_{0, K}$ is the powerset of  $[K]$, we can omit the condition $\{D_{\rm iso}(n) \in \Psi_{0, K}\}$ from the stopping event of $T^*_{fwer}$.
 This finishes the first part of the proof. Next, we prove its asymptotic optimality.

 %we will use Theorem \ref{P0}, Theorem \ref{gamlesdeltaforMT} and Theorem \ref{P2}. 
First, let us fix any arbitrary $\sfP \in \cH_0$. In order to prove that this rule is asymptotically optimal under $\sfP$, it suffices to show that \eqref{prop:PminusP0_1} holds when $s_k = \{k\}$ for every $k \in [K]$. Then by Proposition \ref{prop:P1}(iii), this will further imply that \eqref{condPminusP0} in Theorem \ref{PminusP0} is satisfied and the result will follow. 
Fix some $\bar{k} \in [K]$ such that
\begin{equation*}
\bar{k} \in \argmin_{k \in [K]} \cI (\sfP^k, \cG^k),
\end{equation*}
and let $s = s_{\bar{k}} = \{\bar{k}\}$.
Note that, under $\sfP$, $s$ is independent of $s^c$ due to independence across the sources. Furthermore, for any $\sfQ_1$ such that
$$\sfQ_1 \in \argmin_{\sfQ \in \cG_{\Psi, \bar{k}}} \cI \lt(\sfP, \sfQ; \bar{k}\rt) \quad \text{implies that} \quad \sfQ_1^{\bar{k}} \in 
\cG^{\bar{k}}. %= \argmin_{\sfQ \in \cG^{\bar{k}}} \cI(\sfP^{\bar{k}}, \sfQ).
$$
Thus, $|\cA\lt(\sfQ_1^{s} \otimes \sfP^{s^c}\rt)| = 1$, and also, under $\sfQ_1^{s} \otimes \sfP^{s^c}$ the sources are still independent, which imply that
$$\sfQ_1^{s} \otimes \sfP^{s^c} \in \cP_\Psi \setminus \cH_0.$$
%\begin{equation}
%\label{prop:PminusP0_2}
%$Q_1^{W_{e'}} \otimes \sfP^{W_{e'}^c} \in \cP_\Psi \setminus \cH_0,$
%\end{equation}
Therefore, \eqref{prop:PminusP0_1} holds. 

Next, we fix any arbitrary $\sfP \in \cP_\Psi \setminus \cH_0$. In order to prove that this rule is asymptotically optimal under $\sfP$, we use Theorem \ref{P0P1P2}. We first show that \eqref{prop:P2_1} holds when $s'_k = \{k\}$ for every $k \in [K]$. Then by Proposition \ref{prop:P1}(ii), this will further imply that \eqref{condP2} is satisfied.
Fix $\{\bar{k}\} \notin \cA(\sfP)$ be the unit such that
\begin{equation*}
\{\bar{k}\} \in \argmin_{\{k\} \notin \cA(\sfP)} \cI (\sfP^k, \cG^k),
\end{equation*}
and let $s = s_{\bar{k}} = \{\bar{k}\}$.
Note that, under $\sfP$, $s$ is independent of $s^c$ due to independence across the sources. Furthermore, for any $\sfQ_1$ such that
$$\sfQ_1 \in \argmin_{\sfQ \in \cG_{\Psi, \bar{k}}} \cI \lt(\sfP, \sfQ; \bar{k}\rt) \quad \text{implies that} \quad \sfQ_1^{\bar{k}} \in 
\cG^{\bar{k}}. %= \argmin_{\sfQ \in \cG^{\bar{k}}} \cI(\sfP^{\bar{k}}, \sfQ).
$$
Thus, $|\cA\lt(\sfQ_1^{s} \otimes \sfP^{s^c}\rt)| = |\cA(\sfP)| + 1 \geq 1$, and also, under $\sfQ_1^{s} \otimes \sfP^{s^c}$ the sources are still independent, which imply that
$$\sfQ_1^{s} \otimes \sfP^{s^c} \in \cP_\Psi \setminus \cH_0.$$
%\begin{equation}
%\label{prop:PminusP0_2}
%$Q_1^{W_{e'}} \otimes \sfP^{W_{e'}^c} \in \cP_\Psi \setminus \cH_0,$
%\end{equation}
Therefore, \eqref{prop:P2_1} holds. Now, we also use Remark \ref{rem:fwer} to complete the rest of this proof.

First consider the case when $|\cA(\sfP)| = 1$ and let $\cA(\sfP) = \{\{k_1\}\}$. Since $s_k = s'_k = \{k\}$ for every $k \in [K]$,
$$\max_{\{k\} \in \cA(\sfP)} \cI(\sfP, \cH_0; s_k) = \cI(\sfP^{k_1}, \cH^{k_1}) = \min_{\{k\} \in \cA(\sfP)} \cI(\sfP,  \cH_{\Psi, k}; s'_k).$$
Furthermore, from Lemma \ref{P-P0} and Lemma \ref{P-P1}, we have
$$\cI(\sfP, \cH_0)= \cI(\sfP^{k_1}, \cH^{k_1}) \leq \cI(\sfP^{k_1}, \cH^{k_1}) + \min_{k \neq k_1} \cI(\sfP^k, \cG^{k}) = \min_{\{k\} \in \cA(\sfP)}\cI(\sfP, \cH_{\Psi, k}).$$
Thus, \eqref{FWERcondP0} holds, and
also, the first equality in above implies that \eqref{condP0} is satisfied. Therefore, by using Remark \ref{rem:fwer} the result follows.

Next, we consider the case when $|\cA(\sfP)| > 1$. Since $s_k = s'_k = \{k\}$ for every $k \in [K]$,
$$\max_{\{k\} \in \cA(\sfP)} \cI(\sfP, \cH_0; s_k) = \max_{\{k\} \in \cA(\sfP)} \cI(\sfP^k, \cH^k) \geq \min_{\{k\} \in \cA(\sfP)} \cI(\sfP^k, \cH^k) = \min_{\{k\} \in \cA(\sfP)} \cI(\sfP,  \cH_{\Psi, k}; s'_k).$$
Furthermore, from Lemma \ref{P-P0} and Lemma \ref{P-P1}, we have
$$\cI(\sfP, \cH_0) =  \sum_{\{k\} \in \cA(\sfP)} \cI(\sfP^k, \cH^k) \geq \min_{\{k\} \in \cA(\sfP)} \cI(\sfP^k, \cH^k) = \min_{\{k\} \in \cA(\sfP)} \cI(\sfP, \cH_{\Psi, k}).$$
Thus, \eqref{FWERcondP1} holds, and also, the last equality in above implies that \eqref{condP1} is satisfied.
Therefore, by using Remark \ref{rem:fwer} the result follows.
This completes the proof.

\item[(ii)]
Since $s_k = \{k\}$, $s'_k = [K]$, and $A_k = A$, $B_k = B$ for every $k \in [K]$, we have $T^*_{fwer} = \min\{T_0, T_{joint}\}$, where one can write the stopping times $T_0$ and $T_{joint}$ as
\begin{align*}
T_0 &= \inf\lt\{n  \in \bN : \max_{k \in [K]} \; \Lambda_k(n) \leq 1/A \rt\} = \inf\lt\{n  \in \bN : \Lambda_{(1)}(n) \leq 1/A\rt\},\\
T_{joint} &= \inf\{ n \in \bN : \; \max_{k \in [K]} \; \Lambda_{e}(n) \geq B,\\
& \qquad \qquad \qquad \;\; \Lambda_{k, \rm iso}(n) \notin \lt(1/A, B\rt) \; \; \text{for every}  \, k \in [K] \quad  \text{and} \quad 1 \leq |D_{\rm iso}(n)| \leq u \}\\
&= \inf\{ n \in \bN : \; \Lambda_{(1)}(n) \geq D,\\
& \qquad \qquad \qquad \;\; \Lambda_{k, \rm iso}(n) \notin \lt(1/A, B\rt) \; \; \text{for every}  \, k \in [K] \quad  \text{and} \quad 1 \leq |D_{\rm iso}(n)| \leq u \}.
\end{align*}
Since $\Lambda_{(1)}(n) \geq D > 1$ implies that $p(n) \geq 1$, the stopping events of $T_{joint}$ remains the same if it is intersected with $p(n) \geq 1$. Again, from Lemmas \ref{iso_char_gap_int2}, \ref{iso_char_gap_int3}, \ref{iso_char_gap_int4} and \ref{iso_char_gap_int5}, one can observe that for any $n \in \bN$ and $u \leq K$, $p(n) \geq 1$ implies $1 \leq |D_{\rm iso}(n)| \leq u$.
Thus, one can safely replace the latter condition with the first and write $T_{joint}$ as
\begin{align*}
T_{joint} &= \inf\{ n \in \bN : \; \Lambda_{(1)}(n) \geq D, \\
& \qquad \qquad \qquad \;\; \Lambda_{k, \rm iso}(n) \notin \lt(1/A, B\rt) \; \; \text{for every}  \, k \in [K] \quad  \text{and} \quad p(n) \geq 1 \}.
\end{align*}
%Hence, we can safely omit the later condition from the stopping event of $T_{joint}$.
Now, it is enough to show that $T_{joint} = \min\{T_1, T_2, T_3, T_4\}$ and in order to do so,
we further intersect the stopping event of $T_{joint}$ with the disjoint decomposition of $\{p(n) \geq 1\}$ into these following events:
$$\{p(n) \geq 1\} = \{p(n) = 1\} \cup \{1 < p(n) < u\} \cup \{p(n) = u\} \cup \{p(n) > u\}.$$
The rest of the proof for the first part can be done in a similar manner as employed in the proof of Theorem \ref{th:dete_indep}. Next, we prove its asymptotic optimality.

First, let us fix any arbitrary $\sfP \in \cH_0$. In order to prove that this rule is asymptotically optimal under $\sfP$, it suffices to show that \eqref{prop:PminusP0_1} holds when $s_k = \{k\}$ for every $k \in [K]$. Then by Proposition \ref{prop:P1}(iii), this will further imply that \eqref{condPminusP0} in Theorem \ref{PminusP0} is satisfied and the result will follow. 
Fix some $\bar{k} \in [K]$ such that
\begin{equation*}
\bar{k} \in \argmin_{k \in [K]} \cI (\sfP^k, \cG^k),
\end{equation*}
and let $s = s_{\bar{k}} = \{\bar{k}\}$.
Note that, under $\sfP$, $s$ is independent of $s^c$ due to independence across the sources. Furthermore, for any $\sfQ_1$ such that
$$\sfQ_1 \in \argmin_{\sfQ \in \cG_{\Psi, \bar{k}}} \cI \lt(\sfP, \sfQ; \bar{k}\rt) \quad \text{implies that} \quad \sfQ_1^{\bar{k}} \in 
\cG^{\bar{k}}. %= \argmin_{\sfQ \in \cG^{\bar{k}}} \cI(\sfP^{\bar{k}}, \sfQ).
$$
Thus, $|\cA\lt(\sfQ_1^{s} \otimes \sfP^{s^c}\rt)| = 1$, and also, under $\sfQ_1^{s} \otimes \sfP^{s^c}$ the sources are still independent, which imply that
$$\sfQ_1^{s} \otimes \sfP^{s^c} \in \cP_\Psi \setminus \cH_0.$$
%\begin{equation}
%\label{prop:PminusP0_2}
%$Q_1^{W_{e'}} \otimes \sfP^{W_{e'}^c} \in \cP_\Psi \setminus \cH_0,$
%\end{equation}
Therefore, \eqref{prop:PminusP0_1} holds. 

Next, we fix any arbitrary $\sfP \in \cP_\Psi \setminus \cH_0$. In order to prove that this rule is asymptotically optimal under $\sfP$, we use Theorem \ref{P0P1P2}. Since $s'_k = [K]$ for every $k \in [K]$, both \eqref{condP1} and \eqref{condP2} are satisfied, however \eqref{condP0} is not satisfied when $|\cA(\sfP)| > 1$.
In this case, we use Remark \ref{rem:fwer} to solve this issue.

Since $|\cA(\sfP)| > 1$ and $s_k = \{k\}$, $s'_k = [K]$ for every $k \in [K]$,
\begin{align*}
\max_{\{k\} \in \cA(\sfP)} \cI(\sfP, \cH_0; s_k) &= \max_{\{k\} \in \cA(\sfP)} \cI(\sfP^k, \cH^k)
 \geq \min_{\{k\} \in \cA(\sfP)} \cI(\sfP^k, \cH^k)\\
 &= \min_{\{k\} \in \cA(\sfP)} \cI(\sfP, \cH_{\Psi, k}) = \min_{\{k\} \in \cA(\sfP)} \cI(\sfP,  \cH_{\Psi, k}; s'_k).
\end{align*}
Furthermore, from Lemma \ref{P-P0} and Lemma \ref{P-P1}, we have
$$\cI(\sfP, \cH_0) =  \sum_{\{k\} \in \cA(\sfP)} \cI(\sfP^k, \cH^k) \geq \min_{\{k\} \in \cA(\sfP)} \cI(\sfP^k, \cH^k) = \min_{\{k\} \in \cA(\sfP)} \cI(\sfP, \cH_{\Psi, k}).$$
Thus, \eqref{FWERcondP1} holds, and also, the last equality in above implies that \eqref{condP1} is satisfied.
Therefore, by using Remark \ref{rem:fwer} the result follows.
This completes the proof.
\end{enumerate}
\end{proof}

%The above rule is known as the \textit{Gap Rule} in the literature. 

\section{Detection and isolation of anomalous sources: the general case}\label{app:appl_dep_source}

\subsection{Proof of Theorem \ref{w_eP1_marg}}\label{pf:w_eP1_marg}

\begin{proof}
We have $\emptyset \in \Psi$, $\sfP \in \cP_\Psi \setminus \cH_0$ and $s'_k = \{k\}$ for every $k \in [K]$.
\begin{enumerate}
\item[(i)] If $\alpha, \beta, \gamma, \delta \to 0$ such that $|\log \gamma| >> |\log \alpha| \vee |\log \delta|$, from Lemma \ref{lem:DI_lb}(iii) we have
\begin{align*}
\inf \lt\{ \sfE_{\sfP}[T]:\; 
(T, D) \in \cC_{\Psi}(\alpha, \beta,\gamma, \delta) \rt\}
\gtrsim \; \max_{\{k\} \in \cA(\sfP)}  \lt\{\frac{|\log\gamma|}{ \cI(\sfP, \cH_{\Psi, k} ) } \rt\},
\end{align*}
and on the other hand, from Theorem \ref{ub_DI}(ii) we have
\begin{align*} %\label{ub_DI_2}
\sfE_\sfP\lt[T^*\rt] \lesssim \max_{\{k\} \in \cA(\sfP)}  \lt\{\frac{|\log \gamma|}{\cI \lt(\sfP^k, \cH^k\rt) }\rt\}.
\end{align*}
Thus, the result follows from the definition of ${\sf{ARE}}_{\sfP}[\chi^*]$.

Now, note that, by definition ${\sf{ARE}}_{\sfP}[\chi^*] \geq 1$. Therefore, to prove that ${\sf{ARE}}_{\sfP}[\chi^*] = 1$, it suffices to show that
\eqref{condP1} holds, i.e., 
$$ \min_{\{k\} \in \cA(\sfP)} \cI (\sfP^k , \cH^k) = \min_{\{k\} \in \cA(\sfP)} \cI (\sfP, \cH_{\Psi, k}),$$
which again is true from Proposition \ref{prop:P1} if \eqref{prop:P1_1} holds.
Fix the source $\bar{k} \in [K]$ such that
\begin{equation*}
\{\bar{k}\} \in \argmin_{\{k\} \in \cA(\sfP)} \cI (\sfP^k, \cH^k),
\end{equation*}
and for which $s$ is independent of $s^c$ under $\sfP$, where $s = s'_{\bar{k}} = \{\bar{k}\}$. Furthermore, for any $\sfQ_1$ such that
$$\sfQ_1 \in \argmin_{\sfQ \in \cH_{\Psi, \bar{k}}} \cI \lt(\sfP, \sfQ; \bar{k}\rt) \quad \text{implies that} \quad \sfQ_1^{\bar{k}} \in 
\cH^{\bar{k}}. %= \argmin_{\sfQ \in \cG^{\bar{k}}} \cI(\sfP^{\bar{k}}, \sfQ).
$$
Thus, $1 \leq |\cA\lt(\sfQ_1^{s} \otimes \sfP^{s^c}\rt)| = |\cA(\sfP)| - 1 \leq u-1$, which implies that
$$\sfQ_1^{s} \otimes \sfP^{s^c} \in \cP_\Psi \setminus \cH_0.$$
%\begin{equation}
%\label{prop:PminusP0_2}
%$Q_1^{W_{e'}} \otimes \sfP^{W_{e'}^c} \in \cP_\Psi \setminus \cH_0,$
%\end{equation}
Therefore, \eqref{prop:P1_1} holds and the result follows.
\item[(ii)] If $\alpha, \beta, \gamma, \delta \to 0$ such that $|\log \delta| >> |\log \alpha| \vee |\log \gamma|$, from Lemma \ref{lem:DI_lb}(iii) we have
\begin{align*}
\inf \lt\{ \sfE_{\sfP}[T]:\; 
(T, D) \in \cC_{\Psi}(\alpha, \beta,\gamma, \delta) \rt\}
\gtrsim \; \max_{\{k\} \notin \cA(\sfP)}  \lt\{\frac{|\log\delta|}{ \cI(\sfP, \cG_{\Psi, k} ) } \rt\},
\end{align*}
and on the other hand, from Theorem \ref{ub_DI}(ii) we have
\begin{align*} %\label{ub_DI_2}
\sfE_\sfP\lt[T^*\rt] \lesssim \max_{\{k\} \notin \cA(\sfP)}  \lt\{\frac{|\log \delta|}{\cI \lt(\sfP^k, \cG^k\rt) }\rt\}.
\end{align*}
Thus, the result follows from the definition of ${\sf{ARE}}_{\sfP}[\chi^*]$.

Now, note that, by definition ${\sf{ARE}}_{\sfP}[\chi^*] \geq 1$. Therefore, to prove that ${\sf{ARE}}_{\sfP}[\chi^*] = 1$, it suffices to show that
\eqref{condP2} holds, i.e., 
$$ \min_{\{k\} \notin \cA(\sfP)} \cI (\sfP^k , \cG^k) = \min_{\{k\} \notin \cA(\sfP)} \cI (\sfP, \cG_{\Psi, k}),$$
which again is true from Proposition \ref{prop:P1} if \eqref{prop:P2_1} holds.
Fix the source $\bar{k} \in [K]$ such that
\begin{equation*}
\{\bar{k}\} \in \argmin_{\{k\} \notin \cA(\sfP)} \cI (\sfP^k, \cG^k),
\end{equation*}
and for which $s$ is independent of $s^c$ under $\sfP$, where $s = s'_{\bar{k}} = \{\bar{k}\}$. Furthermore, for any $\sfQ_1$ such that
$$\sfQ_1 \in \argmin_{\sfQ \in \cG_{\Psi, \bar{k}}} \cI \lt(\sfP, \sfQ; \bar{k}\rt) \quad \text{implies that} \quad \sfQ_1^{\bar{k}} \in 
\cG^{\bar{k}}. %= \argmin_{\sfQ \in \cG^{\bar{k}}} \cI(\sfP^{\bar{k}}, \sfQ).
$$
Thus, $2 \leq |\cA\lt(\sfQ_1^{s} \otimes \sfP^{s^c}\rt)| = |\cA(\sfP)| + 1 \leq u$, which implies that
$$\sfQ_1^{s} \otimes \sfP^{s^c} \in \cP_\Psi \setminus \cH_0.$$
%\begin{equation}
%\label{prop:PminusP0_2}
%$Q_1^{W_{e'}} \otimes \sfP^{W_{e'}^c} \in \cP_\Psi \setminus \cH_0,$
%\end{equation}
Therefore, \eqref{prop:P2_1} holds and the result follows.
\end{enumerate}
\end{proof}

\subsection{Proof of Theorem \ref{w_ePminusP0_marg}}\label{pf:w_ePminusP0_marg}

\begin{proof}
We have $\emptyset \in \Psi$ and $s_k = \{k\}$ for every $k \in [K]$.
\begin{enumerate}
\item[(i)] If $\sfP \in \cP_\Psi \setminus \cH_0$ and $\alpha, \beta, \gamma, \delta \to 0$ such that $|\log \alpha| >> |\log \gamma| \vee |\log \delta|$, from Lemma \ref{lem:DI_lb}(iii) we have
\begin{align*}
\inf \lt\{ \sfE_{\sfP}[T]:\; 
(T, D) \in \cC_{\Psi}(\alpha, \beta,\gamma, \delta) \rt\}
\gtrsim \; \frac{|\log\alpha|}{ \cI(\sfP, \cH_0) },
\end{align*}
and on the other hand, from Theorem \ref{ub_DI}(ii) we have
\begin{align*} %\label{ub_DI_2}
\sfE_\sfP\lt[T^*\rt] \lesssim \min_{\{k\} \in \cA(\sfP)}  \lt\{ \frac{|\log \alpha|}{\cI\lt(\sfP^k, \cH^k\rt)} \rt\}.
\end{align*}
Thus, the result follows from the definition of ${\sf{ARE}}_{\sfP}[\chi^*]$.

Now, note that, by definition ${\sf{ARE}}_{\sfP}[\chi^*] \geq 1$. Therefore, to prove that ${\sf{ARE}}_{\sfP}[\chi^*] = 1$, it suffices to show that
\eqref{condP0} holds, i.e., 
$$ \max_{\{k\} \in \cA(\sfP)} \cI (\sfP^k , \cH^k) = \cI (\sfP, \cH_0),$$
which again is true from Proposition \ref{prop:P1} if \eqref{prop:P0_1} holds.
Let the index of the only signal under $\sfP$ be $\bar{k}$, i.e., $\cA(\sfP) = \{\{\bar{k}\}\}$. Then $s$ is independent of $s^c$ under $\sfP$, where $s = s_{\bar{k}} = \{\bar{k}\}$. Furthermore, for any $\sfQ_1$ such that
$$\sfQ_1 \in \argmin_{\sfQ \in \cH_0} \cI \lt(\sfP, \sfQ; \bar{k}\rt) \quad \text{implies that} \quad \sfQ_1^{\bar{k}} \in 
\cH^{\bar{k}}. %= \argmin_{\sfQ \in \cG^{\bar{k}}} \cI(\sfP^{\bar{k}}, \sfQ).
$$
Thus, $|\cA\lt(\sfQ_1^{s} \otimes \sfP^{s^c}\rt)| = 0$, which implies that
$$\sfQ_1^{s} \otimes \sfP^{s^c} \in \cH_0.$$
%\begin{equation}
%\label{prop:PminusP0_2}
%$Q_1^{W_{e'}} \otimes \sfP^{W_{e'}^c} \in \cP_\Psi \setminus \cH_0,$
%\end{equation}
Therefore, \eqref{prop:P0_1} holds and the result follows.
\item[(ii)] If $\sfP \in \cH_0$ and $\alpha, \beta \to 0$, while  $\gamma$ and $\delta$ are either  fixed or go to $0$, from Lemma \ref{lem:DI_lb}(i) we have
$$ \inf \lt\{ \sfE_{\sfP}[T]:\; 
(T, D) \in \cC_{\Psi}(\alpha, \beta,\gamma, \delta) \rt\} \; \gtrsim \; \max_{k \in [K]}  \lt\{ \frac{|\log\beta|}{ \cI \lt( \sfP , \cG_{\Psi, k} \rt)} \rt\},$$
and on the other hand, from Theorem \ref{ub_DI}(i) we have
$$\sfE_\sfP\lt[T^*\rt] \; \lesssim \;  \max_{k \in [K]}   \lt\{ \frac{|\log \beta|}{\cI \lt(\sfP^k, \cG^k\rt)}\rt\}.$$
Thus, the result follows from the definition of ${\sf{ARE}}_{\sfP}[\chi^*]$.

Now, note that, by definition ${\sf{ARE}}_{\sfP}[\chi^*] \geq 1$. Therefore, to prove that ${\sf{ARE}}_{\sfP}[\chi^*] = 1$, it suffices to show that
\eqref{condPminusP0} holds, i.e., 
$$ \min_{k \in [K]} \cI (\sfP^k , \cG^k) = \min_{k \in [K]} \cI (\sfP, \cG_{\Psi, k}),$$
which again is true from Proposition \ref{prop:P1} if \eqref{prop:PminusP0_1} holds.
Fix the source $\bar{k} \in [K]$ such that
\begin{equation*}
\bar{k} \in \argmin_{k \in [K]} \cI (\sfP^k, \cG^k),
\end{equation*}
and for which $s$ is independent of $s^c$ under $\sfP$, where $s = s'_{\bar{k}} = \{\bar{k}\}$. Furthermore, for any $\sfQ_1$ such that
$$\sfQ_1 \in \argmin_{\sfQ \in \cG_{\Psi, \bar{k}}} \cI \lt(\sfP, \sfQ; \bar{k}\rt) \quad \text{implies that} \quad \sfQ_1^{\bar{k}} \in 
\cG^{\bar{k}}. %= \argmin_{\sfQ \in \cG^{\bar{k}}} \cI(\sfP^{\bar{k}}, \sfQ).
$$
Therefore, $|\cA\lt(\sfQ_1^{s} \otimes \sfP^{s^c}\rt)| = 1$, and since $\Psi_{1, 1} \subseteq \Psi$, this implies that
$$\sfQ_1^{s} \otimes \sfP^{s^c} \in \cP_\Psi \setminus \cH_0.$$
%\begin{equation}
%\label{prop:PminusP0_2}
%$Q_1^{W_{e'}} \otimes \sfP^{W_{e'}^c} \in \cP_\Psi \setminus \cH_0,$
%\end{equation}
Therefore, \eqref{prop:PminusP0_1} holds and the result follows.
\end{enumerate}
\end{proof}

\subsection{Proof of Corollary \ref{cor:dep_source_gauss}}\label{pf:cor:dep_source_gauss}

\begin{proof}
We have $\Psi = \Psi_{0, u}$ and $s_k = s'_k = \{k\}$ for every $k \in [K]$. Fix any $\sfP \in \cP_\Psi$, and let $\mu_\sfP$ be the mean vector and $\Sigma$ be the covariance matrix of $X_n$ under $\sfP$ for every $n \in \bN$. Then we have the decomposition that $\Sigma = URU$, where $U$ is the diagonal matrix with elements of the main diagonal being the standard deviations of the sources, i.e.,
$$U = {\rm{diag}}(\sigma_1, \dots, \sigma_K).$$
Also, in the following we denote by $I_K$ the $K \times K$ identity matrix, and by $a_k$ the column vector in $\bR^K$ with only the $k^{th}$ element being $1$ and the rest being $0$.
\begin{itemize}
\item[(i)]
Fix any source $\bar{k} \in [K]$ such that
$$\{\bar{k}\} \in \argmin_{\{k\} \in \cA(\sfP)} ({\mu_k^2}/{\sigma_k^2}),$$
and consider the distribution $\sfQ$ such that it differs from $\sfP$ only in the mean of observations from the source $\bar{k}$, i.e., under $\sfQ$, we have $X^{\bar{k}} \overset{\text{iid}}{\sim} N(0, \sigma_{\bar{k}}^2)$. To be more specific, if $\mu_\sfQ$ denotes the mean vector of $X_n$ under $\sfQ$, then it differs from $\mu_\sfP$ only in its $\bar{k}^{th}$ element, such that it is $\mu_{\bar{k}}$ in $\mu_\sfP$ but $0$ in $\mu_\sfQ$. However, the covariance matrix of $X_n$ is the same under both $\sfP$ and $\sfQ$. Thus, $\sfQ^{\bar{k}} \in \cH^{\bar{k}}$ and $u \geq |\cA(\sfQ)| = |\cA(\sfP)| - 1 \geq 1$, which implies $\sfQ \in \cH_{\Psi, \bar{k}}$.
Now, from \eqref{gauss_info} we have
\begin{align*}
\cI(\sfP, \sfQ) &= \frac{1}{2}\lt\{\log\frac{det(\Sigma)}{det(\Sigma)} - K + tr(\Sigma^{-1}\Sigma) + (\mu_\sfQ - \mu_\sfP)^t\Sigma^{-1}(\mu_\sfQ - \mu_\sfP)\rt\}\\
&=\frac{1}{2}\lt\{- K + tr(I_K) + (\mu_\sfQ - \mu_\sfP)^t\Sigma^{-1}(\mu_\sfQ - \mu_\sfP)\rt\},\\ %\quad \text{let $I_K$ be $K \times K$ identity matrix}\\
&=\frac{\mu_{\bar{k}}^2}{2}a_{\bar{k}}^tU^{-1}R^{-1}U^{-1}a_{\bar{k}} %\quad \text{where} \;\; a = \underbrace{(0, \dots, 0, 1, 0, \dots, 0)}_{\text{1 at $\bar{k}^{th}$ position, rest are 0}}\\
=\frac{\mu_{\bar{k}}^2}{2\sigma_{\bar{k}}^2}a_{\bar{k}}^tR^{-1}a_{\bar{k}}
= \frac{\mu_{\bar{k}}^2}{2\sigma_{\bar{k}}^2}R^{-1}_{\bar{k}\bar{k}}.
\end{align*}
Therefore, as $\alpha, \beta, \gamma, \delta \to 0$ such that $|\log \gamma| >> |\log \alpha| \vee |\log \delta|$, from Theorem \ref{w_eP1_marg}(i) we have
\begin{align*}
{\sf{ARE}}_{\sfP}[\chi^*] &\leq \frac{\min_{\{k\} \in \cA(\sfP)} \cI \lt( \sfP , \cH_{\Psi, k} \rt)}{\min_{\{k\} \in \cA(\sfP)} \cI \lt( \sfP^k , \cH^k \rt)} = \frac{\min_{\{k\} \in \cA(\sfP)} \cI \lt( \sfP , \cH_{\Psi, k} \rt)}{(1/2)\min_{\{k\} \in \cA(\sfP)} (\mu_k^2/\sigma_k^2)}\\
&\leq \frac{\cI \lt( \sfP , \cH_{\Psi, \bar{k}} \rt)}{\mu_{\bar{k}}^2/2\sigma_{\bar{k}}^2} \leq \frac{\cI(\sfP, \sfQ)}{\mu_{\bar{k}}^2/2\sigma_{\bar{k}}^2} = R^{-1}_{\bar{k}\bar{k}}.
\end{align*}
Finally, the proof is complete by taking minimum over all such $\bar{k}$.
\item[(ii)]
Fix any source $\bar{k} \in [K]$ such that
$$\{\bar{k}\} \in \argmin_{\{k\} \notin \cA(\sfP)} \min_{\theta \in \cM_k \setminus \{0\}} ({\theta^2}/{\sigma_k^2}),$$
and consider the distribution $\sfQ$ such that it differs from $\sfP$ only in the mean of observations from the source $\bar{k}$, i.e., under $\sfQ$, we have $X^{\bar{k}} \overset{\text{iid}}{\sim} N(\theta_{\bar{k}}, \sigma_{\bar{k}}^2)$, where
$$\theta_{\bar{k}} := \argmin_{\theta \in \cM_{\bar{k}} \setminus \{0\}} ({\theta^2}/{\sigma_{\bar{k}}^2}).$$
 To be more specific, if $\mu_\sfQ$ denotes the mean vector of $X_n$ under $\sfQ$, then it differs from $\mu_\sfP$ only in its $\bar{k}^{th}$ element, such that it is $0$ in $\mu_\sfP$ but $\theta_{\bar{k}}$ in $\mu_\sfQ$. However, the covariance matrix of $X_n$ is the same under both $\sfP$ and $\sfQ$. Thus, $\sfQ^{\bar{k}} \in \cG^{\bar{k}}$ and $u \geq |\cA(\sfQ)| = |\cA(\sfP)| + 1 \geq 1$, which implies $\sfQ \in \cG_{\Psi, \bar{k}}$.
Now, from \eqref{gauss_info} we have
\begin{align*}
\cI(\sfP, \sfQ) &= \frac{1}{2}\lt\{\log\frac{det(\Sigma)}{det(\Sigma)} - K + tr(\Sigma^{-1}\Sigma) + (\mu_\sfQ - \mu_\sfP)^t\Sigma^{-1}(\mu_\sfQ - \mu_\sfP)\rt\}\\
&=\frac{1}{2}\lt\{- K + tr(I_K) + (\mu_\sfQ - \mu_\sfP)^t\Sigma^{-1}(\mu_\sfQ - \mu_\sfP)\rt\},\\ %\quad \text{let $I_K$ be $K \times K$ identity matrix}\\
&=\frac{\theta_{\bar{k}}^2}{2}a_{\bar{k}}^tU^{-1}R^{-1}U^{-1}a_{\bar{k}} %\quad \text{where} \;\; a = \underbrace{(0, \dots, 0, 1, 0, \dots, 0)}_{\text{1 at $\bar{k}^{th}$ position, rest are 0}}\\
=\frac{\theta_{\bar{k}}^2}{2\sigma_{\bar{k}}^2}a_{\bar{k}}^tR^{-1}a_{\bar{k}}
=\frac{\theta_{\bar{k}}^2}{2\sigma_{\bar{k}}^2}R^{-1}_{\bar{k}\bar{k}}.
\end{align*}
Therefore, as $\alpha, \beta, \gamma, \delta \to 0$ such that $|\log \delta| >> |\log \alpha| \vee |\log \gamma|$, from Theorem \ref{w_eP1_marg}(ii) we have
\begin{align*}
{\sf{ARE}}_{\sfP}[\chi^*] &\leq \frac{\min_{\{k\} \notin \cA(\sfP)} \cI \lt( \sfP , \cG_{\Psi, k} \rt)}{\min_{\{k\} \notin \cA(\sfP)} \cI \lt( \sfP^k , \cG^k \rt)} = \frac{\min_{\{k\} \notin \cA(\sfP)} \cI \lt( \sfP , \cG_{\Psi, k} \rt)}{(1/2)\min_{\{k\} \notin \cA(\sfP)} \min_{\theta \in \cM_k \setminus \{0\}} ({\theta^2}/{\sigma_k^2})}\\
&\leq \frac{\cI \lt( \sfP , \cG_{\Psi, \bar{k}} \rt)}{\theta_{\bar{k}}^2/2\sigma_{\bar{k}}^2} \leq \frac{\cI(\sfP, \sfQ)}{\theta_{\bar{k}}^2/2\sigma_{\bar{k}}^2} = R^{-1}_{\bar{k}\bar{k}}.
\end{align*}
Finally, the proof is complete by taking minimum over all such $\bar{k}$.
\item[(iii)]
We make the following observation about $\mu_\sfP$ which will be used in this proof:
$$\mu_{\sfP} = \sum_{\{i\} \in \cA(\sfP)} \mu_ia_i.$$
Consider the distribution $\sfQ$ such that the mean of observations from every source is $0$. To be more specific, if $\mu_\sfQ$ denotes the mean vector of $X_n$ under $\sfQ$, then every element of $\mu_\sfQ$ is $0$. However, the covariance matrix of $X_n$ is the same under both $\sfP$ and $\sfQ$. Thus, $\sfQ \in \cH_0$.
Now, from \eqref{gauss_info} we have
\begin{align*}
\cI(\sfP, \sfQ) &= \frac{1}{2}\lt\{\log\frac{det(\Sigma)}{det(\Sigma)} - K + tr(\Sigma^{-1}\Sigma) + (\mu_\sfQ - \mu_\sfP)^t\Sigma^{-1}(\mu_\sfQ - \mu_\sfP)\rt\}\\
&=\frac{1}{2}\lt\{- K + tr(I_K) + \mu_\sfP^t\Sigma^{-1}\mu_\sfP\rt\},\\ %\quad \text{let $I_K$ be $K \times K$ identity matrix}\\
&=\frac{1}{2}\lt(\sum_{\{i\} \in \cA(\sfP)} \mu_ia_i\rt)^tU^{-1}R^{-1}U^{-1}\lt(\sum_{\{j\} \in \cA(\sfP)} \mu_ja_j\rt)\\
&=\frac{1}{2}\sum_{\{i\}, \{j\} \in \cA(\sfP)}\mu_i\mu_ja_i^tU^{-1}R^{-1}U^{-1}a_j\\
&=\frac{1}{2}\sum_{\{i\}, \{j\} \in \cA(\sfP)}\frac{\mu_i\mu_j}{\sigma_i\sigma_j}a_i^tR^{-1}a_j = \frac{1}{2}\sum_{\{i\}, \{j\} \in \cA(\sfP)}\frac{\mu_i\mu_j}{\sigma_i\sigma_j}R^{-1}_{ij}
\end{align*}
Therefore, as $\alpha, \beta, \gamma, \delta \to 0$ such that $|\log \alpha| >> |\log \gamma| \vee |\log \delta|$, from Theorem \ref{w_ePminusP0_marg}(i) we have
\begin{align*}
{\sf{ARE}}_{\sfP}[\chi^*] &\leq \min_{\{k\} \in \cA(\sfP)}  \left\{ \frac{\cI(\sfP, \cH_0) }{\cI(\sfP^k, \cH^k)} \right\} \leq \frac{\cI(\sfP, \sfQ)}{(1/2)\max_{\{k\} \in \cA(\sfP)} (\mu_k^2/\sigma_k^2)}\\
&=\sum_{\{i\}, \{j\} \in \cA(\sfP)} \frac{\mu_iR^{-1}_{ij}\mu_j}{\sigma_i\sigma_j}\bigg/\max_{\{k\} \in \cA(\sfP)} \frac{\mu_k^2}{\sigma_k^2}.
\end{align*}
\item[(iv)]
Since $\sfP \in \cH_0$ all elements of $\mu_\sfP$ are $0$.
Fix any source $\bar{k} \in [K]$ such that
$$\bar{k} \in \argmin_{k \in [K]}  \min_{\theta \in \cM_k \setminus \{0\}} ({\theta^2}/{\sigma_k^2}),$$
and consider the distribution $\sfQ$ such that it differs from $\sfP$ only in the mean of observations from the source $\bar{k}$, i.e., under $\sfQ$, we have $X^{\bar{k}} \overset{\text{iid}}{\sim} N(\theta_{\bar{k}}, \sigma_{\bar{k}}^2)$, where
$$\theta_{\bar{k}} := \argmin_{\theta \in \cM_{\bar{k}} \setminus \{0\}} ({\theta^2}/{\sigma_{\bar{k}}^2}).$$ 
To be more specific, if $\mu_\sfQ$ denotes the mean vector of $X_n$ under $\sfQ$, then its $\bar{k}^{th}$ element is $\theta_{\bar{k}}$ and the rest are $0$, or $\mu_\sfQ = \theta_{\bar{k}}a_{\bar{k}}$. However, the covariance matrix of $X_n$ is the same under both $\sfP$ and $\sfQ$. Thus, $\sfQ^{\bar{k}} \in \cG^{\bar{k}}$ and $|\cA(\sfQ)| = 1$, which implies $\sfQ \in \cG_{\Psi, \bar{k}}$.
Now, from \eqref{gauss_info} we have
\begin{align*}
\cI(\sfP, \sfQ) &= \frac{1}{2}\lt\{\log\frac{det(\Sigma)}{det(\Sigma)} - K + tr(\Sigma^{-1}\Sigma) + (\mu_\sfQ - \mu_\sfP)^t\Sigma^{-1}(\mu_\sfQ - \mu_\sfP)\rt\}\\
&=\frac{1}{2}\lt\{- K + tr(I_K) + \mu_\sfQ^t\Sigma^{-1}\mu_\sfQ\rt\},\\ %\quad \text{let $I_K$ be $K \times K$ identity matrix}\\
&=\frac{\theta_{\bar{k}}^2}{2}a_{\bar{k}}^tU^{-1}R^{-1}U^{-1}a_{\bar{k}} %\quad \text{where} \;\; a = \underbrace{(0, \dots, 0, 1, 0, \dots, 0)}_{\text{1 at $\bar{k}^{th}$ position, rest are 0}}\\
=\frac{\theta_{\bar{k}}^2}{2\sigma_{\bar{k}}^2}a_{\bar{k}}^tR^{-1}a_{\bar{k}}
= \frac{\theta_{\bar{k}}^2}{2\sigma_{\bar{k}}^2}R^{-1}_{\bar{k}\bar{k}}.
\end{align*}
Therefore, as $\alpha, \beta \to 0$, while  $\gamma$ and $\delta$ are either  fixed or go to $0$, from Theorem \ref{w_ePminusP0_marg}(ii) we have
\begin{align*}
{\sf{ARE}}_{\sfP}[\chi^*] &\leq \frac{\min_{k \in [K]} \cI \lt( \sfP , \cG_{\Psi,k} \rt)}{\min_{k \in [K]} \cI \lt( \sfP^k , \cG^k \rt)} = \frac{\min_{k \in [K]} \cI \lt( \sfP , \cG_{\Psi,k} \rt)}{(1/2)\min_{k \in [K]} \min_{\theta \in \cM_k \setminus \{0\}} ({\theta^2}/{\sigma_k^2})}\\
&\leq \frac{\cI \lt( \sfP , \cG_{\Psi, \bar{k}} \rt)}{\theta_{\bar{k}}^2/2\sigma_{\bar{k}}^2} \leq \frac{\cI(\sfP, \sfQ)}{\theta_{\bar{k}}^2/2\sigma_{\bar{k}}^2} = R^{-1}_{\bar{k}\bar{k}}.
\end{align*}
Finally, the proof is complete by taking minimum over all such $\bar{k}$.
\end{itemize}
\end{proof}

\section{Detection and isolation of a  dependence structure}\label{app:appl_dep}

\subsection{Proof of Theorem \ref{pure_det_dep}}\label{pf:pure_det_dep}

\begin{proof}
Fix any arbitrary reference distribution $\sfP_0 \in \cH_0$. For every $e \in \cK$ define
$$\Lambda^1_e(n) := \max\{\Lambda_n(\sfP, \sfP_0^e) : \sfP \in \cG^e\} \quad \text{and} \quad \Lambda^0_e(n) := \max\{\Lambda_n(\sfP, \sfP_0^e) : \sfP \in \cH^e\}.$$
Then, for every $e \in \cK$,
\begin{equation}\label{lam1/lam0}
\Lambda_e(n) = \Lambda^1_e(n)/\Lambda^0_e(n).
\end{equation}
Furthermore, since the dependent pairs are disjoint and \eqref{indep_subs} holds, we can write 
$$\sfP = \bigotimes_{e \in \cA(\sfP)} \sfP^e \otimes \sfP^{v_0}, \quad \text{and} \quad \sfP_0 = \bigotimes_{k \in [K]} \sfP^k = \bigotimes_{e \in \cA(\sfP)} \sfP_0^e \otimes \sfP_0^{v_0},$$
where $v_0$ is the only subset of $[K]$ that consists of the independent sources under $\sfP$.
This implies
\begin{equation}\label{decomp-LR}
\Lambda_n(\sfP, \sfP_0) = \prod_{e \in \cA(\sfP)} \Lambda_n(\sfP^e, \sfP_0^e) \times \Lambda_n(\sfP^{v_0}, \sfP_0^{v_0}).
\end{equation}
Now, fix any arbitrary $\bar{e} \in \cK$ and a subset of units $A \in \Psi$ such that $\bar{e} \in A$. Then under any $\sfP \in \cG_{\Psi, \bar{e}}$ such that $\cA(\sfP) = A$, the subset of independent sources is $$v_0 = [K] \setminus \cup_{e \in A} e.$$  
Thus, from \eqref{decomp-LR} one can obtain that
\begin{align*}
&\max\{\Lambda_n(\sfP, \sfP_0) : \sfP \in \cG_{\Psi, \bar{e}}, \;\; \cA(\sfP) = A\} = \prod_{e \in A} \Lambda^1_e(n) \times \max\{\Lambda_n(\sfP, \sfP_0^{v_0}) : \sfP \in \cH_0^{v_0}\}, \;\; \text{and}\\
&\max\{\Lambda_n(\sfP, \sfP_0) : \sfP \in \cH_0\} = \prod_{e \in A} \Lambda_e^0(n) \times \max\{\Lambda_n(\sfP, \sfP_0^{v_0}) : \sfP \in \cH_0^{v_0}\}.
\end{align*}
Due to \eqref{lam1/lam0}, the above two expressions lead to
\begin{equation*}
\frac{\max\{\Lambda_n(\sfP, \sfP_0) : \sfP \in \cG_{\Psi, \bar{e}}, \;\; \cA(\sfP) = A\}}{\max\{\Lambda_n(\sfP, \sfP_0) : \sfP \in \cH_0\}} = \prod_{e \in A} \Lambda_e(n),
\end{equation*}
and thus, if we further maximize the above quantity over all possible $A \in \Psi$, we have
\begin{equation}\label{max_det_dis}
\Lambda_{\bar{e}, \rm det}(n) = \max_{A \in \Psi} \prod_{e \in A} \Lambda_e(n) = \Lambda_{\bar{e}}(n) \times \max_{\mathcal{B} \subseteq \{i_1(n), \dots i_{1 \vee p(n)}(n)\}  , \; \bar{e} \cup \mathcal{B} \in \Psi}\prod_{e \in \mathcal{B}} \Lambda_e(n).
\end{equation}
When $p(n) \geq 1$, using \eqref{max_det_dis} and that $\Lambda_{\bar{e}}(n) \leq 1$ for any $\bar{e} \notin \{i_1(n), \dots i_{p(n)}(n)\}$, we have
$$\max_{\bar{e} \in \cK} \Lambda_{\bar{e}, \rm det}(n) = \max_{\mathcal{B} \subseteq \{i_1(n), \dots i_{p(n)}(n)\}  , \; \mathcal{B} \in \Psi}\prod_{e \in \mathcal{B}} \Lambda_e(n).$$ 
Furthermore, again from \eqref{max_det_dis} one can observe that
$$\max_{\bar{e} \in \cK} \Lambda_{\bar{e}, \rm det}(n) = \max_{\bar{e} \in \cK} \Lambda_{\bar{e}}(n) = \Lambda_{(1)}(n) < 1 \quad \Leftrightarrow \quad p(n) = 0.$$

Now, since $C_e = C$ and $D_e = D$ for every $e \in \cK$, one can write the stopping times $T_0$ and $T_{det}$ as
\begin{align*}
T_0 &= \inf\lt\{n  \in \bN : \max_{e \in \cK} \; \Lambda_{e, \rm det}(n) \leq 1/C \rt\},\\
T_{det} &= \inf\lt\{n \in \bN: \max_{e \in \cK} \; \Lambda_{e, \rm det}(n) \geq D\rt\}.
\end{align*}
Since $C, D > 1$, stopping happens by $T_0$ (resp. $T_{det}$) only if $\max_{e \in \cK} \; \Lambda_{e, \rm det}(n) \leq 1$ (resp. $\max_{e \in \cK} \; \Lambda_{e, \rm det}(n) > 1$), and from the first part of the proof, this happens only if $p(n) = 0$ (resp. $p(n) \geq 1$). Therefore, the stopping events in $T_0$ and $T_{det}$ remain the same even if they are intersected with $p(n) = 0$ and $p(n) \geq 1$ respectively. 
Thus,
\begin{align*}
T_0 &= \inf\lt\{n  \in \bN : \max_{e \in \cK} \; \Lambda_{e, \rm det}(n) \leq 1/C, \;\; p(n) = 0 \rt\}\\
&= \inf\lt\{n  \in \bN : \Lambda_{(1)}(n) \leq 1/C\rt\},
\end{align*}
and
\begin{align*}
T_{det} &= \inf\lt\{n  \in \bN :  \max_{e \in \cK} \; \Lambda_{e, \rm det}(n) \geq D, \;\; p(n) \geq 1 \rt\}\\
&= \inf\lt\{n  \in \bN : \max_{\mathcal{B} \subseteq 
\{i_1(n), \dots i_{p(n)}(n)  \}  , \; \mathcal{B} \in \Psi}  \prod_{e \in \mathcal{B}} \Lambda_{e}(n) \geq D\rt\},
\end{align*}
This completes the proof.
\end{proof}

\subsection{An important lemma} We present an important lemma which will be useful to establish the following theorems.

\begin{lemma}\label{w_e_simpli}
Suppose that $\emptyset \in \Psi$ and $\sfP \in \cP_\Psi$.
\begin{enumerate}
\item[(i)] If $\sfP \notin \cH_0$ and for every $e \in \cA(\sfP)$ there exists $w_e \supseteq e$ such that \eqref{condP1} holds when $s'_e = w_e$ for every $e \in \cA(\sfP)$, i.e., 
$$\min_{e \in \cA(\sfP)} \cI \lt( \sfP , \cH_{\Psi,e}; w_e \rt)  = \min_{e \in \cA(\sfP)} \cI \lt( \sfP , \cH_{\Psi,e} \rt),$$
then it also holds when $s'_e \supseteq w_e$ for every $e \in \cA(\sfP)$.
\item[(ii)] If $\sfP \notin \cH_0$ and for every $e \notin \cA(\sfP)$ there exists $w_e \supseteq e$ such that \eqref{condP2} holds when $s'_e = w_e$ for every $e \notin \cA(\sfP)$, i.e., 
$$\min_{e \notin \cA(\sfP)} \cI \lt( \sfP , \cG_{\Psi,e}; w_e \rt)  = \min_{e \notin \cA(\sfP)} \cI \lt( \sfP , \cG_{\Psi,e} \rt),$$
then it also holds when $s'_e \supseteq w_e$ for every $e \notin \cA(\sfP)$.
\item[(iii)] If $\sfP \in \cH_0$ and for every $e \in \cK$ there exists $w_e \supseteq e$ such that \eqref{condPminusP0} holds when $s_e = w_e$ for every $e \in \cK$, i.e., 
$$\min_{e \in \cK} \; \cI \lt( \sfP , \cG_{\Psi,e} ; w_e \rt) =\min_{e \in \cK} \; \cI \lt( \sfP , \cG_{\Psi,e}  \rt),$$
then it also holds when $s_e \supseteq w_e$ for every $e \in \cK$.
\item[(iv)] If $\sfP \notin \cH_0$ and for some $e \in \cA(\sfP)$ there exists $w_e \supseteq e$ such that
$$\cI(\sfP, \cH_0; w_e) = \cI(\sfP, \cH_0),$$
then \eqref{condP0} holds when $s_e \supseteq w_e$.
\end{enumerate}
\end{lemma}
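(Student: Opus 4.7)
The plan is to reduce all four claims to a single observation: enlarging the subsystem only makes the relevant information number weakly larger, and it can never exceed the ``full'' information number (computed over the entire system). Concretely, by Lemma \ref{info_ineq1}, for any fixed $\sfP$ and any class of distributions $\cQ$, the map $s \mapsto \cI(\sfP, \cQ; s)$ is monotone in $s$, and in particular
\begin{equation*}
\cI(\sfP, \cQ; w) \;\leq\; \cI(\sfP, \cQ; s) \;\leq\; \cI(\sfP, \cQ; [K]) \;=\; \cI(\sfP, \cQ),
\qquad \text{whenever } w \subseteq s \subseteq [K].
\end{equation*}
This is the only tool needed; the monotonicity hypotheses \eqref{max_series} and \eqref{comp_conv_onesided} required by Lemma \ref{info_ineq1} are part of the standing assumptions throughout Section \ref{sec:asymp_opt}.

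For part (i), I would apply the above chain unit-by-unit with $\cQ=\cH_{\Psi,e}$, $w=w_e$, $s=s'_e$, for each $e \in \cA(\sfP)$. Taking the minimum over $e \in \cA(\sfP)$ on all three terms gives
\begin{equation*}
\min_{e \in \cA(\sfP)} \cI(\sfP, \cH_{\Psi,e}; w_e)
\;\leq\; \min_{e \in \cA(\sfP)} \cI(\sfP, \cH_{\Psi,e}; s'_e)
\;\leq\; \min_{e \in \cA(\sfP)} \cI(\sfP, \cH_{\Psi,e}).
\end{equation*}
The hypothesis asserts that the leftmost and rightmost quantities coincide, so the middle one equals them as well, which is exactly \eqref{condP1} for the enlarged subsystems. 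Parts (ii) and (iii) are carried out by an identical argument with $(\cQ, e\text{-range}) = (\cG_{\Psi,e}, e \notin \cA(\sfP))$ and $(\cG_{\Psi,e}, e \in \cK)$ respectively; the only thing that changes between the three cases is the specific class and index set under the minimum.

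For part (iv), the structure is slightly different because \eqref{condP0} involves a $\max$ rather than a $\min$, but the same monotonicity still drives the argument. From Lemma \ref{info_ineq1} applied to $\cQ=\cH_0$, we get for any $e \in \cA(\sfP)$ and any $s_e \supseteq w_e$,
\begin{equation*}
\cI(\sfP, \cH_0; w_e) \;\leq\; \cI(\sfP, \cH_0; s_e) \;\leq\; \cI(\sfP, \cH_0),
\end{equation*}
so the hypothesis $\cI(\sfP, \cH_0; w_e) = \cI(\sfP, \cH_0)$ forces $\cI(\sfP, \cH_0; s_e) = \cI(\sfP, \cH_0)$ for that particular $e$. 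Since for every other $e' \in \cA(\sfP)$ one still has $\cI(\sfP, \cH_0; s_{e'}) \leq \cI(\sfP, \cH_0)$ by monotonicity, taking the maximum over $\cA(\sfP)$ on both sides yields $\max_{e \in \cA(\sfP)} \cI(\sfP, \cH_0; s_e) = \cI(\sfP, \cH_0)$, which is \eqref{condP0}.

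I do not anticipate a genuine obstacle here: the lemma is essentially a ``persistence under enlargement'' statement, and the whole content is the monotonicity already established in Lemma \ref{info_ineq1}. The only care needed is bookkeeping, making sure that in each of the four cases the correct class ($\cH_{\Psi,e}$, $\cG_{\Psi,e}$, or $\cH_0$) and the correct indexing set over which the $\min$ or $\max$ is taken are matched to the corresponding condition \eqref{condP1}--\eqref{condP0}.
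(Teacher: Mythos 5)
Your proposal is correct and takes essentially the same approach as the paper: both rely solely on the monotonicity of $s \mapsto \cI(\sfP, \cQ; s)$ from Lemma \ref{info_ineq1} to sandwich the quantity computed on the enlarged subsystems between the hypothesized quantity on $w_e$ and the full information number, forcing equality throughout. The paper writes the sandwich as a single chain starting from $\cI(\sfP, \cH_{\Psi,e})$ (resp.\ $\cI(\sfP,\cH_0)$) while you take min/max of the unit-by-unit inequalities, but the substance is identical.
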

\begin{proof}
We only prove (i) and (iv) since the other results can be shown in a similar way. Let $s'_e \supseteq w_e$ for every $e \in \cA(\sfP)$. Then,
\begin{align*}
\min_{e \in \cA(\sfP)} \cI \lt( \sfP , \cH_{\Psi,e} \rt) &\geq \min_{e \in \cA(\sfP)} \cI \lt( \sfP , \cH_{\Psi,e}; s'_e \rt)\\
&\geq \min_{e \in \cA(\sfP)} \cI \lt( \sfP , \cH_{\Psi,e}; w_e \rt) = \min_{e \in \cA(\sfP)} \cI \lt( \sfP , \cH_{\Psi,e} \rt),
\end{align*}
where the first and the second inequality follow by using Lemma \ref{info_ineq1} and the fact that $[K] \supseteq s'_e \supseteq w_e$ for every $e \in \cA(\sfP)$. Thus, equality holds throughout the above expression and \eqref{condP1} is implied. 

In order to prove (iv), let $s_{\bar{e}} \supseteq w_{\bar{e}}$ for some $\bar{e} \in \cA(\sfP)$ such that
$$\cI(\sfP, \cH_0; w_{\bar{e}}) = \cI(\sfP, \cH_0).$$
Then,
\begin{align*}
\cI(\sfP, \cH_0) &\geq \max _{e \in \cA(\sfP)} \cI(\sfP, \cH_0; s_e)\\
&\geq \cI(\sfP, \cH_0; s_{\bar{e}}) \geq \cI(\sfP, \cH_0; w_{\bar{e}}) = \cI(\sfP, \cH_0),
\end{align*}
where the first and third inequality follow by using Lemma \ref{info_ineq1} and the facts that $[K] \supseteq s_e$  for every $e \in \cA(\sfP)$ as well as  $s_{\bar{e}} \supseteq w_{\bar{e}}$.
\end{proof}

\subsection{Proof of Theorem \ref{w_eP1}}\label{pf:w_eP1}

\begin{proof}
We have $\emptyset \in \Psi$,  $\sfP \in \cP_\Psi \setminus \cH_0$, and  $\alpha, \beta, \gamma, \delta \to 0$ such that $|\log \gamma| >> |\log \alpha| \vee |\log \delta|$.
\begin{enumerate}

\item[(i)] From Lemma \ref{lem:DI_lb}(iii) we have
\begin{align*}
\inf \lt\{ \sfE_{\sfP}[T]:\; 
(T, D) \in \cC_{\Psi}(\alpha, \beta,\gamma, \delta) \rt\}
\gtrsim \; \max_{e \in \cA(\sfP)}  \lt\{\frac{|\log\gamma|}{ \cI(\sfP, \cH_{\Psi, e} ) } \rt\},
\end{align*}
and on the other hand, from Theorem \ref{ub_DI}(ii) we have
\begin{align*} %\label{ub_DI_2}
\sfE_\sfP\lt[T^*\rt] \lesssim \max_{e \in \cA(\sfP)}  \lt\{\frac{|\log \gamma|}{\cI \lt(\sfP, \cH_{\Psi, e}; s'_e \rt) }\rt\}.
\end{align*}
Thus, from definition we have 
$${\sf{ARE}}_{\sfP}[\chi^*] \leq \frac{\min_{e \in \cA(\sfP)} \cI \lt( \sfP , \cH_{\Psi,e} \rt)}{\min_{e \in \cA(\sfP)} \cI \lt(\sfP, \cH_{\Psi, e}; s'_e \rt)}.$$
Since by definition ${\sf{ARE}}_{\sfP}[\chi^*] \geq 1$, to prove that ${\sf{ARE}}_{\sfP}[\chi^*] = 1$, it suffices to show that
\eqref{condP1} holds, i.e., 
$$\min_{e \in \cA(\sfP)} \cI \lt( \sfP , \cH_{\Psi,e}; s'_e \rt)  = \min_{e \in \cA(\sfP)} \cI \lt( \sfP , \cH_{\Psi,e} \rt).$$
In view of Lemma \ref{w_e_simpli}(i), the above is true if \eqref{prop:P1_1} in Proposition \ref{prop:P1} holds when for every $e \in \cA(\sfP)$ 
\begin{equation*}
s'_e = v_l \;\; \text{where} \;\; v_l  \supseteq e \;\; \text{for some} \;\; l \in [L(\sfP)].
\end{equation*}
Fix any arbitrary $e \in \cA(\sfP)$.
Then for every $e \in \cA(\sfP)$ we have $(s'_e)^c = \cup_{k \neq l} v_{k}$ which is independent of $v_l$, that is, $s'_e$.  Furthermore, letting
$$\sfQ_1 = \argmin_{\sfQ \in \cH_{\Psi, e}} \cI \lt(\sfP, \sfQ; s'_e\rt) \quad \text{implies that} \quad \sfQ_1^{s'_e} = \argmin_{\sfQ \in \cH_{\Psi, e}^{s'_e}} \cI(\sfP^{s'_e}, \sfQ).$$
Note that, since $L(\sfP) > 1$, there exists $l' \neq l$, $l' \geq 1$ and $e' \in \cA(\sfP)$ such that $e' \subseteq v_{l'} \subseteq (s'_e)^c$. Therefore, denoting $s = s'_e$, we have $e' \in \cA\lt(\sfQ_1^{s} \otimes \sfP^{s^c}\rt) \in \Psi$. Thus,
$$\sfQ_1^{s} \otimes \sfP^{s^c} \in \cP_\Psi \setminus \cH_0,$$
%\begin{equation}
%\label{prop:PminusP0_2}
%$Q_1^{W_{e'}} \otimes \sfP^{W_{e'}^c} \in \cP_\Psi \setminus \cH_0,$
%\end{equation}
which implies that \eqref{prop:P1_1} holds and the result follows.
\item[(ii)] If $s'_e = e$ for every $e \in \cK$ then the upper bound on ${\sf{ARE}}$ follows from part (i).

\begin{itemize}
\item Since $\Psi = \Psi_{dis}$ we have $L(\sfP) = |\cA(\sfP)| > 1$. Furthermore, due to \eqref{indep_subs} for every $e \in \cA(\sfP)$ 
there exists $l \in [L(\sfP)]$ such that $v_l = e = s'_e$. Therefore, the condition in part (i) is satisfied and the result follows. 
\item It suffices to show that \eqref{condP1} holds, i.e.,
$$\min_{e \in \cA(\sfP)} \cI \lt( \sfP^e , \cH^e\rt)  = \min_{e \in \cA(\sfP)} \cI \lt( \sfP , \cH_{\Psi,e} \rt),$$
which again is true from Proposition \ref{prop:P1} if \eqref{prop:P1_1} holds.
Fix the pair $\bar{e} \in \cK$ such that
\begin{equation*}
\bar{e} \in \argmin_{e \in \cA(\sfP)} \cI (\sfP^e, \cH^e),
\end{equation*}
and for which $s$ is independent of $s^c$ under $\sfP$, where $s = s'_{\bar{e}} = \bar{e}$. Furthermore, for any $\sfQ_1$ such that
$$\sfQ_1 \in \argmin_{\sfQ \in \cH_{\Psi, \bar{e}}} \cI \lt(\sfP, \sfQ; \bar{e}\rt) \quad \text{implies that} \quad \sfQ_1^{\bar{e}} \in 
\cH^{\bar{e}}. %= \argmin_{\sfQ \in \cG^{\bar{k}}} \cI(\sfP^{\bar{k}}, \sfQ).
$$
Since $s$ is independent of $s^c$, any $e \in \cA(\sfP)$ such that $e \neq \bar{e}$ must satisfy $e \subseteq s^c$, and the number of such pairs is greater than or equal to $1$ as $L(\sfP) > 1$. Combined with the above, the fact that $\cA(\sfP) \in \Psi$ leads to $\emptyset \neq \cA\lt(\sfQ_1^s \otimes \sfP^{s^c}\rt) \in \Psi$ when $\Psi$ is either $\Psi_{dis}$, or $\Psi_{clus}$, or the powerset of $\cK$. Thus,
$$\sfQ_1^{s} \otimes \sfP^{s^c} \in \cP_\Psi \setminus \cH_0,$$
which implies that \eqref{prop:P1_1} holds and the result follows.
\end{itemize}

\end{enumerate}

\end{proof}

\subsection{Proof of Theorem \ref{w_eP2}}\label{pf:w_eP2}

\begin{proof}
We have $\emptyset \in \Psi$,  $\sfP \in \cP_\Psi \setminus \cH_0$, and  $\alpha, \beta, \gamma, \delta \to 0$ such that $|\log \delta| >> |\log \alpha| \vee |\log \gamma|$.
\begin{enumerate}

\item[(i)] From Lemma \ref{lem:DI_lb}(iii) we have
\begin{align*}
\inf \lt\{ \sfE_{\sfP}[T]:\; 
(T, D) \in \cC_{\Psi}(\alpha, \beta,\gamma, \delta) \rt\}
\gtrsim \; \max_{e \notin \cA(\sfP)}  \lt\{\frac{|\log \delta|}{\cI \lt(\sfP, \cG_{\Psi, e}\rt) }\rt\},
\end{align*}
and on the other hand, from Theorem \ref{ub_DI}(ii) we have
\begin{align*} %\label{ub_DI_2}
\sfE_\sfP\lt[T^*\rt] \lesssim \max_{e \notin \cA(\sfP)}  \lt\{\frac{|\log \delta|}{\cI \lt(\sfP, \cG_{\Psi, e}; s'_e \rt) }\rt\}.
\end{align*}
Thus, from definition we have 
$${\sf{ARE}}_{\sfP}[\chi^*] \leq \frac{\min_{e \notin \cA(\sfP)} \cI \lt( \sfP , \cG_{\Psi,e} \rt)}{\min_{e \notin \cA(\sfP)} \cI \lt(\sfP, \cG_{\Psi, e}; s'_e \rt)}.$$
Since by definition ${\sf{ARE}}_{\sfP}[\chi^*] \geq 1$, to prove that ${\sf{ARE}}_{\sfP}[\chi^*] = 1$, it suffices to show that
\eqref{condP2} holds, i.e., 
$$\min_{e \notin \cA(\sfP)} \cI \lt( \sfP , \cG_{\Psi,e}; s'_e \rt)  = \min_{e \notin \cA(\sfP)} \cI \lt( \sfP , \cG_{\Psi,e} \rt).$$
In view of Lemma \ref{w_e_simpli}(ii), the above is true if \eqref{prop:P2_1} in Proposition \ref{prop:P1} holds when for every $e = \{i, j\} \notin \cA(\sfP)$ we have
\begin{equation*}
s'_e =
\begin{cases}
v_l \quad &\text{if} \quad v_l \supseteq e \quad \text{for some} \quad l \in [L(\sfP)],\\
v_{l} \cup v_{l'} \quad &\text{if} \quad i \in v_l \quad \text{and} \quad j \in v_{l'} \quad \text{for some} \quad l \neq l',  \quad l, l' \in [L(\sfP)],\\
e \cup v_l \quad &\text{if} \quad i \;\; \text{or} \;\; j \in v_0 \quad \text{and} \quad e \setminus v_0 \in v_l \quad \text{for some} \quad l \in [L(\sfP)],\\
e \quad &\text{if} \quad v_0 \supseteq e.
\end{cases}
\end{equation*}
Fix any arbitrary $e \notin \cA(\sfP)$.
Then in the first case, we have $(s'_e)^c = \cup_{k \neq l} v_{k}$ which is independent of $v_l$. In the second case, we have $(s'_e)^c = \cup_{k \neq l, l'} v_{k}$ which is independent of both $v_l$ and $v_{l'}$. In the third case, we have $(s'_e)^c = \cup_{k \neq l} v_k \cup \{v_0 \setminus e\}$, which is independent of $v_l$ and $e \setminus v_0$. Finally in the fourth case, we have $(s'_e)^c = \cup_{k} v_k \cup \{v_0 \setminus e\}$, which is independent of $e$. Therefore, from the above it is clear that $(s'_e)^c$ is independent of $s'_e$.
Furthermore, letting
$$\sfQ_1 = \argmin_{\sfQ \in \cG_{\Psi, e}} \cI \lt(\sfP, \sfQ; s'_e\rt) \quad \text{implies that} \quad \sfQ_1^{s'_e} = \argmin_{\sfQ \in \cG_{\Psi, e}^{s'_e}} \cI(\sfP^{s'_e}, \sfQ).$$
Since $\sfQ_1^{s'_e} \in \cG_{\Psi, e}^{s'_e}$, denoting $s = s'_e$, we have $e \in \cA(\sfQ_1^{s} \otimes \sfP^{s^c}) \in \Psi$. Thus,
$$\sfQ_1^s \otimes \sfP^{s^c} \in \cP_\Psi \setminus \cH_0,$$
%\begin{equation}
%\label{prop:PminusP0_2}
%$Q_1^{W_{e'}} \otimes \sfP^{W_{e'}^c} \in \cP_\Psi \setminus \cH_0,$
%\end{equation}
which implies that \eqref{prop:P2_1} holds and the result follows. 
\item[(ii)] If $s'_e = e$ for every $e \in \cK$ then the upper bound on ${\sf{ARE}}$ follows from part (i).
Now, it suffices to show that \eqref{condP1} holds, i.e.,
$$\min_{e \notin \cA(\sfP)} \cI \lt( \sfP^e , \cG^e\rt)  = \min_{e \notin \cA(\sfP)} \cI \lt( \sfP , \cG_{\Psi,e} \rt),$$
which again is true from Proposition \ref{prop:P1} if \eqref{prop:P2_1} holds.
Fix the pair $\bar{e} \in \cK$ such that
\begin{equation*}
\bar{e} \in \argmin_{e \notin \cA(\sfP)} \cI (\sfP^e, \cG^e),
\end{equation*}
and for which $s$ is independent of $s^c$ under $\sfP$, where $s = s'_{\bar{e}} = \bar{e}$. Furthermore, for any $\sfQ_1$ such that
$$\sfQ_1 \in \argmin_{\sfQ \in \cG_{\Psi, \bar{e}}} \cI \lt(\sfP, \sfQ; \bar{e}\rt) \quad \text{implies that} \quad \sfQ_1^{\bar{e}} \in 
\cG^{\bar{e}}. %= \argmin_{\sfQ \in \cG^{\bar{k}}} \cI(\sfP^{\bar{k}}, \sfQ).
$$
Since $\sfQ_1^{\bar{e}} \in \cG^{\bar{e}}$, we have  $\bar{e} \in \cA\lt(\sfQ_1^s \otimes \sfP^{s^c}\rt) \in \Psi$ when $\Psi$ is either $\Psi_{dis}$, or $\Psi_{clus}$, or the powerset of $\cK$. Thus,
$$\sfQ_1^{s} \otimes \sfP^{s^c} \in \cP_\Psi \setminus \cH_0,$$
which implies that \eqref{prop:P1_1} holds and the result follows.
\end{enumerate}

\end{proof}

\subsection{Proof of Theorem \ref{w_eP0}}\label{pf:w_eP0}

\begin{proof}
We have $\emptyset \in \Psi$,  $\sfP \in \cP_\Psi \setminus \cH_0$, and  $\alpha, \beta, \gamma, \delta \to 0$ such that $|\log \alpha| >> |\log \gamma| \vee |\log \delta|$.
\begin{enumerate}

\item[(i)] From Lemma \ref{lem:DI_lb}(iii) we have
\begin{align*}
\inf \lt\{ \sfE_{\sfP}[T]:\; 
(T, D) \in \cC_{\Psi}(\alpha, \beta,\gamma, \delta) \rt\}
\gtrsim \; \frac{|\log\alpha|}{\cI(\sfP, \cH_0)},
\end{align*}
and on the other hand, from Theorem \ref{ub_DI}(ii) we have
\begin{align*} %\label{ub_DI_2}
\sfE_\sfP\lt[T^*\rt] \lesssim \min_{e \in \cA(\sfP)}  \lt\{ \frac{|\log \alpha|}{\cI\lt(\sfP, \cH_0; s_e \rt)} \rt\}.
\end{align*}
Thus, from definition we have 
$${\sf{ARE}}_{\sfP}[\chi^*] \leq \min_{e \in \cA(\sfP)}  \frac{\cI \lt( \sfP , \cH_0 \rt)}{\cI \lt(\sfP, \cH_0; s_e \rt)}.$$
Since by definition ${\sf{ARE}}_{\sfP}[\chi^*] \geq 1$, to prove that ${\sf{ARE}}_{\sfP}[\chi^*] = 1$, it suffices to show that
\eqref{condP0} holds, i.e., 
$$\max _{e \in \cA (\sfP)} \cI(\sfP, \cH_0; s_e) = \cI(\sfP, \cH_0).$$
In view of Lemma \ref{w_e_simpli}(iv), the above is true if \eqref{prop:P0_1} in Proposition \ref{prop:P1} holds when for some $e \in \cA(\sfP)$ 
\begin{equation*}
s_{e} = v_1 \cup \ldots \cup v_{L(\sfP)}.
\end{equation*}
Then for that $e$, we have $s_e^c = v_0$ which is independent of $s_e$. 
Furthermore, letting
$$\sfQ_1 = \argmin_{\sfQ \in \cH_0} \cI \lt(\sfP, \sfQ; s_e\rt) \quad \text{implies that} \quad \sfQ_1^{s_e} = \argmin_{\sfQ \in \cH_0^{s_e}} \cI(\sfP^{s_e}, \sfQ).$$
Therefore, denoting $s = s_e$, we have $\cA(\sfQ_1^{s} \otimes \sfP^{s^c}) = \emptyset$. Thus,
$$\sfQ_1^{s} \otimes \sfP^{s^c} \in \cH_0,$$
which implies that \eqref{prop:P0_1} holds and the result follows.
\item[(ii)] If $s_e = e$ for every $e \in \cK$ then the upper bound on ${\sf{ARE}}$ follows from part (i).
Now, suppose the only dependent pair under $\sfP$ is $\bar{e}$, i.e., $\cA(\sfP) = \{\bar{e}\}$ and the rest of the sources are independent. Clearly this is the case when \eqref{indep_subs} holds and $|\cA(\sfP)| = 1$. Then
$$\sfP = \sfP^{v_0} \otimes \sfP^{v_1},$$
where $v_1 = \bar{e}$ and $v_0 = (\bar{e})^c$.
Therefore, when $s_e = e$ for every $e \in \cK$, we have $s_{\bar{e}} = \bar{e} = v_1$. Thus, asymptotic optimality follows by using part (i). 
\end{enumerate}

\end{proof}

\subsection{Proof of Theorem \ref{depcase_P0}}\label{pf:depcase_P0}

\begin{proof}
We have $\emptyset \in \Psi$,  $\sfP \in \cH_0$, and $\alpha, \beta \to 0$ while $\gamma$ and $\delta$ are either fixed or go to $0$.
Now, from Lemma \ref{lem:DI_lb}(iii) we have
\begin{align*}
\inf \lt\{ \sfE_{\sfP}[T]:\; 
(T, D) \in \cC_{\Psi}(\alpha, \beta,\gamma, \delta) \rt\}
\gtrsim \; \max_{e \in \cK}  \lt\{ \frac{|\log\beta|}{ \cI \lt( \sfP , \cG_{\Psi,e} \rt)} \rt\},
\end{align*}
and on the other hand, since $s_e = e$ for every $e \in \cK$, from Theorem \ref{ub_DI}(ii) we have
\begin{align*} %\label{ub_DI_2}
\sfE_\sfP\lt[T^*\rt] \lesssim \max_{e \in \cK}   \lt\{ \frac{|\log \beta|}{\cI \lt(\sfP^e, \cG^e\rt)}  \rt\}.
\end{align*}
Thus, from definition we have 
$${\sf{ARE}}_{\sfP}[\chi^*] \leq \frac{\min_{e \in \cK} \cI \lt( \sfP , \cG_{\Psi,e} \rt)}{\min_{e \in \cK} \cI \lt( \sfP^e , \cG^e \rt)}.$$
Since by definition ${\sf{ARE}}_{\sfP}[\chi^*] \geq 1$, to prove that ${\sf{ARE}}_{\sfP}[\chi^*] = 1$, it suffices to show that
\eqref{condPminusP0} holds, i.e., 
$$\min_{e \in \cK} \; \cI \lt( \sfP^e , \cG^e\rt) =\min_{e \in \cK} \; \cI \lt( \sfP , \cG_{\Psi,e}  \rt).$$
In view of Lemma \ref{w_e_simpli}(iii), the above is true if \eqref{prop:PminusP0_1} in Proposition \ref{prop:P1} holds when $s_e = e$ for every $e \in \cK$. 
Fix the pair $\bar{e} \in \cK$ such that
\begin{equation*}
\bar{e} \in \argmin_{e \in \cK} \; \cI \lt(\sfP^e, \cG^e\rt),
\end{equation*}
and for which $s$ is independent of $s^c$ under $\sfP$, where $s = s_{\bar{e}} = \bar{e}$. Clearly this is the case when \eqref{indep_subs} holds.
Furthermore, for any $\sfQ_1$ such that
$$\sfQ_1 \in \argmin_{\sfQ \in \cG_{\Psi, \bar{e}}} \cI \lt(\sfP, \sfQ; \bar{e}\rt) \quad \text{implies that} \quad \sfQ_1^{\bar{e}} \in 
\cG^{\bar{e}}. %= \argmin_{\sfQ \in \cG^{\bar{k}}} \cI(\sfP^{\bar{k}}, \sfQ).
$$
Since $\sfQ_1^{\bar{e}} \in \cG^{\bar{e}}$ and $|\cA(\sfP)| = 0$, we have $|\cA\lt(\sfQ_1^s \otimes \sfP^{s^c}\rt)| = 1$. Thus, by using the fact that $\Psi_{1, 1} \subseteq \Psi$, we have
$$\sfQ_1^s \otimes \sfP^{s^c} \in \cP_\Psi \setminus \cH_0,$$
which implies that \eqref{prop:PminusP0_1} holds and the result follows.

\end{proof}

\subsection{Proof of Theorem \ref{dep_case_psi_di_prac_rule}}\label{pf:dep_case_psi_di_prac_rule}

The following lemma is important in establishing Theorem \ref{dep_case_psi_di_prac_rule}.
\begin{lemma}\label{psi_di_prac_rule_lem}
Suppose \eqref{indep_subs} holds, $\Psi = \Psi_{dis}$ and $\sfP \in \cP_\Psi \setminus \cH_0$. Then
$$\cI(\sfP, \cH_0) = \sum_{e \in \cA(\sfP)} \cI(\sfP^e, \cH^e).$$
\end{lemma}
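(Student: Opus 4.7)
The plan is to establish the two inequalities separately, and in each direction to reduce the problem to the disjoint signal pairs via the additivity provided by Lemma \ref{info_ineq2}. Under \eqref{indep_subs} and $\Psi = \Psi_{dis}$, the signal units $e_1,\dots,e_L \in \cA(\sfP)$ are pairwise disjoint, so together with $v_0 := [K] \setminus \bigcup_{l} e_l$ they form a partition of $[K]$ whose blocks are mutually independent under $\sfP$; hence $\sfP = \bigotimes_{e \in \cA(\sfP)} \sfP^e \otimes \bigotimes_{k \in v_0} \sfP^k$, with the sources inside $v_0$ also mutually independent.

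For the upper bound $\cI(\sfP,\cH_0) \leq \sum_{e \in \cA(\sfP)} \cI(\sfP^e,\cH^e)$, I would exhibit an explicit competitor. Pick $\sfQ_*^e$ attaining $\cI(\sfP^e,\cH^e)$ for each $e \in \cA(\sfP)$, and set $\sfQ_* := \bigotimes_{e \in \cA(\sfP)} \sfQ_*^e \otimes \bigotimes_{k \in v_0} \sfP^k$. To see that $\sfQ_* \in \cH_0$, I would check every pair $\bar e \in \cK$: if $\bar e \in \cA(\sfP)$ then its marginal is $\sfQ_*^{\bar e} \in \cH^{\bar e}$ by construction, while if $\bar e \notin \cA(\sfP)$ the two sources of $\bar e$ lie in distinct blocks of the partition, so $\sfQ_*^{\bar e}$ factorises as the product of its two singleton marginals and thus lies in $\cH^{\bar e}$. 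Iterating Lemma \ref{info_ineq2} across the partition then yields $\cI(\sfP,\sfQ_*) = \sum_{e \in \cA(\sfP)} \cI(\sfP^e,\sfQ_*^e) + \sum_{k \in v_0} \cI(\sfP^k,\sfP^k) = \sum_{e \in \cA(\sfP)} \cI(\sfP^e,\cH^e)$.

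For the matching lower bound, fix any $\sfQ \in \cH_0$ and denote its bivariate marginal on a pair $\{i,j\}$ by $\sfQ^{\{i,j\}}$. By definition of $\cH_0$, each such marginal lies in $\cH^{\{i,j\}}$, so on signal pairs $\sfQ^e \in \cH^e$ and on cross-block pairs $\sfQ^{\{i,j\}}$ splits as $\sfQ^i \otimes \sfQ^j$; this pairwise compatibility of $\sfQ$ with the block partition of $\sfP$ is what I would use to apply Lemma \ref{info_ineq2} iteratively across $e_1,\dots,e_L,v_0$ and obtain $\cI(\sfP,\sfQ) = \sum_{e \in \cA(\sfP)} \cI(\sfP^e,\sfQ^e) + \sum_{k \in v_0} \cI(\sfP^k,\sfQ^k)$. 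Once this decomposition is in hand, using $\sfQ^e \in \cH^e$ gives $\cI(\sfP^e,\sfQ^e) \geq \cI(\sfP^e,\cH^e)$ for every $e \in \cA(\sfP)$ and nonnegativity of the singleton terms yields $\cI(\sfP,\sfQ) \geq \sum_{e \in \cA(\sfP)} \cI(\sfP^e,\cH^e)$; minimising over $\sfQ \in \cH_0$ then closes the argument. The main obstacle will be to rigorously carry out the iterative application of Lemma \ref{info_ineq2} for a general $\sfQ \in \cH_0$ whose block-level dependencies need not vanish a priori; I anticipate handling this either by verifying that in the ambient family $\cP$ the null hypothesis $\cH_0$ forces the relevant product structure, or by passing to the "block-projection'' $\tilde\sfQ := \bigotimes_{e \in \cA(\sfP)} \sfQ^e \otimes \bigotimes_{k \in v_0} \sfQ^k \in \cH_0$ and arguing via chain-rule plus convexity of the KL divergence in its second argument that this projection cannot increase the divergence from the product $\sfP$.
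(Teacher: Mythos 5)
Your plan is essentially the same as the paper's: factor $\sfP$ and every $\sfQ\in\cH_0$ over the disjoint blocks $e_1,\dots,e_L, v_0$, apply Lemma~\ref{info_ineq2} iteratively, bound each block term by the corresponding $\cI(\sfP^e,\cH^e)$, and exhibit a minimizer $\sfQ_*$ with $\sfQ_*^{v_0}=\sfP^{v_0}$. Splitting the argument into two inequalities and explicitly verifying $\sfQ_*\in\cH_0$ is slightly more detailed than the paper, but it is the same proof.

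The one place where the proposal hedges unnecessarily is the ``main obstacle'' paragraph. There is no obstacle: condition~\eqref{indep_subs} is posited \emph{for every} $\sfP\in\cP_\Psi$, hence in particular for every $\sfQ\in\cH_0\subseteq\cP_\Psi$. For such a $\sfQ$, $\cA(\sfQ)=\emptyset$, so the right-hand side of~\eqref{indep_subs} (``$\{i,j\}\notin\cA(\sfQ)$ for every $i\in s_1, j\in s_2$'') holds vacuously for every disjoint $s_1,s_2\subseteq[K]$. Consequently $\sfQ$ factorizes over \emph{any} partition of $[K]$, and in particular over $e_1,\dots,e_L,v_0$. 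That is precisely your first anticipated resolution, and it is the one the paper uses; commit to it. Your second alternative (block-projection plus convexity/chain-rule arguments for KL) is both unnecessary and not obviously valid here, since the information numbers $\cI(\cdot,\cdot)$ are defined through the asymptotic conditions~\eqref{max_series}--\eqref{comp_conv_onesided}, not as per-observation KL divergences, so the standard chain-rule and convexity inequalities would need to be rederived in this abstract setting. Finally, be aware that (as in the paper) the construction of $\sfQ_*$ as a product of minimizing marginals implicitly requires that this product lies in the ambient finite family $\cP$; this is assumed rather than proved, so your verification ``$\sfQ_*^{\bar e}\in\cH^{\bar e}$ for every $\bar e$'' shows $\cA(\sfQ_*)=\emptyset$ but does not by itself show $\sfQ_*\in\cP$.
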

\begin{proof}
Due to \eqref{indep_subs} there exists $\sfP_0 \in \cH_0$, such that
$$\sfP = \bigotimes_{e \in \cA(\sfP)} \sfP^e \otimes \sfP_0^{v_0}, \quad \text{where} \;\; v_0 = [K] \setminus \bigcup_{e \in \cA(\sfP)} e.$$
Fix any $\sfQ \in \cH_0$. Since $\cA(\sfQ) = \emptyset$, again due to \eqref{indep_subs} one can write $\sfQ$ as follows:
$$\sfQ = \bigotimes_{e \in \cA(\sfP)} \sfQ^e \otimes \sfQ^{v_0}.$$
Then by Lemma \ref{info_ineq2} we have
$$\cI(\sfP, \sfQ) = \sum_{e \in \cA(\sfP)} \cI\lt(\sfP^e, \sfQ^e\rt) + \cI(\sfP, \sfQ; v_0).$$
Therefore, the minimum of $\cI(\sfP, \sfQ)$ over $\sfQ \in \cH_0$ is obtained by $\sfQ_*$ such that for every $e \in \cA(\sfP)$ 
$$\sfQ_*^e \in \argmin_{\sfQ \in \cH^e} \; \cI\lt(\sfP^e, \sfQ\rt) \quad \text{and} \quad \sfQ_*^{v_0} = \sfP^{v_0},$$
which yields $$\cI(\sfP, \cH_0) = \cI(\sfP, \sfQ_*) = \sum_{e \in \cA(\sfP)} \cI\lt(\sfP^e, \sfQ_*^e\rt) + \cI(\sfP, \sfQ_*; v_0) = \sum_{e \in \cA(\sfP)} \cI\lt(\sfP^e, \cH^e\rt).$$
\end{proof}
Now we are ready to prove Theorem \ref{dep_case_psi_di_prac_rule}.
\begin{proof}[Proof of Theorem \ref{dep_case_psi_di_prac_rule}]
First, fix any arbitrary $\sfP \in \cH_0$. Then in order to prove that this rule is asymptotically optimal under $\sfP$ as $\gamma, \delta \to 0$, it suffices to show that \eqref{prop:PminusP0_1} holds when $s_e = e$ for every $e \in \cK$ since by Proposition \ref{prop:P1}, this will further imply that \eqref{condPminusP0} in Theorem \ref{PminusP0} is satisfied and the result will follow. 
Fix $\bar{e} \in \cK$ such that
\begin{equation*}
\bar{e} \in \argmin_{e \in \cK} \cI \lt(\sfP^e, \cG^e\rt).
\end{equation*}
Since $\cA(\sfP) = \emptyset$ and \eqref{indep_subs} holds, $s$ is independent of $s^c$, where $s = \bar{e}$. Furthermore, letting
$$\sfQ_1 \in \argmin_{\sfQ \in \cG_{\Psi, \bar{e}}} \cI \lt(\sfP, \sfQ; \bar{e}\rt) \quad \text{implies that} \quad \sfQ_1^{\bar{e}} \in \cG^{\bar{e}}.$$
This means under $\sfQ_1^{s} \otimes \sfP^{s^c}$ there is exactly one dependent pair, i.e., $\cA\lt(\sfQ_1^{s} \otimes \sfP^{s^c}\rt) = \{\bar{e}\} \in \Psi_{dis}$, which further implies
$$\sfQ_1^s \otimes \sfP^{s^c} \in \cP_\Psi \setminus \cH_0.$$
Thus, \eqref{prop:PminusP0_1} holds. 

Next, fix any arbitrary $\sfP \in \cP_\Psi \setminus \cH_0$.
We show that \eqref{prop:P2_1} holds when $s'_e = e$ for every $e \notin \cA(\sfP)$ since by Proposition \ref{prop:P1}, this will further imply that \eqref{condP2} is satisfied.
Fix the pair $\bar{e} \notin \cA(\sfP)$ such that
\begin{equation*}
\bar{e} \in \argmin_{e \notin \cA(\sfP)} \; \cI \lt(\sfP^e, \cG^e\rt),
\end{equation*}
and for which $s$ is independent of $s^c$, where $s = s'_{\bar{e}} = \bar{e}$.
Furthermore, letting
$$\sfQ_1 = \argmin_{\sfQ \in \cG_{\Psi, \bar{e}}} \cI\lt(\sfP, \sfQ; \bar{e} \rt) \quad \text{implies that} \quad \sfQ_1^{\bar{e}} \in \cG^{\bar{e}}.$$
Since under $\sfP^{s^c}$ the dependent pairs among the sources $s^c$ are still disjoint and $\sfQ_1^{\bar{e}} \in \cG^{\bar{e}}$, we have  $\bar{e} \in \cA\lt(\sfQ_1^s \otimes \sfP^{s^c}\rt) \in \Psi_{dis}$. Thus,
$$\sfQ_1^s \otimes \sfP^{s^c} \in \cP_\Psi \setminus \cH_0,$$
which implies \eqref{prop:P2_1} holds. Now we show that the conditions in Remark \ref{rem:fwer} are satisfied.

 First, we consider the case when $|\cA(\sfP)| = 1$. Let $\cA(\sfP) = \{e_1\}$.
Since every $\sfQ \in \cup_{e \in \cA(\sfP)} \cH_{\Psi, e}$ satisfies $\cA(\sfP) \setminus \cA(\sfQ) \neq \emptyset$, we have $e_1 \notin \cA(\sfQ)$, or $\sfQ^{e_1} \in \cH^{e_1}$. Thus, by Lemma \ref{info_ineq1}
$$\cI(\sfP, \sfQ) \geq \cI(\sfP^{e_1}, \sfQ^{e_1}) \geq \cI(\sfP^{e_1}, \cH^{e_1}),$$
and further taking infimum over $\sfQ \in \cup_{e \in \cA(\sfP)} \cH_{\Psi, e}$ in the left hand side yields
$$\min_{e \in \cA(\sfP)} \cI(\sfP, \cH_{\Psi, e}) \geq \cI(\sfP^{e_1}, \cH^{e_1}) = \cI(\sfP, \cH_0),$$
where the equality follows from Lemma \ref{psi_di_prac_rule_lem}. The equality also implies that \eqref{condP0} in Theorem \ref{P0} is satisfied.
Furthermore, since $s_e = s'_e = e$ for every $e \in \cK$, we have
$$\max_{e \in \cA(\sfP)} \cI(\sfP, \cH_0; s_e) = \cI(\sfP^{e_1}, \cH^{e_1}) = \min_{e \in \cA(\sfP)} \cI(\sfP, \cH_{\Psi, e}; s'_e).$$
Thus, \eqref{FWERcondP0} holds.

Next, consider the case when $|\cA(\sfP)| > 1$. We will show that \eqref{prop:P1_1} holds when $s'_e = e$ for every $e \in \cA(\sfP)$.
Fix $\bar{e} \in \cA(\sfP)$ such that
\begin{equation*}
\bar{e} \in \argmin_{e \in \cA(\sfP)} \; \cI\lt(\sfP^e, \cH^e\rt).
\end{equation*}
Now since the dependent pairs are disjoint and \eqref{indep_subs} holds, it is clear that $s$ is independent of $s^c$, where $s = \bar{e}$. Furthermore, letting
$$\sfQ_1 = \argmin_{\sfQ \in \cH_{\Psi, \bar{e}}} \cI \lt(\sfP, \sfQ; \bar{e}\rt) \quad \text{implies that} \quad \sfQ_1^{\bar{e}} \in \cH^{\bar{e}}.$$
Since under $\sfQ_1^{s} \otimes \sfP^{s^c}$, the dependent pairs are still disjoint and $|\cA\lt(\sfQ_1^s \otimes \sfP^{s^c}\rt)| = |\cA(\sfP)| - 1 \geq 1$, we have
$$\sfQ_1^s \otimes \sfP^{s^c} \in \cP_\Psi \setminus \cH_0.$$
%\begin{equation}
%\label{prop:PminusP0_2}
%$Q_1^{W_{e'}} \otimes \sfP^{W_{e'}^c} \in \cP_\Psi \setminus \cH_0,$
%\end{equation}
Thus, \eqref{prop:P1_1} holds and by Proposition \ref{prop:P1} this will further imply that \eqref{condP1} is satisfied. This means
$$\min_{e \in \cA(\sfP)} \cI(\sfP, \cH_{\Psi, e}) = \min_{e \in \cA(\sfP)} \cI(\sfP^e, \cH^e) \leq \sum_{e \in \cA(\sfP)} \cI(\sfP^e, \cH^e) = \cI(\sfP, \cH_0),$$
where the last equality follows from Lemma \ref{psi_di_prac_rule_lem}.
Furthermore, since $s_e = s'_e = e$ for every $e \in \cK$, we have
$$\max_{e \in \cA(\sfP)} \cI(\sfP, \cH_0; s_e) = \max_{e \in \cA(\sfP)} \cI(\sfP^{e}, \cH^{e}) \geq \min_{e \in \cA(\sfP)} \cI(\sfP^e, \cH^e) = \min_{e \in \cA(\sfP)} \cI(\sfP, \cH_{\Psi, e}; s'_e).$$
Thus, \eqref{FWERcondP1} holds. Therefore, the asymptotic optimality of $\chi^*_{fwer}$ follows by using Theorem \ref{P0P1P2}. The proof is complete.
\end{proof}

\subsection{Proof of Corollary \ref{cor:dep_struc}}\label{pf:cor:dep_struc}

\begin{proof}
We have $\Psi = \Psi_{clus}$ and $s_e = s'_e = e$ for every $e \in \cK$. Let $\mu$ be the mean vector and $\Sigma$ be the covariance matrix of $X_n$ under $\sfP$. Then we have the decomposition that $\Sigma = URU$, where $U$ is the diagonal matrix with elements of the main diagonal being the standard deviations of the sources, i.e.,
$$U = {\rm{diag}}(\sigma_1, \dots, \sigma_K),$$
and $R$ is the correlation matrix. 
For any $k \in [K]$ we define 
$$A_k = (1 - \rho)I_k + \rho 1_k1_k^t,$$
where
$I_k$ is the $k \times k$ identity matrix, and $1_k$ is the column vector in $\bR^k$ with all elements being $1$. Furthermore, for any $j, l \in [K]$ we denote by $0_{j \times l}$ the $j \times l$ matrix with all entries being $0$. Without loss of generality let $v_1 = \{1, 2, \dots, m\}$, then
$$
R = 
\begin{bmatrix}
A_m &0_{m \times (K - m)}\\
0_{(K - m) \times m} &I_{K - m}
\end{bmatrix}.
$$
In the following we also use two important results regarding $A_k$:
\begin{align*}
A_k^{-1} &= \frac{1}{1-\rho}I_k - \frac{\rho}{(1 - \rho)(1 + (k - 1)\rho)}1_k1_k^t,\\
det(A_k) &= (1 - \rho)^{k-1}(1 + (k-1)\rho). 
\end{align*} 
\begin{itemize}
\item Consider the distribution $\sfQ$ such that $L(\sfQ) = 1$ and the data sources $\{1, \dots, m-1\}$ marginally follow equicorrelated multivariate Gaussian with correlation $\rho$ and is independent of the rest of the sources. However, the mean vector of $X_n$ and the variances of the data sources are the same under both $\sfP$ and $\sfQ$. Then, denoting by $\Sigma_{\sfQ}$ the covariance matrix of $X_n$ under $\sfQ$, we have $\Sigma_\sfQ = UR_\sfQ U$, where $R_\sfQ$ is the correlation matrix under $\sfQ$ that takes the following form
$$
R_\sfQ = 
\begin{bmatrix}
A_{m-1} &0_{(m-1) \times (K - m +1)}\\
0_{(K - m + 1) \times (m-1)} &I_{K - m + 1}
\end{bmatrix}.
$$
Then we have
$$R_\sfQ^{-1}
=
\begin{bmatrix}
B_\sfQ &0_{m \times (K - m)}\\
0_{(K - m) \times m} &I_{K - m}
\end{bmatrix},
\quad \text{where}
\quad
B_\sfQ = 
\begin{bmatrix}
A_{m-1}^{-1} &0_{(m-1) \times 1}\\
0_{1 \times (m-1)} &1
\end{bmatrix},
$$
which implies
$$R_\sfQ^{-1}R = 
\begin{bmatrix}
B_\sfQ A_m &0_{m \times (K - m)}\\
0_{(K - m) \times m} &I_{K - m}
\end{bmatrix}.
$$
Furthermore, we have
\begin{align*}
B_\sfQ A_m = 
\begin{bmatrix}
A_{m-1}^{-1} &0_{(m-1) \times 1}\\
0_{1 \times (m-1)} &1
\end{bmatrix}
\begin{bmatrix}
A_{m-1}^{-1} &\rho 1_{m-1}\\
\rho 1_{m-1}^t &1
\end{bmatrix}
= 
\begin{bmatrix}
I_{m-1} &\rho A_{m-1}^{-1}1_{m-1}\\
\rho 1_{m-1}^t &1
\end{bmatrix},
\end{align*}
which yields $tr(B_\sfQ A_m) = m$.
Now, from \eqref{gauss_info} we have
\begin{align*}
\cI(\sfP, \sfQ) &= \frac{1}{2}\lt\{\log\frac{det(\Sigma_\sfQ)}{det(\Sigma)} - K + tr(\Sigma_\sfQ^{-1}\Sigma) + (\mu - \mu)^t\Sigma_\sfQ^{-1}(\mu - \mu)\rt\}\\
&= \frac{1}{2}\lt\{\log\frac{det(UR_\sfQ U)}{det(URU)} - K + tr(U^{-1}R_\sfQ^{-1}U^{-1}URU)\rt\}\\
&= \frac{1}{2}\lt\{\log\frac{(det(U))^2det(R_\sfQ)}{(det(U))^2det(R)} - K + tr(UU^{-1}R_\sfQ^{-1}R)\rt\}\\
&= \frac{1}{2}\lt\{\log\frac{det(R_\sfQ)}{det(R)} - K + tr(R_\sfQ^{-1}R)\rt\}\\
&= \frac{1}{2}\lt\{\log\frac{det(A_{m-1})}{det(A_m)} - K + tr(B_\sfQ A_m) + tr(I_{K-m})\rt\}\\
&= \frac{1}{2}\lt\{-\log\frac{det(A_m)}{det(A_{m-1})} - K + m + (K - m)\rt\}\\
&=-\frac{1}{2}\log\frac{(1-\rho)^{m-1}(1 + (m-1)\rho)}{(1-\rho)^{m-2}(1 + (m-2)\rho)}\\
&=-\frac{1}{2}\log\frac{(1-\rho)(1 + (m-1)\rho)}{(1 + (m-2)\rho)}\\
%&= -\frac{1}{2}\log \lt(1 - \rho^2  \lt(\frac{1_{m-1}^t1_{m-1}}{1-\rho} - \rho\frac{1_{m-1}^t 1_{m-1}1_{m-1}^t 1_{m-1}}{(1 - \rho)(1 + (m - 2)\rho)}\rt)\rt)\\
%&= -\frac{1}{2}\log \lt(1 - \frac{\rho^2}{1 - \rho}\lt(m-1 - \frac{\rho}{1 + (m - 2)\rho}(m-1)^2\rt)\rt)\\
&= -\frac{1}{2}\log \lt(1 - \frac{\rho^2(m-1)}{1+(m-2)\rho}\rt)
\end{align*}
Now note that, the pair $\{m-1, m\}$ is independent under $\sfQ$ but dependent under $\sfP$, which implies $\sfQ \in \cup_{e \in \cA(\sfP)} \cH_{\Psi, e}$.
Therefore, as $\alpha, \beta, \gamma, \delta \to 0$ such that $|\log \gamma| >> |\log \alpha| \vee |\log \delta|$, from Theorem \ref{w_eP1}(ii) we have
\begin{align*}
{\sf{ARE}}_{\sfP}[\chi^*] &\leq \frac{\min_{e \in \cA(\sfP)} \cI \lt( \sfP , \cH_{\Psi,e} \rt)}{\min_{e \in \cA(\sfP)} \cI \lt( \sfP^e , \cH^e \rt)} \leq \frac{\cI(\sfP, \sfQ)}{(-1/2)\log (1-\rho^2)}\\
&= \log\lt( 1- \rho \frac{ \rho(m-1)}{1 + \rho(m - 2)}\rt) / \log(1 - \rho^2).
\end{align*}
Since the above upper bound is an increasing function of $m$, one can let $m \to \infty$ to obtain the following upper bound which is independent of $m$:
$${\sf{ARE}}_{\sfP}[\chi^*] \leq \frac{\log(1 - \rho)}{\log(1 - \rho^2)}.$$

\item Consider the distribution $\sfQ$ such that $L(\sfQ) = 1$ and the data sources $\{1, \dots, m+1\}$ marginally follow equicorrelated multivariate Gaussian with correlation $\rho$ and is independent of the rest of the sources. However, the mean vector of $X_n$ and the variances of the data sources are the same under both $\sfP$ and $\sfQ$. Then, denoting by $\Sigma_{\sfQ}$ the covariance matrix of $X_n$ under $\sfQ$, we have $\Sigma_\sfQ = UR_\sfQ U$, where $R_\sfQ$ is the correlation matrix under $\sfQ$ that takes the following form
$$
R_\sfQ = 
\begin{bmatrix}
A_{m+1} &0_{(m+1) \times (K - m -1)}\\
0_{(K - m - 1) \times (m+1)} &I_{K - m - 1}
\end{bmatrix}.
$$
Then we have
$$R_\sfQ^{-1}
=
\begin{bmatrix}
A_{m+1}^{-1} &0_{(m+1) \times (K - m -1)}\\
0_{(K - m - 1) \times (m+1)} &I_{K - m - 1}
\end{bmatrix},
$$
which implies
$$R_\sfQ^{-1}R = 
\begin{bmatrix}
A_{m+1}^{-1}B_\sfP &0_{(m+1) \times (K - m -1)}\\
0_{(K - m - 1) \times (m+1)} &I_{K - m - 1}
\end{bmatrix}, 
\quad \text{where} \quad
B_\sfP = 
\begin{bmatrix}
A_m &0_{m \times 1}\\
0_{1 \times m} &1
\end{bmatrix}.
$$
Furthermore, we have
\begin{align*}
A_{m+1}^{-1}B_\sfP &= \lt(\frac{1}{1-\rho}I_{m+1} - \frac{\rho}{(1 - \rho)(1 + m\rho)}1_{m+1}1_{m+1}^t\rt)B_\sfP\\
&= \frac{1}{1-\rho}B_\sfP - \frac{\rho}{(1 - \rho)(1 + m\rho)}1_{m+1}1_{m+1}^tB_\sfP,
\end{align*}
which yields
\begin{align*}
tr(A_{m+1}^{-1}B_\sfP) &= \frac{1}{1-\rho}tr(B_\sfP) -  \frac{\rho}{(1 - \rho)(1 + m\rho)} tr(1_{m+1}1_{m+1}^tB_\sfP)\\
&= \frac{1}{1-\rho}(tr(A_m)+1) - \frac{\rho}{(1 - \rho)(1 + m\rho)}tr(1_{m+1}^tB_\sfP1_{m+1})\\
&= \frac{m+1}{1-\rho} - \frac{\rho}{(1 - \rho)(1 + m\rho)}1_{m+1}^tB_\sfP1_{m+1}\\
&= \frac{m+1}{1-\rho} - \rho\frac{m(1 + (m-1)\rho) + 1}{(1 - \rho)(1 + m\rho)}
\end{align*}
Now, from \eqref{gauss_info} we have
\begin{align*}
\cI(\sfP, \sfQ) &= \frac{1}{2}\lt\{\log\frac{det(\Sigma_\sfQ)}{det(\Sigma)} - K + tr(\Sigma_\sfQ^{-1}\Sigma) + (\mu - \mu)^t\Sigma_\sfQ^{-1}(\mu - \mu)\rt\}\\
&= \frac{1}{2}\lt\{\log\frac{det(UR_\sfQ U)}{det(URU)} - K + tr(U^{-1}R_\sfQ^{-1}U^{-1}URU)\rt\}\\
&= \frac{1}{2}\lt\{\log\frac{(det(U))^2det(R_\sfQ)}{(det(U))^2det(R)} - K + tr(UU^{-1}R_\sfQ^{-1}R)\rt\}\\
&= \frac{1}{2}\lt\{\log\frac{det(R_\sfQ)}{det(R)} - K + tr(R_\sfQ^{-1}R)\rt\}\\
&= \frac{1}{2}\lt\{\log\frac{det(A_{m+1})}{det(A_m)} - K + tr(A_{m+1}^{-1}B_\sfP) + tr(I_{K-m-1})\rt\}\\
&= \frac{1}{2}\bigg\{\log\frac{(1-\rho)^m(1+m\rho)}{(1-\rho)^{m-1}(1+(m-1)\rho)} - K + K - m -1\\ 
&\qquad \qquad \qquad \qquad \qquad+  \frac{m+1}{1-\rho} - \rho\frac{m(1 + (m-1)\rho) + 1}{(1 - \rho)(1 + m\rho)}\bigg\}\\
&=\frac{1}{2}\bigg\{\log\frac{(1-\rho)(1+m\rho)}{(1+(m-1)\rho)}\\ 
&+ \frac{(m+1)(1+m\rho) - \rho(1 + m + m(m-1)\rho) - (m+1)(1-\rho)(1 + m\rho)}{(1-\rho)(1 + m\rho)} 
\bigg\}\\
&=\frac{1}{2}\bigg\{-\log\frac{(1+(m-1)\rho)}{(1-\rho)(1+m\rho)} + \rho \frac{(m+1)(1+m\rho) - (1 + m + m(m-1)\rho)}{(1-\rho)(1 + m\rho)} 
\bigg\}\\
&=-\frac{1}{2}\bigg\{-\log\lt(\frac{1+ \rho(m-1)}{1+\rho(m-1)-\rho^2m}\rt) + \frac{2m\rho^2}{(1-\rho)(1 + m\rho)} 
\bigg\}\\
&=-\frac{1}{2}\log\lt(\frac{1 + \rho(m - 1)}{1 + \rho(m - 1) - \rho^2m}\rt) + \frac{m \rho^2}{(1 + m \rho)(1 - \rho)}
\end{align*}
Now note that, the pair $\{m, m+1\}$ is dependent under $\sfQ$ but independent under $\sfP$, which implies $\sfQ \in \cup_{e \notin \cA(\sfP)} \cG_{\Psi, e}$.
Therefore, as $\alpha, \beta, \gamma, \delta \to 0$ such that $|\log \delta| >> |\log \alpha| \vee |\log \gamma|$, from Theorem \ref{w_eP2}(ii) we have
\begin{align*}
&{\sf{ARE}}_{\sfP}[\chi^*] \leq \frac{\min_{e \notin \cA(\sfP)} \cI \lt( \sfP , \cG_{\Psi,e} \rt)}{\min_{e \notin \cA(\sfP)} \cI \lt( \sfP^e , \cG^e \rt)} \leq \frac{\cI(\sfP, \sfQ)}{\log\sqrt{1-\rho^2} + {\rho^2}/{(1 - \rho^2)}}\\
&\leq \lt\{-\frac{1}{2}\log\lt(\frac{1 + \rho(m - 1)}{1 + \rho(m - 1) - \rho^2m}\rt) + \frac{m \rho^2}{(1 + m \rho)(1 - \rho)}\rt\}\bigg/\lt\{\log\sqrt{1-\rho^2} + \frac{\rho^2}{1 - \rho^2}\rt\}.
\end{align*}
Since the above upper bound is an increasing function of $m$, one can let $m \to \infty$ to obtain the following upper bound which is independent of $m$:
$${\sf{ARE}}_{\sfP}[\chi^*] \leq \lt( \log(1 - \rho) + \frac{2\rho}{1 - \rho}\rt)\bigg/
 \lt( \log(1 - \rho^2) + \frac{2\rho^2}{1 - \rho^2}\rt).$$

\item Consider the distribution $\sfQ$ such that all sources are independent. However, the mean vector of $X_n$ and the variances of the data sources are the same under both $\sfP$ and $\sfQ$. Then, denoting by $\Sigma_{\sfQ}$ the covariance matrix of $X_n$ under $\sfQ$, we have $\Sigma_\sfQ = U^2$.
Now, from \eqref{gauss_info} we have
\begin{align*}
\cI(\sfP, \sfQ) &= \frac{1}{2}\lt\{\log\frac{det(\Sigma_\sfQ)}{det(\Sigma)} - K + tr(\Sigma_\sfQ^{-1}\Sigma) + (\mu - \mu)^t\Sigma_\sfQ^{-1}(\mu - \mu)\rt\}\\
&= \frac{1}{2}\lt\{\log\frac{det(U^2)}{det(URU)} - K + tr(U^{-1}U^{-1}URU)\rt\}\\
&= \frac{1}{2}\lt\{\log\frac{(det(U))^2}{(det(U))^2det(R)} - K + tr(UU^{-1}R)\rt\}\\
&= \frac{1}{2}\lt\{\log\frac{1}{det(R)} - K + tr(R)\rt\}\\
&=-\frac{1}{2}\log(det(R)) = -\frac{1}{2}\log(det(A_m))\\
&=-\frac{1}{2}\log((1-\rho)^{m-1}(1 + (m-1)\rho))\\
&=-\frac{1}{2}\lt((m-1)\log(1-\rho) + \log(1 + (m-1)\rho)\rt)
\end{align*}
Now, clearly $\sfQ \in \cH_0$.
Therefore, as $\alpha, \beta, \gamma, \delta \to 0$ such that $|\log \alpha| >> |\log \gamma| \vee |\log \delta|$, from Theorem \ref{w_eP0}(ii) we have
\begin{align*}
{\sf{ARE}}_{\sfP}[\chi^*] &\leq \min_{e \in \cA(\sfP)} \left\{ \frac{\cI(\sfP, \cH_0)}{\cI(\sfP^e, \cH^e)} \right\} \leq \frac{\cI(\sfP, \sfQ)}{(-1/2)\log (1-\rho^2)}\\
&= \frac{(m-1)\log (1-\rho) + \log (1 + (m - 1)\rho)}{\log (1-\rho^2)}.
\end{align*}
\end{itemize}
The proof is complete.
\end{proof}

\end{appendix}

\bibliographystyle{imsart-number} % Style BST file (imsart-number.bst or imsart-nameyear.bst)
\bibliography{JSDI.bib}       % Bibliography file (usually '*.bib')

\end{document}